\newtheorem{thm}{Theorem}[section]
\newtheorem{lem}[thm]{Lemma}
\newtheorem{cor}[thm]{Corollary}
\newtheorem{pro}[thm]{Proposition}
\theoremstyle{definition}
\newtheorem{defi}[thm]{Definition}
\newtheorem{ex}[thm]{Example}
\newtheorem{rmk}[thm]{Remark}
\newcounter{choicegluing}
\DeclareMathOperator{\inv}{inv}
\DeclareMathOperator{\inti}{int}
\DeclareMathOperator{\im}{im}
\DeclareMathOperator{\morp}{mor}
\DeclareMathOperator{\ob}{ob}
\DeclareMathOperator{\grad}{grad}
\DeclareMathOperator{\id}{id}
\DeclareMathOperator{\dist}{dist}
\DeclareMathOperator{\pic}{Pic^{0}}
\def\Z{\mathbb{Z}}
\def\Q{\mathbb{Q}}
\def\R{\mathbb{R}}
\def\C{\mathbb{C}}
\newcommand{\swf}{\underline{\operatorname{swf}}}
\DeclareFontFamily{U}{mathx}{\hyphenchar\font45}
\DeclareFontShape{U}{mathx}{m}{n}{<-> mathx10}{}
\DeclareSymbolFont{mathx}{U}{mathx}{m}{n}
\DeclareMathAccent{\widebar}{0}{mathx}{"73}
\title[Unfolded Seiberg--Witten Floer spectra, II]
{Unfolded Seiberg--Witten Floer spectra, II: Relative invariants and the gluing theorem}
\author{Tirasan Khandhawit}              
\date{} 
\address{Department of Mathematics, Faculty of Science, Mahidol University, 272 Rama VI Road, Thung Phayathai, Ratchathewi, Bangkok, 10400, Thailand  }
\email{tirasan.kha@mahidol.ac.th}
\author{Jianfeng Lin}
\address{ Yau Mathematical Sciences Center, Jingzhai,  Tsinghua University, Haidian District, Beijing, 100084,  China}
\email{linjian5477@mail.tsinghua.edu.cn}
\author{Hirofumi Sasahira}
\address{Faculty of Mathematics, Kyushu University, 
744, Motooka, Nishi-ku, Fukuoka, 819-0395
Japan}
\email{hsasahira@math.kyushu-u.ac.jp}
\begin{document}

\vskip 0.3 truecm

\maketitle 

\begin{abstract}
We use the construction of unfolded Seiberg--Witten Floer spectra of general 3-manifolds defined in our previous paper to extend the notion of relative Bauer--Furuta invariants to general 4-manifolds with boundary.  One of the main purposes of this paper is to give a detailed proof of the gluing theorem for the relative invariants. 
\end{abstract}


\section {Introduction}   \label{section introduction}

The Bauer--Furuta invariant, which was introduced in \cite{Bauer-Furuta1}, can be regarded a stable homotopy refinement of the Seiberg--Witten invariants \cite{Witten} for closed 4-manifolds. The invariant takes value in equivariant stable cohomotopy group of spheres and can give interesting applications in 4-manifold theory, such as the 10/8-theorem \cite{Furuta108}. On the other hand, the Seiberg--Witten Floer spectrum, which was first introduced by Manolescu for rational homology 3-spheres \cite{Manolescu1}, can be regarded as a stable homotopy refinement of monopole Floer homology \cite{Kronheimer-Mrowka}.
Using this Seiberg--Witten Floer spectrum, Manolescu extended the notion of the Bauer--Furuta invariant to 4-manifolds whose boundary are rational homology spheres. This ``relative'' invariant takes value in a stable cohomotopy group of the Seiberg--Witten Floer spectrum of the boundary manifold. 

In the previous paper \cite{KLS1}, we have constructed the ``unfolded'' version of Seiberg--Witten Floer spectrum for general 3-manifolds.
It is then natural to extend Manolescu's construction of relative Bauer--Furuta invariant to arbitrary 4-manifolds with boundary. Recall that the unfolded spectrum comes with two variations: type-A and type-R. Consequently, the unfolded relative Bauer--Furuta invariant will also come with type-A and type-R variations. Here the letters ``A'' and ``R'' stand for the notions ``attractor'' and ``repeller'' in dynamical system. The type-A invariant is an object in a category $\mathfrak{S}$ of ind-spectra, and the type-R invariant is an object in a category $\mathfrak{G}^{*}$ of pro-spectra.

{\begin{defi}
Let $(Y_{\text{in}},\mathfrak{s}_{\text{in}})$ and $(Y_{\text{out}},\mathfrak{s}_{\text{out}})$ be two oriented (but not necessarily connected) spin$^{c}$ 3--manifolds. A spin$^{c}$ cobordism from $(Y_{\text{in}},\mathfrak{s}_{\text{in}})$ to $(Y_{\text{out}},\mathfrak{s}_{\text{out}})$ is a compact, oriented spin$^{c}$ 4--manifold $(X,\hat{\mathfrak{s}})$ together with a \emph{fixed diffeomorphism}
$$
(\partial X, \hat{\mathfrak{s}}|_{\partial X})\cong (-Y_{\text{in}},\mathfrak{s}_{\text{in}})\sqcup (Y_{\text{out}},\mathfrak{s}_{\text{out}})
$$ 
between spin$^{c}$ 3--manifolds. Here we use the canonical identification between the set of spin$^{c}$ structures on $Y_{\text{in}}$ and the corresponding set for $-Y_{\text{in}}$, the orientation reversal of $Y_{\text{in}}$. When $Y_{\text{in}}=\emptyset$, we also call $(X, \hat{\mathfrak{s}})$ a spin$^{c}$ manifold with boundary $(Y_{\text{out}},\mathfrak{s}_{\text{out}})$.
\end{defi}}

Let $(X,\hat{\mathfrak{s}})$ be a connected spin$^{c}$ cobordism from  $(Y_{\text{in}},\mathfrak{s}_{\text{in}})$ to $(Y_{\text{out}},\mathfrak{s}_{\text{out}})$. We equip $X$ with a Riemannian metric $\hat{g}$ and a spin$^c$ connection $\hat{A}_{0}$. Denote the restriction of $\hat{A}_{0}, \hat{g}$ to $Y_{\text{in}}$ (resp.  $Y_{\text{out}}$) by 
$ A_{\text{in}}\text{ and }g_\text{in}$ (resp. $A_{\text{out}}\text{ and }g_\text{out}$). Then the type-A unfolded relative Bauer--Furuta invariant of $X$ is constructed as a morphism in the stable category $\mathfrak{S} $

\begin{align}
 \underline{\textnormal{bf}}^{A}(X, \hat{\mathfrak{s}};S^{1}) \colon & 
\Sigma^{-(V^{+}_X\oplus  V_{\text{in}})}T  (X,  \hat{\mathfrak{s}};S^{1})\wedge \underline{\textnormal{swf}}_{}^{A}(Y_{\text{in}},\mathfrak{s}_\text{in},A_{\text{in}},
g_\text{in} ; S^1)    \nonumber \\
& \rightarrow \underline{\textnormal{swf}}_{}^{A}(Y_\text{out},\mathfrak{s}_\text{out},A_{\text{out}}, g_\text{out}
; S^1). \nonumber
\end{align}
Here $\underline{\textnormal{swf}}^{A}(Y, \mathfrak{s}, A, g;S^1)$ is the type-A unfolded Seiberg-Witten Floer spectrum  defined in \cite[Definition 5.7]{KLS1}. 

The type-R unfolded relative Bauer--Furuta invariant of $X$  is constructed analogously as a morphism in the stable category $\mathfrak{S}^* $
\begin{align}
\underline{\textnormal{bf}}^{R}(X, \hat{\mathfrak{s}};S^{1}) \colon &
\Sigma^{-(V^{+}_X\oplus  V_{\text{out}})}T  (X,  \hat{\mathfrak{s}};S^{1})\wedge \underline{\textnormal{swf}}_{}^{R}(Y_{\text{in}},\mathfrak{s}_\text{in},A_{\text{in}},
g_\text{in} ; S^1)   \nonumber \\
& \rightarrow \underline{\textnormal{swf}}_{}^{R}(Y_\text{out},\mathfrak{s}_\text{out},A_{\text{out}}, g_\text{out}
; S^1). \nonumber
\end{align}
Here $\underline{\textnormal{swf}}^{R}(Y, \mathfrak{s}, A, g; S^1)$ is the type-R unfolded Seiberg-Witten Floer spectrum defined in \cite[Definition 5.9]{KLS1}.
The object $T  (X,  \hat{\mathfrak{s}};S^{1})$ is the Thom spectrum of virtual index bundle associated to a family of Dirac operators. See Lemma~\ref{lem thombundle} below for the precise definition. We also denote by $V^+_X$ a maximal positive subspace of $\operatorname{Im} (H^2(X, \partial X;\R) \rightarrow H^2(X;\R))$ with respect
to the intersection form, $V_{\text{in}}$ the cokernel of $\iota^* \colon H^{1}(X;\mathbb{R})\rightarrow H^{1}(Y_{\text{in}};\mathbb{R})$
, and $V_{\text{out}}$ similarly.
We refer readers to  
 Section~\ref{sec bfconstuct} and Definition~\ref{def BFtypeA}-\ref{def BFtypeR} for more detailed descriptions of these objects. 
 
 In Section~\ref{sec INVofBF}, we show that these are invariants of the 4-manifold with boundary in the following sense.

\begin{thm}
As one varies $(\hat{g},\hat{A}_{0})$, both domain and target of $\underline{\textnormal{bf}}^{A}(X, \hat{\mathfrak{s}};S^{1})$ are changed by suspending or desuspending with the same number of copies of $\mathbb{C}$; the morphism $\underline{\textnormal{bf}}^{A}(X, \hat{\mathfrak{s}};S^{1})$ is invariant as a stable homotopy class. The same result holds for $\underline{\textnormal{bf}}^{R}(X, \hat{\mathfrak{s}};S^{1})$. Moreover, when $c_{1}(\mathfrak{s}|_{Y})$ is torsion, one can construct further normalizations:
\begin{align}
\underline{\textnormal{BF}}^{A}(X, \hat{\mathfrak{s}};S^{1})  &\colon
\Sigma^{-(V^{+}_X\oplus  V_{\text{in}})}T(X,  \hat{\mathfrak{s}};S^{1})\wedge \underline{\textnormal{SWF}}_{}^{A}(Y_{\text{in}},\mathfrak{s}_\text{in}; S^1)  \nonumber\\ 
& \rightarrow \underline{\textnormal{SWF}}_{}^{A}(Y_\text{out},\mathfrak{s}_\text{out}; S^1). \nonumber
\end{align}
\begin{align}
\underline{\textnormal{BF}}^{R}(X, \hat{\mathfrak{s}};S^{1}) \colon &  
\Sigma^{-(V^{+}_X\oplus  V_{\text{out}})}T(X,  \hat{\mathfrak{s}};S^{1})\wedge \underline{\textnormal{SWF}}_{}^{R}(Y_{\text{in}},\mathfrak{s}_\text{in}; S^1)  \nonumber \\
& \rightarrow \underline{\textnormal{SWF}}_{}^{R}(Y_\text{out},\mathfrak{s}_\text{out}; S^1), \nonumber
\end{align}
which are completely independent of metrics and base connections.
\end{thm}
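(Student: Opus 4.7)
The plan is a standard homotopy argument. Given two choices of Riemannian data $(\hat g_0, \hat A_0^{(0)})$ and $(\hat g_1, \hat A_0^{(1)})$, I would connect them by a smooth one-parameter family $(\hat g_t, \hat A_0^{(t)})_{t\in[0,1]}$ and show that each constituent of the Bauer--Furuta morphism changes along this path only by canonical $\mathbb{C}$-suspensions which balance on the domain and the target.

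First I would analyze how each factor depends on the parameter. By the construction in \cite{KLS1}, a variation of the boundary data $(A_\text{in},g_\text{in})$ (respectively $(A_\text{out},g_\text{out})$) alters $\underline{\textnormal{swf}}^A(Y_\text{in},\mathfrak{s}_\text{in},A_\text{in},g_\text{in};S^1)$ by a canonical suspension by $\mathbb{C}^{k_\text{in}}$ (respectively $\mathbb{C}^{k_\text{out}}$), where the exponent records the spectral flow of the boundary Dirac operator along the path; the same holds for $\underline{\textnormal{swf}}^R$. On the four-manifold side, the Thom spectrum $T(X,\hat{\mathfrak{s}};S^1)$ is built from a virtual index bundle associated to a family of Dirac operators with APS-type boundary conditions, and by the gluing/excision formula for the family index its formal dimension changes by $k_\text{in}+k_\text{out}$, with the signs chosen so that the domain $\Sigma^{-(V^+_X\oplus V_\text{in})}T(X,\hat{\mathfrak{s}};S^1)\wedge \underline{\textnormal{swf}}^A(Y_\text{in})$ and the target $\underline{\textnormal{swf}}^A(Y_\text{out})$ are altered by the same number of $\mathbb{C}$-suspensions. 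This is the key index-theoretic identity underlying the statement.

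Next I would upgrade this dimension count to an actual homotopy between the two Bauer--Furuta maps. I would partition $[0,1]$ into finitely many subintervals on which no eigenvalue of the extended Hessian of the Chern--Simons--Dirac functional on $Y_\text{in}\sqcup Y_\text{out}$ crosses zero, and perform a family version of the finite-dimensional approximation from Section~\ref{sec bfconstuct} on each subinterval, at a truncation level large enough to absorb all relevant eigenspaces uniformly over the parameter. On a single subinterval the resulting two Bauer--Furuta maps are straightforwardly homotopic. At each eigenvalue crossing one invokes the explicit suspension isomorphisms of \cite{KLS1} for $\underline{\textnormal{swf}}^{A/R}$ together with the corresponding Thom isomorphism for $T(X,\hat{\mathfrak{s}};S^1)$, and checks that they intertwine the approximations on either side of the crossing. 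The type-R case runs in parallel, the only change being which of $V_\text{in}$, $V_\text{out}$ enters the desuspension, in accordance with the attractor/repeller duality.

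For the normalized statement when $c_1(\mathfrak{s}|_Y)$ is torsion, I would use the canonical half-integer grading shift $n(Y,\mathfrak{s})$ established in \cite{KLS1} to absorb the Dirac-eigenspace suspension ambiguity in $\underline{\textnormal{swf}}^{A/R}$, producing $\underline{\textnormal{SWF}}^{A/R}(Y,\mathfrak{s};S^1)$; the same shift applied fiberwise to the virtual index bundle over the space of four-dimensional data normalizes $T(X,\hat{\mathfrak{s}};S^1)$, and $\underline{\textnormal{BF}}^{A/R}$ is obtained as the induced composition, now genuinely independent of $(\hat g, \hat A_0)$. The main obstacle I anticipate is the uniform control of Seiberg--Witten trajectories across eigenvalue crossings in the two-ended unfolded setup: one must verify that the attractor/repeller Conley-index machinery from \cite{KLS1} is compatible with the family finite-dimensional approximation and with the suspension isomorphisms simultaneously. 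Once the bookkeeping is aligned, the invariance reduces to a standard compactness argument for Seiberg--Witten trajectories with data ranging over a compact parameter interval.
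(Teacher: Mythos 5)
Your overall strategy---connecting the two choices by a path and tracking suspensions along it---is in the same spirit as the paper's argument, and your index-theoretic account of why the $\mathbb{C}$-suspensions balance is correct in outline. But two steps in your sketch would not go through as written and hide the real content of the proof.

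First, your partition criterion (``no eigenvalue of the extended Hessian crosses zero'') is not the one that matters. The paper partitions $[0,1]$ into subintervals on which the \emph{truncation levels} $\lambda_n,\mu_n$ are not eigenvalues of the family $\slashed{D}_s$ on $Y$; that is what keeps the finite-rank bundle $W_{n,\beta}(s)$ of constant rank and continuously varying, which is the prerequisite for constructing the homotopy of approximated Seiberg--Witten maps at all. A zero-crossing of an eigenvalue and a crossing of $\lambda_n$ or $\mu_n$ are independent events, and only the latter disturbs the finite-dimensional model. Second, ``on a single subinterval the resulting two Bauer--Furuta maps are straightforwardly homotopic'' is precisely where the work lies: the paper constructs a parameterized pre-index pair $(\widetilde{K}_1,\widetilde{K}_2)$ inside $V_{\lambda_n}^{\mu_n}\times[0,1]$, encloses it in a parameterized index pair $(\widetilde{N},\widetilde{L})$, and invokes Theorem 6.7 and Corollary 6.8 of \cite{Salamon} to show that the inclusion of each slice $N_s/L_s\hookrightarrow\widetilde{N}/\widetilde{L}$ is a homotopy equivalence compatible with the flow-map identification $N_0/L_0\cong N_1/L_1$; this compatibility is exactly what makes the diagram comparing $\psi_0$ and $\psi_1$ commute. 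You also omit a structural simplification the paper exploits: for changes of $(\hat{g},\hat{A}_0)$ with the boundary restriction $(g,A_0)$ held fixed, a linear interpolation of the interior data already gives a direct homotopy of the maps $v$ from (\ref{eq map v}), with no spectral-flow bookkeeping. The spectral-flow/partition machinery and your normalization via $n(Y,\mathfrak{s},A_0,g)$ are needed only when the boundary data moves. Your high-level picture is right, but without these repairs the proof does not close.
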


See \cite[Definition 5.10]{KLS1} for the definitions of the normalized  unfolded Seiberg-Witten Floer spectra $\underline{\textnormal{SWF}}^{A}(Y,\mathfrak{s};S^1)$ and $\underline{\textnormal{SWF}}^{R}(Y,\mathfrak{s};S^1)$,  and the normalized unfolded relative Bauer-Furuta invariants $\underline{\textnormal{BF}}^{A}(X, \hat{\mathfrak{s}}; S^1)$ and $\underline{\textnormal{BF}}^{R}(X, \hat{\mathfrak{s}};S^1)$ will be defined in  Definition \ref{def normalized BFA} and Definition \ref{def BFtypeR} below. 

\begin{rmk} First, we emphasize that our unfolded relative invariant is defined over the \emph{relative} Picard torus 
\[ \operatorname{Pic}^{0}(X,Y) \cong \ker(H^{1}(X;\mathbb{R})\rightarrow
H^{1}(Y;\mathbb{R}))/\ker(H^{1}(X;\mathbb{Z})\rightarrow H^{1}(Y;\mathbb{Z})). \]
Secondly, the choice of labeling each boundary component corresponds to which side its unfolded spectrum will appear in the morphism.
Essentially, $\underline{\textnormal{swf}}_{}^{A}(Y ) $ is the Spanier--Whitehead dual of $\underline{\textnormal{swf}}_{}^{R}(-Y) $ 
and $\underline{\textnormal{bf}}^{A}(X) $ is the same as $\underline{\textnormal{bf}}^{R}(X^{\dagger}) $ where  $X^{\dagger} \colon -Y_{\text{out}}\rightarrow -Y_{\text{in}}$ is the adjoint cobordism of $X \colon Y_{\text{in}} \rightarrow Y_{\text{out}}$. Finally, both $\underline{\textnormal{BF}}^{A}$ and $\underline{\textnormal{BF}}^{R}$ agree with Manolescu's construction when $b_1 (Y) = 0$. In this case, we denote $\underline{\textnormal{BF}}^{A}=\underline{\textnormal{BF}}^{R}$ by $\textnormal{BF}$.
\end{rmk}

\begin{ex} \label{example: relative BF for handles}
Let us consider the case when the 4-manifold $X$ is $S^2 \times D^2 $ or $D^3 \times S^1 $ and  $\hat{\mathfrak{s}}_{0}$ is the unique torsion spin$^{c}$ structure. Its boundary is $S^2 \times S^1$  with torsion spin$^{c}$ structure, whose unfolded spectrum is $S^0 $ by calculation in \cite{KLS1}. In this case, the type-A invariant $\underline{\textnormal{bf}}^{A}(X,\hat{\mathfrak{s}}_{0})$
is a stable homotopy class in $[S^0 ,S^0 ]^{st}_{S^1} \cong \mathbb{Z}$. By classical Hodge theory and \cite[Lemma 3.8]{Bauer-Furuta1},
 we can conclude that $\underline{\textnormal{bf}}^{A}(X,\hat{\mathfrak{s}}_{0}) = 1$, the identity element.
 
On the other hand, $\underline{\textnormal{bf}}^{R}(S^2 \times D^2,\hat{\mathfrak{s}}_{0})$ is a stable homotopy class in $[S^{-1} ,S^0 ]^{st}_{S^1}$ as $V_{\text{out}} = \mathbb{R} $. The group $[S^{-1} ,S^0 ]^{st}_{S^1} $ is trivial, therefore $\underline{\textnormal{bf}}^{R}(S^2 \times D^2,\hat{\mathfrak{s}}_{0}) = 0$, the trivial element.
\end{ex}

One of the main goals of the paper is to prove the gluing theorem for unfolded relative Bauer--Furuta invariants.
When decomposing a 4-manifold $X$ to two pieces along a 3-manifold $Y$, the gluing theorem can express the (relative) Bauer--Furuta invariant of $X$ in term of a ``product'' of relative invariants of the two pieces. The case when $Y=S^3$ was first proved by Bauer \cite{Bauer}  using only invariants of closed 4-manifolds and the positive scalar curvature metric of $S^3$. 
The case when $Y$ is a homology 3-sphere was proved by Manolescu \cite{Manolescu2}. Our setup and argument closely follow and generalize those of Manolescu.

Generally, our gluing theorem works when $Y$ is any 3-manifold.
Some mild homological assumptions will be made. 
These conditions are not essential in the sense that they can be removed under more generalized notion of category and unfolded spectrum (see the upcoming remark for more explanation). We now state the gluing theorem which will reappear in Section~\ref{sec gluingsetup} with more details.

\begin{thm} Let 
\[ 
(X_{0},\hat{\mathfrak{s}}_{0}) \colon (Y_{0},\mathfrak{s}_{0})\rightarrow  (Y_{2},\mathfrak{s}_{2}), \ 
(X_{1},\hat{\mathfrak{s}}_{1}) \colon  (Y_{1},\mathfrak{s}_{1})\rightarrow  (-Y_{2},\mathfrak{s}_{2})
\]
be connected, spin$^{c}$ cobordisms and 
\[
(X,\hat{\mathfrak{s}}) \colon (Y_0,\mathfrak{s}_{0}) \sqcup (Y_1,\mathfrak{s}_{1}) \rightarrow \emptyset 
\]
be the glued cobordism along $Y_2$.
If the following conditions hold
\begin{enumerate}[(i)]
\item \label{item gluecon1} $Y_2$ is connected,
\item \label{item gluecon2} $b_1(Y_0) = b_1 (Y_1) = 0$,
\item \label{item gluecon3} $
\im(H^{1}(X_{0};\mathbb{R})\rightarrow H^{1}(Y_{2};\mathbb{R}))\subset \im(H^{1}(X_{1};\mathbb{R})\rightarrow H^{1}(Y_{2};\mathbb{R})),
$
\end{enumerate}
then, under the natural identification between domains and targets, one has
\begin{equation} \label{eq gluingintro}
\textnormal{BF}(X,\hat{\mathfrak{s}})|_{\pic(X,Y_{2})}= \pmb{\tilde{\epsilon}}(\underline{\textnormal{bf}}^{A}(X_{0},\hat{\mathfrak{s}}_{0}),\underline{\textnormal{bf}}^{R}(X_{1},\hat{\mathfrak{s}}_{1})),
\end{equation}
where $\pmb{\tilde{\epsilon}}(\cdot,\cdot)$ is the Spanier-Whitehead
duality operation defined in Section~\ref{section spanierwhitehead}
and the relative Picard torus $\pic(X,Y_{2}) $ is given by 
\[
\ker(H^{1}(X;\mathbb{R})\rightarrow
H^{1}(Y_{2};\mathbb{R}))/\ker(H^{1}(X;\mathbb{Z})\rightarrow H^{1}(Y_{2};\mathbb{Z})).
\]
\end{thm}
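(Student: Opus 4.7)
\smallskip

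\noindent The strategy follows Manolescu's homology-sphere gluing argument, adapted to the unfolded setting where $\underline{\textnormal{swf}}^{A}(Y_{2})$ is an ind-spectrum in $\mathfrak{S}$ and $\underline{\textnormal{swf}}^{R}(-Y_{2})$ is its Spanier--Whitehead dual pro-spectrum in $\mathfrak{S}^{*}$. The plan has three steps: first, realize all three invariants through a common family of Furuta-type finite-dimensional approximations parametrized by the length of a cylindrical neck inserted along $Y_{2}$; second, show that for long enough neck the approximation map for $X$ splits across $Y_{2}$ into the approximation maps for $X_{0}$ and $X_{1}$; third, identify that splitting with the abstract pairing $\pmb{\tilde{\epsilon}}$ of Section~\ref{section spanierwhitehead}.

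\smallskip

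Before turning to analysis I would verify that the topological data on both sides of \eqref{eq gluingintro} match after restriction to $\pic(X,Y_{2})$. Inserting $[-R,R]\times Y_{2}$ into $X$ produces a family of metrics $\hat{g}_{R}$ along which $\underline{\textnormal{BF}}(X,\hat{\mathfrak{s}})$ is constant by the invariance theorem. A Mayer--Vietoris computation, using (i) that $Y_{2}$ is connected and (ii) that $b_{1}(Y_{0})=b_{1}(Y_{1})=0$, gives canonical isomorphisms $V^{+}_{X}\cong V^{+}_{X_{0}}\oplus V^{+}_{X_{1}}$ up to a trivial summand, $V^{X}_{\text{in}}=0$, and a K-theoretic factorization of the family Dirac index yielding
\[ T(X,\hat{\mathfrak{s}};S^{1})|_{\pic(X,Y_{2})} \simeq T(X_{0},\hat{\mathfrak{s}}_{0};S^{1})\,\boxtimes\,T(X_{1},\hat{\mathfrak{s}}_{1};S^{1}). \]
Condition (iii) enters precisely to control the Mayer--Vietoris boundary: it forces the contribution of $H^{1}(Y_{2};\R)$ to be absorbed into $V^{X_{1}}_{\text{out}}$, so that the desuspension indices on the two sides match exactly.

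\smallskip

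The analytic core is a standard neck-stretching plus finite-dimensional approximation argument. For each spectral cutoff level and each large $R$, the Seiberg--Witten map on $(X,\hat{g}_{R})$ in Coulomb gauge has a finite-dimensional approximation as in the construction of $\underline{\textnormal{bf}}^{A}$ and $\underline{\textnormal{bf}}^{R}$. Exponential decay of bounded-energy solutions on long cylinders and the compactness argument from \cite{KLS1} imply that any approximate solution on $X$ can be split canonically into a pair of approximate configurations on the half-infinite extensions $\hat{X}_{0}=X_{0}\cup_{Y_{2}}([0,\infty)\times Y_{2})$ and $\hat{X}_{1}=X_{1}\cup_{-Y_{2}}([0,\infty)\times(-Y_{2}))$, whose restrictions to the middle slice lie in a common isolating neighborhood of the Coulomb gradient flow on $Y_{2}$. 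At the finite-dimensional level this produces a homotopy between the approximation map of $X$ and the smash-then-pair composition of the approximation maps of $X_{0}$ and $X_{1}$.

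\smallskip

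The main obstacle is matching this "smash-and-pair" to the abstract Spanier--Whitehead pairing $\pmb{\tilde{\epsilon}}$ inside the ind/pro category. I would handle it by comparing two Conley-index constructions attached to the same Coulomb flow on $Y_{2}$: the attractor--repeller pair that defines the canonical duality $\underline{\textnormal{swf}}^{A}(Y_{2})\wedge\underline{\textnormal{swf}}^{R}(-Y_{2})\to S^{0}$ and therefore $\pmb{\tilde{\epsilon}}$, and the isolating neighborhood produced by intersecting the boundary values of approximate solutions coming from $\hat{X}_{0}$ with those coming from $\hat{X}_{1}$. A cofinality argument in $\mathfrak{S}$ and $\mathfrak{S}^{*}$, valid once $R$ and the spectral cutoff are large enough, yields an $S^{1}$-equivariant equivalence between the two pairings. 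Passing to the stable limit then gives \eqref{eq gluingintro} over $\pic(X,Y_{2})$. The delicate point I expect to spend the most care on is ensuring that every homotopy constructed in the cofinality argument is simultaneously compatible with the $S^{1}$-action and with the parametrization by the relative Picard torus, since the latter did not appear in Manolescu's homology-sphere setting.
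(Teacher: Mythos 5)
Your proposal has a genuine gap at its analytic core, and it confuses two distinct gluing strategies. You state that you will "follow Manolescu's homology-sphere gluing argument," but what you then describe is a neck-stretching argument in the spirit of Bauer's $Y_2 = S^3$ theorem. These are not the same. Bauer's argument relies on the positive scalar curvature of $S^3$ to kill the boundary moduli space; for a general $Y_2$, Manolescu (and this paper) proceed by a very different route: they do not stretch the metric at all, but instead deform the Seiberg--Witten \emph{map} itself — its gauge-fixing equations, Sobolev gluing of domain and target, boundary conditions, and harmonic gauge group — through a long chain of stably c-homotopic SWC-triples (Section~\ref{subsec stablyc} and the seven steps of Section~\ref{sec deform2nd}). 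Your key claim that exponential decay plus compactness "produces a homotopy between the approximation map of $X$ and the smash-then-pair composition" does not hold as stated: compactness results (such as Proposition~\ref{bounded for gluing}) relate zero sets of the maps, not the maps themselves up to stable homotopy, and there is no canonical splitting of approximate solutions on a long neck. Constructing the required homotopy of maps is precisely the hard part and is exactly what the SWC-triple machinery and the inductive Sobolev gluing of Lemma~\ref{gluing sobolev space} through Proposition~\ref{step 2} accomplish. Your proposal omits all of this.

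A second omission: you never mention the $T$-tameness machinery (Sections~\ref{section T-tame} and \ref{sec Ttamemfd}), which is a central new ingredient of this paper beyond the homology-sphere case. Because the unfolded spectra arise from unbounded regions $J^\pm_m \subset Str(\tilde R)$ rather than balls, the flow maps in Theorem~\ref{thm flowmap} a priori have uncontrolled squeeze times $T$. Without $T$-tame pre-index pairs, $T$-tame manifold isolating blocks (Proposition~\ref{prop mfdnhbdswf}), and the simplified flow-map formula of Lemma~\ref{flow map from tame index pair}, the "cofinality argument" you invoke to match the Conley-index pairing with $\pmb{\tilde\epsilon}$ cannot be made concrete — the explicit formula (\ref{gluing with long neck}) and the deformation steps of Section~\ref{sec deform1st} depend entirely on having uniform control of these parameters. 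Finally, you misidentify the main delicacy: $S^1$-equivariance is automatic since every construction is $S^1$-equivariant from the start, whereas the real subtlety is the deformation of the harmonic gauge group action and the mixed boundary-value problem (Steps 5--6 of Section~\ref{sec deform2nd}, in particular Lemma~\ref{laplace with mixed bundary condition}), which is exactly where condition~(\ref{item gluecon3}) is used — to obtain the decomposition (\ref{eq: decomposition of basis}) that splits $\mathcal{G}^{h,\hat o}_{X,Y_2}$ compatibly with the asymmetric type-A/type-R roles of $X_0$ and $X_1$, not merely to "control the Mayer--Vietoris boundary."
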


\begin{rmk} The main limitation of the unfolded construction is that one can recover only the partial Bauer--Furuta invariant of $X$ on $\pic(X, Y_{2}) $ rather than on the full Picard torus. Regarding the hypotheses of the theorem,
\begin{itemize} 
\item Condition (\ref{item gluecon2}) is to avoid dealing with type-A and type-R of $\underline{\textnormal{swf}}(Y_0)$, $\underline{\textnormal{swf}}(Y_1)$, and $\textnormal{BF}(X)$. If one tries to extend this direction, a category containing more general kinds of diagrams in $\mathfrak{C} $ will be required 
\item Condition (\ref{item gluecon3}) is to control the harmonic action of the relative gauge groups on $Y_2$.
Otherwise, a more generalized version of unfolded spectrum such as mixture of type-A and type-R will be needed.
\end{itemize}
\end{rmk}

{
Many classical operations (e.g. log transformation, Fintushel--Stern surgery, fiber sum) in 4-dimensional topology involve
gluing and pasting 4-manifolds along 3-manifolds with $b_1>0$ (especially the 3-torus). Our gluing theorem provides a tool
to study the Bauer--Furuta invariant under these operations. The long term goal is to use this idea to compute the Bauer--Furuta
invariant for interesting examples of irreducible 4--manifolds and draw new results beyond the reach of classical Seiberg--Witten invariant.
Here, we mention one of the consequences on classical surgery.

\begin{cor}\label{cor: surgery along loops} 
Let $(X,\hat{\mathfrak{s}})$ be a smooth, oriented, spin$^{c}$ 4-manifold with $b_{1}(\partial X)=0$ or $\partial X=\emptyset$. Let $\gamma$ be an embedded loop with $[\gamma]\neq 0\in H_{1}(X;\mathbb{R})$. If $X' $ is the 4-manifold obtained by surgery along $\gamma$ and $\hat{\mathfrak{s}}'$ is the unique spin$^c$ structure that is isomorphic to $\hat{\mathfrak{s}}$ on $X\setminus \gamma$, then we have
\[ \textnormal{BF}(X',\hat{\mathfrak{s}}') = \textnormal{BF}(X,\hat{\mathfrak{s}})|_{\pic(X,Y)},
\] 
where $Y = S^2 \times S^1$ and $H^{1}(X\mathbb{})\rightarrow
H^{1}(Y\mathbb{}) $ is induced by inclusion of $\gamma$. 
\end{cor}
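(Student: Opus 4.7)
My plan is to apply the gluing theorem to two surgery decompositions along $Y = S^1\times S^2$ sharing the common piece $X^{\circ} := X\setminus\inti(\nu(\gamma))$, where $\nu(\gamma)\cong S^1\times D^3$. These are
\[
X = X^{\circ}\cup_{Y}(S^1\times D^3), \qquad X' = X^{\circ}\cup_{Y}(D^2\times S^2),
\]
with $\hat{\mathfrak{s}}$ and $\hat{\mathfrak{s}}'$ agreeing on $X^{\circ}$. The meridian $\mathrm{pt}\times S^2\subset Y$ bounds a $3$-disk in each handle, so both $\hat{\mathfrak{s}}|_{Y}$ and $\hat{\mathfrak{s}}'|_{Y}$ are the unique torsion spin$^c$ structure on $Y$, putting us in the setting of Example~\ref{example: relative BF for handles}.

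In each decomposition I would place the handle as the type-A piece $X_0\colon\emptyset\to Y$ and $X^{\circ}$ as the type-R piece $X_1\colon\partial X\to -Y$. Conditions (i) ($Y$ connected) and (ii) ($b_1(\partial X)=0$) are immediate. The subtle step is condition (iii). For $X'$ it is trivial since $H^{1}(D^2\times S^2)=0$. For $X$, the type-A image equals all of $H^{1}(Y;\R)\cong\R$, so condition (iii) reduces to surjectivity of $H^{1}(X^{\circ};\R)\to H^{1}(Y;\R)$. This is exactly where the hypothesis $[\gamma]\neq 0\in H_{1}(X;\R)$ enters: the generator of $H_{1}(Y;\R)$ is a parallel push-off of $\gamma$, and a short Mayer--Vietoris argument (the meridian sphere dies in $\nu(\gamma)$) shows that $[\gamma]\neq 0\in H_{1}(X;\R)$ forces the push-off class to remain nonzero in $H_{1}(X^{\circ};\R)$, giving the surjectivity by duality. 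I regard this verification of (iii) as the main obstacle of the proof.

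Once condition (iii) is established, the gluing theorem yields
\begin{align*}
\textnormal{BF}(X,\hat{\mathfrak{s}})|_{\pic(X,Y)} &= \pmb{\tilde{\epsilon}}\bigl(\underline{\textnormal{bf}}^{A}(S^1\times D^3),\,\underline{\textnormal{bf}}^{R}(X^{\circ},\hat{\mathfrak{s}}|_{X^{\circ}})\bigr), \\
\textnormal{BF}(X',\hat{\mathfrak{s}}')|_{\pic(X',Y)} &= \pmb{\tilde{\epsilon}}\bigl(\underline{\textnormal{bf}}^{A}(D^2\times S^2),\,\underline{\textnormal{bf}}^{R}(X^{\circ},\hat{\mathfrak{s}}|_{X^{\circ}})\bigr).
\end{align*}
By Example~\ref{example: relative BF for handles} the two handle type-A invariants both equal the identity $1\in[S^{0},S^{0}]^{st}_{S^{1}}$, so the right-hand sides agree under the natural identification. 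A final Mayer--Vietoris computation, using $H^{1}(D^2\times S^2)=0$, shows that $H^{1}(X')\to H^{1}(Y)$ is the zero map, so $\pic(X',Y)=\pic(X')$ and the left-hand side of the $X'$-formula is simply $\textnormal{BF}(X',\hat{\mathfrak{s}}')$; the analogous sequence for $X$ identifies $\pic(X,Y)$ naturally with $\pic(X',Y)$ via $H^{1}(X)\cong H^{1}(X^{\circ})$, and the corollary follows.
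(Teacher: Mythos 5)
Your proof is correct and follows the same route as the paper: decompose $X$ and $X'$ along $Y = S^1\times S^2$ with the common piece $X\setminus\nu(\gamma)$ as the type-R factor and the respective handles as the type-A factor, apply the gluing theorem and Example~\ref{example: relative BF for handles}, and match up the Picard tori. The paper's proof is terser—it asserts that essentiality of $\gamma$ lets one apply the gluing theorem without spelling out the Mayer--Vietoris verification of hypothesis~(iii)—but your detailed check of that condition (showing $H^1(X^\circ;\R)\to H^1(Y;\R)$ is surjective precisely because $[\gamma]\ne 0$) is exactly the justification the paper is invoking implicitly.
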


\begin{proof} Let $N \cong D^3 \times S^1 $ be a tubular neighborhood of $\gamma $ and denote by $Y \cong S^2 \times S^1$ its boundary.
Since $\gamma $ is homologically essential, we can apply the gluing theorem for $X = N \cup_Y (X \setminus N) $ and $X'=(S^2 \times D^2) \cup_Y (X \setminus N) $ to obtain
\[\textnormal{BF}(X,\hat{\mathfrak{s}})|_{\pic(X,Y_{})}= \pmb{\tilde{\epsilon}}(\underline{\textnormal{bf}}^{A}(D^3 \times S^1,\hat{\mathfrak{s}}_{0}),\underline{\textnormal{bf}}^{R}(X \setminus N,\hat{\mathfrak{s}}|_{X \setminus N}))
\]
and
\[\textnormal{BF}(X',\hat{\mathfrak{s}})|_{\pic(X',Y_{})}= \pmb{\tilde{\epsilon}}(\underline{\textnormal{bf}}^{A}(S^2 \times D^2,\hat{\mathfrak{s}}'_{0}),\underline{\textnormal{bf}}^{R}(X
\setminus N,\hat{\mathfrak{s}}|_{X \setminus N})).
\]
Here $\hat{\mathfrak{s}}_{0}$ and $,\hat{\mathfrak{s}}_{0}$ are torsion spin$^{c}$ structures. 
By Example \ref{example: relative BF for handles}, we have $$\underline{\textnormal{bf}}^{A}(D^3 \times S^1,\hat{\mathfrak{s}}_{0}) = \underline{\textnormal{bf}}^{A}(S^2 \times D^2,\hat{\mathfrak{s}}'_{0}) = 1,$$ so $\textnormal{BF}(X,\hat{\mathfrak{s}})|_{\pic(X,Y_{})} = \textnormal{BF}(X',,\hat{\mathfrak{s}}'_{0})|_{\pic(X',Y_{})} $. 
In addition, one can check that $\pic(X',Y_{}) = \pic(X')$ and we recover the full Bauer--Furuta invariant on $(X',\mathfrak{s}')$. 
\end{proof}
\begin{rmk}
Recently, there has been significant amount of attentions \cite{Zemke1,MillerZemke,LevineZemke,Sarkar} on using Floer homology to study ribbon cobordance and ribbon homology cobordisms (homology cobordisms with only $1$-handles and $2$-handles). In particular, Daemi-Lidman-Vela-Vick-Wong \cite{daemi2019obstructions} proved that the Heegaard Floer homology $\widehat{HF}(Y_{1})$ is a summand of  $\widehat{HF}(Y_{2})$ if there is a ribbon homology cobordism from $Y_{1}$ to $Y_{2}$. A central tool in their proof is a surgery formula that describes cobordism maps under surgery along essential loops. We expect that our Corollary \ref{cor: surgery along loops} can be useful in proving a parallel result for Manolescu's Seiberg-Witten Floer spectrum.
\end{rmk}

As another consequence, we can give a vanishing result for the relative Bauer-Furuta invariant. The case of closed manifolds was proved by Fr\o yshov \cite[Theorem~1.1.1]{Froyshov}. See also \cite[Proposition~4.6.5]{Nicolaescu} for an analogous result for the Seiberg--Witten invariant.

\begin{cor}  Let $X$ be a smooth, oriented, 4-manifold containing an embedded 2-sphere $S$ which has trivial self-intersection and is homologically essential (i.e., $[S]\neq 0\in H_{2}(X;\mathbb{R})$). Suppose either $X$ is closed or $b_{1}(\partial X)=0$. Then its (relative) Bauer--Furuta invariant for any spin$^{c}$ structure is trivial.
\end{cor}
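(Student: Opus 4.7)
The plan is to follow the strategy of Corollary~\ref{cor: surgery along loops}, using the gluing theorem to isolate a tubular neighborhood of $S$ and reducing to the computation of Example~\ref{example: relative BF for handles}. Since $[S]\cdot[S]=0$, the normal bundle of $S$ is trivial, so a tubular neighborhood is diffeomorphic to $N:=S^{2}\times D^{2}$ with $\partial N\cong Y:=S^{2}\times S^{1}$, and I write $X=(X\setminus N)\cup_{Y}N$ with $N$ placed on the type-R side of the decomposition.

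First I verify the hypotheses of the gluing theorem. The dividing 3-manifold $Y$ is connected, so condition~(\ref{item gluecon1}) holds; the remaining outer boundaries are either empty or equal to $\partial X$, for which $b_{1}=0$ by hypothesis, giving condition~(\ref{item gluecon2}); and $H^{1}(S^{2}\times D^{2})=0$ forces $\operatorname{im}(H^{1}(N;\R)\to H^{1}(Y;\R))=0$, so condition~(\ref{item gluecon3}) is automatic. I also observe that $\pic(X,Y)=\pic(X)$: the meridian $\{\mathrm{pt}\}\times S^{1}\subset Y$ bounds the disk $\{\mathrm{pt}\}\times D^{2}$ in $N$, so the restriction map $H^{1}(X;\R)\to H^{1}(Y;\R)$ vanishes (and likewise over $\Z$). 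Consequently the gluing theorem yields
\[
\textnormal{BF}(X,\hat{\mathfrak{s}})
=\pmb{\tilde{\epsilon}}\bigl(\underline{\textnormal{bf}}^{A}(X\setminus N,\hat{\mathfrak{s}}|_{X\setminus N}),\;\underline{\textnormal{bf}}^{R}(N,\hat{\mathfrak{s}}|_{N})\bigr).
\]
When $\langle c_{1}(\hat{\mathfrak{s}}),[S]\rangle=0$, the restriction $\hat{\mathfrak{s}}|_{N}$ is the unique torsion spin$^{c}$ structure, and Example~\ref{example: relative BF for handles} gives directly $\underline{\textnormal{bf}}^{R}(N,\hat{\mathfrak{s}}|_{N})=0$ because the ambient stable homotopy group is trivial; the Spanier--Whitehead pairing then vanishes identically.

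The main obstacle is the case $\langle c_{1}(\hat{\mathfrak{s}}),[S]\rangle\neq 0$, in which the spin$^{c}$ structure on $Y$ is non-torsion and Example~\ref{example: relative BF for handles} does not literally apply. My plan here is to exploit the positive scalar curvature of the round product metric on $Y=S^{2}\times S^{1}$: the standard Weitzenb\"ock argument excludes irreducible solutions of the Seiberg--Witten equations on $\R\times Y$, while the non-triviality of $c_{1}(\hat{\mathfrak{s}}|_{Y})$ excludes reducibles, so the unfolded Floer spectrum of $(Y,\hat{\mathfrak{s}}|_{Y})$ should reduce to the trivial object in both $\mathfrak{S}$ and $\mathfrak{S}^{*}$. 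Any morphism with trivial source or target is null-homotopic, which again forces $\underline{\textnormal{bf}}^{R}(N,\hat{\mathfrak{s}}|_{N})=0$. Checking this PSC-vanishing carefully in the conventions of \cite{KLS1} is the only genuinely new technical step; the rest of the proof is a line-by-line imitation of the surgery corollary just established.
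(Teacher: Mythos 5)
Your argument for the case $\langle c_{1}(\hat{\mathfrak{s}}),[S]\rangle=0$ matches the paper's exactly: the same decomposition $X=(X\setminus N)\cup_{Y} N$ with $N\cong S^{2}\times D^{2}$ on the type-R side, the same verification of hypotheses (\ref{item gluecon1})--(\ref{item gluecon3}), the identity $\pic(X,Y)=\pic(X)$ (you argue via the meridian bounding in $N$, the paper via Mayer--Vietoris, but these are the same computation), and the appeal to Example~\ref{example: relative BF for handles} for $\underline{\textnormal{bf}}^{R}(S^{2}\times D^{2},\hat{\mathfrak{s}}_{0})=0$.

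For the case $\langle c_{1}(\hat{\mathfrak{s}}),[S]\rangle\neq 0$ you and the paper diverge. The paper \emph{does not} invoke the gluing theorem here. Instead it proves the vanishing of $\textnormal{BF}(X,\hat{\mathfrak{s}})$ directly by neck-stretching inside $X$ along $Y_{0}=S^{2}\times S^{1}$: for the positive-scalar-curvature product metric on $Y_{0}$ and non-torsion $\hat{\mathfrak{s}}|_{Y_{0}}$, the 3-dimensional Seiberg--Witten equations have no solutions, so for long necks there are no finite type $X$-trajectories (Definition~\ref{defi: X-trajectory}) and, by Lemma~\ref{convegence of approximated X-trajectories}, no $(n,\epsilon)$-approximated $X$-trajectories of long length. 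The defining map (\ref{equation: definition of upsilon}) then collapses to the constant map to the basepoint. This requires no information about $\underline{\textnormal{swf}}(Y_{0},\hat{\mathfrak{s}}|_{Y_{0}})$ at all. Your route, by contrast, applies the gluing theorem in the non-torsion case and tries to show that $\underline{\textnormal{swf}}^{R}(-Y_{0},\hat{\mathfrak{s}}|_{Y_{0}})$ is the trivial object of $\mathfrak{S}^{*}$, which would make $\underline{\textnormal{bf}}^{R}(N)$ null since its target is contractible. This is a genuinely different argument and, if carried out, would establish a reusable fact about the unfolded spectrum of $S^{2}\times S^{1}$ with non-torsion spin$^{c}$ structure.

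The gap, which you flag yourself, is that you have not actually proved triviality of the unfolded spectrum, and this is not trivial in the unfolded framework. The argument should go: PSC plus non-torsion $c_{1}$ rules out critical points of the (balanced-)perturbed Chern--Simons--Dirac functional; finite-type Seiberg--Witten trajectories are gradient trajectories that converge to critical points, so $\inv(J_{m}^{\pm})=\emptyset$ for every $m$; then by the standard compactness/convergence comparison between the exact and approximated flows, $\inv(J_{m}^{n,\pm})=\emptyset$ for $n$ large relative to $m$, so each $I_{m}^{n,\pm}$ is the basepoint and the ind/pro-system is trivial. Each link in that chain is believable but needs to be checked against the conventions of \cite{KLS1} (in particular that the balanced perturbation does not reintroduce reducible critical points, and that the approximated-flow convergence lemma applies to the isolating neighborhoods $J_m^{n,\pm}$ rather than just to $Str(\tilde{R})$). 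Until that is done your proposal is incomplete precisely where the paper's neck-stretching argument short-circuits the question; the paper's route is the more economical one here.
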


\begin{proof} We focus on the case $\partial X\neq \emptyset$. The closed case is similar but simpler. Since its self-intersection is trivial, the sphere $S$ has a neighborhood diffeomorphic to $S^2 \times D^2 $, denoted by $X_1$. We then have a decomposition $X = X_0 \cup_{Y_{0}} X_1 $ where $X_0$ is  $X \setminus X_1 $ and $Y_{0} \cong S^2 \times S^1 $.  

For a spin$^{c}$ structure $\hat{\mathfrak{s}}$ on $X$, there are two possibilities:

Suppose $\langle c_{1}(\hat{\mathfrak{s}}),[S]\rangle \neq 0$. Then the conclusion follows from standard neck-stretching argument: Let $g_{0}$ be a positive scalar curvature metric on $Y_{0}$. We equip $X$ with a Riemannian metric $\hat{g}$ whose restriction to a collar neighborhood of $Y_{0}$ equals the product metric $[-1,1]\times g_{0}$. By stretching the neck of $Y_{0}$, we get a family of Riemannian metrics $\{\hat{g}_{T}\mid T\geq 1\}$ on $X$. Since the 3-dimensional Seiberg-Witten equations on $(Y_{0},\hat{\mathfrak{s}}|_{Y_{0}})$ has no solution, there are no finite type $X$-trajectories for the metric $g_{T}$ when $T$ is large enough (see Definition \ref{defi: X-trajectory}). By the converging result Lemma \ref{convegence of approximated X-trajectories}, there are no $(n,\epsilon)$-approximated $X$-trajectory of length $L$ when $n,L$ are large and $\epsilon$ is small (see Definition \ref{the approximated X-trajectory}). Therefore, by construction of the relative Bauer-Furuta invariant \cite[Page 918]{Manolescu1} (see also (\ref{equation: definition of upsilon})), we see that it is vanishing in this case.

Suppose  $\langle c_{1}(\hat{\mathfrak{s}}),[S]\rangle =0$. Then the conclusion follows from our gluing theorem: Since $S$ is essential, the condition 
$$
\im(H^{1}(X_{0};\mathbb{R})\rightarrow H^{1}(Y_{};\mathbb{R}))\subset \im(H^{1}(X_{1};\mathbb{R})\rightarrow H^{1}(Y_{};\mathbb{R}))$$
 holds because both images are zero. From the Mayer--Vietoris sequence, we have $\pic(X,Y_{}) = \pic(X) $. We now apply the gluing theorem
\[ \textnormal{BF}(X;\hat{\mathfrak{s}})=\pmb{\tilde{\epsilon}} (\underline{\textnormal{bf}}^{A}(X_{0},\hat{\mathfrak{s}}|_{X_{0}}),\underline{\textnormal{bf}}^{R}(X_{1},\mathfrak{s},\hat{\mathfrak{s}}|_{X_{1}})). \]
Since the type-R relative Bauer--Furuta invariant $\underline{\textnormal{bf}}^{R}(S^2 \times D^2,\hat{\mathfrak{s}}|_{X_{0}}) =0$ by Example \ref{example: relative BF for handles},
we conclude that $\textnormal{BF}(X,\hat{\mathfrak{s}})$ is trivial.

\end{proof}

%

}

 \bigskip\noindent\textbf{Acknowledgement:}  The first author is supported by JSPS KAKENHI Grant Number 18K13419. The second author was partially supported by the NSF Grant DMS-1707857. The third author is supported by JSPS KAKENHI Grant Number JP16K17590.

 \section{Summary of constructions and proofs}
 \label{sec outline}
 
 Most of required background in Conley theory is contained in Section \ref{sec Conley}. Background for our stable categories
and Spanier--Whitehead duality is contained in Section \ref{section stablecategory}. We summarize the major constructions here.

 \subsection{Unfolded Seiberg--Witten Floer spectra} \label{subsec Unfolded}
Here we will recall the construction and definition of the unfolded Seiberg--Witten Floer spectrum \cite{KLS1}. Let $Y$ be a closed spin$^c$ 3-manifold (not necessarily connected) with a spinor bundle $S_Y$.
 
 We always work on a Coulomb slice $Coul(Y) = \{ (a, \phi) \in  i \Omega^1 (Y) \oplus \Gamma(S_Y) \mid d^* a =0 \} $ with Sobolev completion.
 With a basepoint chosen on each connected component, we identify the residual gauge group with the based harmonic
gauge group $\mathcal{G}^{h,o}_{Y}\cong H^{1}(Y; \mathbb{Z}) $ acting on $Coul(Y)$. We consider a strip
of balls in $Coul(Y)$ translated by this action 
\begin{align}\label{eq: str}
Str(R)=\{x\in Coul(Y) \mid \exists h\in \mathcal{G}^{h,o}_{Y} \text{ s.t. } \|h\cdot x\|_{L^{2}_{k}}\leq
R\}.
\end{align}
Recall from \cite[Definition 3.1]{KLS1} that a Seiberg-Witten trajectory is called ``finite type'' if it is contained in a bounded region of $Coul(Y)$ in the $L^{2}_{k}$-norm. The boundedness result for 3-manifolds \cite[Theorem~3.2]{KLS1} states that all finite-type Seiberg-Witten trajectories are contained in $Str(R) $ for $R$ sufficiently large.

The basic idea of unfolded construction is to consider increasing sequences of bounded regions in the Coulomb slice. To do this, we choose a basis for $H^{1}(Y;\mathbb{R})$ and use it to identify $i\Omega^{h}(Y)$, the space of imaginary valued harmonic $1$-forms, with $\mathbb{R}^{b_{1}(Y)}$. Under this isomorphism, we let 
$$
p_{\mathcal{H}}=(p_{\mathcal{H}, 1},\cdots, p_{\mathcal{H},b_{1}(Y)}):Coul(Y)\rightarrow \mathbb{R}^{b_{1}(Y)}
$$
be the $L^{2}$-orthogonal projection. Let $\bar{g}:\mathbb{R}\rightarrow \mathbb{R}$ be a certain ``step function'' with small derivative (see \cite[Figure 1]{KLS1}). We consider the function 
$$
g_{j,\pm}=\bar{g}\circ p_{\mathcal{H},j}\pm \mathcal{L}: Coul(Y)\rightarrow \mathbb{R} \text{ for }1\leq j\leq b_{1}(Y).
$$
Here $\mathcal{L}$ denotes the balanced-perturbed Chern-Simons-Dirac functional.
These functions $g_{j,\pm}$ are constructed in such a way that for any real number $\theta$ and any integer $m$, the region 
\begin{align*}
J^{\pm}_{m} := Str(\tilde{R}) \cap \bigcap_{ 1 \leq j \leq b_1} g_{j, \pm}^{-1} (-\infty, \theta + m]
\end{align*}
is bounded. We pick a sequence of finite-dimensional subspaces $V^{\mu_n}_{\lambda_n} $ coming from eigenspaces of the operator $(*d,\slashed{D})$ and define $ J^{n, \pm}_{m} := J^{\pm}_{m}  \cap V^{\mu_n}_{\lambda_n}  $.
 
The main point is that when we choose a generic $\theta$, the region
$J^{n, \pm}_{m}  $ becomes an isolating neighborhood with respect to the approximated Seiberg--Witten flow $\varphi^n $ on $V^{\mu_n}_{\lambda_n}$ when $n$ is large relative to $m$. This is essentially because the perturbations we add on $\pm\mathcal{L}$ have small derivatives. We can now define desuspended Conley indices
\begin{align*}
\begin{split}
& I^{n,+}_{m} = \Sigma^{-\bar{V}^0_{\lambda_n}}I(\inv(J^{n,+}_{m}) ,  \varphi_n), \\
& I^{n,-}_{m} = \Sigma^{-V^0_{\lambda_n}}I(\inv(J^{n,-}_{m}) , \varphi_n)
\end{split}
\end{align*}
as objects in the stable category $\mathfrak{C} $ (see Section~\ref{section stablecategory}). Here $\bar{V}^0_{\lambda_n} $ is the orthogonal complement of the space of harmonic 1-forms in ${V}^0_{\lambda_n}$.  Note that these objects do not depend on $n$ up to canonical isomorphism of the form
\begin{align*}
\tilde{\rho}_{m_{}}^{n,\pm} \colon I^{n,\pm}_{m_{}}(Y_{\text{}}) \rightarrow I^{n+1,\pm}_{m_{1}}(Y_{\text{}}).
\end{align*}
 
The unfolded Seiberg-Witten Floer spectra  are represented by direct and inverse systems in the stable category $\mathfrak{C} $ as follows 
\begin{align} \label{eq SWFdef}
   \begin{split}
       & \swf^{A}(Y) : I^+_{1} \xrightarrow{j_{1}} I^+_2 \xrightarrow{j_{2}} \cdots   \\
       & \swf^{R}(Y) : {I}^-_1 \xleftarrow{\bar{j}_1} {I}^-_2 \xleftarrow{\bar{j}_{2}} \cdots.
   \end{split}
\end{align}
Connecting morphisms in the diagram for $\swf^{A}(Y)  $ are induced by attractor relation while morphisms in $\swf^{R}(Y) $ are induced by repeller relation. More precisely, we have morphisms between desuspended Conley indices
\begin{align*} 
\tilde{i}_{m_{}}^{n,+} \colon I^{n,+}_{m_{}}(Y_{\text{}}) \rightarrow I^{n,+}_{m_{}+1}(Y_{\text{}}) \text{ and } \tilde{i}_{m_{}-1}^{n,-} \colon I^{n,-}_{m_{}}(Y_{\text{}}) \rightarrow I^{n,-}_{m_{}-1}(Y_{\text{}}). 
\end{align*}
Then, the morphisms $j_m, \bar{j}_m$ in (\ref{eq SWFdef}) are given by composition of $\tilde{\rho}_{m_{}}^{n,\pm}$'s and $\tilde{i}_{m_{}}^{n,\pm} $ appropriately.

\subsection{Unfolded Relative Bauer--Furuta invariants}
Let $X$ be a compact, connected, oriented, Riemannian 4--manifold with boundary $Y = -Y_{\text{in}} \sqcup Y_{\text{out}} $. 
To define the invariant, we pick  auxiliary homological data which corresponds to a choice of basis of $H^1 (X ; \mathbb{R}) $ and keeps track of both kernel and image of $ \iota^* \colon H^1 (X ; \mathbb{R}) \rightarrow H^1 (Y ; \mathbb{R})$ (see the list at beginning of Section 5.1). 

In this construction, we use the double Coulomb slice $Coul^{CC}(X)$ as a gauge fixing. The main idea is to find suitable finite-dimensional approximation for the Seiberg--Witten map together with the restriction map
\begin{align*}
(SW, r) \colon Coul^{CC}(X) \rightarrow
L^{2}_{k-1/2}(i\Omega^{2}_{+}(X)\oplus \Gamma(S_{X}^{-}))\oplus Coul(Y).
\end{align*}
Note that there is an action of $H^1 (X ; \mathbb{Z})$ on both sides with restriction on $Coul(Y)$. Compactness of solutions can only be achieved modulo this action.   However, the construction of the unfolded spectra does not behave well under the action of $H^1 (X ; \mathbb{Z}) $ on $Coul(Y) $. This is essentially the reason we can define the unfolded relative invariant only on the relative Picard torus induced from $\ker{\iota^*} $. As one can see in the basic boundedness result (Theorem~\ref{boundedness for X-trajectory}), we need a priori bound on the $\im{i^*} $-part quantified by the projection $\hat{p}_{\beta} $.

We will focus on type-A relative invariant $\underline{\textnormal{bf}}^{A}(X)$. Although it is formulated as a morphism from $\swf^{A}(Y_{\text{in}})$ to $\swf^{A}(Y_{\text{out}})$, the main part of the construction is to obtain maps of the form 
\begin{equation} \label{eq bfmapintro}
\begin{split}
&B(W_{n,\beta})/S(W_{n,\beta})   \\
&  \quad \rightarrow (B(U_{n})/S(U_{n}))\wedge
I(\inv(J^{n,-}_{m_{0}}(-Y_{\text{in}}))) \wedge I(\inv(J^{n,+}_{m_{1}}(Y_{\text{out}}))).
\end{split}
\end{equation}
The left hand side is the Thom space of a finite-dimensional subbundle $W_{n,\beta}$ of the Hilbert bundle
$$
\mathcal{W}_{X}=Coul^{CC}(X)/\ker(H^1(X;\mathbb{Z})\rightarrow H^{1}(Y;\mathbb{Z})),
$$ while $B(U_{n})/S(U_{n})$ is a sphere. We point out  that the right hand side is intuitively $\swf^{R}(-Y_{\text{in}}) \wedge \swf^{A}(Y_{\text{out}})$, which may be viewed as a `mixed'-type unfolded spectrum of $Y$. It is possible to formally consider this in a larger category containing both $\mathfrak{S} $ and $\mathfrak{S}^*$, but we will not pursue this in this paper. Another remark is that $W_{n,\beta}$ has extra constraint $\hat{p}_{\beta, \text{out} }=0$ to control the $\im{i^*} $-part mentioned earlier. The reason we only need the part on $Y_{\text{out}}$ is because we start with a fixed $m_0$ and then choose sufficiently large $m_1$. The order of dependency of parameters is established at the beginning of Section~\ref{sec bfconstuct}.

A notion of pre-index pair (see Section~\ref{section T-tame}) is also required to define the map (\ref{eq bfmapintro}). 
This part closely resembles original Manolescu's construction \cite{Manolescu1} in the case $b_1 (Y)=0 $.
The last step to apply Spanier--Whitehead duality (see Section~\ref{section dualswf}) between $\swf^{R}(-Y_{\text{in}})$ and $\swf^{A}(Y_{\text{in}})$ and define the relative invariant as a morphism in $\mathfrak{S} $.

\subsection{The Gluing theorem} Let  $X_{0} \colon Y_{0}\rightarrow Y_{2}$ and $X_{1} \colon Y_{1}\rightarrow -Y_{2}$ be connected, oriented
cobordisms. We consider the composite cobordism $X = X_0 \cup_{Y_2} X_1 $ glued along $Y_2$ from $Y_0 \sqcup Y_1$ to the empty manifold.

The main technical difficulty of the proof of the gluing theorem is that two different kinds of index pairs arise in the construction. On one hand, to define the relative invariant, we require an index pair $(N_1, N_2)$ to contain a certain pre-index pair $(K_1 , K_2)$. On the other hand, we need a manifold isolating block when dealing with duality morphisms.
In general, a canonical homotopy equivalence between index pairs can be given by flow maps (Theorem~\ref{thm flowmap}), but the formula can sometimes be inconvenient to work with and the common squeeze time $T$ can be arbitrary. 

This is the reason we introduce the concept of $T$-tameness, which is a quantitative refinement of
notions in Conley theory (see Section \ref{section T-tame} and \ref{sec Ttamemfd}).
The flow maps from $T$-tame index pairs can be simplified (Lemma \ref{flow map from tame index pair}).
Most boundedness results in this paper are stated for trajectories with finite length.
As a result, the time parameter $T$, which also corresponds to the length of a cylinder, has a uniform bound during the construction.

The proof of the gluing theorem can be divided to two major parts. The first part, contained in Section \ref{sec deform1st},
involves simplifying the flow maps and duality morphisms. We carefully set up all the parameters needed to explicitly write down  $\pmb{\tilde{\epsilon}} (\underline{\textnormal{bf}}^{A}(X_{0}),\underline{\textnormal{bf}}^{R}(X_{1}))$. 
For instance, we can represent Conley index part of the map as a composition of smash product of flow maps and Spanier--Whitehead duality map
$$\pmb{\tilde{\epsilon}}(\iota_{0},\iota_{1}) \colon K_{0}/S_{0}\wedge
K_{1}/S_{1}\rightarrow \tilde{N}_{0}/\tilde{N}^{+}_{0}\wedge \tilde{N}_{1}/\tilde{N}^{+}_{1}\wedge B^{+}(V^{2}_{n},\bar{\epsilon})$$
given by formula (\ref{gluing with long neck}). (Here $V^{2}_{n}$ is a finite dimensional subspace of $Coul(Y)$ coming from eigenspaces of $(d^{*},\slashed{D})$.)   After two steps, we deform the formula to the one given in Proposition~\ref{deformed pairing}. 

The second part of the proof of the gluing theorem, contained in Section \ref{sec deform2nd}, is to deform Seiberg--Witten
maps on $X_0$ and $X_1$ to the Seiberg--Witten map on $X$.
Many of the arguments here will be similar to Manolescu's proof \cite{Manolescu2} when $b_1(Y_2) = 0$ . The crucial part is to deform gauge fixing with boundary
conditions and harmonic gauge groups on $X_0$ and $X_1$ to those on $X$.
For clarity, we subdivide the deformation to seven steps.
A recurring technique is to move between maps and conditions on the domain (Lemma~\ref{moving map to  domain2}). 
Other ingredients such as stably c-homotopic pairs are contained in Section~\ref{subsec stablyc}.  


\section{Conley Index}
\label{sec Conley}
In this section, we recall basic facts regarding the Conley index theory and develop some further properties we need.  Without any modification, all the results and constructions of this section can be adapted to the $G$-equivariant theory, when  $G$ is a compact Lie group. See \cite{Conley}  and \cite {Salamon} for more details. 
\subsection{Conley theory: definition and basic properties}
Let $\Omega$ be a finite dimensional manifold and $\varphi$ be a smooth flow on
$\Omega$, i.e. a $C^{\infty}$-map $\varphi \colon \Omega \times \mathbb{R}\rightarrow \Omega$ such
that $\varphi(x,0)=x$ and $\varphi(x,s+t)=\varphi(\varphi(x,s),t)$ for any
$x\in \Omega$ and $s,t \in \mathbb{R}$. We often denote by $\varphi(x,I) := \{ \varphi(x ,t) \mid t \in I \}$ for a subset $I \subset \mathbb{R} $.

\begin{defi} Let $A$ be a compact subset of $\Omega$.
\begin{enumerate}[(1)]
        \item The \emph{maximal invariant subset} of $A$ is given by $\inv{(\varphi,A)} := \{ x \in A \mid \varphi(x ,
\mathbb{R}) \subset A  \}$. We simply write $\inv(A)$ when the flow is clear from the context.

        \item $A$ is called an \emph{isolating neighborhood}  if $\inv{(A)}$ is contained in the interior $\operatorname{int}{(A)}$.
        \item A compact subset $S$ of $\Omega$ is called an \emph{isolated invariant set} if there is an isolating neighborhood
$\tilde{A}$ such that $\inv{(\tilde{A})} = S$. In this situation, we also say that $\tilde{A}$ is
an isolating neighborhood of $S$. 
\end{enumerate}

\end{defi}

A central idea in Conley index theory is a notion of index pairs.

\begin{defi}
For an isolated invariant set $S$, a pair $(N,L)$ of compact
sets $L\subset N$ is called an \emph{index pair} of $S$ if the following conditions hold:

\begin{enumerate}[(i)]
\item  $\inv(N\setminus L)=S\subset \inti(N\setminus L)$;
\item  $L$ is an exit set for $N$, i.e. for any $x\in N$ and $t>0$ such
that $\varphi(x,t)\notin N$, there exists $\tau\in [0,t)$ with $\varphi(x,\tau)\in
L$;
\item  $L$ is positively invariant in $N$, i.e. if $x\in L$, $t>0$, and $\varphi(x,[0,t])\subset N$, then we have $\varphi(x,[0,t])\subset
L$.
\end{enumerate}
\end{defi}

We state two fundamental facts regarding index pairs:
\begin{itemize}
\item For an isolated invariant set $S$ with an isolating neighborhood $A$,
there always exists an index pair $(N,L)$ of $S$ such that $L\subset N\subset A$.
\item For any two index pairs $(N,L)$ and $(N',L')$ of $S$,
there is a natural homotopy equivalence $N/L\rightarrow N'/L'$.
\end{itemize}
These lead to definition of the Conley index.

\begin{defi} \label{def conleyindex} Given an isolated invariant set $S$ of a flow $\varphi$ with an index pair $(N,L) $, we denote
by $I(\varphi,S,N,L)$ the space $N/L$ with $[L]$ as the basepoint. The \emph{Conley
index} $I(\varphi,S) $ can be defined as a collection of pointed spaces $I(\varphi,S,N,L)$  together with natural homotopy equivalences between them. We sometimes write $I(S)$
when the flow is clear from the
context.
\end{defi}

Given two index pairs, a canonical homotopy equivalence between them was constructed
by Salamon \cite{Salamon}.
\begin{thm}[{\cite[Lemma~4.7]{Salamon}}] \label{thm flowmap}
 If $(N,L)$ and $(N',L')$
are two index pairs for the same isolated invariant set $S$, then there exists $\bar{T}>0$ such that
 \begin{itemize}
 \item $\varphi(x,[-\bar{T},\bar{T}])\subset N'\setminus L'$ implies $x\in N\setminus L$;
 \item $\varphi(x,[-\bar{T},\bar{T}])\subset N\setminus L$ implies $x\in N'\setminus L'$.
 \end{itemize}
  Moreover, for any $T\geq \bar{T}$, the map  $s_{T,(N,L),(N',L')}:N/L\rightarrow N'/L'$ given by
\[
\begin{split}
& s_{T,(N,L),(N',L')}([x]) := \\
&\left\{
  \begin{array}{l l}
   [\varphi(x,3T)] & \quad \text{if } \varphi(x,[0,2T])\subset N\setminus L \text{ and } \varphi(x,[T,3T])\subset N'\setminus
L' \\
     \mathop{[L']} & \quad \text{otherwise}
  \end{array} \right.
  \end{split}
\]
is well-defined and continuous.   The maps $s_{T,(N,L),(N',L')}$ are all homotopic to each other  for different $T \geq \bar{T}$ and they
give an isomorphism  
between $N/L$ and $N'/L'$ in the homotopy category of pointed spaces. These isomorphisms satisfy the following properties
\begin{itemize}
\item For any index pair $(N,L)$, the map $s_{T,(N,L),(N,L)}$ is homotopic to the identity map on $N/L$;
\item For any index pairs  $(N,L),\ (N',L')$ and $(N'',L'')$, the composition 
$$s_{T,(N',L'),(N'',L'')}\circ s_{T,(N,L),(N',L')}:N/L\rightarrow N''/L''$$ is homotopic to $s_{T,(N,L),(N'',L'')}$.
\end{itemize}
 We call $s_{T,(N,L),(N',L')}$ the flow map at time $T$. We sometimes also write $s_{T}$ when the index pairs are clear from the context.
\end{thm}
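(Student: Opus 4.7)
The plan is to establish the theorem in four stages: existence of $\bar T$, well-definedness and continuity of the formula $s_{T,(N,L),(N',L')}$, independence of the choice of $T\geq \bar T$ up to homotopy, and finally the categorical properties (identity, composition, invertibility).

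First I would produce $\bar T$ by a compactness-and-isolation argument. For each $T>0$ consider the nested compact set
\[ A_T := \{x\in N' \mid \varphi(x,[-T,T]) \subset N'\setminus L'\}. \]
Its intersection $\bigcap_{T>0} A_T$ consists exactly of points with full trajectory in $N'\setminus L'$, which is the isolated invariant set $S$. Since the index-pair condition (i) gives $S\subset \inti(N\setminus L)$ and $\inti(N\setminus L)$ is open, compactness of $A_1$ together with the nesting yields some $T_1$ with $A_{T_1}\subset N\setminus L$. Swapping the roles of the two pairs produces $T_2$, and I set $\bar T := \max(T_1,T_2)$.

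Next I would verify that the formula descends to a continuous map $N/L \to N'/L'$. Well-definedness at the basepoint uses the positive invariance condition (iii): if $x\in L$ then $\varphi(x,[0,2T])$ stays in $L$ until it leaves $N$, so the first case fails and the formula returns $[L']$. Continuity at the seam between the two branches uses the exit property (ii) of $(N',L')$: any $x$ whose forward trajectory is just about to leave $N'\setminus L'$ during $[T,3T]$ has $\varphi(x,3T)\in L'$, so the two branches glue into a continuous map on the quotient. Homotopy invariance in $T$ is then immediate by the linear interpolation $(1-u)T_1 + u T_2 \geq \bar T$. Once this is in hand, the identity statement follows from the obvious homotopy $u\mapsto \varphi(\cdot,3uT)$ on $N/L$, while the composition property follows by tracking a point through $N\setminus L$, $N'\setminus L'$, $N''\setminus L''$ at staggered times and comparing with $s_{T,(N,L),(N'',L'')}$; a failure of any transition sends the point to the basepoint on both sides. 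The isomorphism claim is then an immediate corollary of identity and composition: $s_{T,(N',L'),(N,L)}\circ s_{T,(N,L),(N',L')}\simeq s_{T,(N,L),(N,L)}\simeq \id_{N/L}$.

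The hard part will be Stage 2, namely the continuity of $s_T$ at the seam between its two defining branches, and pinning down $\bar T$ so that the seam behavior is uniform in $T$. The definition is a sharp case distinction, so continuity demands a precise geometric understanding of how trajectories that barely satisfy $\varphi(x,[0,2T])\subset N\setminus L$ or $\varphi(x,[T,3T])\subset N'\setminus L'$ are absorbed into $[L']$; the exit-set and positive-invariance conditions are exactly what make this absorption automatic, but translating them into a clean continuity statement is where the bulk of the technical effort lies.
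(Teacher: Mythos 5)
The proposal has two concrete gaps, both in the parts you flag as delicate.

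The first is in the compactness step of Stage 1. The set $A_T=\{x\in N'\mid\varphi(x,[-T,T])\subset N'\setminus L'\}$ is the intersection of the closed condition $\varphi(x,[-T,T])\subset N'$ with the open condition $\varphi(x,[-T,T])\cap L'=\emptyset$, so it is neither open nor closed, and in particular not compact. Your identification $\bigcap_T A_T=\inv(N'\setminus L')=S$ is correct, but the standard ``nested compact sets shrinking into an open set'' argument does not apply to the $A_T$ directly. The usual fix is to pass to a common compact isolating neighborhood containing both pairs (this is precisely the extra hypothesis in Salamon's Lemma~4.7), and then argue with the genuinely compact sets $A^{[-T,T]}$; alternatively one replaces $A_T$ by its closure and must then verify that $\inv(\overline{N'\setminus L'})=S$, which is a nontrivial consequence of the index-pair axioms that your sketch does not address.

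The second gap is in Stage 2. You only check continuity when a sequence $x_n$ satisfying both branch conditions degenerates so that $\varphi(x_\infty,\cdot)$ fails the $N'\setminus L'$ condition, and you attribute the absorption into $[L']$ to the exit-set axiom (ii); in fact it is positive invariance (iii) of $L'$ that yields $\varphi(x_\infty,3T)\in L'$ here. More importantly, two other cases are unaddressed. If $x_\infty$ fails the $N\setminus L$ condition but still satisfies the $N'\setminus L'$ condition, then $\varphi(x_\infty,3T)\notin L'$ and continuity would actually break; this configuration must be excluded, and the exclusion genuinely uses the defining property of $\bar T$: since $\varphi(x_\infty,2T)\in L\subset\Omega\setminus(N\setminus L)$, the contrapositive of ``$\varphi(z,[-T,T])\subset N'\setminus L'\Rightarrow z\in N\setminus L$'' forces the $N'\setminus L'$ condition to fail as well. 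Dually, you must also show the set of $x$ satisfying both branch conditions is open in $N$ (so that a sequence of basepoint-valued $x_n$ cannot converge to a non-basepoint $x$); this requires invoking the exit-set property of \emph{both} pairs plus the $\bar T$ compatibility between them, and is not covered by your ``absorption'' remark. Without these cases the well-definedness of $s_T$ is not established.

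\end{document}
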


Through the rest of the paper, we will be always working in the homotopy category when talking about maps between Conley indices. Namely, all maps should be understood as homotopy equivalent classes of maps and all commutative diagrams only hold up to homotopy. More precisely, a map $
f:I(\varphi_{1},S_{1})\rightarrow I(\varphi_{2},S_{2})
$ 
between two Conley indices mean a collection of maps 
 (in the homotopy category)
$$\{f_{(N_{1},L_{1}),(N_{2},L_{2})}:I(\varphi_{1},S_{1},N_{1},L_{1})\rightarrow I(\varphi_{2},S_{2},N_{2},L_{2})\},$$
from \emph{any} representative of $I(\varphi_{1},S_{1})$ to \emph{any} representative of $I(\varphi_{2},S_{2})$, such that the following diagram commutes up to homotopy:
$$\xymatrix{
I(\varphi_{1},S_{1},N_{1},L_{1}) \ar[d]_{s_{T,(N_{1},L_{1}),(N'_{1},L'_{1})}} \ar[rrr]^{f_{(N_{1},L_{1}),(N_{2},L_{2})}} &&&I(\varphi_{2},S_{2},N_{2},L_{2})\ar[d]^{s_{T,(N_{2},L_{2}),(N'_{2},L'_{2})}}\\
I(\varphi_{1},S_{1},N'_{1},L'_{1}) \ar[rrr] ^{f_{(N'_{1},L'_{1}),(N'_{2},L'_{2})}}          &&&I(\varphi_{2},S_{2},N'_{2},L'_{2})}$$
In the language of \cite{Salamon}, this collection of maps gives a morphism between two connected simple systems $I(\varphi_{1},S_{1})$ and $I(\varphi_{2},S_{2})$.
Note that to define such a collection $\{ f_{*,*}\}$, we only need to specify a single map 
$$
f_{(N_{1},L_{1}),(N_{2},L_{2})}:I(\varphi_{1},S_{1},N_{1},L_{1})\rightarrow I(\varphi_{2},S_{2},N_{2},L_{2})
$$
for specific choices of $(N_{1},L_{1})$ and $(N_{2},L_{2})$.
This is because all the other maps can be obtained by composing it with flow maps. 



Next, we consider a situation when an isolated invariant set can be decomposed to smaller isolated invariant
sets.

\begin{defi} \ \begin{enumerate}[(1)]
\item  For a subset $A$, we define the $\alpha$-limit set and respectively $\omega$-limit set as following 
$$
\alpha(A)=\mathop{\cap}_{t<0}\overline{\varphi(A,(-\infty,t])} \quad \text{and } \quad
\omega(A)=\mathop{\cap}_{t>0}\overline{\varphi(A,[t,\infty))}.
$$
\item Let $S$ be an isolated invariant set. A compact subset $T\subset
S$ is called an
\emph{attractor} (resp. \emph{repeller}) if there exists a neighborhood $U$
of $T$ in $S$ such that $\omega(U)=T$ (resp. $\alpha(U)=T$).
\item When $T$
is an attractor in $S$, we define the set $T^{*}:=\{x\in S \mid \omega(x)\cap
T=\emptyset\}$, which is a repeller in $S$. We call $(T,T^{*})$ an \emph{attractor-repeller
pair} in $S$.
\end{enumerate}

\end{defi}

Note
that an attractor and a repeller are isolated invariant sets.
We state an
important result relating Conley indices of an attractor-repeller pair.
\begin{pro}[{\cite[Theorem~5.7]{Salamon}}]\label{Attractor-repeller-exact sequence}Let
$S$ be an isolated invariant set with an isolating neighborhood $A$ and $(T,T^{*})$
be an attractor-repeller pair in $S$. Then there exist compact sets $\tilde{N}_{3}\subset
\tilde{N}_{2}\subset \tilde{N}_{1}\subset A$ such that the pairs $(\tilde{N}_{2},\tilde{N}_{3}),
(\tilde{N}_{1},\tilde{N}_{3}),(\tilde{N}_{1},\tilde{N}_{2})$ are index pairs
for $T,$ $S$ and $T^*$ respectively. The maps induced by inclusions give a natural coexact sequence of
Conley indices
$$
I(\varphi,T)\xrightarrow{i_{}} I(\varphi,S)\xrightarrow{r}
I(\varphi,T^{*})\rightarrow \Sigma I(\varphi,T)\rightarrow \Sigma I(\varphi,
S) \rightarrow \cdots.
$$
We call the triple $(\tilde{N}_{3},\tilde{N}_{2},\tilde{N}_{1})$
an index triple for the pair $(T,T^{*})$ and call the maps $i_{}$ and $r$ the attractor map and
the repeller map
respectively.
\end{pro}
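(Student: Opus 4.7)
The plan is to prove the proposition in two stages: first construct the index triple $(\tilde{N}_{3},\tilde{N}_{2},\tilde{N}_{1})$ explicitly, then derive the coexact sequence from the associated cofibration of quotient spaces.

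For the construction, I would start by invoking the first fundamental fact quoted just before Definition~\ref{def conleyindex}: there exists an index pair $(\tilde{N}_{1},\tilde{N}_{3})$ for $S$ with $\tilde{N}_{1}\subset A$. The goal is then to carve out an intermediate compact set $\tilde{N}_{2}$ corresponding to the attractor $T$. Since $(T,T^{*})$ is an attractor--repeller pair, one can fix a compact neighborhood $U$ of $T$ in $S$ with $\omega(U)\subset T$ and $\overline{U}\cap T^{*}=\emptyset$, and a disjoint compact neighborhood $U^{*}$ of $T^{*}$ in $S$. Extending $U$ slightly into $\tilde{N}_{1}$, define
\[
\tilde{N}_{2} := \tilde{N}_{3}\cup \overline{\{x\in \tilde{N}_{1} : \varphi(x,[0,t])\subset \tilde{N}_{1}\setminus \tilde{N}_{3}\ \text{and}\ \varphi(x,t)\in U \text{ for some } t\geq 0\}}.
\]
Morally, $\tilde{N}_{2}\setminus \tilde{N}_{3}$ collects the points of $\tilde{N}_{1}$ that are being absorbed by the attractor before exiting $\tilde{N}_{1}$. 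By shrinking $U$ if necessary (using the hypothesis $\tilde{N}_{1}\subset A$) I can arrange $\tilde{N}_{2}\cap T^{*}=\emptyset$.

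Next I would verify the three index-pair conditions for each of the pairs $(\tilde{N}_{2},\tilde{N}_{3})$, $(\tilde{N}_{1},\tilde{N}_{3})$, $(\tilde{N}_{1},\tilde{N}_{2})$. The middle one is given, so the work is in the other two. For $(\tilde{N}_{2},\tilde{N}_{3})$: the maximal invariant set in $\tilde{N}_{2}\setminus \tilde{N}_{3}$ is exactly the points whose full orbits stay in the attractor region, which forces their $\omega$-limits into $T$ and hence their entire orbits into $T$; positive invariance and the exit property are inherited from those of $(\tilde{N}_{1},\tilde{N}_{3})$ together with the definition of $\tilde{N}_{2}$. For $(\tilde{N}_{1},\tilde{N}_{2})$: points in $\tilde{N}_{1}\setminus \tilde{N}_{2}$ by construction never reach $U$ via a path in $\tilde{N}_{1}\setminus \tilde{N}_{3}$, so any invariant orbit inside $\tilde{N}_{1}\setminus \tilde{N}_{2}$ has $\omega$-limit disjoint from $T$; combined with invariance inside $S$, such orbits must lie in $T^{*}$. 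The main obstacle in this stage is verifying that $\tilde{N}_{2}$ is positively invariant in $\tilde{N}_{1}$ and that $\tilde{N}_{2}$ serves as an exit for $\tilde{N}_{1}$ relative to $T^{*}$: a point $x\in \tilde{N}_{2}\setminus \tilde{N}_{3}$ whose trajectory stays in $\tilde{N}_{1}$ continues to be a point that flows forward into $U$, because shifting the time origin preserves the defining property; a delicate point is the treatment of the closure in the definition of $\tilde{N}_{2}$, which I would handle by choosing $U$ so that $\partial U\cap \varphi$-trajectories intersect transversally, or alternatively by smoothing $\tilde{N}_{2}$ via a Lyapunov function argument as in Conley's decomposition theorem.

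For the coexact sequence, once the index triple is in place, the chain of inclusions $\tilde{N}_{3}\subset \tilde{N}_{2}\subset \tilde{N}_{1}$ yields a natural cofiber sequence of pointed spaces
\[
\tilde{N}_{2}/\tilde{N}_{3}\ \xrightarrow{\ i\ }\ \tilde{N}_{1}/\tilde{N}_{3}\ \xrightarrow{\ r\ }\ \tilde{N}_{1}/\tilde{N}_{2},
\]
where $i$ is induced by inclusion and $r$ by collapsing $\tilde{N}_{2}$ to the basepoint. The Puppe construction applied to this cofibration extends it to the required long coexact sequence
\[
I(\varphi,T)\to I(\varphi,S)\to I(\varphi,T^{*})\to \Sigma I(\varphi,T)\to \Sigma I(\varphi,S)\to \cdots,
\]
and naturality in the connected simple system sense follows from the fact that the flow maps of Theorem~\ref{thm flowmap} commute up to homotopy with the inclusion-induced maps $i$ and $r$. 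This last commutation is straightforward because the Salamon flow map is defined by pushing along the flow, which respects the filtration $\tilde{N}_{3}\subset \tilde{N}_{2}\subset \tilde{N}_{1}$ whenever the squeeze time is large enough relative to all three pairs.
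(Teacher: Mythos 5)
The paper gives no proof of this proposition---it is cited verbatim from Salamon \cite[Theorem~5.7]{Salamon}---so your blind reconstruction is being measured against the literature rather than against an argument in the paper. Your second stage is fine: once the index triple exists, the chain $\tilde{N}_3\subset\tilde{N}_2\subset\tilde{N}_1$ gives a cofibration of quotients, the Puppe construction yields the long coexact sequence, and naturality in the connected-simple-system sense follows because the flow maps of Theorem~\ref{thm flowmap} respect the filtration for large squeeze time.

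The gap is in the construction of $\tilde{N}_2$. Your defining set consists of points whose forward orbit inside $\tilde{N}_1\setminus\tilde{N}_3$ eventually lands in $U$. The claim that ``shifting the time origin preserves the defining property'' is false for this set: if $\varphi(x,t)\in U$ and the orbit subsequently leaves $U$, the point $y=\varphi(x,s)$ for $s>t$ has no reason to ever re-enter $U$, so positive invariance fails. Worse, if $U$ is a neighborhood of $T$ in $S$ with $\omega(U)=T$ (as your setup suggests), then $\varphi(x,t)\in U\subset S$ together with the invariance of $S$ forces $x\in S$, so the defining set is exactly the basin of $T$ in $S$; whenever there is a connecting orbit from $T^*$ to $T$ the closure of this basin meets $T^*$, so shrinking $U$ cannot arrange $\tilde{N}_2\cap T^*=\emptyset$, contrary to what you assert. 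Extending $U$ to a compact neighborhood $\hat U$ inside $\tilde{N}_1$ removes the constraint $x\in S$, but positive invariance still fails unless $\hat U$ is itself positively invariant in $\tilde{N}_1$, i.e.\ an attractor block---and producing such a block is precisely the nontrivial step that Salamon carries out via a Lyapunov function for the attractor--repeller pair. You mention Lyapunov functions as a possible repair but never execute that route, so the argument as written does not yield an index pair. A correct version sets $\tilde{N}_2=\tilde{N}_3\cup P_{\tilde{N}_1}(B)$, where $B$ is a positively invariant attractor block for $T$ in $\tilde{N}_1\setminus\tilde{N}_3$ and $P_{\tilde{N}_1}(B)=\{\varphi(x,t):x\in B,\ t\ge 0,\ \varphi(x,[0,t])\subset\tilde{N}_1\}$ is the positive orbit of $B$ within $\tilde{N}_1$; positive invariance is then automatic, and the remaining verifications proceed along the lines you sketch.
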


%

\subsection{$T$-tame pre-index pair and $T$-tame index pair}
\label{section T-tame}
For a set $A$ and $I\subset \mathbb{R}$, let us denote
$$
A^{I}:=\{x\in \Omega \mid \varphi(x,I)\subset A\}.
$$
We also write $A^{[0,\infty)}$ and $A^{(-\infty,0]}$ as $A^{+}$ and $A^{-}$ respectively.
The following notion of pre-index pair was introduced by Manolescu \cite{Manolescu1}.

\begin{defi}\label{defi pre-index pair}
 A pair $(K_{1},K_{2})$ of compact subsets of an isolating neighborhood $A$ is called \emph{a pre-index pair} in $A$ if
\begin{enumerate}[(i)]
\item For any $x\in K_{1}\cap A^{+}$, we have $\varphi(x,[0,\infty))\subset \operatorname{int}(A)$;
\item $K_{2}\cap A^{+}=\emptyset$.
\end{enumerate}
\end{defi}

We have two basic results regarding pre-index pairs.

\begin{thm}[{\cite[Theorem~4]{Manolescu1}}]\label{from pre-index to index}
For any pre-index pair $(K_{1},K_{2})$ in an isolating neighborhood $A$, there exists an index pair $(N,L)$ satisfying
\begin{equation}\label{pre-index pair contained in index pair}
K_{1}\subset N\subset A,\ K_{2}\subset L.
\end{equation}

\end{thm}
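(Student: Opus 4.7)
\emph{Plan.} I would construct $(N, L)$ by enlarging a standard index pair around $S$ with truncated forward flows of $K_1$ and $K_2$ inside $A$. The two pre-index conditions are used in a complementary way: condition~(i) forces forward trajectories from $K_1$ either to exit $A$ in finite time or to be absorbed into any prescribed neighborhood of $S$, while condition~(ii) forces every forward trajectory from $K_2$ to exit $A$ in finite time.

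The first step is to invoke the standard existence theorem for Conley index pairs to select $(N_0, L_0)$ for $S$ with $L_0 \subset N_0 \subset \inti(A)$ and $N_0$ contained in a small preassigned neighborhood of $S$. The second step is to extract a uniform time $T > 0$ such that every trajectory from $K_1$ either leaves $A$ before time $T$ or satisfies $\varphi(x, T) \in \inti(N_0)$, and every trajectory from $K_2$ leaves $A$ before time $T$. The latter is a straightforward compactness argument from condition~(ii); the former uses that for $x \in K_1 \cap A^+$ we have $\omega(x) \subset S \subset \inti(N_0)$, so that the nested closed subsets
\begin{equation*}
B_T := \{ x \in K_1 : \varphi(x, [0, T]) \subset A,\ \varphi(x, T) \notin \inti(N_0) \}
\end{equation*}
of the compact set $K_1$ have empty intersection, forcing $B_T = \emptyset$ for some finite $T$.

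The third step is to define compact forward tails $F_i := \{\varphi(x, t) : x \in K_i,\ t \in [0, T],\ \varphi(x, [0, t]) \subset A\}$ for $i = 1, 2$, and form the pair $N := N_0 \cup F_1$, $L := L_0 \cup F_2$. By construction $K_1 \subset N \subset A$, $K_2 \subset L \subset N$, and the identity $\inv(N \setminus L) = S \subset \inti(N \setminus L)$ follows from $\inv(N) \subset \inv(A) = S \subset \inti(N_0 \setminus L_0)$ together with the small choice of $N_0$.

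The main obstacle is verifying the remaining two index-pair axioms, namely positive invariance of $L$ in $N$ and the exit-set property, for this inflated pair. The danger is that a trajectory starting in $L_0$ could a priori leave $N_0$, traverse $F_1 \setminus L_0$, and return; or a trajectory in $F_1$ could exit $A$ at a point not lying in $F_2$. Both pathologies are resolved by shrinking $N_0$ tightly enough around $S$ that $F_1 \cap N_0 \subset L_0$, and by adjoining to $L$ the first-exit points of trajectories in $F_1$. With these refinements the positive invariance and exit-set axioms reduce to a routine flow-tracing exercise, closely paralleling Manolescu's original argument in \cite{Manolescu1}.
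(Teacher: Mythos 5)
Your step 2 is where the argument breaks: the sets $B_T$ are not nested in $T$. For $T' > T$, whether $\varphi(x,T')\in\inti(N_0)$ is unrelated to whether $\varphi(x,T)\in\inti(N_0)$, since a trajectory may enter $\inti(N_0)$ and later leave it while remaining in $A$; so the finite-intersection-property argument is unavailable, and in fact the conclusion is false. For the linear saddle $\dot{x}=x$, $\dot{y}=-y$, take $A=[-1,1]^2$, $N_0=[-\epsilon,\epsilon]^2$, $L_0=\{|x|=\epsilon\}\cap N_0$, $K_1=[-1/2,1/2]^2$, $K_2=\{|x|=1/2\}\cap K_1$. Then $(K_1,K_2)$ is a pre-index pair in $A$, but $x_T:=(\epsilon e^{-T},0)\in K_1$ has $\varphi(x_T,[0,T])\subset A$ and $\varphi(x_T,T)=(\epsilon,0)\notin\inti(N_0)$, so $B_T\neq\emptyset$ whenever $\epsilon e^{-T}<1/2$. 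This trajectory starts in $\inti(N_0)$, exits through $L_0$ at time $T$, and leaves $A$ only at time $T+\ln(1/\epsilon)$; in between it is in neither $\inti(N_0)$ nor $\Omega\setminus A$, so no uniform hand-off time $T$ exists. The same points realize the positive-invariance failure you flag at the end: $(\epsilon,0)\in L_0$ flows to $(\epsilon e^{s},0)\in F_1\setminus(L_0\cup F_2)$ for small $s>0$, so $L$ is not positively invariant in $N$.

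Manolescu's construction, as reconstructed in the proof of Theorem~\ref{from pre-index to index refined} of this paper, is structurally different and sidesteps the problem: rather than time-truncated tails $F_i$, it uses the full positive hull $P_A(K)=\{\varphi(x,t):x\in K,\ t\geq 0,\ \varphi(x,[0,t])\subset A\}$, which is positively invariant in $A$ by construction. One chooses a compact neighborhood $C$ of $A^+\cap\partial A$ in $A$ disjoint from $A^-$ and from $P_A(K_1)$, and an open neighborhood $V$ of $A^+$ in $A$ disjoint from $K_2$ with $\overline{V\setminus C}\subset\inti(A)$; then $L=P_A(A\setminus V)$ and $N=P_A(B)\cup L$ for a suitable compact $B\supset K_1$ give the required index pair. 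Positive invariance of $N$ and $L$ is automatic, the exit-set and isolation properties follow from the choices of $C$ and $V$, and no uniform time bound is needed. Any repair of your outline would require $F_1$, $F_2$ to be closed under the forward flow inside $A$, which is precisely passing to $P_A$.
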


\begin{thm}[{\cite[Proposition~A.5]{Khandhawit1}}]\label{pre-index map compatible}
Let $(K_{1},K_{2})$ be a pre-index pair  and $(N_{1},L_{1})$, $(N_2, L_2)$ be two index pairs containing $(K_{1},K_{2})$. Denote
by $\iota_{j} \colon K_{1}/K_{2}\rightarrow N_{j}/L_{j}$ the map induced by inclusion.
Let $s_{T} \colon N_{1}/L_{1}\rightarrow N_{2}/L_{2}$ be the flow map for some large $T $.  Then, the composition
 $s_{T}\circ \iota_{1}$ is homotopic to $ \iota_{2}$.
\end{thm}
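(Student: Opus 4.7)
My approach is to construct an explicit homotopy between $\iota_2$ and $s_T\circ\iota_1$ by progressively ``turning on'' the flow in the definition of the flow map. First I would fix $T\geq \bar{T}$ large enough that the flow map $s_T=s_{T,(N_1,L_1),(N_2,L_2)}$ of Theorem~\ref{thm flowmap} is well-defined, and (using compactness of $K_2$ together with condition~(ii) $K_2\cap A^+=\emptyset$ from the pre-index pair definition) such that every trajectory starting in $K_2$ leaves $N_1\setminus L_1$ during the interval $[0,2T]$. This last choice ensures that the construction below will automatically send the basepoint to the basepoint.

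The plan is to define a family $H_s\colon K_1/K_2\to N_2/L_2$ for $s\in[0,1]$ by
\[
H_s([x])=\begin{cases} [\varphi(x,3sT)] & \text{if } \varphi(x,[0,2sT])\subset N_1\setminus L_1 \text{ and } \varphi(x,[sT,3sT])\subset N_2\setminus L_2,\\ [L_2] & \text{otherwise.}\end{cases}
\]
Tautologically $H_1=s_T\circ\iota_1$. At $s=0$ the formula becomes $H_0([x])=[x]$ whenever $x\in(N_1\setminus L_1)\cap(N_2\setminus L_2)$ and $[L_2]$ otherwise. The only discrepancy between $H_0$ and $\iota_2$ occurs at points of $(K_1\cap L_1)\setminus L_2$. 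For such an $x$, positive invariance of $L_1$ in $N_1$ keeps $\varphi(x,t)\in L_1$ on an initial interval; combined with pre-index property~(i) (which rules out $x\in A^+$ for a point sitting in $L_1$, since otherwise the forward orbit would lie in the exit set $L_1$ and be trapped there, contradicting $\operatorname{inv}(N_1\setminus L_1)=S$), the trajectory exits $N_2$ through $L_2$ in uniformly bounded time. This yields a short secondary flow-based homotopy between $H_0$ and $\iota_2$.

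The main obstacle will be verifying continuity of the piecewise-defined $H_s$ at the boundary between the two cases, together with joint continuity in $s$. The standard technique uses compactness of $K_1$, positive invariance of $L_j$ in $N_j$, and the exit-set property: if $(x_n,s_n)\to (x,s)$ with the conditions holding for each $(x_n,s_n)$ but failing for $(x,s)$, then positive invariance forces the nearby trajectories to enter the exit set $L_2$ near the critical time, so $[\varphi(x_n,3s_nT)]\to [L_2]$ in the quotient. A cleaner alternative route I would try first is to exploit the naturality of the flow map in Theorem~\ref{thm flowmap}: since $s_{T,(N_2,L_2),(N_2,L_2)}$ is homotopic to the identity, one has $s_T\circ\iota_2\simeq \iota_2$, so it suffices to prove $s_T\circ\iota_1\simeq s_T\circ\iota_2$. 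The two maps differ only in which index pair's interior is required to contain the first segment of the trajectory, and the $[-\bar{T},\bar{T}]$ window provided by Theorem~\ref{thm flowmap} lets $N_1\setminus L_1$ and $N_2\setminus L_2$ be interchanged via a straight-line interpolation of the defining conditions.
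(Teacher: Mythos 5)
Your central construction breaks down before it gets started: the family $H_s$ is not continuous at $s=0$, and more generally is not a map of pointed spaces once $sT$ drops below the threshold $\bar{T}$ of Theorem~\ref{thm flowmap}. Concretely, at $s=0$ your formula reads $H_0([x])=[x]$ if $x\in (N_1\setminus L_1)\cap (N_2\setminus L_2)$ and $H_0([x])=[L_2]$ otherwise. Take $x\in K_1\cap L_1$ with $x\notin L_2$ (nothing in the hypotheses prevents $L_1\cap K_1$ from meeting the interior of $N_2\setminus L_2$; e.g.\ for a saddle in $\R^2$ with $N_1=[-1,1]^2$, $N_2=[-1.5,1.5]^2$ and $K_1=[-1,1]\times[-\tfrac12,\tfrac12]$, the point $x=(1,0)$ does exactly this). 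Then $H_0([x])=[L_2]$, yet a sequence $x_n\to x$ in $K_1\cap(N_1\setminus L_1)\cap(N_2\setminus L_2)$ has $H_0([x_n])=[x_n]\to [x]\neq[L_2]$. So $H_0$ is discontinuous. The reason the Salamon continuity argument for $s_T$ does not rescue you is precisely the requirement $T\geq\bar{T}$: the ``window'' property in Theorem~\ref{thm flowmap} is what forces trajectories that meet the complement of an index pair near the boundary time to actually fall into the exit set, and for $sT<\bar{T}$ that mechanism fails. Restricting the homotopy to $s\in[\bar{T}/T,1]$ gives nothing, since there $H_s=s_{sT}\circ\iota_1\simeq s_{T}\circ\iota_1$ already.

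The auxiliary step of your argument is also incorrect. You claim that pre-index condition~(i) rules out $x\in A^+$ for a point sitting in $L_1$, ``since otherwise the forward orbit would lie in the exit set $L_1$ and be trapped there.'' Positive invariance of $L_1$ only keeps the forward orbit in $L_1$ for as long as it remains in $N_1$; being in the exit set is precisely what allows the orbit to \emph{leave} $N_1$ in finite time while still staying in $\operatorname{int}(A)$, as condition~(i) permits. So there is no contradiction with $\operatorname{inv}(N_1\setminus L_1)=S$, and $K_1\cap L_1\cap A^+$ can be nonempty. Consequently the claimed ``uniformly bounded exit time through $L_2$'' does not follow; after leaving $N_1$ such an orbit may perfectly well remain in $N_2\setminus L_2$ and converge to $S$. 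Finally, the ``cleaner alternative'' of a ``straight-line interpolation of the defining conditions'' is not a construction — set inclusions cannot be linearly interpolated — and would need to be replaced by something concrete. A workable route (consistent with the paper's handling of Propositions~\ref{pre-index map compatible with attractor} and~\ref{pre-index map compatible with repellor}) is to pass through a $T$-tame index pair $(N,L)$ containing $(K_1,K_2)$, provided by Theorem~\ref{from pre-index to index refined}, and use the simplified formula of Lemma~\ref{flow map from tame index pair}: for a $T$-tame source index pair, $s_T\circ\iota$ depends only on the target pair and the ambient isolating neighborhood $A$, from which the desired compatibility drops out after one more application of $s_T\simeq\operatorname{id}$.
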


Consequently, when $(K_{1},K_{2})$ is a pre-index pair in an isolating neighborhood $A$, 
we have a \emph{canonical map} to the Conley index 
\begin{equation}\label{map from pre-index to index}
\iota \colon K_{1}/K_{2}\rightarrow I(S),
\end{equation}
where $S = \inv(A) $ and the map is induced by inclusion.

Next, we discuss the quantitative refinement of Theorem \ref{from pre-index to index}, which will be useful in our formulation of relative Bauer--Furuta invariant and the gluing theorem. Let us consider the following definition.


\begin{defi}\label{defi tame pre-index pair}
Let  $A$ be an isolating neighborhood. For a positive real number $T$, a pair $(K_{1},K_{2})$ of compact subsets of $A$ is called a $\emph{$T$-tame
pre-index pair}$ in $A$ if it satisfies the following conditions:
\begin{enumerate}[(i)]

\item \label{item tamepreindex1} There exists a compact set $A' \subset  \operatorname{int}(A) $ containing $A^{[-T,T]} $ such that, if $x\in K_{1}\cap A^{[0,T']}$ for some $T'\geq T$, then $\varphi(x,[0,T'-T])\subset A'$.

\item \label{item tamepreindex2} $K_{2}\cap A^{[0,T]}=\emptyset$.
\end{enumerate}
\end{defi}

It is straightforward to see that a $T$-tame pre-index pair in $A$ is pre-index pair in $A$. The converse also holds.

\begin{lem}\label{always tame for large T}
Let $(K_{1},K_{2})$ be a pre-index pair in an isolating neighborhood $A$. Then, there exists $\bar{T}>0$ such that $(K_{1},K_{2})$ is a ${T}$-tame pre-index pair in $A$  for any ${T}\geq \bar{T}$.
\end{lem}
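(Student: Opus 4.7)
The plan is to verify the two conditions of Definition~\ref{defi tame pre-index pair} simultaneously, for all sufficiently large $T$, with a \emph{single} compact set $A'$. Condition~(\ref{item tamepreindex2}) is routine: $K_{2}\cap A^{[0,T]}$ is a decreasing family of compact subsets of $A$ whose intersection is $K_{2}\cap A^{+}=\emptyset$, so the finite intersection property gives $K_{2}\cap A^{[0,T]}=\emptyset$ for all $T$ beyond some threshold.

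To construct $A'$, I would set
$$
A_{0}:=\overline{\inv(A)\cup \varphi(K_{1}\cap A^{+},[0,\infty))}.
$$
The main claim is that $A_{0}$ is a compact subset of $\operatorname{int}(A)$. Compactness is immediate since $A_{0}\subset A$ in a finite-dimensional manifold. The inclusion $A_{0}\subset\operatorname{int}(A)$ rests on three points: (a) $\inv(A)\subset\operatorname{int}(A)$ by the isolating-neighborhood hypothesis; (b) $\varphi(K_{1}\cap A^{+},[0,\infty))\subset\operatorname{int}(A)$ by pointwise application of the pre-index condition; and (c) if $x_{n}\in K_{1}\cap A^{+}$ and $t_{n}\to\infty$, any limit of $\varphi(x_{n},t_{n})$ lies in $\bigcap_{M>0}A^{[-M,M]}=\inv(A)$, because $\varphi(x_{n},t_{n})\in A^{[-M,\infty)}\subset A^{[-M,M]}$ eventually for each fixed $M$. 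I then pick $A'$ to be any compact neighborhood of $A_{0}$ inside $\operatorname{int}(A)$; in particular $A^{[-T,T]}\subset A'$ for all large $T$, since $A^{[-T,T]}$ decreases to $\inv(A)\subset\operatorname{int}(A')$.

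The remaining statement is the trajectory containment in condition~(\ref{item tamepreindex1}), which I would establish by contradiction. Assume sequences $T_{n}\to\infty$, $x_{n}\in K_{1}\cap A^{[0,T'_{n}]}$ with $T'_{n}\geq T_{n}$, and $t_{n}\in[0,T'_{n}-T_{n}]$ such that $y_{n}:=\varphi(x_{n},t_{n})\notin A'$. A subsequence gives $x_{n}\to x\in K_{1}$, and $T'_{n}\to\infty$ together with closedness of $A$ forces $x\in A^{+}$. If $t_{n}$ admits a bounded subsequence $t_{n_{k}}\to t_{\infty}$, then $y_{n_{k}}\to \varphi(x,t_{\infty})\in A_{0}\subset \operatorname{int}(A')$, contradicting $y_{n_{k}}\in \Omega\setminus A'$. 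If instead $t_{n}\to\infty$, then $T'_{n}-t_{n}\geq T_{n}\to\infty$, so for each fixed $M>0$ one has $y_{n}\in A^{[-M,M]}$ eventually; any cluster point $y$ of $\{y_{n}\}$ therefore lies in $\inv(A)\subset \operatorname{int}(A')$, again contradicting $y_{n}\notin A'$ in the limit.

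Taking $\bar{T}$ as the maximum of the thresholds from the two conditions, monotonicity of $A^{[-T,T]}$ and $A^{[0,T]}$ in $T$ shows both conditions persist for every $T\geq \bar{T}$ with the same $A'$. The principal obstacle is step~(c) in the proof that $A_{0}\subset\operatorname{int}(A)$: it is precisely here that the pre-index hypothesis is used essentially, together with the characterization $\inv(A)=\bigcap_{M}A^{[-M,M]}$. Everything else is compactness and monotonicity.
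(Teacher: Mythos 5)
Your proof is correct and in essence runs the same contradiction argument as the paper: pass to a subsequence of bad points $\varphi(x_n,t_n)$, then split on whether the time parameter stays bounded, landing either on a violation of the pre-index condition (bounded case) or on a point of $\inv(A)\cap\partial A$ (unbounded case), contradicting that $A$ is an isolating neighborhood. The one mild stylistic difference is that you construct the candidate compact set $A'$ explicitly as a neighborhood of $A_0=\overline{\inv(A)\cup\varphi(K_1\cap A^+,[0,\infty))}$ before running the contradiction, whereas the paper leaves $A'$ implicit inside a diagonal argument; your version makes the choice of $A'$ more transparent but requires a preliminary compactness argument (that $A_0\subset\operatorname{int}(A)$) which itself already repeats the dichotomy, so the two proofs have comparable total length and the same mathematical content.
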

\begin{proof}
It is straightforward to see that $K_{2}\cap A^{[0,+\infty)}=\emptyset$ implies $K_{2}\cap A^{[0,T]}=\emptyset$ for a sufficiently large $T > 0$. We are  left with checking that condition (\ref{item tamepreindex1}) of Definition \ref{defi tame pre-index pair} holds for a sufficiently large $T>0$.

Suppose that the condition does not hold for $T_j > 0$. Then we can find sequences $\{x_{j,k}\}$, $\{T'_{j,k}\}$ and $\{T''_{j,k}\}$ where $x \in K_1 \cap A^{[0,T'_{j,k}]} $ and $0 \leq T''_{j,k} \leq T'_{j,k} -  T_j $ such that $\{\varphi(x_{j,k},T''_{j,k})\}$ converges to a point on $ \partial A $ as $k \rightarrow \infty $. Now assume that there is a sequence of such $\{T_j\} $ with $T_j \rightarrow \infty $. Passing to a subsequence, one can find a sequence $\{k_j\}$ such that $x_{j,k_j} \rightarrow x_\infty \in K_1 \cap A^{+} $ and $\varphi(x_{j,k_j},T''_{j,k_j}) \rightarrow y \in \partial A $. If $T''_{j,k_j} \rightarrow T'' $, we see that $\varphi(x_\infty , T'') = y $. This contradicts with definition of the pre-index pair $(K_1 , K_2) $. On the other hand, we observe that $\varphi(x_{j,k_j},T''_{j,k_j}) \in A^{[-T''_{j,k_j},T_j]} $. If $\{T''_{j,k_j} \}$ goes to infinity, we obtain that $y\in \inv(A)$.
This is a contradiction because $A$ is an isolating neighborhood, i.e. $\inv(A)\cap \partial A=\emptyset$.

\end{proof}

We next consider the $T$-tame version of index pairs.

\begin{defi}\label{def tame index pair}
For a positive real number $T$, an index pair $(N,L)$ contained in an isolating neighborhood $A$ is called a \emph{$T$-tame index pair}
in $A$ if it satisfies the following conditions:
\begin{enumerate}[(i)]
\item Both $N,L$ are positively invariant in $A$;
\item $A^{[-T,T]}\subset N$;
\item $A^{[0,T]}\cap L=\emptyset$.
\end{enumerate}

A subset $A$ is also called a \emph{$T$-tame isolating neighborhood} if $A^{[-T,T]} \subset \operatorname{int}(A) $.

\end{defi}

One important reason why we are interested in $T$-tame index pairs is that the definition of the flow maps can be simplified when one of the index pairs is $T$-tame. 

\begin{lem}\label{flow map from tame index pair}
Let $(N,L)$ and $(N',L')$ be two index pairs in an isolating neighborhood $A$. Let $T$ be a sufficiently large number so that the flow map $s_{T} \colon N/L\rightarrow
N'/L'$ is well-defined. If the index pair $(N,L)$
is $T$-tame, then flow map $s_{T}$ can be given by a formula
$$
s_{T}([x]) = \left\{
  \begin{array}{l l}
   [\varphi(x,3T)] & \quad \text{if } \varphi(x,[0,3T])\subset A \text{ and } \varphi(x,[T,3T])\subset N'\setminus L', \\
     \mathop{[L']} & \quad \text{otherwise.}
  \end{array} \right.
$$
\end{lem}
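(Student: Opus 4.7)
The plan is to compare the simplified formula with the original formula from Theorem \ref{thm flowmap}, which sends $[x]$ to $[\varphi(x,3T)]$ if
\[
\text{(a):} \quad \varphi(x,[0,2T])\subset N\setminus L \text{ and } \varphi(x,[T,3T])\subset N'\setminus L',
\]
and to $[L']$ otherwise. Writing the condition in the lemma as
\[
\text{(b):} \quad \varphi(x,[0,3T])\subset A \text{ and } \varphi(x,[T,3T])\subset N'\setminus L',
\]
it suffices to prove the equivalence (a) $\iff$ (b) for representatives $x \in N\setminus L$ (the basepoint case $x \in L$ trivially sends to $[L']$ in both formulas, since $L\subset N \subset A$ together with condition (iii) of Definition \ref{def tame index pair} forces the trajectory to leave $A$ within time $T$, so (b) fails as well).

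The implication (a) $\Rightarrow$ (b) is immediate: both $N$ and $N'$ are contained in $A$, so $\varphi(x,[0,2T])\subset N \subset A$ and $\varphi(x,[T,3T])\subset N'\subset A$, giving $\varphi(x,[0,3T])\subset A$.

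For (b) $\Rightarrow$ (a), I would use the two defining features of $T$-tameness. First, positive invariance of $N$ in $A$ (condition (i)) combined with $x\in N$ and $\varphi(x,[0,3T])\subset A$ yields $\varphi(x,[0,3T])\subset N$. Next, to exclude trajectories entering $L$ during $[0,2T]$, suppose for contradiction that $\varphi(x,t_0)\in L$ for some $t_0\in[0,2T]$. Positive invariance of $L$ in $A$, applied to the segment $\varphi(x,[t_0,3T])\subset A$, forces $\varphi(x,[t_0,3T])\subset L$; in particular $\varphi(x,2T)\in L$. But condition (iii) of Definition \ref{def tame index pair} asserts $A^{[0,T]}\cap L=\emptyset$, so $\varphi(x,2T)\notin A^{[0,T]}$, meaning there exists $s\in[0,T]$ with $\varphi(x,2T+s)\notin A$. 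This contradicts $\varphi(x,[0,3T])\subset A$ since $2T+s\in[2T,3T]$. Hence $\varphi(x,[0,2T])\subset N\setminus L$, establishing (a).

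The main conceptual step is the second implication, where one must rule out trajectories touching $L$ within time $2T$; this is exactly the role of the $T$-tameness axiom $A^{[0,T]}\cap L=\emptyset$, which together with positive invariance of $L$ propagates a hypothetical hit into $L$ to a forced exit from $A$ before time $3T$. Once this is in hand, the lemma follows immediately by substitution into Salamon's flow-map formula.
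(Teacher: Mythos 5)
Your proof is correct and follows essentially the same structure as the paper's: reduce to the equivalence of the two conditions, observe that (a)$\Rightarrow$(b) is immediate from $N,N'\subset A$, and use positive invariance of $N$ in $A$ together with the tameness condition $A^{[0,T]}\cap L=\emptyset$ for the converse. The only difference is a small detour in (b)$\Rightarrow$(a): you push a hypothetical hit $\varphi(x,t_0)\in L$ forward to time $2T$ via positive invariance of $L$ before applying $A^{[0,T]}\cap L=\emptyset$, whereas the paper applies that condition directly at $t_0$, since $\varphi(x,[t_0,t_0+T])\subset\varphi(x,[0,3T])\subset A$ already places $\varphi(x,t_0)\in A^{[0,T]}$ for any $t_0\in[0,2T]$. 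Both are valid; the paper's version is just slightly shorter.
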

\begin{proof}
We only need to show that the following two conditions are equivalent for $x\in N$.
\begin{enumerate}[(1)]
\item $\varphi(x,[0,3T])\subset A \text{ and } \varphi(x,[T,3T])\subset N'\setminus L'$;
\item $\varphi(x,[0,2T])\subset N\setminus L \text{ and } \varphi(x,[T,3T])\subset N'\setminus L'$.
\end{enumerate}
It is straightforward to see that (2) implies (1) since $N \subset A $ and  $N'\subset A$. Let us suppose that $\varphi(x,[0,3T])\subset A$. Since $N$ is positively invariant in $A$, we have $\varphi(x,[0,3T])\subset N$. By property of $T$-tame index pair, we have $\varphi(x,[0,2T])\cap
L=\emptyset$ and we are done.

\end{proof}

We now show a quantitative refinement of \cite[Theorem 4]{Manolescu1}.

\begin{thm}\label{from pre-index to index refined}
For any $T>1$, let $A$ be a $(T-1)$-tame isolating neighborhood and $(K_{1},K_{2})$ be a $(T-1)$-tame pre-index pair in $A$. Then, there exists a $T$-tame index pair in $A$ which contains $(K_{1},K_{2})$.
\end{thm}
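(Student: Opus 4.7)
My plan is to construct an explicit $T$-tame index pair $(N, L)$ containing the pre-index pair $(K_1, K_2)$, following the spirit of the proof of Theorem \ref{from pre-index to index} but with quantitative time constants built in from the start. First observe that $A^{[-T,T]} \subset A^{[-(T-1),T-1]} \subset \operatorname{int}(A)$, so $A$ is already a $T$-tame isolating neighborhood. Let $A' \subset \operatorname{int}(A)$ be the compact set from condition (i) of the $(T-1)$-tame pre-index hypothesis; it contains $A^{[-(T-1),T-1]}$ and captures forward orbits from $K_1$ for time $T'-(T-1)$ whenever $x \in K_1 \cap A^{[0, T']}$ with $T' \geq T-1$.

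Define
\[
N := \overline{\{\varphi(x, t) : x \in K_1 \cup K_2 \cup A^{[-T,T]},\, t \geq 0,\, \varphi(x, [0, t]) \subset A\}},
\]
\[
L := \overline{\{x \in N : \varphi(x, [0, T]) \not\subset A\}}.
\]
The containments $K_1 \cup K_2 \cup A^{[-T,T]} \subset N \subset A$ are clear by construction, as is $K_2 \subset L$: condition (ii) of the $(T-1)$-tame pre-index pair forces $\varphi(x, [0, T-1]) \not\subset A$, hence $\varphi(x, [0, T]) \not\subset A$, for every $x \in K_2$. Positive invariance of $L$ in $A$ follows because if $x \in L$ exits $A$ at some time $t_0 \leq T$, then for $0 \leq s \leq t_0$ the point $\varphi(x, s)$ exits $A$ within time $t_0 - s \leq T$; positive invariance of $N$ in $A$ holds by the forward-saturation nature of the construction. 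The condition $A^{[0,T]} \cap L = \emptyset$ is immediate: points in $A^{[0,T]}$ satisfy $\varphi(x, [0, T]) \subset A$ by definition.

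The remaining index pair axioms (the equality $\operatorname{inv}(N \setminus L) = \operatorname{inv}(A) \subset \operatorname{int}(N \setminus L)$, the exit property of $L$, and positive invariance of $L$ in $N$) are verified by standard forward-orbit arguments: points of $\operatorname{inv}(A)$ lie in $\operatorname{int}(A^{[-T,T]}) \subset \operatorname{int}(N)$ and have an open neighborhood of points whose forward orbit remains in $A$ for time $T$, hence are bounded away from $L$; if $x \in N$ and $\varphi(x, t) \notin N$ then the trajectory must first exit $A$ at some time $t_0 \leq t$, and choosing $\tau = \max(0, t_0 - T/2)$ gives a point in $L$ with $\tau < t$. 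The main obstacle is a technical one: the two nested closures in the definitions of $N$ and $L$ interact subtly with the positive-invariance axiom, because propagating positive invariance through a closure requires limit orbits to stay inside $\operatorname{int}(A)$ rather than drift onto $\partial A$. This is precisely where condition (i) of the $(T-1)$-tame pre-index pair becomes essential: the compact set $A' \subset \operatorname{int}(A)$ confines the relevant forward saturations away from $\partial A$ except during the last $T-1$ units of flow time before exit, and this ``one unit of slack'' between the $(T-1)$-tame hypothesis and the $T$-tame conclusion is exactly what makes the whole construction consistent with all of the $T$-tame index pair requirements.
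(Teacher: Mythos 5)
The proposal contains a genuine gap in the verification of the $T$-tame condition (iii), $A^{[0,T]}\cap L=\emptyset$. You assert this is ``immediate,'' but it fails precisely because $L$ is defined as a \emph{closure}. A point $x\in A^{[0,T]}$ whose forward orbit $\varphi(x,[0,T])$ touches $\partial A$ can be the limit of points $x_n\in N$ with $\varphi(x_n,[0,T])\not\subset A$, and such an $x$ lies in $L$. A concrete counterexample: take the linear saddle $\dot u=-u$, $\dot v=v$, $A=[-1,1]^2$, $K_1=K_2=\emptyset$. Then $N=P_A(A^{[-T,T]})=\{(u,v):|u|\le e^{-T},\ |v|\le 1,\ |uv|\le e^{-2T}\}$ (already closed), and your $L=\overline{\{(u,v)\in N:|v|>e^{-T}\}}=\{(u,v)\in N:|v|\ge e^{-T}\}$ contains the point $(0,e^{-T})\in A^{[0,T]}$. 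So the output is not a $T$-tame index pair.

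The underlying issue is that your construction uses the constant $T$ directly in the definition of $L$, leaving no slack between the forward-orbit time in the definition of $L$ and the time appearing in the target condition $A^{[0,T]}\cap L=\emptyset$. You acknowledge the need for ``one unit of slack'' between $T-1$ and $T$ at the end of your proposal, but it is never actually built into your definitions of $N$ or $L$. The paper's proof introduces the slack explicitly: it fixes $T'\in(T-1,T)$, sets $V_0=A^{[0,T']}$, thickens to an open set $V\supset V_0$ by a small radius $d$, and takes $L=P_A(A\setminus V)$. The tameness condition (iii) then follows by backward propagation: if $x=\varphi(y,t)\in A^{[0,T]}\cap L$ with $y\in A\setminus V$ and $\varphi(y,[0,t])\subset A$, then $y\in A^{[0,T+t]}\subset A^{[0,T']}=V_0\subset V$, contradiction. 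This argument needs $T'<T$ strictly and nowhere relies on forward orbits staying in $\inti(A)$. A secondary but related gap in your proposal is the positive invariance of the closed sets $N$ and $L$: you flag the difficulty of propagating positive invariance through a closure but do not resolve it, whereas the paper's $P_A(\cdot)$ sets are automatically positively invariant and (for the carefully chosen generators) compact, so no closure is taken at all.
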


\begin{proof}
The proof is an adaption of the arguments in \cite{Manolescu1} to the $T$-tame setting. 
Denote by $\tilde{K}_{1}=K_{1}\cup A^{[-T,T]}$. We claim
that $(\tilde{K}_{1},K_{2})$ is a pre-index pair in $A$. Since $(K_1 , K_2)$ is already a pre-index pair in $A$, it suffices to check that $\varphi(y,[0,\infty))\subset
\inti(A)$ for any $y\in A^{[-T,T]} \cap A^{+} =A^{[-T,\infty]}$. This is straightforward since $A$ is $(T-1)$-tame. 

By Theorem~\ref{from pre-index to index}, there exists an index pair
containing $(\tilde{K}_{1},K_{2})$. 
 From the argument of the proof, one could pick a compact subset $C \subset A$ and as well as an open neighborhood $V$ of $C$  such that the following conditions hold:
\begin{enumerate}[(I)]
 \item
     $C$ is a compact neighborhood of $A^+ \cap \partial A$ in $A$;
     
      \item
     $C \cap A^{-} = \emptyset$;
     
     \item
     $C \cap P_A( \tilde{K}_1) = \emptyset$;

     \item
     ${V}$ is an open neighborhood of $A^{+}$ in $A$;
     
     \item
     $\overline{{V} \setminus C} \subset \inti(A)$;
     
     \item
     $K_2 \cap {V} = \emptyset$. 
\end{enumerate}
Recall that, for a subset $K$ of $A$, the set $P_{A}(K)$ is given by 
$$
P_{A}(K):=\{\varphi(x,t)\mid x\in K,\ t\geq 0 \text{ and } \varphi(x,[0,t])\subset A\}. $$
Let us say that a pair $(C,V)$ is good if it satisfies all of the above conditions. 
After specifying a good pair $(C,V) $, a compact subset $B$ can be chosen so that
$$(N,L) = (P_{A}(B)\cup P_{A}(A\setminus {V}),P_{A}(A\setminus {V}))$$
is an index pair containing $(\tilde{K}_1 ,K_2)$.

 Our strategy is to carefully choose a good pair $(C,V)$ so that the induced $(N,L)$ is  a $T$-tame index pair in $A$ which contains $(K_{1},K_{2})$.

Since $(K_1, K_2)$ is a $T$-tame pre-index pair (as $T > T-1$), we can take a compact set $A'$ in $A$ satisfying condition (\ref{item tamepreindex1}) of Definition~\ref{defi tame pre-index pair}. Fix a compact set $A''$ in $A$ such that
\[
         A' \subset \inti (A''), \ A'' \subset \inti(A)
\]
and pick a real number $T' \in ( T - 1 , T )$. Consider a pair
$$ (C_0 , V_0 ) = (( A \setminus \inti(A'')) \cap A^{[0, T']},  A^{[0, T']})  $$  
Note that $V_0$ is closed. We have the following observations:
\begin{itemize}
\item
   $A^+ \cap \partial A \subset C_0$:
         This is obvious as $A'' \subset \inti (A)$ and $A^+  \subset A^{[0, T']}$.

    \item
    The distance between $C_0$ and $A^{-}$ is positive:
    Observe that
    \[
      \begin{split}
    C_0 \cap A^{-} 
    &= ( A \setminus \inti(A'') ) \cap A^{(-\infty, T']}   \\
    &\subset   (A \setminus \inti(A'')) \cap A^{[-T+1, T-1]} \\
    & = \emptyset,
\end{split}
    \]      
     where we have used the fact that
     $
                   A^{[-T+1, T-1]} \subset A' \subset \inti (A'').
     $
     Since $C_0$ and $A^{-}$ are compact, the distance between them is positive. 
       
   \item
   The distance between $C_0$ and $P_{A}( \tilde{K}_1)$ is positive: 
  Suppose that this is not true. Since $C_0$ is compact,  there would be a sequence $\{ x_j \}$ of points in $\tilde{K}_1$ and a sequence of nonnegative number $\{t_j\}  $ such
that $\varphi(x_j, [0, t_j]) \subset A$ and $y_j = \varphi(x_j, t_j)$ converges to a point $y$ in $C_0$.  
  
  If $t_j \rightarrow \infty$, we would have $\varphi(y,
(-\infty, 0]) \subset A$, which means that $y \in A^{-}$.  This is a contradiction since $C_0 \cap A^{-} = \emptyset$.  
  
  After passing to a subsequence, we now assume that $(x_j ,t_j) \rightarrow (x,t)$ a point in $\tilde{K}_1 \times [0,\infty) $. If $x \in K_1$, then $x \in K_1 \cap A^{[0,  t+T']}$ because $\varphi(x, [0, t]) \subset A$ and $y = \varphi(x, t) \in C_0
\subset A^{[0, T']}$. By the property of $A'$, we have
 \[
      \varphi(x,   [0, t + T' - (T-1)]) \subset A', 
\]
which implies that $y \in A'$. This is a contradiction since $C_0 \cap A' = \emptyset$.  If $x \in A^{[-T, T]}$, then $y
\in A^{[-T-t, T']} \subset A^{[-T+1, T-1]}$. This is also a contradiction since $C_0 \cap A^{[-T+1, T-1]} = \emptyset$.
 
 \item
 $A^+ \subset {V}_0$: This is clear from the definition of ${V}_0$. 
 
 \item
 $\overline{{V}_0 \setminus C_0} \subset \inti(A)$: We will actually prove that $\overline{{V}_0 \setminus C_0} \subset A'' $.  Since $A''$ is closed, it is sufficient
to show that ${V}_0 \setminus C_0 \subset A''$. It is then straightforward to see that 
${V}_0 \setminus C_0  = A^{[0, T']} \cap \inti(A'')  \subset A''$.   
   \item
  The distance between  $K_2$ and ${V}_0$ is positive: 
   Since $(K_1, K_2)$ is $(T-1)$-tame,  we have $K_2 \cap A^{[0, T-1]} = \emptyset$, and consequently $K_2 \cap {V}_0 = \emptyset$. Since $K_2$ and ${V}_0$ are compact, the distance between them is positive. 
\end{itemize}

For a sufficiently small positive number $d$, we define
$$
            C := \{ x \in A | \operatorname{dist}(x, C_0) \leq d \}, \
            {V} := \{ x \in A | \operatorname{dist}(x, \tilde{V}_0) < d \}. 
$$
From the above observations, one can check that $(C,V)$ is a good pair.

We finally check that $(N,L) = (P_{A}(B)\cup P_{A}(A\setminus {V}),P_{A}(A\setminus {V}))$ is $T$-tame.
\begin{enumerate}[(i)]
\item Notice that $P_A(S)$ is positively invariant for any subset $S \subset A$ and that the union of two positively invariant sets in $A$ is again positively invariant in $A$. Thus, $N,L$ are positively invariant in $A$
\item From our construction, we have $A^{[-T,T]} \subset  \tilde{K}_1 \subset N $.
\item We are left to show that $A^{[0, T]} \cap L = \emptyset$. Suppose that there is an element  $x \in A^{[0, T]} \cap L$. From the definition, we obtain $y \in A \setminus {V}$ and $t \geq 0$ such that
$\varphi(y, [0, t]) \subset A$ and $x = \varphi(y, t)$. It follows that
$ y \in A^{[0, T + t]}$.
On the  other hand, we have
$ A^{[0, T+t]} \subset A^{[0, T']} = {V}_0 \subset {V}$.
This is a contradiction since $y \not\in \tilde{V}$.
\end{enumerate}
 
\end{proof}

\subsection{The attractor-repeller pair arising from a strong Morse decomposition}

In many situations, we obtain an attractor-repeller pair by decomposing an isolating neighborhood to two parts.
We introduce the following notion, which arises in many situations.
\begin{defi}\label{defi strong morses decom}
Let $(A_{1},A_{2})$ be a pair of compact subsets of an isolating neighborhood $A$. We say that $(A_{1},A_{2})$ is a \emph{strong Morse decomposition of $A$} if
\begin{itemize}
\item $A=A_{1}\cup A_{2};$
\item For any $x\in A_{1}\cap A_{2}$, there exists $\epsilon>0$ such that \begin{equation}\label{morse condition}
\varphi(x,(0,\epsilon))\cap A_{1}=\emptyset \text{ and } \varphi(x,(-\epsilon,0))\cap A_{2}=\emptyset.\end{equation}
\end{itemize}
\end{defi}

Simply speaking, the flow leaves $A_1$ immediately and enters $A_2$ immediately at any point on $A_1 \cap A_2 $ (see Figure~\ref{fig strong Morse}). A strong Morse decomposition naturally occurs when we split $A$ by a level set of some function transverse to the flow. Let us summarize some basic properties of a strong Morse decomposition in the following lemma.  The proofs are straightforward and we omit them.

\begin{figure}[hbtp]
    \centering
        \psset{unit=1.0cm}
\begin{pspicture}(0,0)(6, 2.6)
\psrotate(3,1.3){180}{
\pspolygon[fillstyle=hlines, hatchwidth=0.3pt, hatchsep=12pt](0,0)(2.8,0)(2.8,2.6)(0,2.6)
\pspolygon(0,0)(5.6,0)(5.6,2.6)(0,2.6)
\psline{->}(3.1,0.2)(2.5,0.1)
\rput(0,0.4){\psline{->}(3.1,0.2)(2.5,0.1)}
\rput(0,0.8){\psline{->}(3.1,0.2)(2.5,0.1)}
\rput(0,1.4){\psline{->}(3.1,0.2)(2.5,0.3)}
\rput(0,1.8){\psline{->}(3.1,0.2)(2.5,0.3)}
\rput(0,2.2){\psline{->}(3.1,0.2)(2.5,0.3)}
\psdot(1.4,1.3)
\psdot(4.2,1.3)
\psline{->}(4.2,1.3)(2.8,1.3)
\psline(2.8,1.3)(1.4,1.3)}
\rput(5.5,2.2){$A_2$}
\rput(1,2.2){$A_1$}
\end{pspicture}
      
    \caption{A strong Morse decomposition}
   \label{fig strong Morse}
\end{figure}
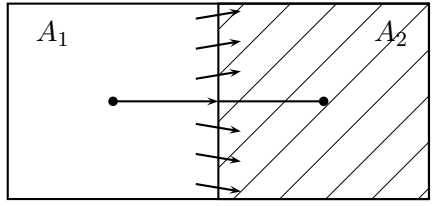

\begin{lem}\label{basic property of strong decomposition}
Let $(A_{1},A_{2})$ be a strong Morse decomposition of an isolating neighborhood $A$. Then, we have the following results.
\begin{enumerate}[(1)]
\item $A_{1}$ (resp. $A_{2}$) is negatively (resp. positively) invariant in $A$;
\item $A_{1}\cap A_{2}=\partial A_{1}\cap \partial A_{2}$ and $\partial A_{i} = (\partial A \cap A_i ) \cup (A_{1}\cap A_{2})$ for
$i=1,2$;
\item $A_{1}$ and $A_{2}$ are isolating neighborhoods;
\item $(\inv(A_{2}),\inv(A_{1}))$ is an attractor-repeller pair in $\inv(A)$.
\end{enumerate}
\end{lem}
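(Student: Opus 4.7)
The strategy is to verify the four claims in order, with each one relying on the strong Morse condition to exclude pathologies along the interface $A_1 \cap A_2$.

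For (1) I will show that $A_2$ is positively invariant in $A$; negative invariance of $A_1$ is symmetric (use the condition $\varphi(x,(-\epsilon,0))\cap A_2 = \emptyset$ instead). Given $x \in A_2$ with $\varphi(x, [0, t]) \subset A$, set
\[
s^{*} \;:=\; \sup\{\, s \in [0, t] \mid \varphi(x, [0, s]) \subset A_2 \,\}.
\]
Closedness of $A_2$ upgrades the supremum to a maximum. If $s^{*} < t$, then arbitrarily close to $s^{*}$ from the right there must exist $s'$ with $\varphi(x, s') \in A \setminus A_2 \subset A_1$, and closedness of $A_1$ then places $\varphi(x, s^{*}) \in A_1 \cap A_2$. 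The strong Morse condition at $\varphi(x, s^{*})$ furnishes $\epsilon > 0$ with $\varphi(x, (s^{*}, s^{*} + \epsilon)) \cap A_1 = \emptyset$, contradicting the approximating sequence.

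Claim (2) is pure point-set topology once (1) is in hand. The inclusion $A_1 \cap A_2 \subset \partial A_1 \cap \partial A_2$ is immediate from the strong Morse condition, which at any $x \in A_1 \cap A_2$ exhibits nearby forward flowline points outside $A_1$ and nearby backward flowline points outside $A_2$. The reverse inclusion is just closedness of $A_1$ and $A_2$. For the identity $\partial A_i = (\partial A \cap A_i) \cup (A_1 \cap A_2)$, any $x \in \partial A_i \cap \operatorname{int}(A)$ has arbitrarily nearby points in $A \setminus A_i \subset A_{3-i}$, hence lies in $A_1 \cap A_2$; the reverse containment is clear. Claim (3) then combines (1) and (2): an $x \in \inv(A_1)$ certainly lies in $\inv(A) \subset \operatorname{int}(A)$, and if $x \in \partial A_1$, claim (2) forces $x \in A_1 \cap A_2$, whereupon the strong Morse condition produces a forward flowline leaving $A_1$, contradicting $x \in \inv(A_1)$.

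For (4), write $S := \inv(A)$. Disjointness $\inv(A_1) \cap \inv(A_2) = \emptyset$ is a direct consequence of the strong Morse condition, since a common point would satisfy $\varphi(x, (0, \epsilon)) \cap A_1 = \emptyset$ while simultaneously lying in $\inv(A_1)$. Take
\[
U \;:=\; A_2 \cap S,
\]
which is a neighborhood of $\inv(A_2)$ in $S$ by (3), and which is compact and forward-invariant by (1). Consequently $\omega(U)$ equals the maximal invariant subset of $U$, which is precisely $\inv(A_2)$, so $\inv(A_2)$ is an attractor. The identification $T^{*} = \inv(A_1)$ then follows from the dichotomy: for any $x \in S$, the forward trajectory either enters $A_2$ at some finite time (in which case $\omega(x) \subset \inv(A_2)$ is nonempty and $x \notin T^{*}$) or else stays in $A_1$ forever, and the latter is upgraded via negative invariance of $A_1$ in $A$ to $x \in \inv(A_1)$. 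The main point to handle with care, and the one subtlety I expect to be delicate, is the choice of neighborhood $U$: the naive choice $S \setminus \inv(A_1)$ fails because forward trajectories starting near $\inv(A_1)$ can remain in $A_1$ for arbitrarily long times, so that $\omega(S \setminus \inv(A_1))$ is awkward to compute directly; taking $U = A_2 \cap S$ sidesteps this by making $U$ manifestly compact and forward-invariant, so that the standard identification of $\omega(U)$ with the maximal invariant subset of $U$ applies immediately.
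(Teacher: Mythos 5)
Your proof is correct. The paper omits the argument entirely (``The proofs are straightforward and we omit them''), so there is no published proof to compare against; all four claims are verified soundly, and your choice $U = A_2 \cap \inv(A)$ in part (4) --- compact and forward-invariant by (1), hence $\omega(U) = \inv(U) = \inv(A_2)$ --- cleanly avoids the pitfall you flagged with $S\setminus\inv(A_1)$.
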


When an attractor-repeller pair comes from a strong Morse decomposition, we have an extra property for index triples as follows.
\begin{lem}\label{special index triple}
Let $(A_{1},A_{2})$ be a strong Morse decomposition of $A$. Suppose that $(\tilde{N}_{3},\tilde{N}_{2},\tilde{N}_{1})$ is an index triple for $(\inv(A_{2}),\inv(A_{1}))$ and denote by $\tilde{N}'_{2}=\tilde{N}_{2}\cup (\tilde{N}_{1}\cap A_{2})$. Then, $(\tilde{N}_{3},\tilde{N}'_{2},\tilde{N}_{1})$
is again an index triple for $(\inv(A_{2}),\inv(A_{1}))$.  In particular, we can always pick an index triple  $(\tilde{N}_{3},\tilde{N}_{2},\tilde{N}_{1})$ of $(\inv(A_{2}),\inv(A_{1}))
$  satisfying  $\tilde{N}_{1}\cap A_{2} \subset \tilde{N}_{2}$.
\end{lem}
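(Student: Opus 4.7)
The plan is to verify that $(\tilde{N}_3, \tilde{N}'_2, \tilde{N}_1)$ satisfies the three conditions of being an index triple, namely that $(\tilde{N}'_2, \tilde{N}_3)$ is an index pair for the attractor $\inv(A_2)$, $(\tilde{N}_1, \tilde{N}_3)$ is an index pair for $\inv(A)$ (this is unchanged and needs no work), and $(\tilde{N}_1, \tilde{N}'_2)$ is an index pair for the repeller $\inv(A_1)$. The two essential tools from the excerpt are that $A_2$ is positively invariant in $A$ and that $A_1\cap A_2=\partial A_1\cap\partial A_2$ satisfies the strong Morse condition \eqref{morse condition}. A preliminary observation I would establish is $\inv(A_1)\cap A_2=\emptyset$: any $x\in A_1\cap A_2$ has $\varphi(x,(0,\epsilon))\cap A_1=\emptyset$ by the strong Morse condition, so $x\notin\inv(A_1)$. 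Combined with compactness, this gives a positive distance between $\inv(A_1)$ and $A_2$.

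For the repeller pair $(\tilde{N}_1,\tilde{N}'_2)$: The complement $\tilde{N}_1\setminus\tilde{N}'_2=(\tilde{N}_1\setminus\tilde{N}_2)\setminus A_2$ contains $\inv(A_1)$ in its interior using the distance observation above. The exit set property for $\tilde{N}'_2$ is immediate from that of $\tilde{N}_2$, and positive invariance of $\tilde{N}'_2$ in $\tilde{N}_1$ splits into two cases: points in $\tilde{N}_2$ stay in $\tilde{N}_2$ by the original positive invariance, while points in $\tilde{N}_1\cap A_2$ stay in $A_2$ by positive invariance of $A_2$ in $A$ (noting $\tilde{N}_1\subset A$).

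For the attractor pair $(\tilde{N}'_2,\tilde{N}_3)$: The two easy conditions—exit set and positive invariance of $\tilde{N}_3$—follow by splitting into the same two cases and invoking the exit/invariance properties of $(\tilde{N}_1,\tilde{N}_3)$ and $(\tilde{N}_2,\tilde{N}_3)$ respectively. The main obstacle I expect is verifying $\inv(\tilde{N}'_2\setminus\tilde{N}_3)=\inv(A_2)$. The containment $\supset$ is clear since $\inv(A_2)\subset\tilde{N}_2\setminus\tilde{N}_3\subset\tilde{N}'_2\setminus\tilde{N}_3$. For $\subset$, note that any $x\in\inv(\tilde{N}'_2\setminus\tilde{N}_3)$ lies in $\inv(\tilde{N}_1\setminus\tilde{N}_3)=\inv(A)$, which by Lemma \ref{basic property of strong decomposition}(4) decomposes into $\inv(A_2)$, $\inv(A_1)$, and connecting orbits from $\inv(A_1)$ to $\inv(A_2)$. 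The case $x\in\inv(A_1)$ is excluded since $\inv(A_1)\cap A_2=\emptyset$ and $\inv(A_1)\cap\tilde{N}_2=\emptyset$, while $x\in\tilde{N}'_2=\tilde{N}_2\cup(\tilde{N}_1\cap A_2)$.

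To rule out connecting orbits, I would exploit the $\alpha$-limit behavior: if $x$ is on a connecting orbit then $\varphi(x,t)$ approaches $\inv(A_1)$ as $t\to-\infty$, and using the positive distance between $\inv(A_1)$ and $A_2$, for sufficiently negative $t$ we have $\varphi(x,t)\notin A_2$, hence $\varphi(x,t)\in\tilde{N}_2$. Then by positive invariance of $\tilde{N}_2$ in $\tilde{N}_1$ and the fact that the full orbit stays in $\tilde{N}'_2\subset\tilde{N}_1$, the orbit stays in $\tilde{N}_2$ for all later times, so $\varphi(x,\mathbb{R})\subset\tilde{N}_2$ and hence $x\in\inv(\tilde{N}_2\setminus\tilde{N}_3)=\inv(A_2)$, contradicting $x$ being on a connecting orbit. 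The ``in particular'' statement follows immediately by starting from any index triple and replacing $\tilde{N}_2$ with $\tilde{N}'_2$, which tautologically contains $\tilde{N}_1\cap A_2$.
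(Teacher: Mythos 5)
Your proposal follows the same overall strategy as the paper: verify directly that each of the three pairs $(\tilde{N}_1,\tilde{N}_3)$, $(\tilde{N}'_2,\tilde{N}_3)$, $(\tilde{N}_1,\tilde{N}'_2)$ is an index pair for the appropriate isolated invariant set, and most of the individual conditions you verify the same way. For the hardest condition, $\inv(\tilde{N}'_2\setminus\tilde{N}_3)=\inv(A_2)$, you take a genuinely different route. The paper argues directly: if $x\in\inv(\tilde{N}'_2\setminus\tilde{N}_3)$ and $\varphi(x,t)\notin\tilde{N}_2$ for some $t$, then since the orbit avoids $\tilde{N}_3$, which is an exit set for both $\tilde{N}_2$ and $\tilde{N}_1\cap A_2$, the whole orbit must lie in $\tilde{N}_1\cap A_2\subset A_2$; hence $x\in\inv(A_2)=\inv(\tilde{N}_2\setminus\tilde{N}_3)$, a contradiction. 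You instead invoke the abstract attractor--repeller decomposition of $\inv(A)$ from Lemma~\ref{basic property of strong decomposition}(4) into $\inv(A_2)$, $\inv(A_1)$ and connecting orbits, dispose of the $\inv(A_1)$ case by $\inv(A_1)\cap\tilde{N}'_2=\emptyset$, and rule out connecting orbits by an $\alpha$-limit argument that forces the orbit into $\tilde{N}_2$ for all time. Both work; yours leans more heavily on the abstract Conley structure, while the paper's is more elementary and avoids $\alpha$-limit sets.

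There is a gap, however, in your treatment of the exit set condition for $(\tilde{N}'_2,\tilde{N}_3)$. You assert that it follows ``by splitting into the same two cases and invoking the exit/invariance properties of $(\tilde{N}_1,\tilde{N}_3)$ and $(\tilde{N}_2,\tilde{N}_3)$,'' but that is not quite enough. In the case $x\in\tilde{N}_1\cap A_2$ with $\varphi(x,t)\notin\tilde{N}'_2$ for some $t>0$, the exit property of $(\tilde{N}_1,\tilde{N}_3)$ applies only once you know $\varphi(x,t)\notin\tilde{N}_1$, and a priori one could have $\varphi(x,t)\in\tilde{N}_1\setminus\tilde{N}'_2$, i.e., still in $\tilde{N}_1$ but outside $A_2$. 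Closing this requires the strong Morse hypothesis: if $\varphi(x,[0,t])\subset\tilde{N}_1\subset A$, then by positive invariance of $A_2$ in $A$ the flow stays in $A_2$, so $\varphi(x,t)\in\tilde{N}_1\cap A_2\subset\tilde{N}'_2$, a contradiction; hence the flow must leave $\tilde{N}_1$ somewhere on $[0,t]$, and only then can the exit property of $(\tilde{N}_1,\tilde{N}_3)$ be invoked. The paper handles this point by observing explicitly that ``a flow cannot go from $A_2$ to $A_1$.''
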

\begin{proof}
We simply check each condition of index pairs one by one. \begin{itemize}
 \item  $\tilde{N}'_{2}$ is positively invariant in $\tilde{N}_{1}$: Since $A_{2}$ is positively invariant in $A$, $A_{2}\cap
\tilde{N}_{1}$ is positively invariant in $\tilde{N}_{1}$. The set $\tilde{N}_{2}$ is also positively invariant in $\tilde{N}_{1}$ because $(\tilde{N}_{1},\tilde{N}_{2}) $ is an index pair. It is straightforward to see that the union
 of two positively invariant sets is a positively invariant set. 
 \item $\tilde{N}'_{2}$ is an exit set for $\tilde{N}_{1}$ because $\tilde{N}'_{2}$ contains $\tilde{N}_{2}$, which
is an exit set for $\tilde{N}_{1}$.
 \item $\inv(A_{1})= \inv(\tilde{N}_{1}\setminus \tilde{N}'_{2})\subset \inti(\tilde{N}_{1}\setminus \tilde{N}'_{2})$: Consider an element $x\in \inv(\tilde{N}_{1}\setminus
\tilde{N}_{2})=\inv (A_{1})$. Then, $\varphi(x,(-\infty,\infty))$ is contained in $(\tilde{N}_{1}\setminus \tilde{N}_{2})  \cap 
\inti (A_{1})$. Since $\inti (A_{1}) \cap A_2 = \emptyset$,  we see that  $\varphi(x,(-\infty,\infty))\subset
\tilde{N}_{1}\setminus (\tilde{N}_{2}\cup (\tilde{N}_{1}\cap A_{2}))$. Thus, $x\in \inv (\tilde{N}_{1}\setminus
\tilde{N}'_{2})$ and $\inv(\tilde{N}_{1}\setminus \tilde{N}_{2})\subset \inti(\tilde{N}_{1}\setminus \tilde{N}'_{2})$. Since $\tilde{N}_{1}\setminus \tilde{N}'_{2}\subset \tilde{N}_{1}\setminus \tilde{N}_{2} $, we have $\inv(\tilde{N}_{1}\setminus
\tilde{N}'_{2}) = \inv(\tilde{N}_{1}\setminus \tilde{N}_{2}) = \inv{(A_1)} $. Note that $\inv(A_1) \subset \inti(\tilde{N}_{1}\setminus \tilde{N}'_{2}) $ because $\inv(A_1) \subset \inti(\tilde{N}_{1}\setminus
\tilde{N}_{2}) $ and $\inv(A_1) \cap A_2 = \emptyset $. 

\item $\tilde{N}_{3}$ is positively invariant in $\tilde{N}'_{2}$ because $\tilde{N}_{3}$ is positively invariant
in $\tilde{N}_{1}$, which contains $\tilde{N}'_{2}$.
\item $\tilde{N}_{3}$ is an exit set for $\tilde{N}'_{2}$: We only have to check that $\tilde{N}_{3}$ is an exit set for $\tilde{N}_{1} \cap A_2$. Suppose that $x \in \tilde{N}_{1} \cap A_2 $ but $\varphi(x,t) \notin \tilde{N}_{1} \cap A_2 $ for some $t>0$. Notice that a flow cannot go from $A_2$ to $A_1$ since $(A_1, A_2)$ is a strong Morse decomposition. If $ \varphi(x,t) \in \tilde{N_1}$, we would have $\varphi(x,t) \notin A_2 $ which implies $\varphi(x,t) \notin A $ a contradiction. When $\varphi(x,t) \notin \tilde{N_1}$, we can use the fact that $\tilde{N}_{3}$ is an exit set for $\tilde{N}_{1}$. 
\item $\inv(A_{2})=\inv(\tilde{N}'_{2}\setminus \tilde{N}_{3})\subset \inti(\tilde{N}'_{2}\setminus \tilde{N}_{3})$: Suppose that we have $x\in \inv(\tilde{N}'_{2}\setminus
\tilde{N}_{3})$ such that $\varphi(x,t) \notin \tilde{N}_{2}\setminus \tilde{N}_{3}$ for some $t \in \mathbb{R}$.  Since $\varphi(x,(-\infty,\infty))$
does not intersect $\tilde{N}_{3}$, which is an exit set for both $\tilde{N}_{2}$ and $\tilde{N_{1}}\cap A_{2}$, one can deduce that $\varphi(x,(-\infty,\infty))\subset\tilde{N_{1}}\cap A_{2}$. This implies  $x\in \inv
(A_{2}) = \inv(\tilde{N}_{2}\setminus \tilde{N}_{3})$ which is a contradiction. 
Therefore, $\inv (\tilde{N}'_{2}\setminus \tilde{N}_{3}) \subset \inv(\tilde{N}_{2}\setminus
\tilde{N}_{3})$ while the converse is trivial. Consequently, $\inv(\tilde{N}'_{2}\setminus \tilde{N}_{3}) =\inv(A_{2}) $ is contained in $\inti(\tilde{N}_{2}\setminus \tilde{N}_{3}) \subset \inti(\tilde{N}'_{2}\setminus \tilde{N}_{3})$.
\end{itemize}\end{proof}

When an attractor-repeller pair arises from  a strong Morse decomposition, we will show that
canonical maps from pre-index pairs are compatible with the attractor and repeller maps.
\begin{pro}\label{pre-index map compatible with attractor}
Let $(A_{1},A_{2})$ be a strong Morse decomposition of $A$ and let $(K_{1},K_{2})$ be a pre-index pair in $A_{2}$. Then,
we have the following:
\begin{enumerate}[(1)]
\item $(K_{1},K_{2})$ is also a pre-index pair in $A$;
\item We have a commutative diagram 
\begin{equation*}
\xymatrix{
K_1 / K_2 \ar[dr]_{\iota} \ar[r]^{\iota_2} & I(\inv(A_2)) \ar[d]^{i}\\
 & I(\inv(A))}
\end{equation*}
where $\iota , \iota_2 $ are the canonical maps and $i \colon I(\inv (A_2))\rightarrow I(\inv (A_{}))$ is the attractor map.
\end{enumerate}
\end{pro}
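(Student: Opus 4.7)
For part (1), my plan is to leverage the positive invariance of $A_2$ in $A$ provided by Lemma~\ref{basic property of strong decomposition}(1). This yields the identity $A_2^{+} = A^{+} \cap A_2$: a forward orbit starting in $A_2 \subset A$ that stays in $A$ cannot escape into $A_1$. Since $K_1, K_2 \subset A_2$, the two conditions in Definition~\ref{defi pre-index pair} transfer directly from $A_2$ to $A$. Condition (ii) follows from $K_2 \cap A^{+} = K_2 \cap A_2^{+} = \emptyset$, and condition (i) follows because for $x \in K_1 \cap A^{+} = K_1 \cap A_2^{+}$ we already have $\varphi(x,[0,\infty)) \subset \inti(A_2) \subset \inti(A)$, using that any open ball contained in $A_2$ is also contained in $A$.

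For part (2), my strategy is to realize $\iota$, $\iota_2$, and the attractor map $i$ simultaneously by inclusions arising from one common index triple. First I would apply Theorem~\ref{from pre-index to index} to $(K_1, K_2)$ viewed as a pre-index pair in $A_2$ to obtain an index pair $(\tilde{N}_2, \tilde{N}_3)$ of $\inv(A_2)$ with $K_1 \subset \tilde{N}_2 \subset A_2$ and $K_2 \subset \tilde{N}_3$. The next step is to extend this attractor index pair to an index triple $(\tilde{N}_3, \tilde{N}_2, \tilde{N}_1)$ for the attractor-repeller pair $(\inv(A_2), \inv(A_1))$ inside $A$, keeping $\tilde{N}_2$ and $\tilde{N}_3$ fixed. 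The strong Morse decomposition hypothesis is what makes such a gluing work: no flow enters $A_2$ from $A_1$ except through the common boundary $A_1 \cap A_2$, so one can combine $(\tilde{N}_2, \tilde{N}_3)$ with a suitable index pair for the repeller $\inv(A_1)$, adjusting by $P_A$-saturation along the lines of the construction in Lemma~\ref{special index triple}, to obtain the desired $\tilde{N}_1$.

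Once the triple is in hand, the attractor map $i$ is represented by the inclusion $\tilde{N}_2/\tilde{N}_3 \hookrightarrow \tilde{N}_1/\tilde{N}_3$ (Proposition~\ref{Attractor-repeller-exact sequence}). Because $K_1 \subset \tilde{N}_2 \subset \tilde{N}_1$ and $K_2 \subset \tilde{N}_3$, Theorem~\ref{pre-index map compatible} lets me identify $\iota_2$ with the inclusion $K_1/K_2 \to \tilde{N}_2/\tilde{N}_3$ and, using part (1) to guarantee that $(\tilde{N}_1, \tilde{N}_3)$ is an index pair of $\inv(A)$ containing the pre-index pair, identify $\iota$ with the inclusion $K_1/K_2 \to \tilde{N}_1/\tilde{N}_3$. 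The composition $i \circ \iota_2$ is then manifestly the same inclusion, so the triangle commutes up to homotopy.

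The main obstacle I anticipate is the extension step in part (2): while Proposition~\ref{Attractor-repeller-exact sequence} guarantees the existence of \emph{some} index triple inside $A$, I need to produce one whose attractor index pair is precisely the pre-specified $(\tilde{N}_2, \tilde{N}_3) \subset A_2$. This requires revisiting the Salamon-type construction of index triples and carefully tracking the containments $K_1 \subset \tilde{N}_2 \subset A_2$ and $K_2 \subset \tilde{N}_3$ together with the strong Morse condition throughout the gluing, so that the resulting $\tilde{N}_1$ is still positively invariant in $A$ and that $\tilde{N}_2$ serves as an exit set within it.
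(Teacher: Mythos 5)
Part (1) matches the paper's argument.  Part (2) contains a genuine gap precisely at the ``extension step'' you flag as the main obstacle, and the strong Morse hypothesis does not rescue it.

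You want to build an index triple $(\tilde N_3, \tilde N_2, \tilde N_1)$ for $(\inv(A_2), \inv(A_1))$ in which the attractor pair $(\tilde N_2, \tilde N_3)$ is the one produced by Theorem~\ref{from pre-index to index} inside $A_2$, so that $\tilde N_3 \subset A_2$. But for $(\tilde N_1, \tilde N_3)$ to be an index pair for $\inv(A)$, the set $\tilde N_3$ must be an exit set for all of $\tilde N_1$: any forward orbit leaving $\tilde N_1$ must first pass through $\tilde N_3$. Whenever $\inv(A_1) \neq \emptyset$, the set $\tilde N_1$ must contain a neighborhood of the repeller inside $A_1$, and in general orbits near the repeller escape $A$ directly through $\partial A \cap A_1$ without ever entering $A_2$ --- hence without ever meeting your $\tilde N_3$.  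The strong Morse condition only controls how orbits cross the interface $A_1 \cap A_2$; it places no restriction on orbits that exit through the $A_1$-part of $\partial A$.  So no amount of $P_A$-saturation will produce the $\tilde N_1$ you need, and the one object you want to keep fixed is exactly the one that must change.

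The paper avoids this by refusing to couple the index triple to the pre-index pair.  It picks an arbitrary index triple $(\tilde N_3, \tilde N_2, \tilde N_1)$ subject only to $\tilde N_1 \cap A_2 \subset \tilde N_2$ (Lemma~\ref{special index triple}), and separately picks $T$-tame index pairs $(N_2, L_2) \supset (K_1, K_2)$ in $A_2$ and $(N, L) \supset (K_1, K_2)$ in $A$ via Theorem~\ref{from pre-index to index refined}.  Then $i \circ \iota_2$ and $\iota$ are written as compositions of inclusions with flow maps into the index triple, the flow maps are reduced to explicit pointwise formulas using Lemma~\ref{flow map from tame index pair}, and the two defining conditions are shown to coincide on $K_1$ (using positive invariance of $A_2$ and $\tilde N_1 \cap A_2 \subset \tilde N_2$).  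You should adopt this strategy: realize the three morphisms with separate representatives and compare flow-map formulas, rather than trying to engineer a single shared exit set.
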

\begin{proof} \
\begin{enumerate}
\item  
 Consider $x\in K_{1}$ satisfying $\varphi(x,[0,\infty))\subset A$. Since $A_{2}$ is positively invariant in $A$ and
$x\in K_1 \subset A_{2}$, we have $\varphi(x,[0,\infty))\subset A_{2}$. Consequently, we see that $\varphi(x,[0,\infty))\subset \inti(A_{2})\subset
\inti(A)$ because $(K_{1},K_{2})$ is an pre-index pair in $A_{2}$.
Now, consider $x\in K_{2} \cap A^+$. Again, since $A_{2}$ is positively invariant in $A$,
we have $\varphi(x,[0,\infty))\subset A_{2}$. This is impossible because $K_{2}\cap A^{+}_{2}=\emptyset$.

\item Let $\tilde{N}_{3}\subset \tilde{N}_{2}\subset \tilde{N}_{1}\subset A$ be an index triple for $(\inv(A_{2}),\inv(A_{1}))$
such that $\tilde{N}_{1}\cap A_{2}\subset \tilde{N}_{2}$ (cf. Lemma \ref{special index triple})
and let $L\subset N\subset A$ (resp. $L_{2}\subset N_{2}\subset A_{2}$) be an index pair for $\inv(A)$ (resp. $\inv(A_{2})$)
that contains $(K_{1},K_{2})$. By Theorem~\ref{from pre-index to index refined}, we may also assume that both $(N,L)$ and $(N_{2},L_{2})$ are $T$-tame.
By possibly increasing $T$, we also assume that we have flow maps $s_T \colon N/L\xrightarrow{}\tilde{N}_{1}/\tilde{N}_{3}$
and $s'_T \colon N_{2}/L_{2}\xrightarrow{}\tilde{N}_{2}/\tilde{N}_{3} $. 
Then, the map $i \circ \iota$ is represented by a composition
$$
K_{1}/K_{2}\xrightarrow{\iota_2}N_{2}/L_{2}\xrightarrow{s'_{T}}\tilde{N}_{2}/\tilde{N}_{3}\xrightarrow{i}\tilde{N}_{1}/\tilde{N}_{3}
$$
while the map $\iota$ is represented by the composition
$$
K_{1}/K_{2}\xrightarrow{\iota}N/L\xrightarrow{s_{T}}\tilde{N}_{1}/\tilde{N}_{3}.$$
We will show that these two compositions are in fact the same map.

Applying Lemma~\ref{flow map from tame index pair}, one can check that $i \circ s'_T \circ \iota_2 $ sends $[x]$ to $[\varphi(x,3T)]$
if
\begin{equation}\label{condition 3atrcomp}
\varphi(x,[0,3T])\subset A_{2},\ \varphi(x,[T,3T])\subset \tilde{N}_{2}\setminus \tilde{N}_{3}
\end{equation}
and to the basepoint otherwise. On the other hand, $s_T \circ \iota $ sends $[x]$ to $[\varphi(x,3T)]$
if
\begin{equation}\label{condition 4atrcomp}
\varphi(x,[0,3T])\subset A,\ \varphi(x,[T,3T])\subset \tilde{N}_{1}\setminus \tilde{N}_{3}
\end{equation}
and to the basepoint otherwise. It is obvious that condition (\ref{condition
3atrcomp}) implies (\ref{condition 4atrcomp}). On the other hand, condition (\ref{condition
4atrcomp}) implies (\ref{condition 3atrcomp}) for $x \in K_1 \subset A_2 $ simply because  $A_{2}$ is positively invariant in $A$ and $\tilde{N}_{1}\cap
A_{2}\subset \tilde{N}_{2}$.
\end{enumerate}
\end{proof}
\begin{pro}\label{pre-index map compatible with repellor}
Let $(A_{1},A_{2})$ be a strong Morse decomposition of $A$ and let $(K_{3},K_{4})$ be a pre-index pair in $A$.
Consider a pair $(K'_{3},K'_{4}) := (K_{3}\cap A_{1},(K_{4}\cap A_{1})\cup (K_{3}\cap A_{1}\cap A_{2}))$.  Then, we have the followings:
\begin{enumerate}[(1)]
\item The pair $(K'_{3},K'_{4})$ is a pre-index pair in $A_{1}$;
\item A map $q \colon K_{3}/K_{4}\rightarrow K'_{3}/K'_{4}$ given by
$$q([x])=\left\{
  \begin{array}{l l}
   [x] & \quad \text{if } x\in K'_{3}, \\
     \mathop{[K'_{4}]} & \quad \text{otherwise,}
  \end{array} \right.$$
  is well-defined and continuous;
\item We have a commutative diagram 
\begin{equation*}
\xymatrix{
K_3 / K_4 \ar[d]_{q} \ar[r]^{\iota} & I(\inv(A)) \ar[d]^{r}\\
K'_3 / K'_4 \ar[r]^{\iota'} & I(\inv(A_1))}
\end{equation*}
where $\iota , \iota' $ are the canonical maps and $r \colon I(\inv (A))\rightarrow I(\inv (A_{1}))$ is the repeller map.

\end{enumerate}
\end{pro}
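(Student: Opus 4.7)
For part~(1), I verify the two conditions of Definition~\ref{defi pre-index pair} directly. For condition (ii), I split $K'_4 = (K_4 \cap A_1) \cup (K_3 \cap A_1 \cap A_2)$; the first piece has empty intersection with $A_1^+ \subset A^+$ by the pre-indexness of $(K_3, K_4)$, and the second piece is ruled out because for $x \in A_1 \cap A_2$ the strong Morse condition $\varphi(x,(0,\epsilon))\cap A_1 = \emptyset$ contradicts $x \in A_1^+$. For condition (i), take $x \in K'_3 \cap A_1^+$; pre-indexness of $(K_3,K_4)$ in $A$ yields $\varphi(x,[0,\infty))\subset \inti(A)$, and the same strong Morse argument shows the trajectory avoids $A_1 \cap A_2$, so by the identification $\partial A_1 = (\partial A \cap A_1) \cup (A_1 \cap A_2)$ from Lemma~\ref{basic property of strong decomposition} the trajectory lies in $\inti(A_1)$. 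For part~(2), well-definedness reduces to $K_4 \cap K'_3 \subset K'_4$, which is immediate from the definitions. For continuity of $q$, the only nontrivial case is a sequence $x_n \in K_3 \setminus A_1$ converging to some $x_\infty \in K'_3$; since $A = A_1 \cup A_2$ with both closed, $x_n \in A_2$, whence $x_\infty \in A_1 \cap A_2 \subset K'_4$, so $q([x_\infty]) = [K'_4]$ matches the constant image $q([x_n]) = [K'_4]$.

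For part~(3), I use the $T$-tame framework of Section~\ref{section T-tame}. By Theorem~\ref{from pre-index to index refined}, pick a $T$-tame index pair $(N,L)$ for $\inv(A)$ in $A$ containing $(K_3, K_4)$ and a $T$-tame index pair $(N_1, L_1)$ for $\inv(A_1)$ in $A_1$ containing $(K'_3, K'_4)$. By Lemma~\ref{special index triple}, pick an index triple $(\tilde{N}_3, \tilde{N}_2, \tilde{N}_1)$ for $(\inv(A_2), \inv(A_1))$ with $\tilde{N}_1 \cap A_2 \subset \tilde{N}_2$. I represent $r \circ \iota$ as the composition $K_3/K_4 \xrightarrow{\iota} N/L \xrightarrow{s_T} \tilde{N}_1/\tilde{N}_3 \xrightarrow{r} \tilde{N}_1/\tilde{N}_2$ and $\iota' \circ q$ as $K_3/K_4 \xrightarrow{q} K'_3/K'_4 \xrightarrow{\iota'} N_1/L_1 \xrightarrow{s_T} \tilde{N}_1/\tilde{N}_2$, and expand each flow map by Lemma~\ref{flow map from tame index pair}. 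Comparing the simplified formulas pointwise: for $x \in K_3 \setminus A_1 \subset A_2$, positive invariance of $A_2$ in $A$ combined with $\tilde{N}_1 \cap A_2 \subset \tilde{N}_2$ forces $r \circ \iota([x])$ to the basepoint, matching $\iota' \circ q([x])$; for $x \in K'_3$, the same ingredients show that the trigger condition $\varphi(x,[0,3T]) \subset A$ together with $\varphi(x,[T,3T]) \subset \tilde{N}_1 \setminus \tilde{N}_2$ forces $\varphi(x,[0,3T]) \subset A_1$, so the $A$-side and $A_1$-side trigger conditions coincide and both compositions return $[\varphi(x,3T)]$.

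I expect the main obstacle to lie in part~(3): coordinating the $T$-tame index pairs in $A$ and $A_1$ with an index triple satisfying the extra containment from Lemma~\ref{special index triple}, and verifying that the combination of positive invariance of $A_2$ with $\tilde{N}_1 \cap A_2 \subset \tilde{N}_2$ translates correctly between the $A$-side and $A_1$-side flow-map formulas. Parts~(1) and (2) are essentially routine consequences of the strong Morse condition and the closedness of $A_1, A_2$, and the argument closely mirrors the analysis in the attractor case of Proposition~\ref{pre-index map compatible with attractor}.
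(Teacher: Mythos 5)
Your proposal is correct and follows essentially the same route as the paper's proof: parts (1) and (2) are verified directly from Definition~\ref{defi pre-index pair} and the strong Morse condition (your sequential continuity argument is the pointwise version of the paper's closure computation $(\overline{K_{3}\setminus K'_{3}})\cap K'_{3}\subset K'_{4}$), and part (3) uses the same combination of Theorem~\ref{from pre-index to index refined}, Lemma~\ref{special index triple} (giving $\tilde{N}_{1}\cap A_{2}\subset \tilde{N}_{2}$), and Lemma~\ref{flow map from tame index pair} to reduce the commutativity to the equivalence of the two trigger conditions, invoking negative invariance of $A_{1}$ (equivalently positive invariance of $A_{2}$) plus the exit-set property of $\tilde{N}_3$ in $\tilde{N}_2$.
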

\begin{proof}

\ \begin{enumerate}
\item We will check the two conditions of pre-index pair directly. Suppose that $x\in K'_{3}$ and $\varphi(x,[0,\infty))\subset A_1$. We can see that  $
\varphi(x,[0,\infty))\cap
(A_{1}\cap A_{2})=\emptyset
$ from the property (\ref{morse condition}) of strong Morse decomposition. 
Since  $(K_{3},K_{4})$ is a pre-index pair in $A$ and $x \in K_3 \cap A^{+} $  we have $\varphi(x,[0,\infty))\cap
\partial A=\emptyset$. Consequently, we can deduce that $\varphi(x,[0,+\infty))\cap
\partial A_{1}=\emptyset$ because $\partial A_{1} = (\partial A \cap A_1 ) \cup (A_{1}\cap A_{2})$.

Since $(K_{3},K_{4})$ is a pre-index pair in $A$, we have $K_4 \cap A^+ = \emptyset$. It follows directly that $(K_{4}\cap A_{1})\cap A^{+}_{1}=\emptyset$. On the other hand,  we can see that $(K_{3}\cap A_{1}\cap A_{2})\cap A^{+}_{1}=\emptyset$ as a point on $A_1 \cap A_2 $ leaves $A_1$ immediately. Therefore, $K'_{4}$ has empty intersection with $A^{+}_{1}$.

\item Note that $q$ is continuous because $(\overline{K_{3}\setminus K'_{3}})\cap K'_{3} = K_3 \cap A_1 \cap A_2\subset K'_{4}$.
For $x \in K_4 \cap K'_3 \subset K_4 \cap A_1 \subset K'_4 $, we see that $q$ is well-defined.

\item As in the proof of Proposition \ref{pre-index map compatible with attractor}, let $\tilde{N}_{3}\subset\tilde{N}_{2}\subset\tilde{N}_{1}\subset
A $ be an index triple for $(\inv (A_{2}),\inv(A_{1}))$ with $\tilde{N}_{1}\cap A_{2}\subset \tilde{N}_{2}$ and let $L\subset N\subset A$ (resp. $L_{1}\subset N_{1}\subset A_{1}$)
be an index pair for $A$ (resp. for $A_{1}$) that contains $(K_{3},K_{4})$ (resp. $(K'_{3},K'_{4})$).
By Theorem~\ref{from pre-index to index refined}, we can assume that $(N,L)$ and $(N_{1},L_{1})$ are both $T$-tame.
By possibly increasing $T$, we also assume that we have flow maps $s_{T} \colon N/L\xrightarrow{} \tilde{N}_{1}/\tilde{N}_{3} $ and $s'_T \colon N_{1}/L_{1}\xrightarrow{}\tilde{N}_{1}/\tilde{N}_{2} $.
 Then, the map
$q\circ \iota'$ is represented by
\begin{equation*}
K_{3}/K_{4}\xrightarrow{q} K'_{3}/K'_{4}\xrightarrow{\iota'}N_{1}/L_{1}\xrightarrow{s'_{T}}\tilde{N}_{1}/\tilde{N}_{2},
\end{equation*}
and the map $r\circ \iota$ is represented by
\begin{equation*}
K_{3}/K_{4}\xrightarrow{\iota}N/L\xrightarrow{s_{T}} \tilde{N}_{1}/\tilde{N}_{3}\xrightarrow{r} \tilde{N}_{1}/\tilde{N}_{2}.
\end{equation*}
We will show that these two compositions are in fact the same maps.

Applying Lemma~\ref{flow map from tame index pair},
one can check that  $s'_T \circ \iota' \circ q $ sends $[x]$ to $[\varphi(x,3T)]$ if
\begin{equation}\label{condition 1repcomp}
\varphi(x,[0,3T])\subset A_{1} \text{ and } \varphi(x,[T,3T])\subset \tilde{N}_{1}\setminus \tilde{N}_{2}
\end{equation}
and to the basepoint otherwise. On the other hand, $ r \circ s_T \circ \iota$ sends $[x]$ to $[\varphi(x,3T)]$
if
\begin{equation}\label{condition 2repcomp}
\begin{split}
&\varphi(x,[0,3T])\subset A, \\
& \varphi(x,[T,3T])\subset \tilde{N}_{1}\setminus \tilde{N}_{3} \\
& \varphi(x,3T)\notin
\tilde{N}_{2}
\end{split}
\end{equation}
and to the basepoint otherwise. Clearly, condition (\ref{condition 1repcomp}) implies condition (\ref{condition 2repcomp}). We will check that the two conditions are the same.
Consider an element $x\in K_{3}$ satisfying (\ref{condition 2repcomp}). We see that $\varphi(x,3T)\in\tilde{N}_{1}\setminus \tilde{N}_{2}\subset A_{1}$ because $\tilde{N}_{1}\cap A_{2}\subset \tilde{N}_{2}$.
Since $A_{1}$ is negatively invariant in $A$, we have $\varphi(x,[0,3T])\subset A_{1}$. Moreover, the facts $\varphi(x,3T)\notin
\tilde{N}_{2}$ and $\varphi(x,[T,3T])\cap \tilde{N}_{3}=\emptyset$ imply that $\varphi(x,[T,3T])\cap \tilde{N}_{2}=\emptyset$
since $\tilde{N}_{3}$ is an exit set for $\tilde{N}_{2}$. We have proved that $x$
satisfies condition (\ref{condition 1repcomp}).

\end{enumerate}
\end{proof}

\subsection{$T$-tame manifold isolating block for Seiberg-Witten flow}
\label{sec Ttamemfd}

\begin{defi}
For a compact set $N$ in $\Omega$, we consider the following subsets of its boundary:
\[
     \begin{split}
        n^+(N) := & \{ x \in \partial N | \exists \epsilon > 0 \ \text{s.t.} \ \varphi(-\epsilon, 0) \cap N = \emptyset 
\}, \\
        n^-(N) := & \{ x \in \partial N | \exists \epsilon > 0 \ \text{s.t.} \ \varphi(0, \epsilon) \cap N = \emptyset \}.
     \end{split}
\]
A compact set $N$ is called an \emph{isolating block} if $\partial N = n^+(N) \cup n^-(N)$. 

\end{defi}

It is straightforward to verify that an isolating block $N$ is an isolating neighborhood and that $(N, n^-(N))$ is an index pair.

\begin{defi}
Let $S$ be  a compact subset of $\Omega$. If $N$ is a compact submanifold of $\Omega$ and is also an isolating block with $\inv  N = S$, we call $N$ a \emph{manifold isolating block} of $S$. 
\end{defi}

In \cite{Conley-Easton}, it is proved that, for any isolating neighborhood $A$, we can always find a manifold isolating block
$N$ of $\inv A$ with $N \subset A$. 
We also introduce a notion of tameness for an isolating block as quantitative refinement as in Section~\ref{section T-tame}.

\begin{defi} \label{def T-tame isolating block}
Let $A$ be an isolating neighborhood and $T$ be a positive number. An isolating block $N$ in $A$ is called $T$-tame
if $A^{[-T, T]} \subset \inti(N)$. 
\end{defi}


We turn into special situation involving construction of the spectrum invariants a 3-manifold $Y$: $\swf^{A}(Y, \frak{s}, A_0, g; S^1)$ and $\swf^{R}({Y},\frak{s}, A_0, g; S^1)$.
Let $R_0$ be the universal constant from \cite[Theorem~3.2]{KLS1} such that all finite-type Seiberg-Witten trajectories are contained in $Str(R_{0})$ (see (\ref{eq: str})). Take a positive number $\tilde{R}$ with $\tilde{R} > R_0$,
 sequences $\lambda_{n} \rightarrow -\infty$, $\mu_{n} \rightarrow \infty$ and consider the sets
\[
     \begin{split}
        J_{m}^{\pm}  &:= Str( \tilde{R} ) \cap \bigcap_{ 1 \leq j \leq b_1 } g_{j, \pm}^{-1}(-\infty,  \theta + m], \\
        J_{m}^{n, \pm}  &:= J_{m}^{\pm} \cap V_{\lambda_n}^{\mu_n},
     \end{split}
\]
defined in Section 2.1 (see also \cite[Section~5.1]{KLS1} for more details.)

\begin{lem} \label{lem J 2T int}
For each positive integer $m$, there is a positive number $T_{m}$ independent of $n$ such that  
\[
              (J_{m}^{n,+})^{[-2T, 2T]} \subset \inti  \left\{ (J_{m}^{n, +})^{[-T, T]} \right\}, 
\]
for all $T > T_{m}$ and $n$ sufficiently large. In particular,   $(J_{m}^{n,+})^{[-2T, 2T]} \subset \inti (J_{m}^{n, +})$. Similar results hold for $J_{m}^{n, -}$. 
\end{lem}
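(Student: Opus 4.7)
The plan is a compactness–contradiction argument, the key inputs being the convergence of approximated Seiberg--Witten trajectories to honest ones and the boundedness theorem \cite[Theorem~3.2]{KLS1}. Since every finite-type trajectory lies in $Str(R_{0}) \subset \inti(Str(\tilde{R}))$, and for generic $\theta$ such a trajectory also satisfies $g_{j,+} < \theta + m$ strictly for all $j$, any finite-type trajectory contained in $J_{m}^{+}$ actually sits in a strictly smaller region. The heart of the argument is to upgrade this pointwise statement to a uniform one: trajectory segments of sufficiently long length inside $J_{m}^{n,+}$ must have their centers separated from $\partial J_{m}^{n,+}$ by a positive amount that is independent of $n$.

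Concretely, the intermediate step I would establish first is the following \emph{shrinking claim}: there exist $\delta > 0$ and $T_{m} > 0$ such that for every $T > T_{m}$ and every sufficiently large $n$,
\[
 (J_{m}^{n,+})^{[-T,T]} \subset J_{m,\delta}^{n,+},
\]
where $J_{m,\delta}^{+} := Str(\tilde{R} - \delta) \cap \bigcap_{j} g_{j,+}^{-1}(-\infty, \theta + m - \delta]$ and $J_{m,\delta}^{n,+} := J_{m,\delta}^{+} \cap V_{\lambda_{n}}^{\mu_{n}}$. Were this to fail, I could extract sequences $T_{k} \to \infty$, $n_{k} \to \infty$, $\delta_{k} \to 0$ and points $x_{k} \in (J_{m}^{n_{k},+})^{[-T_{k}, T_{k}]}$ with $x_{k} \notin J_{m,\delta_{k}}^{+}$. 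Applying the convergence result for approximated Seiberg--Witten trajectories, a subsequence of $\varphi^{n_{k}}(x_{k}, \cdot)$ would converge in $C^{\infty}_{\mathrm{loc}}$ to a genuine Seiberg--Witten trajectory $\gamma \colon \mathbb{R} \to Coul(Y)$ with $\gamma(\mathbb{R}) \subset J_{m}^{+}$. Being of finite type, $\gamma$ lies in $Str(R_{0})$ by \cite[Theorem~3.2]{KLS1}, and genericity of $\theta$ gives $g_{j,+}(\gamma(t)) < \theta + m$ strictly for every $j, t$. Hence $\gamma(0)$ has positive distance to $\partial J_{m}^{+}$, contradicting $x_{k} \to \gamma(0)$ with $x_{k} \notin J_{m,\delta_{k}}^{+}$ and $\delta_{k} \to 0$.

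Given the shrinking claim, the lemma follows by a uniform-continuity argument for the finite-dimensional flow $\varphi^{n}$. For $x \in (J_{m}^{n,+})^{[-2T,2T]}$ and any $s \in [-T,T]$, the trajectory of $\varphi^{n}(x,s)$ still lies in $J_{m}^{n,+}$ for time $[-T,T]$, so $\varphi^{n}(x,s) \in (J_{m}^{n,+})^{[-T,T]} \subset J_{m,\delta}^{n,+}$ and in particular is separated from $\partial J_{m}^{n,+}$ by $\delta$, uniformly in $s$. Since $[-T,T]$ is compact and $\varphi^{n}$ is continuous, a sufficiently small neighborhood $U$ of $x$ in $V_{\lambda_{n}}^{\mu_{n}}$ satisfies $\varphi^{n}(U \times [-T,T]) \subset J_{m}^{n,+}$, so $U \subset (J_{m}^{n,+})^{[-T,T]}$ and $x \in \inti\{(J_{m}^{n,+})^{[-T,T]}\}$. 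The ``in particular'' clause $(J_{m}^{n,+})^{[-2T,2T]} \subset \inti(J_{m}^{n,+})$ is then immediate by monotonicity of the interior operator. The statement for $J_{m}^{n,-}$ is identical after reversing time.

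The main obstacle is the shrinking claim. Two points need particular care: first, one must argue that the limit $\gamma$ is indeed defined on all of $\mathbb{R}$ and of finite type, which is where the fact that the $x_{k}$-segments of length $2T_{k}$ are \emph{centered} at $x_{k}$ is used, together with the uniform $L^{2}_{k}$-bound coming from containment in $J_{m}^{+}$. Second, the generic choice of $\theta$ is used in exactly one place but is essential: it converts the non-strict inequality $g_{j,+} \leq \theta + m$ on finite-type trajectories into the strict one needed to produce the slack $\delta > 0$.
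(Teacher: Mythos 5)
Your proposal is correct in outline and reaches the lemma by essentially the same compactness--contradiction mechanism as the paper, but organized through a different intermediate statement. The paper's proof is a short, direct contradiction: it extracts $x_n \in (J_m^{n,+})^{[-2T_n,2T_n]}\cap \partial\{(J_m^{n,+})^{[-T_n,T_n]}\}$, deduces that $\varphi_m^n(x_n,t_n)$ lies in $\partial J_m^{n,+}\cap (J_m^{n,+})^{[-T_n,T_n]}$ for some $t_n\in[-T_n,T_n]$, and then cites \cite[Lemma~5.4]{KLS1} (which forces such a point onto $\partial Str(\tilde{R})$ rather than a $g_{j,+}$ level set) and \cite[Lemma~5.5(a)]{KLS1} (which rules out $\partial Str(\tilde{R})$). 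You instead introduce a quantitative ``shrinking claim'' $(J_m^{n,+})^{[-T,T]}\subset J^{n,+}_{m,\delta}$ for a fixed $\delta>0$, prove it by extracting a limiting genuine trajectory from approximated ones and appealing to the boundedness \cite[Theorem~3.2]{KLS1} and genericity of $\theta$, and then deduce the lemma by a tube-lemma/continuity argument. In effect you re-derive, from the more primitive results of \cite{KLS1}, the content of Lemmas~5.4 and 5.5(a) that the paper cites as black boxes; this costs length but makes the argument more self-contained. One imprecision worth flagging: the clause ``genericity of $\theta$ gives $g_{j,+}(\gamma(t))<\theta+m$ strictly'' elides a second essential ingredient, namely the approximately-Lyapunov property of $g_{j,+}$ (it decreases along the flow outside critical points because $\bar g$ has small derivative); genericity of $\theta$ alone only keeps critical points off the level set, and you need monotonicity to push a trajectory touching the level set back out of $J_m^+$. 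The deduction from the shrinking claim is correct as written, provided $J^{n,+}_{m,\delta}$ is understood to be uniformly (over $Str(\tilde{R})$ and over $n$) separated from $\partial J_m^{n,+}$, which holds because $g_{j,+}$ is Lipschitz on bounded sets.
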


\begin{proof}
If the statement is not true, we have a sequence $T_n \rightarrow \infty$ such that we can take elements
\[
              x_n \in  (J_{m}^{n, +})^{[-2T_n, 2T_n]} \cap \partial \left\{ (J_{m}^{n, +})^{[-T_n, T_n]} \right\}.
\]
In particular, we would have
\[
         \varphi_{m}^{n}(x_n, [-2T_n, 2T_n]) \subset J_{m}^{n, +} \text{ and }
         \varphi_{m}^{n}(x_n, t_n) \in \partial J_{m}^{n, +} 
\]
for some    $t_n\in [-T_n, T_n]$,
which implies 
\[
\varphi_{m}^{n}(x_n, t_n) \in (J_{m}^{n, +})^{[-T_n, T_n]}. 
\]
On the other hand, by \cite[Lemma~5.4]{KLS1}, we must have
\[
              \varphi_{m}^{n}(x_n, t_n) \in \partial Str(\tilde{R}). 
\]
This is a contradiction to \cite[Lemma 5.5 (a)]{KLS1}. 

\end{proof}

We now state the main result of this section.

\begin{pro} \label{prop mfdnhbdswf}
Let $T_m$ be the constant from Lemma~\ref{lem J 2T int}.  When $T > 4T_{m}$ and $n$ is sufficiently large, we can always find a $T$-tame manifold
isolating block $N_{m}^{n, +}$ of $\inv(J_{m}^{n, +})$ with $N_{m}^{n, +} \subset J_{m}^{n, +}$. 
A similar result holds for $J_{m}^{n, -}$. 
\end{pro}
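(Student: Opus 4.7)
The plan is to construct $N_m^{n,+}$ as a smooth compact submanifold of the finite-dimensional space $V_{\lambda_n}^{\mu_n}$ sandwiched between $(J_m^{n,+})^{[-T,T]}$ and $J_m^{n,+}$, with boundary transverse to the approximated Seiberg--Witten flow $\varphi^n$. The construction then delivers $T$-tameness automatically, because $T$-tameness of an isolating block $N \subset A$ is, by Definition~\ref{def T-tame isolating block}, just the condition $A^{[-T,T]} \subset \inti N$.

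First I would iterate Lemma~\ref{lem J 2T int}. Writing $A := J_m^{n,+}$ and applying the lemma with $T' = T/2$ and with $T' = T/4$ (both exceed $T_m$ since $T > 4T_m$), one obtains, for all sufficiently large $n$, a chain of strict interior inclusions
\[ A^{[-T,T]} \;\subset\; \inti A^{[-T/2, T/2]} \;\subset\; A^{[-T/2, T/2]} \;\subset\; \inti A^{[-T/4, T/4]} \;\subset\; A. \]
Each set in the chain is compact, and because $\inv(A) \subset A^{[-s,s]}$ for every $s$, each is an isolating neighborhood of the same invariant set $\inv(A)$. In particular $A^{[-T/4, T/4]}$ is an isolating neighborhood whose interior contains the compact set $A^{[-T,T]}$.

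Next I would invoke a sandwich form of the Conley--Easton manifold isolating block theorem \cite{Conley-Easton} (compare \cite{Salamon}). Applied to the pair consisting of the isolating neighborhood $A^{[-T/4, T/4]}$ of $\inv(A)$ and the compact subset $A^{[-T,T]}$ in its interior, this yields a smooth compact submanifold-with-boundary $N = N_m^{n,+}$ with
\[ A^{[-T,T]} \;\subset\; \inti N \;\subset\; N \;\subset\; A^{[-T/4, T/4]} \;\subset\; A, \]
such that $\varphi^n$ is transverse to $\partial N$ in the Conley sense, i.e.\ $\partial N = n^+(N) \cup n^-(N)$. Concretely, one builds a smooth Lyapunov-type function $f$ on a neighborhood of $\overline{A^{[-T/4,T/4]} \setminus A^{[-T,T]}}$ that strictly decreases along orbit segments not contained in $\inv(A)$, and defines $N$ as the smoothed sublevel set bounded by regular values placed in the gap afforded by the chain above; the intermediate buffer $A^{[-T/2,T/2]}$ is what guarantees that such regular values exist.

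By construction $N \subset A = J_m^{n,+}$, the set $N$ is a manifold isolating block with $\inv N = \inv(A)$, and the inclusion $A^{[-T,T]} \subset \inti N$ gives $T$-tameness. The analogous statement for $J_m^{n,-}$ follows by running the same argument with the time-reversed flow, or equivalently by the symmetry between the $+$ and $-$ variants in \cite{KLS1}. The main technical obstacle is the \emph{sandwich} form of Conley--Easton --- producing a manifold isolating block with prescribed inclusions on both sides rather than merely some small one near $\inv(A)$. This is slightly stronger than the usual existence statement, but it follows from the standard smoothing of Lyapunov functions once the strict interior containments supplied by Lemma~\ref{lem J 2T int} are in place.
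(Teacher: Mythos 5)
Your chain of inclusions from Lemma~\ref{lem J 2T int} is exactly where the paper starts, and your overall strategy — trap a manifold block between $(J_m^{n,+})^{[-T,T]}$ and $J_m^{n,+}$ — is the right target. But the proof hinges on a ``sandwich form'' of the Conley--Easton theorem that you invoke without establishing, and this is precisely the non-trivial step. The standard Conley--Easton theorem produces a manifold isolating block $N$ with $\inv(A)\subset\inti N\subset N\subset A$; it says nothing about containing a \emph{prescribed} compact set $K\supsetneq\inv(A)$ in $\inti N$. Your sketch via a ``Lyapunov-type function $f$ strictly decreasing along orbit segments'' runs into exactly the obstruction that makes this hard: the set $A^{[-T,T]}$ is \emph{not} invariant, so any function $f$ that strictly decreases along nonconstant orbits must vary on $A^{[-T,T]}\setminus\inv(A)$. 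Picking a sublevel set $\{f\le c\}$ with $c>\max_{A^{[-T,T]}}f$ does not by itself stay inside $A$, and the smoothing step — making $\partial N$ transverse while keeping $A^{[-T,T]}\subset\inti N$ — is left entirely to the phrase ``standard smoothing of Lyapunov functions.''

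The paper's actual argument supplies the missing idea. It modifies the vector field by a cutoff $\tau$ with $\tau\equiv 0$ on $(J_m^{n,+})^{[-T,T]}$ and $\tau\equiv 1$ outside $(J_m^{n,+})^{[-T/2,T/2]}$, producing a new flow $\tilde\varphi_m^n$ for which every point of $(J_m^{n,+})^{[-T,T]}$ is a rest point. It then proves (using exactly your chain of inclusions) that $J_m^{n,+}$ is still an isolating neighborhood for $\tilde\varphi_m^n$. Now the \emph{ordinary} Conley--Easton theorem applied to $\tilde\varphi_m^n$ automatically yields a manifold block whose interior contains $\inv(\tilde\varphi_m^n,J_m^{n,+})\supset(J_m^{n,+})^{[-T,T]}$, because that set is now part of the invariant set. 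Finally, since $\partial N\subset J_m^{n,+}\setminus\tau^{-1}(0)$, the modified and original vector fields are positive scalar multiples of one another there, so $N$ is a manifold isolating block for $\varphi_m^n$ as well. This flow-modification step is the rigorous content behind the ``sandwich'' you assert; without it, or some equivalent mechanism forcing $(J_m^{n,+})^{[-T,T]}$ into the interior of the block, the proof is incomplete.
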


\begin{proof}
Fix $m$ and suppose that $n$ is sufficiently large so that the statement of Lemma \ref{lem J 2T int} holds.
Take a positive number $T$ with $T > 4T_m$.  By Lemma \ref{lem J 2T int}, we have 
\[
\begin{split}
     &     (J_{m}^{n, +})^{[-T, T]} \subset \inti \left\{ (J_{m}^{n, +})^{[-T/2, T/2]} \right\} \text{ and }  \\
   &       (J_{m}^{n, +})^{[-T/2, T/2]} \subset \inti \left\{ (J_{m}^{n, +})^{[-T/4, T/4]} \right\}.
 \end{split}
\] 
We can take a smooth function $\tau : V_{\lambda_n}^{\mu_n} \rightarrow [0, 1]$ such that
\[
        \begin{split}
             \tau = 0 \ \text{on $(J_{m}^{n, +})^{[-T, T]} $, and}  \ 
             \tau = 1 \ \text{on $V_{\lambda_n}^{\mu_n} \setminus (J_{m}^{n, +})^{[-T/2, T/2]}$.}
        \end{split}
\]
and a smooth bump function $\iota_m: Coul(Y)\rightarrow [0,1]$ such that 
\[
     \begin{split}
             \iota_{m}^{-1}((0,1]) \ \text{ is bounded, and}  \ 
             \iota_{m} = 1 \ \text{in a neighborhood of $J^{\pm}_{m+1}$.}
        \end{split}
\]
Let $\tilde{\varphi}_{m}^{n}$ be the flow on $V_{\lambda_n}^{\mu_n}$ generated by $\tau \cdot \iota_m \cdot ( l + p_{\lambda_n}^{\mu_n}
\circ c)$.
We will prove that $J_{m}^{n, +}$ is an isolating neighborhood of $\inv(\tilde{\varphi}_{m}^{n}, J_{m}^{n, +})$.   If this
is not true, we can take 
\[   
            x \in \partial J_{m}^{n, +}  \cap \inv (\tilde{\varphi}_{m}^{n}, J_{m}^{n, +}).
\]

Put
\[
  \begin{split}
         P^+(x)  &:= \{ \varphi_{m}^{n}(x, t) | t \geq 0,  \varphi_{m}^{n}(x, [0, t]) \subset J_{m}^{n, +} \}, \\
         P^-(x) &:= \{ \varphi_{m}^{n}(x, t) | t \leq 0, \varphi_{m}^{n} (x, [t, 0]) \subset J_{m}^{n, +}  \}.
  \end{split}
\]
Suppose that $ P^+(x) \cap (J_{m}^{n, +})^{[-T/2, T/2]} = \emptyset $. This means a forward $\varphi_{m}^{n}$-trajectory of $x$ inside $ J_{m}^{n, +}$ lie outside $(J_{m}^{n, +})^{[-T/2, T/2]}$, so that a forward $\varphi_{m}^{n}$-trajectory agrees with a forward $\tilde{\varphi}_{m}^{n}$-trajectory. Consequently, we have
\(
      \varphi_{m}^{n}(x, [0,\infty)) = \tilde{\varphi}_{m}^{n}(x, [0, \infty)) \subset J_{m}^{n, +}. 
\)
Hence $\varphi_{m}^{n}(x, T/2) \in P^+(x) $ and $\varphi_{m}^{n}(x, T/2) \in  (J_{m}^{n, +})^{[-T/2, T/2]} $ which is a contradiction. We can now conclude that $ P^+(x) \cap (J_{m}^{n, +})^{[-T/2, T/2]} \neq \emptyset $ and, in particular, $x \in (J_{m}^{n, +})^{[0, T/2]}$. 

Similarly we can deduce that $x \in (J_{m}^{n,+})^{[ -T/2 , 0]}$.
These facts imply that
\[
        x \in (J_{m}^{n, +})^{[-T/2, T/2]} \cap \partial J_{m}^{n, +},
\]
which is a contradiction because
\[
       (J_{m}^{n, +})^{[-T/2, T/2]} \subset \inti \left\{ (J_{m}^{n, +})^{[-T/4, T/4]} \right\} \subset \inti (J_{m}^{n,
+}). 
\]

Therefore $J_{m}^{n, +}$ is an isolating neighborhood of $\inv( \tilde{\varphi}_{m}^{n}, J_{m}^{n, +})$. 
By the result of Conley and Easton \cite{Conley-Easton}, we can find a manifold isolating block $N_{m}^{n, +}$ of $\inv(\tilde{\varphi}_{m}^{n},
J_{m}^{n, +})$ with $N_{m}^{n, +} \subset J_{m}^{n,+}$.  Note that
\[
        (J_{m}^{n, +})^{[-T, T]}  \subset \inv(\tilde{\varphi}_{m}^{n}, J_{m}^{n, +}) \subset \inti N_{m}^{n,+}.
\]
Since the directions of the flows $\varphi_{m}^{n}$ and $\tilde{\varphi}_{m}^{n}$ coincide on $\partial  N_{m}^{n,+} \subset J_{m}^{n,+} \setminus \tau^{-1}(0)$, we see that
$N_{m}^{n, +}$ is also a manifold isolating block of $\inv( \varphi_{m}^{n}, J_{m}^{n, +} )$. Thus $N_{m}^{n, +}$ is a $T$-tame
manifold isolating block of $\inv (\varphi_{m}^{n}, J_{m}^{n, +})$ in $J_{m}^{n, +}$.

\end{proof}

%

\section{Stable homotopy categories}
\label{section stablecategory}

\subsection{Summary}

In this section, we will discuss the stable homotopy categories $\frak{C}$, $\frak{S}$, $\frak{S}^*$. The discussion in this section will be  needed to construct the gluing formula in Theorem \ref{thm gluing BF inv}. 

First let us briefly recall the definition of the categories. (See \cite{KLS1} for the details.)  An object of $\frak{C}$ is a triple $(A, m, n)$, where $A$ is  a pointed topological space with $S^1$-action which is $S^1$-homotopy equivalent to a finite $S^1$-CW complex (see \cite[Chapter I, Section 3]{May} for the definition of equivariant CW complex), $m$ is an integer and $n$ is a rational number.  The set of morphisms between $(A_1, m_1, n_1)$ and $(A_2, m_2, n_2)$ is given by
\[
 \begin{split}
    & \morp_{\frak{C}}((A_1, m_1, n_1), (A_2, m_2, n_2))   \\
   & \quad   = \lim_{u, v \rightarrow \infty} [  (\R^{u} \oplus \C^{v})^+ \wedge A_1, (\R^{u+m_1-m_2} \oplus \C^{v + n_1-n_2})^+ \wedge A_2]_{S^1}
 \end{split}
\]
if $n_1-n_2$ is an integer, and we define $\morp_{\frak{C}}((A_1, m_1, n_1), (A_2, m_2, n_2))$ to be the empty set if $n_1-n_2$ is not an integer. Here $[\cdot, \cdot]_{S^1}$ is the set of pointed $S^1$-homotopy classes, $\R$ is the one dimensional trivial representation of $S^1$ and $\C$ is the standard two dimensional representation of $S^1$. 
The category $\frak{S}$ is the category of direct systems 
\[
       Z : Z_1 \stackrel{j_1}{\rightarrow} Z_2 \stackrel{j_2}{\rightarrow} \cdots 
\]
in $\frak{C}$. Here $Z_m$ and $j_m$ are an object and morphism in $\frak{C}$ respectively.  For objects $Z, Z'$ in $\frak{S}$, the set morphism is defined by
\[
        \morp_{\frak{S}}(Z, Z') = \lim_{\infty \leftarrow m} \lim_{n \rightarrow \infty} \morp_{\frak{C}} (Z_m, Z_n').
\]
The category $\frak{S}^*$ is the category of inverse systems 
\[
    \bar{Z} : \bar{Z}_1 \stackrel{ \bar{j}_1}{\leftarrow} \bar{Z}_2  \stackrel{ \bar{j}_2 }{\leftarrow} \cdots
\]
in $\frak{C}$. Here $\bar{Z}_m$ and $\bar{j}_m$ are an object and morphism in $\frak{C}$ respectively.  For objects $\bar{Z}$,  $\bar{Z}'$ in $\frak{S}^*$, the set of morphisms is defined by
\[
      \morp_{\frak{S}^*} (\bar{Z}, \bar{Z}') = 
       \lim_{\infty \leftarrow n}   \lim_{m \rightarrow \infty} \morp_{\frak{C}} (\bar{Z}_m, \bar{Z}_n').
\]
In Section \ref{section smashpr}, we will define the smash product in the category $\frak{C}$ and prove that $\frak{C}$ is a symmetric, monoidal category (Lemma \ref{lem monoidal category}). 
In Section \ref{section spanierwhitehead}, we will introduce the notion of the $S^1$-equivariant Spanier-Whitehead duality between the categories $\frak{S}$ and $\frak{S}^*$.   We will say that $Z \in \ob \frak{S}$ and $\bar{Z} \in \ob \frak{S}^*$ are $S^1$-equivariant Spanier-Whitehead dual to each other if there are elements
\[
   \epsilon \in \lim_{\infty \leftarrow m} \lim_{n \rightarrow \infty} \morp_{\frak{C}} (\bar{Z}_n \wedge Z_m, S), \
   \eta \in \lim_{\infty \leftarrow n} \lim_{m \rightarrow \infty} (S, Z_m \wedge \bar{Z}_n),
\]
which satisfy certain conditions (Definition \ref{def duality}). Here $S = (S^0, 0, 0) \in \frak{C}$.   The elements $\epsilon, \eta$ are called duality morphisms. 
In Section \ref{section dualswf}, we will prove that the Seiberg-Witten Floer stable spectra $\swf^A(Y) \in \ob \frak{S}$ and $\swf^{R}(-Y) \in \ob \frak{S}^*$ are $S^1$-equivariant Spanier-Whitehead dual to each other (Proposition \ref{prop SWF duality}). We will construct natural duality morphisms for $\swf^{A}(Y)$ and $\swf^{R}(-Y)$ which will be needed for the gluing formula of the Bauer-Furuta invariants (Theorem \ref{thm gluing BF inv}).

We will focus on the $S^1$-equivariant stable homotopy categories. But the statements can be proved for the $Pin(2)$-equivariant stable homotopy categories in a similar way.

\subsection{Smash product}
\label{section smashpr}

In this subsection, we establish the symmetric monoidal structure on the category $\frak{C}$. To do this, we will define
the smash product as a bifunctor $\wedge : \frak{C} \times \frak{C} \rightarrow \frak{C}$. First we define the smash product
of two objects $(A_1, m_1, n_1), (A_2, m_2, n_2) \in \frak{C}$. Here $A_i$ is an $S^1$-topological space, $m_i \in 2\Z$,
$n_i \in \Q$. We define the smash product by
\[
       (A_1, m_1, n_1) \wedge (A_2, m_2, n_2) := ( A_1 \wedge A_2, m_1+m_2, n_1+n_2),
\]
where $A_1 \wedge A_2$ denotes the classical smash product on pointed topological spaces. Next we define the smash product
of morphisms. Suppose that for $i = 1, 2$ a map
\[
     f_i : ( \R^{ k_i } \oplus \C^{l_i} )^+ \wedge A_i 
            \rightarrow 
           ( \R^{ k_i + m_i - m_i' } \oplus \C^{l_i + n_i - n_i'} )^+ \wedge A_i'
\]
represents a morphism $[f_i] \in \morp_{ \frak{C} }((A_i, m_i, n_i), (A_i', m_i', n_i'))$.  We may suppose  that $k_i$ is
even. 
We define a map
\begin{gather*}
       f_1 \wedge f_2 : (\R^{ k_1 } \oplus \R^{ k_2 } \oplus \C^{ l_1 } \oplus \C^{l_2} )^+ \wedge A_1 \wedge A_2
       \rightarrow   \\
       (\R^{ k_1 + m_1 - m_1' } \oplus \R^{ k_2 + m_2 - m_2' } \oplus \C^{l_1+n_1-n_1'} \oplus \C^{ l_2+n_2-n_2' } )^+  \wedge
A_1' \wedge A_2'                         
\end{gather*}
by putting the suspension indices for $f_1$ on the left and those for $f_2$ on the right.  We define $[f_1] \wedge [f_2]$
to be the morphism represented by $f_1 \wedge f_2$.  To prove that this operation is well defined, we need to check that
for $a, b \in \Z_{>0}$, we have
\[
         \Sigma^{ (\R^{a} \oplus \C^{b})^+ }  (f_1 \wedge f_2) \cong
         (\Sigma^{ (\R^{a} \oplus \C^{b})^+ }  f_1 ) \wedge f_2 \cong
         f_1 \wedge (\Sigma^{ (\R^{a} \oplus \C^{b})^{+} }  f_2 ),
\]
where $\cong$ means $S^1$-equivariant stably homotopic.  The first equivalence is obvious.  The second equivalence follows
from the fact that the following diagram is commutative up to homotopy for $u_1 = k_1, k_1+m_1-m_1'$, $u_2 = k_2, k_2+m_2-m_2'$,
$v_1 = l_1, l_1+n_1-n_1'$, $v_2=l_2, l_2+n_2-n_2'$:
\[
          \xymatrix{
( \R^{a} \oplus \R^{u_1} \oplus \R^{u_2} )^+ \wedge ( \C^{b} \oplus \C^{v_1} \oplus \C^{v_2}  )^+  \ar[rd]^{\id} \ar[dd]_{\substack{(\gamma_{
\R^a, \R^{u_1} } \oplus \id_{\R^{u_2}})^+ \\
 \qquad \wedge ( \gamma_{ \C^{b}, \C^{v_1} } \oplus \id_{ \C^{v_2} })^+}}  
                 &     \\    
                         &        (\R^{u})^+ \wedge ( \C^{v} )^+ \\
    (\R^{u_1} \oplus \R^{a} \oplus \R^{u_2} )^+ \wedge ( \C^{v_1} \oplus \C^{b} \oplus \C^{v_2} )^+ \ar[ru]_{\id} &   }
\]
Here $u = a+u_1+u_2$, $v= b + v_1 +v_2$
and $\gamma_{ \R^{a}, \R^{u_1} }$ is the map which interchange $\R^a$ and $\R^{u_1}$. Similarly for $\gamma_{\C^{b}, \C^{v_1}}$.
Note that $u_1 \in 2\Z$ by the assumption on $k_1, m_1, m_1'$. 

There is an isomorphism
\begin{gather*}
    \gamma_{(A_1, m_1, n_1), (A_2, m_2, n_2)} :   
   (A_1, m_1, n_1) \wedge (A_2, m_2, n_2)   \\
 \rightarrow    
 (A_2, m_2, n_2) \wedge
(A_1, m_1, n_1)
\end{gather*}
represented by the obvious homeomorphism $A_1 \wedge A_2 \rightarrow A_2 \wedge A_1$. It is not difficult to see that $\gamma$
 is natural in $(A_i, m_i, n_i)$. That is, the following diagrams are commutative for $f_i \in \morp_{\frak{C}}( (A_i, m_i,
n_i), (A_i', m_i', n_i') )$:
\[
       \xymatrix{
         (A_1, m_1, n_1) \wedge (A_2, m_2, n_2) \ar[r]^{\gamma} \ar[d]_{f_1 \wedge f_2} & (A_2, m_2, n_2) \wedge (A_1, m_1,
n_1) \ar[d]^{f_2 \wedge f_1} \\
         (A_1', m_1', n_1') \wedge (A_2', m_2', n_2') \ar[r]_{\gamma} & (A_2', m_2', n_2') \wedge (A_1', m_1', n_1'). 
       }
\]
(Again, we need the assumption that $m_i$ is even here.) Once the well-definedness of $\wedge$ and the naturality are established
we can prove the following lemma easily by checking the axioms at the level of topological spaces.

\begin{lem} \label{lem monoidal category}
The category $\frak{C}$ equipped with $\wedge$ and $\gamma$ is a symmetric monoidal category with unit $S=(S^0, 0, 0)$.
\end{lem}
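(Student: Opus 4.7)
The plan is to reduce every coherence check to a statement about pointed $S^1$-spaces, exploiting the fact that a morphism in $\frak{C}$ is specified by a stable homotopy class of $S^1$-maps between suspensions and that the shifts $(m,n)$ behave additively under $\wedge$. First I would construct the associator
\[
\alpha \colon ((A_1,m_1,n_1)\wedge(A_2,m_2,n_2))\wedge(A_3,m_3,n_3)\longrightarrow (A_1,m_1,n_1)\wedge((A_2,m_2,n_2)\wedge(A_3,m_3,n_3))
\]
as the morphism represented by the classical associator $(A_1\wedge A_2)\wedge A_3\to A_1\wedge(A_2\wedge A_3)$ of pointed spaces; both sides have the same triple $(A_1\wedge A_2\wedge A_3,\ m_1+m_2+m_3,\ n_1+n_2+n_3)$, so this is well defined. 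Similarly the left and right unitors $\lambda_{(A,m,n)}\colon S\wedge(A,m,n)\to(A,m,n)$ and $\rho_{(A,m,n)}\colon (A,m,n)\wedge S\to(A,m,n)$ come from the standard homeomorphisms $S^0\wedge A\cong A\cong A\wedge S^0$.

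Next I would verify naturality of $\alpha$, $\lambda$, $\rho$ in each argument, exactly as was done for $\gamma$ just above the lemma. For morphisms $f_i\colon(A_i,m_i,n_i)\to(A_i',m_i',n_i')$ represented at finite level by $S^1$-maps between suspensions, the two composites in the naturality square for $\alpha$ differ only by a permutation of the order in which the various $(\R^{k_i}\oplus\C^{\ell_i})^+$-suspension coordinates are recorded. Since complex linear reorderings always act by the identity on $(\C^v)^+$ and, under the assumption $m_i\in2\Z$ used throughout the section, every block of real coordinates that must be interchanged has even dimension, each such reordering is $S^1$-homotopic to the identity and the naturality squares commute in $\frak{C}$.

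Once naturality is in place, the pentagon for $\alpha$, the triangle relating $\alpha$ to $\lambda,\rho$, the hexagon for $\gamma$, and the relation $\gamma_{Y,X}\circ\gamma_{X,Y}=\id$ all reduce to the analogous identities for the ordinary smash product of pointed topological spaces together with standard identities for permutations of $\R$- and $\C$-suspension coordinates. All of these either hold strictly at the pointed-space level or become the identity after a single further suspension, so they descend to equalities of morphisms in $\frak{C}$.

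The only real obstacle I anticipate is the parity bookkeeping involved in the naturality argument: a swap of two copies of $\R^{\mathrm{odd}}$ produces a degree $-1$ self-map of the corresponding sphere, which would obstruct commutativity. This is precisely why the section hypothesises $m_i\in2\Z$; checking that every reordering appearing in the verification of the axioms acts on real blocks of even total dimension (and is therefore orientation-preserving as an $S^1$-map of spheres) is the only nontrivial ingredient, and it is purely combinatorial once the suspension indices are tracked explicitly. With that verification in hand, the symmetric monoidal axioms follow from their space-level analogues.
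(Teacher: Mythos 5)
Your proposal matches the paper's approach: the paper's proof of this lemma is a single sentence stating that once well-definedness of $\wedge$ and naturality of $\gamma$ are established (which is done just before the lemma, including the observation that the evenness of $m_i$ is needed to make the relevant coordinate permutations homotopic to the identity), the monoidal and symmetry axioms reduce to checking them at the level of pointed topological spaces. Your elaboration—constructing $\alpha$, $\lambda$, $\rho$ from the classical associator and unitors, verifying naturality by the same parity argument, and reducing the pentagon/triangle/hexagon/involution identities to their space-level analogues—is exactly the content the paper is implicitly invoking.
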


We briefly mention the $Pin(2)$-case. The smash product $\wedge$ and the interchanging operation $\gamma$ can be defined
on the category $\frak{C}_{Pin(2)}$ in exactly the same way as before. As a result, the category $\frak{C}_{Pin(2)}$ is also
an symmetric monoidal category.

\subsection{Equivariant Spanier-Whitehead duality}
\label{section spanierwhitehead}

In this subsection we will set up the equivariant Spanier-Whitehead duality between the categories $\frak{S}$ and $\frak{S}^*$.
Although we will mostly focus on the $S^1$-case for simplicity, all definitions and proofs can be easily adapted to the $Pin(2)$-case.
As a result, a duality between $\frak{S}_{Pin(2)}$ and $\frak{S}_{Pin(2)}^*$ can also be set up in a similar way. 

The following definition is motivated by \cite[Chapter III]{LMS} and  \cite[Chapter XVI Section 7]{May}.

\begin{defi} 
Let $U, W$ be objects of $\frak{C}$ and put $S = (S^0, 0, 0) \in \ob \frak{C}$.  Suppose that there exist morphisms 
\[
         \epsilon : W \wedge U \rightarrow S, \ \eta : S \rightarrow U \wedge W
\]
such that the compositions
\[
      U \cong S \wedge U \xrightarrow{\eta \wedge \id} U \wedge W \wedge U
      \xrightarrow{\id \wedge \epsilon}  U \wedge S \cong U
\]
and 
\[
     W \cong W \wedge S \xrightarrow{\id \wedge \eta } W \wedge U \wedge W
          \xrightarrow{\epsilon \wedge \id} S \wedge W \cong W
\]
are equal to the identity morphisms respectively. Then we say that $U$ and $W$ are Spanier-Whitehead dual to each other and
call $\epsilon$ and $\eta$ duality morphisms. 
\end{defi}

We generalize this definition to the duality between $\frak{S}$ and $\frak{S}^*$.

\begin{defi}\label{def duality}
Let 
\[
     Z : Z_1 \rightarrow Z_2 \rightarrow Z_3 \rightarrow \cdots
\]
be an object of $\frak{S}$ and 
\[
     \bar{Z} : \bar{Z}_1 \leftarrow \bar{Z}_2 \leftarrow \bar{Z}_3 \leftarrow \cdots 
\]
be an object of $\frak{S}^*$.  Suppose that we have an element
\[
     \epsilon \in \lim_{\infty \leftarrow m} \lim_{n \rightarrow \infty} \morp_{ \frak{C} } (\bar{Z}_n \wedge Z_m, S)
\]
represented by a collection $\{ \epsilon_{m, n} : \bar{Z}_n \wedge Z_m \rightarrow S \}_{m > 0, n \gg m}$ and an element
\[
          \eta \in \lim_{\infty \leftarrow n} \lim_{m \rightarrow \infty} \morp_{\frak{C}}(S, Z_m \wedge \bar{Z}_n)
\]
represented by a collection $\{ \eta_{m, n} : S \rightarrow Z_m \wedge \bar{Z}_n \}_{  n > 0, m \gg n }$ which satisfy the
following conditions: 

\begin{enumerate}[(i)]
\item
For any $m > 0$ there exists $n$ large enough relative to $m$ and $m'$  large enough relative to $n$ such that the composition
\[
     Z_m \cong S \wedge Z_m \xrightarrow{\eta_{m', n} \wedge \id}  Z_{m'} \wedge \bar{Z}_n \wedge Z_m  
     \xrightarrow{\id \wedge \epsilon_{m, n}} Z_{m'} \wedge S \cong Z_{m'}
\]
is equal to the connecting morphism $Z_{m} \rightarrow Z_{m'}$ of the inductive system $Z$. 

\item
For any $n > 0$, there exists $m$ large enough relative $n$ and $n'$ large enough to $m$ such that the composition
\[
  \bar{Z}_{n'} \cong \bar{Z}_{n'} \wedge S \xrightarrow{\id \wedge \eta_{m, n}} 
  \bar{Z}_{n'} \wedge Z_m \wedge \bar{Z}_n \xrightarrow{\epsilon_{m, n'} \wedge \id}
  S \wedge \bar{Z}_{n} \cong \bar{Z}_n
\]
is equal to the connecting morphism $\bar{Z}_{n'} \rightarrow \bar{Z}_n$ of the projective system $\bar{Z}$.

\end{enumerate}
Then we say that $Z$ and $\bar{Z}$ are $S^1$-equivariant Spanier-Whitehead dual to each other and we call $\epsilon$ and
$\eta$ duality morphisms. 

\end{defi}

We end this subsection with introducing a smashing operation $\tilde{\epsilon}$, which will be used to give  the statement
of the gluing theorem for the Bauer-Furuta invariant. 

\begin{defi}
Let $Z \in \ob \frak{S}$ and $\bar{Z} \in \ob \frak{S}^*$ be objects that are $S^1$-equivariant Spanier-Whitehead dual to
each other with duality morphisms $\epsilon, \eta$.  Suppose that we have objects $W \in \ob \frak{C} \ (\subset \ob \frak{S})$,
$\bar{W} \in \ob \frak{C} \ (\subset \ob \frak{S}^*)$ and morphisms
\[
     \rho \in \morp_{ \frak{S} } (W, Z),  \  \bar{\rho}  \in \morp_{ \frak{S}^* }(\bar{W}, \bar{Z}).
\]
Choose a morphism $\rho_m : W \rightarrow Z_m$ which represents $\rho$ and let $\{ \bar{\rho}_{n} : \bar{W} \rightarrow \bar{Z}_{n}
\}_{n > 0}$ be the collection which represents $\bar{\rho}$. 
We define the morphism $\tilde{\epsilon}(\rho, \bar{\rho}) \in \morp_{ \frak{C} }( W \wedge \bar{W}, S )$ by the composition
\[
       \bar{W} \wedge   W  \xrightarrow{ \bar{\rho}_n \wedge \rho_{m} }  \bar{Z}_n \wedge Z_{m} \xrightarrow{\epsilon_{m,
n}}  S.
\]
 It can be proved that $\tilde{\epsilon}(\rho, \bar{\rho})$ does not depend on the choices of $m, n$ and $\rho_m$.  (Note
that $\bar{\rho}_n$ is determined by $n$ and $\bar{\rho}$.)

\end{defi}

\subsection{Spanier-Whitehead duality of the unfolded Seiberg-Witten Floer spectra}
\label{section dualswf}

Let $Y$ be a closed, oriented 3-manifold with a Riemannian metric $g$ and spin$^c$ structure $\frak{s}$, and let $-Y$
be $Y$ with opposite orientation.  As in Section~\ref{subsec Unfolded}, the unfolded Seiberg-Witten Floer spectrum $\swf^{A}(Y, \frak{s}, A_0, g; S^1) \in \ob
\mathfrak{S}$  is represented by 
\[
   \swf^{A}(Y) : I_{1} \xrightarrow{j_{1}} I_2 \xrightarrow{j_{2}} \cdots   
\]
with $I_n := \Sigma^{-V_{\lambda_n}^{0}} I( \inv( V_{\lambda_n}^{\mu_n} \cap J_n^+), \varphi_n) $. It is not hard to see that the unfolded spectrum  $\swf^{R}(-Y, \frak{s}, A_0, g; S^1) \in \ob \mathfrak{S}^*$ can be represented by
\[ \swf^{R} (-Y): \bar{I}_1 \xleftarrow{\bar{j}_1} \bar{I}_2 \xleftarrow{\bar{j}_{2}} \cdots, \]
where $\bar{I}_n := \Sigma^{ -V^{\mu_n}_{0} } I ( \inv(V_{\lambda_n}^{\mu_n} \cap J_n^+), \overline{\varphi}_n) $ and $\overline{\varphi}_n$ is the reverse flow of $\varphi_n$. For integers $m, n$ with $m < n$ we also write $j_{m, n}$, $\bar{j}_{m,n}$ for the compositions
\[
  \begin{split}
      & I_{m} \xrightarrow{j_{m}} I_{m+1} \xrightarrow{j_{m+1}} \cdots \xrightarrow{j_{n-1}}   I_{n}, \\
      & \bar{I}_{n} \xrightarrow{\bar{j}_{n-1}} \bar{I}_{n-1} \xrightarrow{\bar{j}_{n-2}} \cdots \xrightarrow{\bar{j}_{m}}
\bar{I}_{m}.
   \end{split}
\]


%


We will define duality morphisms $\epsilon$ and $\eta$ between  $\swf^{A}(Y, \frak{s}, A_0, g; S^1)$ and $\swf^{R}(-Y, \frak{s},
A_0, g; S^1)$.  as follows.  Take an $S^{1}$-equivariant manifold isolating block $N_n$ for $\inv ( V_{\lambda_n}^{\mu_n}
\cap J_n^+)$.  That is, $N_n$ is a compact submanifold of $V_{\lambda_n}^{\mu_n}$ of codimension $0$ and there are submanifolds
$L_n, \overline{L}_n$ of $\partial N_n$ of codimension $0$ such that
\[
       L_n \cup \overline{L}_n = \partial N_n, \ \partial L_n = \partial \overline{L}_n = L_n \cap \overline{L}_n
\]
and that $(N_n, L_n)$, $(N_n, \overline{L}_n)$ are index pairs for $\inv( V_{\lambda_n}^{\mu_n} \cap J_n^+, \varphi_n)$,
$\inv (V_{\lambda_n}^{\mu_n} \cap J_n^+, \overline{\varphi}_n)$ respectively. 
Fix a small positive number $\delta > 0$.  For a subset $P \subset V_{\lambda_{n}}^{\mu_n}$ we write $\nu_{\delta}(P)$ for

\[
      \{ x \in V_{\lambda_n}^{\mu_n} | \dist(x, P) \leq \delta \}.
\]
Choose  $S^1$-equivariant homotopy equivalences
\[
       a_n : N_n \rightarrow N_n \setminus \nu_{\delta}( \overline{L}_n), \
       b_n : N_{n} \rightarrow N_{n} \setminus \nu_{\delta}( L_n )
\]
such that
\begin{equation}  \label{eq a b}
   \begin{split}
      &  \| a_n(x) - x \| < 2\delta \ \text{for $x \in N_{n}$},  \\  
      &     a_n(L_n) \subset L_n,    \ 
           a_n(x) = x  \
 \text{for $x \in N_n \setminus \nu_{3\delta}(\partial N_{n})$,} \\
      &   \| b_n(y) - y \| < 2 \delta \ \text{for $y \in N_n$}, \\
 &     b_n( \overline{L}_n) \subset \overline{L}_n, \
            b_n(y) = y \ \text{for $y \in N_n \setminus \nu_{3\delta}(\partial N_{n} )$}.
   \end{split}
\end{equation}
Put $B_{\delta} = \{ x \in V_{\lambda_n}^{\mu_n} | \| x \| \leq \delta \}$ and $S_{\delta} = \partial B_{\delta}$.   Define

\[  
    \hat{\epsilon}_{n, n} : (N_n/ \overline{L}_n) \wedge (N_n / L_n) \rightarrow   
      (V_{\lambda_n}^{\mu_n})^+ = B_{\delta} / S_{\delta} 
\]
by the formula
\[
      \hat{\epsilon}_{n, n}([y] \wedge [x]) = 
      \left\{
         \begin{array}{ll}
             [b_n(y) - a_n(x)] & \text{if $\| b_n(y) - a_n(x) \| < \delta$} \\
             * & \text{otherwise.}
         \end{array}
      \right.
\]
It is straightforward to see that $\hat{\epsilon}_n$ is a well-defined, continuous $S^1$-equivariant map.  Taking the desuspension by
$V_{\lambda_n}^{\mu_n}$ we get a morphism
\[
        \epsilon_{n,n} : \bar{I}_n \wedge I_{n} \rightarrow S.
\]
For $m, n$ with $m < n$,  we define a morphism $\epsilon_{m, n} : \bar{Z}_{n} \wedge Z_{m} \rightarrow S$ to be the composition
\[
     \bar{I}_{n} \wedge I_{m} \xrightarrow{\id \wedge j_{m,n}} \bar{I}_{n} \wedge I_{n} \xrightarrow{\epsilon_{n,n}} S.
\]

\begin{lem} \label{lem independence a b}
With the above notation, the morphism $\epsilon_{m, n} \in \morp_{ \frak{C} }( \bar{I}_n \wedge I_{m}, S )$ is independent
of the choices of $N_{n}$, $a_n, b_n$ and $\delta$.
\end{lem}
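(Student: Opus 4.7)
The plan is to verify the independence of $\epsilon_{m,n}$ with respect to each of the four data separately: first with respect to $a_n, b_n$ for fixed $(N_n, \delta)$, then with respect to $\delta$ for fixed $N_n$, and finally with respect to $N_n$. Since $\epsilon_{m,n}$ is defined as the composition $\epsilon_{n,n}\circ(\id\wedge j_{m,n})$, it suffices to treat the case $m=n$, i.e. to prove independence for $\epsilon_{n,n}$.

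For the independence with respect to $a_n$ and $b_n$, the idea is that the space of $S^1$-equivariant maps $N_n \to N_n\setminus\nu_{\delta}(\overline{L}_n)$ satisfying the list of conditions in (\ref{eq a b}) is contractible. Given two choices $a_n, a_n'$, the linear interpolation $a_n^s := (1-s)a_n + s a_n'$ takes values in $N_n$ (since $N_n$ is a codimension-$0$ submanifold and the interpolation is supported near $\partial N_n$), still satisfies $\|a_n^s(x)-x\|<2\delta$, equals the identity outside $\nu_{3\delta}(\partial N_n)$, and sends $L_n$ to $L_n$. Substituting $a_n^s, b_n^s$ into the definition of $\hat{\epsilon}_{n,n}$ yields a homotopy of $S^1$-equivariant maps through the well-defined formula, proving the claim. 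For the independence with respect to $\delta$, note that rescaling $(a_n, b_n)$ by a factor and simultaneously rescaling the target ball $B_\delta$ preserves the Thom-collapse picture up to the canonical identification $B_{\delta}/S_{\delta}\cong (V_{\lambda_n}^{\mu_n})^+$. Thus for $\delta'<\delta$ one uses the same pair $(a_n, b_n)$ (which trivially satisfies the conditions for any $\delta''\in[\delta',\delta]$), and observes that the resulting maps for $\delta$ and $\delta'$ factor through each other up to the identification of Thom spaces, yielding a linear homotopy between them.

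The main obstacle is independence with respect to $N_n$. Given two manifold isolating blocks $N_n, N_n'$ with the associated decompositions $(L_n,\overline{L}_n)$ and $(L_n', \overline{L}_n')$, I would invoke Theorem~\ref{thm flowmap} to realize the canonical isomorphisms $\bar{I}_n \to \bar{I}_n'$ and $I_n \to I_n'$ as flow maps $s_T$ and $\bar{s}_T$ for $T$ sufficiently large. Using Proposition~\ref{prop mfdnhbdswf} one may further arrange both $N_n$ and $N_n'$ to be $T$-tame, so that by Lemma~\ref{flow map from tame index pair} these flow maps have the simplified form $[x]\mapsto[\varphi(x, 3T)]$ whenever $\varphi(x,[0,3T])\subset J_n^+$ and $\varphi(x,[T,3T])\subset N_n'\setminus L_n'$. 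The task is then to check that the diagram
\begin{equation*}
\xymatrix{
\bar{I}_n \wedge I_n \ar[r]^{\epsilon_{n,n}} \ar[d]_{\bar{s}_T\wedge s_T} & S \ar@{=}[d]\\
\bar{I}_n' \wedge I_n' \ar[r]^{\epsilon_{n,n}'} & S
}
\end{equation*}
commutes up to $S^1$-equivariant homotopy. The key geometric point is that if $x\in N_n$ lies in the region where the flow map stays inside both blocks and $y\in N_n$ lies in the corresponding region for the reverse flow, then for $T$ large the orbits $\varphi(x,3T)$ and $\varphi(y,-3T)$ are close to the maximal invariant set, where the two pushoff maps $a_n, a_n'$ and $b_n, b_n'$ are both close to the identity, so the difference $b_n'(\varphi(y,-3T))-a_n'(\varphi(x,3T))$ is close to $b_n(y)-a_n(x)$ up to a bounded translation; one then constructs a linear homotopy through the flow, interpolating between the two compositions and checking that all intermediate maps remain well defined. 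A convenient reduction is to choose $N_n'\subset\inti(N_n)$ (which one can always arrange by shrinking along the flow), handle this nested case directly, and then deduce the general case by comparing each of $N_n, N_n'$ with a common smaller block $N_n''\subset N_n\cap N_n'$.
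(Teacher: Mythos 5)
Your reduction to the case $m=n$, your treatment of $\delta$, and your outline of the $N_n$-independence via flow maps (reduce to the nested case $N_n'\subset \inti N_n$ and exploit $T$-tameness so that, after flowing for time $3T$, the push-off maps act as the identity on the relevant region) are all essentially what the paper does.

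However, your separate treatment of the $a_n,b_n$-independence by linear interpolation has a genuine gap. The path $a_n^s=(1-s)a_n+sa_n'$ need not take values in $N_n$ (a codimension-$0$ manifold-with-boundary is not convex) and need not send $L_n$ to $L_n$ (the exit set is not convex either). More seriously, well-definedness of the formula $[y]\wedge[x]\mapsto[b_n(y)-a_n^s(x)]$ rests on the estimate $\|b_n(y)-a_n^s(x)\|>\delta$ whenever $x\in L_n$; this holds when $a_n^s(x)\in L_n$ because $b_n(N_n)$ avoids $\nu_\delta(L_n)$, but for an intermediate $s$ the point $a_n^s(x)$ can wander off $L_n$ by as much as $4s\delta$, at which point the triangle-inequality bound degrades to $\delta-4s\delta$ and the map can hit the interior of $B_\delta$ when it should hit the basepoint. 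So the interpolating family is not visibly a family of continuous based maps.

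The remedy is the one the paper uses: do not try to interpolate $a_n,b_n$ directly, but instead compare $\hat\epsilon_{n,n}$ with $\hat\epsilon_{n,n}'\circ(s_T\wedge\bar{s}_T)$. After precomposing with the long flow, the only points that survive lie in $A^{[-3T,3T]}$, where both $a_n,a_n'$ and $b_n,b_n'$ equal the identity by condition (\ref{eq a b}); the two compositions then have identical formulas, and one only has to check equality of the non-basepoint loci. This handles $a_n,b_n$ and $N_n$ in one stroke, and your $N_n$-independence outline already contains the essential ingredients. If you delete the linear-interpolation paragraph and instead observe that $a_n,b_n$-independence is a special case ($N_n=N_n'$) of the flow-map argument, the proposal is sound.
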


\begin{proof}
The proof of the independence from $\delta$ is straightforward. We prove the independence from $N_n$, $a_n$ and $b_{n}$. Fix an isolating
neighborhood $A (\subset V_{\lambda_n}^{\mu_n} \cap J_n^+)$ of $\inv(V_{\lambda_n}^{\mu_n} \cap J_n^+)$.  Take two manifold
isolating blocks $N_n, N_n'$ for $\inv ( V_{\lambda_n}^{\mu_n} \cap J_{n}^+)$ included in $\inti A$.  Then we get two maps
\[
   \hat{\epsilon}_{n, n} :  (N_n / \overline{L}_n) \wedge (N_{n} / L_{n}) \rightarrow B_{\delta} / S_{\delta}, \
   \hat{\epsilon}_{n, n}' :  (N_n' / \overline{L}_n') \wedge (N_{n}' / L_{n}') \rightarrow B_{\delta} / S_{\delta}.
\]
It is sufficient to show that the following diagram is commutative up to $S^1$-equivariant homotopy:
\[
     \xymatrix{
       (N_n / \overline{L}_n) \wedge (N_n / L_{n})  \ar[r]^(0.65){\hat{\epsilon}_{n,n}} \ar[d]_{ \bar{s} \wedge s}  &  B_{\delta}
/ S_{\delta}   \\
       (N_{n}' / \overline{L}_n')  \wedge (N_{n}' / L_{n}')  \ar[ru]_{\hat{\epsilon}_{n,n}'}
     }
\]
Here $s = s_{T} : N_{n} / L_{n} \rightarrow N_{n}' / L_{n}'$, $\bar{s} = \bar{s}_{T} : N_{n} / \overline{L}_n \rightarrow
N_{n}' / \overline{L}_n'$ are the flow maps with large $T  > 0$:
\[  
\begin{split}
    &   \qquad  s([x])  = \\
    &   \left\{
                       \begin{array}{ll}
          [\varphi(x, 3T)] & 
          \text{if $\varphi(x, [0, 2T]) \subset N_n \setminus L_{n}, \varphi(x, [T, 3T]) \subset N_{n}' \setminus L_{n}',$}
 \\
          * & \text{otherwise,}
                       \end{array}    
                   \right.     \\
   &  \qquad   \bar{s}([y]) =   \\
  &   \left\{
                       \begin{array}{ll}
          [\varphi(y, -3T)] & 
          \text{if $\varphi(y, [-2T,0]) \subset N_n \setminus \overline{L}_{n}, \varphi(y, [-3T, -T]) \subset N_{n}' \setminus
\overline{L}_{n}',$}  \\
          * & \text{otherwise.}
                       \end{array}    
                   \right.
     \end{split}
     \]
The proof can be reduced to the case $N_n' \subset \inti N_n$ since we can find a manifold isolating block $N''_{n}$ with
$N_n'' \subset  \inti N_{n},  \inti N_{n}'$.  Assume that $N_n' \subset \inti N_{n}$. Taking sufficiently large $T > 0$ we
have 
\begin{equation} \label{eq A frac{1}{2}T}
       A^{[-T, T]} \subset 
       (N_{n}' \setminus \nu_{3\delta}(\partial N_{n}') )  \subset (N_{n} \setminus \nu_{3\delta}( \partial N_{n})). 
\end{equation}
It is straightforward to see that $\hat{\epsilon}_{n,n}$ is homotopic to a map $\hat{\epsilon}_{n,n}^{(0)} : (N_n / \overline{L}_n)
\wedge (N_{n} / L_{n}) \rightarrow B_{\delta} / S_{\delta}$ defined by
\[
   \hat{\epsilon}_{n,n}^{(0)}( [y] \wedge [x]) =    \left\{
              \begin{array}{ll}
                [\Delta_{T,n}(y, x)] & \text{if}
                   \left\{
                      \begin{array}{l}
                         \varphi(x, [0, 3T]) \subset N_{n} \setminus L_{n}, \\
                          \varphi(y, [-3T, 0]) \subset N_n \setminus \overline{L}_{n},  \\
                          \| \Delta_{T,n}(y,x) \| < \delta
                      \end{array}
                   \right.    \\
               * & \text{otherwise.}
              \end{array}
          \right.
\]
Here 
\[
    \Delta_{T,n}( y, x) = b_n(\varphi(y,-3T)) - a_n(\varphi(x, 3T)).
\]
Suppose that $\epsilon^{(0)}( [y] \wedge [x]) \not= *$. Then 
\[
     \varphi(x, 3T) \in N_{n}^{[-3T, 0]}, \varphi(y, -3T) \in N_{n}^{[0, 3T]}, \ \| \varphi(y, -3T) - \varphi(x, 3T) \| <
5\delta.
\]
Taking small $\delta > 0$ and the using the fact that $N_n \subset \inti A$,  we may suppose that
\[
            \varphi(x, 3T), \varphi(y, -3T) \in A^{[-3T, 3T]},
\]
which implies 
\[
       a_n( \varphi(x, 3T) ) = \varphi(x, 3T), \ b_n(\varphi(y, -3T)) = \varphi(y, -3T). 
\]
Here we have used (\ref{eq a b}) and (\ref{eq A frac{1}{2}T}). 
We can assume that $\delta$ is independent of $x, y$ since $N_n$ is compact. 
So we have
\[
   \begin{split}
  &   \hat{\epsilon}^{(0)}_{n,n}( [y] \wedge [x]) =  \\
 &    \qquad     \left\{
              \begin{array}{ll}
                   [\varphi(y, -3T) -   \varphi(x, 3T)] & \text{if}
                   \left\{
                      \begin{array}{l}
                         \varphi(x, [0, 3T]) \subset N_{n} \setminus L_{n}, \\
                          \varphi(y, [-3T, 0]) \subset N_n \setminus \overline{L}_{n},  \\
                          \| \varphi(y, -3T) - \varphi(x, 3T) \| < \delta,
                      \end{array}
                   \right.    \\
               * & \text{otherwise.}
              \end{array}
          \right.
     \end{split}
\]
 On the hand, we can write
\[
\begin{split}
    &    \hat{\epsilon}'_{n,n} \circ (s \wedge \bar{s})([y] \wedge [x]) =   \\
 &     \qquad  \left\{
              \begin{array}{ll}
                   [\Delta_{T,n}'(y,x)] & \text{if}
                   \left\{
                      \begin{array}{l}
                         \varphi(x, [0, 2T]) \subset N_{n} \setminus L_{n}, \\
                         \varphi(x, [T, 3T]) \subset N_{n}' \setminus L_{n}', \\
                          \varphi(y, [-2T, 0]) \subset N_n \setminus \overline{L}_n,  \\
                          \varphi(y, [-3T, -T]) \subset N_{n}' \setminus \overline{L}_n', \\
                          \| \Delta_{T,n}'(y,x) \| < \delta,
                      \end{array}
                   \right.    \\
               * & \text{otherwise.}
              \end{array}
          \right.
  \end{split}
\]
Here
\[
        \Delta_{T,n}'(y, x) =
        b_n'(\varphi(y, -3T)) -   a_{n}'(\varphi(x, 3T))
\]
As before, if $\hat{\epsilon}'_{n,n} \circ (s \wedge \bar{s})([y] \wedge [x]) \not= *$ we have
\[
     \varphi(x, 3T), \varphi(y, -3T) \in A^{[- 3T, 3T]}
\]
and we can write
\[
  \begin{split}
     &  \hat{\epsilon}_{n,n}' \circ (s \wedge \bar{s})([y] \wedge [x]) =   \\
    & \qquad      \left\{
              \begin{array}{ll}
                  [\varphi(y, -3T) -   \varphi(x, 3T) ] & \text{if}
                   \left\{
                      \begin{array}{l}
                         \varphi(x, [0, 2T]) \subset N_{n} \setminus L_{n}, \\
                         \varphi(x, [T, 3T]) \subset N_{n}' \setminus L_{n}', \\
                          \varphi(y, [-2T, 0]) \subset N_n \setminus \overline{L}_n,  \\
                          \varphi(y, [-3T, -T]) \subset N_{n}' \setminus \overline{L}_n', \\
                          \| \varphi(y, -3T)) -  \varphi(x, 3T) \| < \delta,
                      \end{array}
                   \right.    \\
               * & \text{otherwise.}
              \end{array}
          \right.
    \end{split}
\]
We will show that $\hat{\epsilon}_{n,n}^{(0)} =  \hat{\epsilon}_{n,n}' \circ (s \wedge \bar{s})$.  It is sufficient to prove
that $\hat{\epsilon}^{(0)}_{n,n}([y] \wedge [x]) \not= *$ if and only if $\hat{\epsilon}_{n,n}' \circ (s \wedge \bar{s})([y]
\wedge [x]) \not=*$.  It is straightforward to see that if $\hat{\epsilon}_{n,n}' \circ (s \wedge \bar{s})([y] \wedge [x]) \not=*$ then
$\hat{\epsilon}_{n,n}^{(0)}([y] \wedge [x]) \not= *$ using the assumption that $N_{n}' \subset \inti N_{n}$.  Conversely,
 suppose that $\hat{\epsilon}^{(0)}_{n,n}([y] \wedge [x]) \not= *$.  Then  $\varphi(x, 3T), \varphi(y, -3T) \in A^{ [-3T,
3T] }$ and we have
\[
    \begin{split}
    &  \varphi(x, [2T, 3T]) =  \varphi(\varphi(x, 3T), [-T, 0])  \subset A^{ [-2T, 2T]  } \subset  \inti N_n,  \\
    & \varphi(y, [-3T, 2T]) = \varphi( \varphi(y, -3T), [0, T]  )  \subset A^{  [- 2T, 2T] }  \subset \inti N_n.
    \end{split}
\]
This implies that $\hat{\epsilon}_{n,n}^{(0)}( [y] \wedge [x]) \not= *$.

\end{proof}

A calculation similar to that in the proof of Lemma \ref{lem independence a b} proves the following:

\begin{lem}
Suppose that $\lambda < \lambda_n$, $\mu > \mu_{n}$.  Take $S^{1}$-equivariant manifold isolating blocks $N_n, N_n'$ for $\inv ( V_{\lambda_n}^{\mu_n}
\cap J_n^+)$, $\inv( V_{\lambda}^{\mu} \cap J_n^+)$. Note that we have canonical homotopy equivalences
\[
       \Sigma^{ V_{\lambda}^{\lambda_n} } (N_{n} / L_{n}) \cong N_{n}' / L_{n}', \
       \Sigma^{ V_{\mu_n}^{\mu} } (N_{n} / \overline{L}_n) \cong N_{n}' / \overline{L}_n'. 
\]
See Proposition 5.6 of \cite{KLS1}. 
The following diagram is commutative up to  $S^1$-equivariant homotopy:
\[
    \xymatrix{
 \Sigma^{ V_{\mu_n}^{\mu} }(N_n / \overline{L}_n) \wedge \Sigma^{ V_{\lambda}^{\lambda_n} } (N_n / L_n) 
 \ar[rr]^(0.7){ \Sigma^{ W }  \hat{\epsilon}_{n,n} } \ar[d]_{\cong}
               &  & (V_{\lambda}^{\mu})^+ \\
        (N_n' / \overline{L}_n') \wedge (N_n' / L_n') \ar[rru]_{ \hat{\epsilon}_{n,n}'}
    }
\]
Here $W = V_{\lambda}^{\lambda_n} \oplus V_{\mu_n}^{\mu}$. 
\end{lem}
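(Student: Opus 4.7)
The plan is to mirror the technique used in the proof of Lemma~\ref{lem independence a b}. After deforming by a sufficiently long flow map, both maps in the diagram will simplify to the naive difference formula on pairs of points trapped near the common invariant set $\inv(V_{\lambda_n}^{\mu_n} \cap J_n^+) = \inv(V_\lambda^{\mu} \cap J_n^+)$, where the auxiliary perturbations $a, b$ act as the identity and the manifold isolating block becomes irrelevant.

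First I would exploit the independence-of-choices part of Lemma~\ref{lem independence a b} to pick the manifold isolating blocks conveniently. Concretely, since Proposition~5.6 of \cite{KLS1} identifies $N_n'/L_n' \cong \Sigma^{V_\lambda^{\lambda_n}}(N_n/L_n)$ and $N_n'/\overline{L}_n' \cong \Sigma^{V_{\mu_n}^{\mu}}(N_n/\overline{L}_n)$ via the linear splitting $V_\lambda^\mu = V_{\lambda_n}^{\mu_n} \oplus W$ (the new eigendirections $V_\lambda^{\lambda_n}$ are unstable for $\varphi_n$, hence added to the attractor exit set, while the new directions $V_{\mu_n}^{\mu}$ are unstable for $\bar{\varphi}_n$, hence added to the repeller exit set), one may arrange $N_n'$ to have the product form $N_n \times D(V_\lambda^{\lambda_n}) \times D(V_{\mu_n}^{\mu})$ on a neighborhood of the invariant set, with $L_n', \overline{L}_n'$ matching these factors. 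The canonical equivalences then become the identity on the product coordinates.

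Next, following the calculation in Lemma~\ref{lem independence a b}, I would precompose both $\Sigma^W \hat{\epsilon}_{n,n}$ (via its source identification with $(N_n'/\overline{L}_n') \wedge (N_n'/L_n')$) and $\hat{\epsilon}_{n,n}'$ with flow maps at time $T \gg 0$, producing homotopic models $\hat{\epsilon}_{n,n}^{(0)}$ and $\hat{\epsilon}_{n,n}'^{(0)}$. Any non-basepoint input $([y]\wedge[x])$ then forces $\varphi_n(x, 3T), \varphi_n(y,-3T) \in A^{[-3T,3T]}$ for a fixed isolating neighborhood $A$ of $\inv(V_\lambda^\mu \cap J_n^+)$, so by (\ref{eq a b}) the maps $a_n', b_n'$ fix these points. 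Writing $x = x_0 + \tilde{x}$, $y = y_0 + \tilde{y}$ according to the product decomposition, the invariant set lies in $V_{\lambda_n}^{\mu_n}$, hence the flow forces $\tilde{x}, \tilde{y}$ to be small near the invariant set, and both formulas reduce to the single expression $[(y_0 + \tilde{y}) - (x_0 + \tilde{x})] \in (V_\lambda^\mu)^+$ inside the $\delta$-ball.

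The main obstacle is bookkeeping the suspension identifications carefully: one must verify that the canonical equivalence of Proposition~5.6 of \cite{KLS1} precisely matches the suspension coordinates in $W^+$ with the product factors $\tilde{x} \in V_\lambda^{\lambda_n}$ and $\tilde{y} \in V_{\mu_n}^{\mu}$, so that the $W^+$ part of $\Sigma^W \hat{\epsilon}_{n,n}$ (which is the smash identity) agrees with the contribution $\tilde{y} - \tilde{x}$ appearing in $\hat{\epsilon}_{n,n}'$. Once this matching is established, the equality of the two simplified maps is immediate, and homotopy commutativity of the diagram follows exactly as in the final paragraph of Lemma~\ref{lem independence a b}.
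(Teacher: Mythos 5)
Your overall strategy matches what the paper has in mind: rerun the argument of Lemma~\ref{lem independence a b}, apply flow maps for large $T$ so that non-basepoint inputs are trapped near the invariant set, and observe that there the maps $a_n', b_n'$ are the identity, reducing both sides to subtraction formulas. However, you identify the matching of suspension coordinates as ``the main obstacle'' and then defer it as bookkeeping, when in fact it is precisely the content of the lemma and hides a real subtlety your sketch does not address. Concretely, writing $x' = (x, \tilde x_\lambda, 0)$ and $y' = (y, 0, \tilde y_\mu)$ for the images of $[\tilde x_\lambda]\wedge[x]$ and $[\tilde y_\mu]\wedge[y]$ under the naive product-block identification, the geometric duality $\hat\epsilon'_{n,n}([y']\wedge[x']) \approx y' - x'$ yields $(y-x,\, -\tilde x_\lambda,\, \tilde y_\mu)$, whereas $\Sigma^W\hat\epsilon_{n,n}$ under the canonical identification $W^+ \wedge (V_{\lambda_n}^{\mu_n})^+ \cong (V_\lambda^\mu)^+$ by addition yields $(y-x,\, \tilde x_\lambda,\, \tilde y_\mu)$. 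These formulas differ by the involution $-1$ on the $V_\lambda^{\lambda_n}$ factor, which is not a priori $S^1$-equivariantly homotopic to the identity (the $S^1$-fixed part of $V_\lambda^{\lambda_n}$, spanned by $*d$-eigenvectors, can be odd-dimensional). Showing that the canonical equivalence of Proposition~5.6 of \cite{KLS1} is pinned down in just such a way that this sign is absorbed — or otherwise reconciling the two formulas — is the actual work of the proof, and you have not done it.

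Two smaller imprecisions also need attention. First, you assume the new isolating block can be taken of literal product form $N_n \times D(V_\lambda^{\lambda_n}) \times D(V_{\mu_n}^\mu)$; but this product is generally not an isolating block for the approximated Seiberg--Witten flow on $V_\lambda^\mu$, because the nonlinearity $p_\lambda^\mu \circ c$ couples the coordinates. One either homotopes the flow so that the new directions are linear (this is how the Proposition~5.6 equivalence of \cite{KLS1} is typically built), or keeps $N_n'$ arbitrary as in Lemma~\ref{lem independence a b}; your argument conflates the two. Second, the statement ``the invariant set lies in $V_{\lambda_n}^{\mu_n}$'' is false: $\inv(V_\lambda^\mu \cap J_n^+)$ is a different subset from $\inv(V_{\lambda_n}^{\mu_n} \cap J_n^+)$. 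What is true — and what the flow-map trapping argument actually uses — is that the former lies in a small neighborhood of the latter for $n$ large, so $\tilde x_\lambda, \tilde y_\mu$ become small but not zero after the flow map. That distinction matters if one is to keep careful track of the suspension coordinates in the final formula.
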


This lemma implies that the morphism $\epsilon_{n,n}$ (and hence $\epsilon_{m,n}$) is independent of the choices of $\lambda_n,
\mu_n$.

We have obtained a collection $\{ \epsilon_{m, n} : \bar{I}_{n} \wedge I_{m} \rightarrow S \}_{ n \geq m }$ of morphisms.
 Since $j_{m, n} = j_{m+1, n} \circ j_{m, m+1}$,  the following diagram is commutative:
\begin{equation} \label{eq commu epsilon j}
         \xymatrix{
             \bar{I}_{n} \wedge I_{m} \ar[d]_{\id \wedge j_{m, m+1}} \ar[r]^{\epsilon_{m,n}}   & S \\
             \bar{I}_n \wedge I_{m+1} \ar[ru]_{\epsilon_{m+1, n}} &
         }
\end{equation}

\begin{lem} \label{lem commu epsilon bar{j}}
For $m < n$, the following diagram is commutative: 
\begin{equation} \label{eq commu epsilon bar{j}}
       \xymatrix{
           \bar{I}_n \wedge I_{m} \ar[r]^{\epsilon_{m,n}}        & S  \\
           \bar{I}_{n+1} \wedge I_{m} \ar[u]^{\bar{j}_{n, n+1} \wedge \id}  \ar[ru]_{\epsilon_{m, n+1} } &
       }
\end{equation}
\end{lem}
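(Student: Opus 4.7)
The plan is first to reduce the lemma to the case $m = n$. Using the factorizations $j_{m,n+1} = j_{n,n+1} \circ j_{m,n}$ and the definition $\epsilon_{m,n} = \epsilon_{n,n} \circ (\id \wedge j_{m,n})$, both the top-right and diagonal compositions of (\ref{eq commu epsilon bar{j}}) factor through the common map $\id \wedge j_{m,n}:\bar{I}_{n+1}\wedge I_{m}\to \bar{I}_{n+1}\wedge I_{n}$. Hence it suffices to establish the identity
\begin{equation*}
\epsilon_{n+1,n+1}\circ(\id\wedge j_{n,n+1})\;=\;\epsilon_{n,n}\circ(\bar{j}_{n,n+1}\wedge\id)
\end{equation*}
as morphisms $\bar{I}_{n+1}\wedge I_{n}\to S$ in $\frak{C}$.

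Next, imitating the proof of Lemma \ref{lem independence a b} together with the subsequent lemma on independence of $\lambda_n,\mu_n$, I would enlarge the ambient finite-dimensional space to a common $V = V_{\lambda}^{\mu}$ with $\lambda\le \min(\lambda_n,\lambda_{n+1})$, $\mu\ge \max(\mu_n,\mu_{n+1})$, and compute all four Conley indices inside $V$. The essential geometric input is that the balanced-perturbed Chern--Simons--Dirac functional is a Lyapunov function for $\varphi$, so that (up to controlled modification on the boundary, as in Proposition \ref{prop mfdnhbdswf}) $V\cap J_{n}^{+}$ is positively invariant in $V\cap J_{n+1}^{+}$. This provides a strong Morse decomposition of $V\cap J_{n+1}^{+}$ into a ``repeller part'' $B$ and an ``attractor part'' whose maximal invariant set is $S_{1}=\inv(V\cap J_{n}^{+})$ inside $S_{2}=\inv(V\cap J_{n+1}^{+})$. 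For the reverse flow $\bar\varphi$, the roles swap and $S_{1}$ becomes a repeller in $S_{2}$. Using Conley--Easton I would then pick nested manifold isolating blocks $M\subset N$ for $S_1\subset S_2$ compatible with this decomposition, with forward-flow exit/entrance sets $L_M,\bar L_M$ and $L_N,\bar L_N$ satisfying
\[
L_M = L_N\cap M,\qquad \bar L_M = (\bar L_N\cap M)\cup\bigl(M\cap\overline{N\setminus M}\bigr).
\]

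With these concrete choices, Propositions \ref{pre-index map compatible with attractor} and \ref{pre-index map compatible with repellor} identify $j_{n,n+1}$ with the natural inclusion $\iota\colon M/L_M\hookrightarrow N/L_N$ and $\bar{j}_{n,n+1}$ (the repeller map for $\bar\varphi$) with the collapse $q\colon N/\bar L_N\to M/\bar L_M$ sending $N\setminus M$ to the basepoint. The two sides then unfold, via the pairing formula of Section \ref{section dualswf}, into
\[
[y]\wedge[x]\;\longmapsto\;[b_N(y)-a_N(\iota(x))]\quad\text{and}\quad [y]\wedge[x]\;\longmapsto\;[b_M(q(y))-a_M(x)].
\]
I would choose $a_M,b_M$ to be the restrictions of $a_N,b_N$ to $M$ (permissible because the homotopy equivalences in (\ref{eq a b}) are the identity away from a collar neighborhood of the boundary, and the interface $M\cap\overline{N\setminus M}$ lies inside $\bar L_M$ but away from $\bar L_N$); this already makes the two formulas coincide for $y\in M$.

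The remaining point, which is the main obstacle, is to verify that when $y\in N\setminus M$---so that side~2 automatically produces the basepoint via $q$---side~1 also produces the basepoint. This requires a uniform separation estimate: by compactness of $N$ and strong Morse-ness of $(B,M)$, the distance from $B\setminus\nu_{3\delta}(M)$ to $M\setminus\nu_{3\delta}(\bar L_N)$ can be made at least $\delta$ for all sufficiently small $\delta>0$, so that $\|b_N(y)-a_N(x)\|\ge \delta$ whenever $y\in N\setminus M$ and $x\in M$ are both away from the collars. Points of $y$ in the collar of $\bar L_N$ are moved inside by $b_N$ but still remain in $B\setminus M$, so the same estimate applies. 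Handling points near the interface $M\cap\overline{N\setminus M}$ is precisely what motivates the pre-index-pair compatibility of Propositions \ref{pre-index map compatible with attractor}--\ref{pre-index map compatible with repellor}, and after invoking these the two compositions agree on the nose, proving the lemma.
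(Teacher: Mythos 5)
Your reduction to the case $m=n$ is a formally valid manipulation (both compositions factor through $\id\wedge j_{m,n}$), but it converts the lemma into a \emph{harder} statement that your subsequent argument does not actually establish.  The paper proves (\ref{eq commu epsilon bar{j}}) directly for $m<n$, and the inequality $m<n$ is load-bearing: after taking a single manifold isolating block $N_{n+1}$ and cutting it by the functions $g_{j,+}$ to get $N_m\subset N_n\subset N_{n+1}$, the crucial observation is that for $\delta$ small one has $\nu_{5\delta}(N_m)\cap N_{n+1}\subset N_n$, so whenever either side of the pairing is non-basepoint the point $y$ is forced into $N_n$, where $b_{n+1}$ and $b_n$ can be taken to agree.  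With $m=n$ (your $M=N_n$, $N=N_{n+1}$) this inclusion fails trivially, and the two maps are genuinely \emph{not} equal on the nose: pick $x\in N_n$ and $y\in N_{n+1}\setminus N_n$ both within distance $\delta/3$ of the interface $\bigcup_j N_n\cap g_{j,+}^{-1}(n+\theta)$; then $a_{n+1}(x)=x$ and $b_{n+1}(y)\approx y$ (since $a_{n+1},b_{n+1}$ are the identity away from $\partial N_{n+1}$), so $\hat\epsilon_{n,n+1}([y]\wedge[x])\neq *$, while $\bar i_{n,n+1}([y])=*$ and hence the other composition returns the basepoint.

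Two more specific problems.  First, your prescription ``take $a_M,b_M$ to be restrictions of $a_N,b_N$'' is not permissible in the $m=n$ case: the interface belongs to $\bar L_M$ but not to $\partial N$, so $a_N|_{M}$ does \emph{not} push points off $\nu_\delta(\bar L_M)$ near the interface, i.e.\ it does not qualify as a choice of $a_M$ satisfying (\ref{eq a b}).  Second, the step ``after invoking Propositions \ref{pre-index map compatible with attractor}--\ref{pre-index map compatible with repellor} the two compositions agree on the nose'' is a gap: those propositions compare canonical maps from a pre-index pair into the Conley index under attractor and repeller maps, whereas the obstruction here lies in the Spanier--Whitehead pairing $\hat\epsilon$ itself, a difference of the homotopy retractions $a,b$, and the propositions say nothing about that.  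To repair the argument you would either need to exhibit an explicit homotopy that pushes the support of $\hat\epsilon_{n,n+1}$ away from $N_{n+1}\setminus N_n$ (this is nontrivial and not present in your write-up), or simply keep $m<n$ and follow the paper's direct argument, where the extra room between $N_m$ and $\partial N_n$ makes the separation estimate automatic.
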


\begin{proof}
We have to prove that the following diagram is commutative up to $S^1$-equivariant homotopy:
\begin{equation}  \label{eq bar{i} epsilon}
     \xymatrix{
     (N_{n} /  \overline{L}_{n}) \wedge ( N_m / L_{m} ) \ar[r]^(0.65){\hat{\epsilon}_{n, m}}  
                 & B_{\delta} / S_{\delta}    \\
     (N_{n+1} /  \overline{L}_{n+1}) \wedge (N_{m} / L_{m}) 
                 \ar[ru]_{\hat{\epsilon}_{m, n+1}} \ar[u]^{ \bar{i}_{n, n+1} \wedge \id}
     }
\end{equation}
By Lemma \ref{lem independence a b}, we can use the following specific manifold isolating blocks (with corners).  First take
a manifold isolating block $N_{n+1}$ for $\inv (V_{\lambda_{n+1}}^{\mu_{n+1}} \cap J_{n+1}^+ )$.   We have compact submanifolds
$L_{n+1}, \overline{L}_{n+1}$ in $\partial N_{n+1}$ with
\[
        \partial N_{n+1} = L_{n+1} \cup \overline{L}_{n+1}, \  
        \partial L_{n+1} = \partial \overline{L}_{n+1}  = L_{n+1} \cap \overline{L}_{n+1}.
\]
Moreover $(N_{n+1}, L_{n+1})$ is an index pair for $(\inv (V_{\lambda_{n+1}}^{\mu_{n+1}} \cap J_{n+1}^+), \varphi_{n+1})$
and $(N_{n+1}, \overline{L}_{n+1})$ is an index pair for $(\inv(V_{\lambda_{n+1}}^{\mu_{n+1}} \cap J_{n+1}^+), \overline{\varphi}_{n+1})$,
where $\overline{\varphi}_{n+1}$ is the reverse flow of $\varphi_{n+1}$.
Put
\[
   \begin{split}
        &  N_{m} := N_{n+1} \cap J_{m}^+ = N_{n+1} \cap \bigcap_{j=1}^{b_1} g_{j, +}^{-1}((-\infty, m+ \theta]), \\
        & L_{m} := L_{n+1} \cap N_{m}, \\
        & \overline{L}_{m} := (\overline{L}_{n+1} \cap N_{m}) \cup \bigcup_{j=1}^{b_1} N_{m} \cap g_{j,+}^{-1}(m+\theta),
\\
        &  N_{n} := N_{n+1} \cap J_{n}^+ = N_{n+1} \cap \bigcap_{j=1}^{b_1} g_{j, +}^{-1}((-\infty, n + \theta]), \\
        & L_{n} := L_{n+1} \cap N_{n}, \\
        & \overline{L}_{n} :=  (\overline{L}_{n+1} \cap N_{n}) \cup \bigcup_{j=1}^{b_1} N_{n} \cap g_{j,+}^{-1}(n+\theta)
   \end{split}
\]
Then $N_{m}, N_{n}$ are isolating blocks for $\inv (V_{\lambda_{n+1}}^{\mu_{n+1}}  \cap J_m^+)$, $\inv ( V_{\lambda_{n+1}}^{\mu_{n+1}}
\cap   J_{n}^+)$ and $N_{m}$, $N_{n}$, $L_{m}$, $\overline{L}_{m}$, $L_n$, $\overline{L}_{n}$ are manifolds with corners
(for generic $\theta$). Moreover $(N_{m}, L_{m})$, $(N_{m}, \overline{L}_{m})$, $(N_{n}, L_{n})$, $(N_{n}, \overline{L}_{n})$
are index pairs for $(\inv (V_{\lambda_{n+1}}^{\mu_{n+1}} \cap J_{m}^+), \varphi_{n+1} )$,  $(\inv (V_{\lambda_{n+1}}^{\mu_{n+1}}
\cap J_{m}^+), \overline{\varphi}_{n+1} )$, $(\inv (V_{\lambda_{n+1}}^{\mu_{n+1}} \cap J_{n}), \varphi_{n+1} )$, $(\inv (V_{\lambda_{n+1}}^{\mu_{n+1}}
\cap J_{n}), \overline{\varphi}_{n+1} )$ respectively.  Also we have
\[
   \begin{split}
       & L_{m} \cup \overline{L}_m = \partial N_{m}, \   \partial L_{m} = \partial \overline{L}_{m} = L_m \cap \overline{L}_m,
\\
       & L_{n} \cup \overline{L}_{n} = \partial N_{n}, \   \partial L_{n} = \partial \overline{L}_{n} = L_m \cap \overline{L}_{n}.
   \end{split}
\]

The connecting morphisms $j_{m, n} : I_{m} \rightarrow I_{n}$, $j_{m, n+1} : I_{m} \rightarrow I_{n+1}$ and $\bar{j}_{n,n+1}
: \bar{I}_{n+1} \rightarrow \bar{I}_{n}$ are induced by the inclusions
\[
     i_{m, n} : N_{m} / L_{m} \rightarrow N_{n} / L_{n}, \   i_{m, n+1} :  N_{m} / L_{m} \rightarrow N_{n+1} / L_{n+1}
\]
and projection 
\[
\begin{split}
   & \bar{i}_{n, n+1} :   N_{n+1} / \overline{L}_{n+1} \rightarrow  \\
& \qquad  N_{n+1} \left/ \left\{  \overline{L}_{n+1} \cup \bigcup_{j}
\left( N_{n+1} \cap  g_{j, +}^{-1}([n+\theta, \infty))  \right) \right\} \right.
     = N_{n} / \overline{L}_{n} .
 \end{split}
\]
With the index pairs we have taken above,  for $x \in N_m, y \in N_{n+1}$ we can write
\[
      \hat{\epsilon}_{m, n+1}( [y] \wedge [x]) = 
         \left\{
            \begin{array}{ll}
                 [b_{n+1}(y) - a_{n+1}(x)] & \text{if $\| b_{n+1}(y) - a_{n+1}(x) \| < \delta$, }  \\
                 * & \text{otherwise. }
             \end{array}
         \right. 
\]
Also we have
\[
\begin{split}
   &      \hat{\epsilon}_{m, n} \circ (\bar{i}_{ n, n+1 } \wedge \id) ([y] \wedge [x]) =  \\
   &  \qquad
           \left\{
              \begin{array}{ll}
                   [b_n(y) - a_{n}(x)]   & \text{if $y \in N_{n}$, $\| b_n(y) - a_n(x) \| < \delta$,}   \\
                   * & \text{otherwise.}
              \end{array}
              \right.
    \end{split}
\]
We may suppose that $a_{n}(x) = a_{n+1}(x)$ for $x \in N_{m}$.  Note that if $\hat{\epsilon}_{m, n+1}([y] \wedge [x]) \not=
*$ or $\hat{\epsilon}_{m, n} \circ ( \bar{i}_{n,n+1} \wedge \id ) ([y] \wedge [x]) \not= *$ we have $y \in \nu_{5\delta}(N_{m})$.
 For small $\delta > 0$  we can suppose that $\nu_{5\delta}(N_m) \cap N_{n+1} \subset N_{n}$ and that $b_{n+1}(y) = b_{n}(y)$
for $y \in \nu_{5\delta}(N_m) \cap N_{n+1}$.  This implies that (\ref{eq bar{i} epsilon}) commutes. 

 \end{proof}

The commutativity of the diagrams (\ref{eq commu epsilon j}) and (\ref{eq commu epsilon bar{j}}) means that the collection
$\{ \epsilon_{m, n} \}_{m, n}$ defines an element $\epsilon$ of ${\displaystyle \lim_{\infty \leftarrow m} \lim_{n \rightarrow
\infty} } \morp_{\frak{C}}( \bar{I}_n \wedge I_m, S)$.

Next we will define ${\displaystyle \eta \in \lim_{\infty \leftarrow n} \lim_{m \rightarrow \infty}  \morp_{\frak{C}}( S,
I_{m} \wedge \bar{I}_{n} )}$. Take a manifold isolating block $N_{n} (\subset V_{\lambda_n}^{\mu_n})$ of $\inv( V_{\lambda_n}^{\mu_n}
\cap J_n^+)$.  As usual we have compact submanifolds $L_{n}, \overline{L}_{n}$ of $\partial N_{n}$ such that 
\[
              \partial N_{n} = L_n \cup \overline{L}_{n}, \ \partial L_{n} = \partial \overline{L}_n = L_{n} \cap \overline{L}_n
\]
and that $(N_n, L_n)$, $(N_n, \overline{L}_n)$ are index pairs for $(\inv (V_{\lambda_n}^{\mu_n} \cap J_{n}^+), \varphi_{n})$,
$(\inv (V_{\lambda_n}^{\mu_n} \cap J_{n}^+), \overline{\varphi}_n)$ respectively. 
  Taking a large positive number $R > 0$ we may suppose that $N_{n} \subset B_{R/2}$, where $B_{R/2} = \{ x \in V_{\lambda_n}^{\mu_n}
| \| x \| \leq R / 2 \}$. 
We  define 
\[
     \hat{\eta}_{n, n} : (V_{\lambda_n}^{\mu_n})^+ = B_{R} / S_{R} \rightarrow (N_n / L_n) \wedge (N_n / \overline{L}_n)
\]
by
\[
    \hat{\eta}_{n,n}( [x] ) =
       \left\{
           \begin{array}{ll}
               [x] \wedge [x] & \text{if $x \in  N_n$, }  \\
               *  & \text{otherwise.}
           \end{array}
       \right.
\]
We can see that $\hat{\eta}_{n,n}$ is a well-defined continuous map and induces a morphism
\[
        \eta_{n,n} : S  \rightarrow  I_n \wedge \bar{I}_n.
\]
For $m > n$, we define $\eta_{m, n} : S \rightarrow  I_{m} \wedge \bar{I}_{n}$ to be the composition
\[
         S  \xrightarrow{\eta_{n,n}}  I_{n} \wedge \bar{I}_{n} \xrightarrow{j_{n, m} \wedge \id}  I_{m} \wedge \bar{I}_{n}.
\]

\begin{lem} \label{lem eta independence}
The morphism $\eta_{m, n} \in \morp_{\frak{S}}(S, I_{m} \wedge \bar{I}_n)$ is independent of the choices of  $R$ and $N_{n}$.

\end{lem}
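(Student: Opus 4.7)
The plan closely mirrors the strategy in the proof of Lemma~\ref{lem independence a b}.

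Independence from $R$ is elementary. For $R' \geq R$, one has $N_n \subset B_{R/2} \subset B_{R'/2}$, so the obvious collapse $B_{R'}/S_{R'} \to B_R/S_R$ (induced by the inclusion $B_R \hookrightarrow B_{R'}$) fits into a commutative triangle with the two versions of $\hat{\eta}_{n,n}$; after desuspending by $V_{\lambda_n}^{\mu_n}$ the induced morphisms in $\frak{C}$ agree.

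For independence from $N_n$, I would fix an isolating neighborhood $A$ of $\inv(V_{\lambda_n}^{\mu_n} \cap J_n^+)$. Given two manifold isolating blocks $N_n, N_n' \subset \inti A$, one can find a third manifold isolating block $N_n'' \subset \inti N_n \cap \inti N_n'$ via Conley--Easton, so it suffices to treat the case $N_n' \subset \inti N_n$. For $T$ sufficiently large that $(N_n, L_n)$ and $(N_n, \overline{L}_n)$ are $T$-tame index pairs in $A$ (using Theorem~\ref{from pre-index to index refined}), one has flow maps $s_T \colon N_n/L_n \to N_n'/L_n'$ and $\bar{s}_T \colon N_n/\overline{L}_n \to N_n'/\overline{L}_n'$, both of which admit the simplified description of Lemma~\ref{flow map from tame index pair}. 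The claim then reduces to showing that the diagram
\[
\xymatrix{
B_R/S_R \ar[r]^(0.35){\hat{\eta}_{n,n}} \ar[rd]_{\hat{\eta}_{n,n}'} & (N_n/L_n) \wedge (N_n/\overline{L}_n) \ar[d]^{s_T \wedge \bar{s}_T} \\
 & (N_n'/L_n') \wedge (N_n'/\overline{L}_n')
}
\]
commutes up to $S^1$-equivariant homotopy.

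To produce this homotopy, I would interpolate via the one-parameter family
\[
H_u([x]) = \begin{cases}
  [\varphi_n(x, 3uT)] \wedge [\varphi_n(x, -3uT)] & \text{if $(*)_u$ holds,} \\
  * & \text{otherwise,}
\end{cases}
\]
where $(*)_u$ requires $\varphi_n(x, [-3uT, 3uT]) \subset N_n$, $\varphi_n(x, 3uT) \in N_n' \setminus L_n'$, and $\varphi_n(x, -3uT) \in N_n' \setminus \overline{L}_n'$. At $u = 0$ this coincides with $\hat{\eta}_{n,n}'$, while at $u = 1$, by the simplified flow map formula from Lemma~\ref{flow map from tame index pair}, it coincides with $(s_T \wedge \bar{s}_T) \circ \hat{\eta}_{n,n}$.

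The main obstacle will be the continuity check: identifying the compact locus in $B_R$ where $H_u$ is non-basepoint and verifying that this locus varies continuously in $u$. The essential geometric input is that, for $T$ large, $A^{[-T, T]} \subset \inti N_n'$, so any $x$ whose forward trajectory of length $3uT$ lies in $N_n$ and terminates near $\inv A$ has its endpoint automatically in $\inti N_n'$; the symmetric statement handles the backward direction. This is exactly the input used after the corresponding displayed inclusion in the proof of Lemma~\ref{lem independence a b}. After desuspension by $V_{\lambda_n}^{\mu_n}$, the resulting equality of morphisms in $\frak{C}$ yields independence from $N_n$.
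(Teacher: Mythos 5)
Your overall architecture matches the paper's: independence from $R$ is elementary, and independence from $N_n$ reduces via Conley--Easton to the case of nested blocks, then uses a homotopy that gradually shrinks the flow time. But there is a genuine gap at the key step.

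You assert that at $u = 1$ your map $H_1$ coincides with $(s_T \wedge \bar{s}_T)\circ\hat{\eta}_{n,n}$ ``by the simplified flow map formula.'' This is not true as written. The simplified flow map from Lemma~\ref{flow map from tame index pair} requires $\varphi(x,[T,3T]) \subset N_n'\setminus L_n'$ and $\varphi(x,[-3T,-T]) \subset N_n'\setminus\overline{L}_n'$ on an entire interval, whereas your condition $(*)_1$ only constrains the \emph{endpoints} $\varphi(x,\pm 3T)$. With $N_n'\subset\inti N_n$, nothing prevents a trajectory that lies in $N_n$ for all of $[-3T,3T]$ from leaving $N_n'$ through $L_n'$ somewhere in $(2T,3T)$ and re-entering through $\overline{L}_n'$ at time $3T$; such an $x$ satisfies $(*)_1$ but not the flow-map conditions. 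Closing this gap is precisely the intermediate equivalence the paper proves before constructing its homotopy: it shows $(s\wedge\bar{s})\circ\hat{\eta}_{n,n}([x])\neq *$ if and only if $\varphi(x,[-3T,3T])\subset\inti N_n'$, using the identity $A^{[-T,T]}\subset\inti N_n$ to promote endpoint information to full-interval information and the $n^{\pm}$ structure of $\partial N_n'$ to rule out re-entry. You cite the geometric input ($A^{[-T,T]}\subset\inti N_n'$) but never carry out this argument.

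This matters because your mixed condition $(*)_u$ (full trajectory in $N_n$, endpoints in $N_n'$) is not of the clean ``compact set inside an open set'' type, and this is exactly what makes the continuity check fail. Consider $x$ such that $\varphi(x,3u_0 T) \in \overline{L}_n'\setminus L_n'$ with the rest of $(*)_{u_0}$ satisfied: then $H_{u_0}([x])$ is non-basepoint (since $\varphi(x,3u_0 T)\notin L_n'$), yet for $u < u_0$ close, $\varphi(x,3uT)$ lies just \emph{outside} $N_n'$ (because $\overline{L}_n'$ is the entrance set), so $(*)_u$ fails and $H_u([x]) = *$. That is a discontinuity. The paper avoids this by replacing $(*)_u$ with the single open condition $\varphi(x,[-3(1-s)T,3(1-s)T])\subset\inti N_n'$: at the boundary of that locus, the endpoint must lie in $L_n'$ (resp. $\overline{L}_n'$) and hence the wedge factor hits the basepoint, so the map extends continuously by the basepoint. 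Incidentally, the paper's reduction direction $N_n\subset\inti N_n'$ (rather than your $N_n'\subset\inti N_n$) makes this cleanest, since both endpoints of the homotopy naturally live in $(N_n'/L_n')\wedge(N_n'/\overline{L}_n')$ and the governing condition is entirely in terms of the outer block $N_n'$.
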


\begin{proof}
The independence from $R$ is straightforward. We prove the independence from the choice of $N_n$. 
Take another manifold isolating block $N_{n}'$ of $\inv ( V_{\lambda_n}^{\mu_n} \cap J_n^+ )$. We may assume that $N_n, N_n'
\subset A$ for an isolating neighborhood $A$ of $\inv ( V_{\lambda_n}^{\mu_n} \cap J_{n}^+ )$.   It is sufficient to show
that the following diagram is commutative up to $S^1$-equivariant homotopy:
\[
   \xymatrix{
          B_{R} / S_{R} \ar[r]^(0.35){\hat{\eta}_{n,n}} \ar[rd]_{\hat{\eta}'_{n,n}}  
                                  & (N_n / L_{n}) \wedge (N_{n} / \overline{L}_n)   \ar[d]^{ s \wedge \bar{s} }  \\
                                                    & (N_n' / L_{n}') \wedge (N_{n}' / \overline{L}_n')
     }
\]
Here $s = s_{T}, \bar{s} = \bar{s}_{T}$ are the flow maps with $T \gg 0$.  For $x \in B_R$  we have
\[
   \begin{split}
    &   (s \wedge \bar{s}) \circ \hat{\eta}_{n, n}([x]) =  \\
    &      \left\{
          \begin{array}{ll}
                [ \varphi(x, 3T) ] \wedge [ \varphi(x, -3T) ] & 
                         \text{if}  
                         \left\{
                         \begin{array}{l}
      \varphi(x, [0, 2T]) \subset N_n \setminus L_{n}, \\
    \varphi(x, [T, 3T]) \subset N_{n}' \setminus L_{n}', \\
     \varphi(x, [-2T, 0]) \subset N_{n} \setminus \overline{L}_n, \\
      \varphi(x, [-3T, -T]) \subset N_{n}' \setminus \overline{L}_{n}',
                         \end{array}    
                           \right.  \\
                  * & \text{otherwise}
          \end{array}
        \right.
   \end{split}
\]
and
\[
     \hat{\eta}_{n,n}'([x]) =
       \left\{
          \begin{array}{ll}
            [x] \wedge [x] & \text{if $x \in \inti N_n'$,}  \\
            * & \text{otherwise.}
          \end{array}
       \right.
\]
We can reduce the proof to the case $N_n \subset \inti N_{n}'$.   
Suppose $N_n \subset \inti N_{n}'$. Also we may assume that $A^{[-T, T]} \subset \inti N_{n}$, choosing a sufficiently large
$T$.  If $(s \wedge \bar{s}) \circ \hat{\eta}_{n,n}([x]) \not= *$, we have 
\[
         \varphi(x, [-3T, 3T]) \subset \inti N_{n}'. 
\]
Conversely, suppose that $\varphi(x, [-3T, 3T]) \subset \inti N_{n}'$. Then we have $x \in A^{[-3T, 3T]}$.  Hence
\[
       \varphi(x, [-2T, 2T]) \subset A^{[-T, T]} \subset \inti N_{n}. 
\]
Therefore $\varphi(x, [0, 2T]) \subset N_{n} \setminus L_{n}, \varphi(x, [-2T, 0]) \subset N_{n} \setminus \overline{L}_{n}$.
 Thus $(s \wedge \bar{s}) \circ \hat{\eta}_{n,n}([x]) \not= *$.  We have obtained:
\[
 \begin{split}
    &    (s \wedge \bar{s}) \circ \hat{\eta}_{n,n}([x]) =   \\
     &    \left\{
          \begin{array}{ll}
                [ \varphi(x, 3T) ] \wedge [ \varphi(x, -3T) ] & \text{if} \   \varphi(x, [-3T, 3T]) \subset  \inti N_{n}',
\\
                           * & \text{otherwise.}
          \end{array}
        \right.
   \end{split}
\]
This is homotopic to $\hat{\eta}'_{n,n}$ through a homotopy $H$ which maps $([x], s)$ to 
\[
       [ \varphi(x, 3(1-s)T) ] \wedge [ \varphi(x, -3(1-s)T) ]
\]
if 
\[
        \varphi(x, [-3(1-s)T, 3(1-s)T]) \subset
 \inti N_{n}'
\]
and to the basepoint otherwise. 
\end{proof}

\begin{lem}
Let $\lambda < \lambda_n, \mu > \mu_n$. Take manifolds index pairs $N_n, N_n'$ for $\inv(J_n \cap V_{\lambda_n}^{\mu_n})$,
$\inv( J_n \cap V_{\lambda}^{\mu} )$. Then we have the canonical $S^1$-equivariant homotopy equivalence:
\[
      \Sigma^{V_{\lambda}^{\lambda_n}} ( N_n / L_n ) \cong N_n' / L_n',  \
      \Sigma^{ V_{\mu_n}^{\mu} } (N_n / \overline{L}_n) \cong N_n'/ \overline{L}_n'.
\]
See Proposition 5.6 of \cite{KLS1}. 
The following diagram is commutative up to $S^1$-equivariant homotopy:
\[
    \xymatrix{
         (V_{\lambda}^{\mu})^+ \ar[rr]^(0.35){\Sigma^{W} \hat{\eta}_{n,n} } \ar[rrd]_{ \hat{\eta}_{n,n}' } &   &   \Sigma^{W}
(N_n / L_n) \wedge (N_n / \overline{L}_n) \ar[d]   \\
                                                        &  &   (N_{n}' / L_{n}' ) \wedge ( N_{n}' / \overline{L}_n')
             }
\]
Here $W = V_{\lambda}^{\lambda_n} \oplus V_{\mu_n}^{\mu}$.
\end{lem}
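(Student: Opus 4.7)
The plan is to parallel the structure of Lemma~\ref{lem independence a b}, the analogous statement for $\epsilon$: exploit the independence of the choice of manifold isolating block to pick a convenient one, compute both maps in the diagram explicitly, and construct an explicit $S^{1}$-equivariant homotopy between them.

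First I would invoke Lemma~\ref{lem eta independence} to replace $N_n'$ by a convenient product. Take small closed balls $D_1 \subset V_\lambda^{\lambda_n}$ and $D_2 \subset V_{\mu_n}^\mu$ centered at the origin and set $N_n' = N_n \times D_1 \times D_2$. Because the linearized Seiberg--Witten flow on $V_\lambda^{\lambda_n}$ is expanding (unstable directions) and on $V_{\mu_n}^\mu$ is contracting (stable directions), this product is a manifold isolating block for $\inv(J_n^+ \cap V_\lambda^\mu)$ with exit set $L_n' = (L_n \times D_1 \times D_2) \cup (N_n \times \partial D_1 \times D_2)$ and entrance set $\overline{L}_n' = (\overline{L}_n \times D_1 \times D_2) \cup (N_n \times D_1 \times \partial D_2)$. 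Under this choice, the canonical equivalences are realized by the zero sections: $\Sigma^{V_\lambda^{\lambda_n}}(N_n/L_n) \hookrightarrow N_n'/L_n'$ via the inclusion $\{0\} \subset D_2$, and $\Sigma^{V_{\mu_n}^\mu}(N_n/\overline{L}_n) \hookrightarrow N_n'/\overline{L}_n'$ via $\{0\} \subset D_1$.

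Next I would compute both compositions in the diagram explicitly. Writing a point of $(V_\lambda^\mu)^+$ as $(w_1, x, w_2)$ with $w_1 \in V_\lambda^{\lambda_n}$, $x \in V_{\lambda_n}^{\mu_n}$, $w_2 \in V_{\mu_n}^\mu$, the composition along the top and down the vertical arrow sends the input to $[(x, w_1, 0)] \wedge [(x, 0, w_2)]$ (valid whenever $x \in N_n$, $w_1 \in D_1$, $w_2 \in D_2$, and to the basepoint otherwise), while $\hat{\eta}_{n,n}'$ sends it to $[(x, w_1, w_2)] \wedge [(x, w_1, w_2)]$ under the same conditions. The natural $S^1$-equivariant homotopy
$$H((w_1, x, w_2), t) = [(x, w_1, tw_2)] \wedge [(x, tw_1, w_2)]$$
(basepoint otherwise) then interpolates between the two. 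Well-definedness uses star-shapedness of $D_1, D_2$ around $0$ and the observation that if $x \in L_n$ or $w_1 \in \partial D_1$ then $(x, w_1, tw_2) \in L_n'$ for every $t$ (with the dual statement for $\overline{L}_n'$), so the image lands in the basepoint on the relevant loci.

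The main anticipated obstacle is rigorously verifying that $N_n \times D_1 \times D_2$ is a manifold isolating block for the full \emph{nonlinear} approximated Seiberg--Witten flow $\varphi_n$ on $V_\lambda^\mu \cap J_n^+$, rather than only for the linearization $(\ast d, \slashed{D})$. For sufficiently small $D_1, D_2$, the compactness estimates used in \cite{KLS1} (and recalled near Lemma~\ref{lem J 2T int}) show that the nonlinear and linear flows stay $C^0$-close on the relevant bounded region, but one may still need to modify the vector field by a bump function as in the proof of Proposition~\ref{prop mfdnhbdswf} so that the product really becomes an isolating block; this is essentially the same technicality already handled in the analogous $\epsilon$-calculation and does not alter the homotopy class of $\hat{\eta}_{n,n}'$.
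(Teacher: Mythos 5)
The explicit homotopy $H$ is correct \emph{provided} one can take $N_n' = N_n \times D_1 \times D_2$ with the product exit/entry sets you describe, and the well-definedness/continuity verification you sketch is sound. However, there is a genuine gap in the preliminary reduction, and your sketch of how to close it points in the wrong direction.

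The product $N_n \times D_1 \times D_2$ need \emph{not} be an isolating block for the nonlinear flow $\varphi_n = -(l + p_\lambda^\mu c)$, and shrinking $D_1, D_2$ makes matters worse, not better. On $N_n \times \partial D_1 \times D_2$ with $\|w_1\| = r$, the linear term contributes $\langle -l w_1, w_1\rangle \geq |\lambda_n| r^2$ pushing outward, but the coupling term $p_\lambda^{\lambda_n} c(w_1, x, w_2)$ does not vanish as $w_1, w_2 \to 0$ (generically $p_\lambda^{\lambda_n} c(0,x,0) \neq 0$), so it contributes $O(r)$ to $\langle \dot w_1, w_1\rangle$ with the sign uncontrolled. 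For small $r$ the perturbation dominates the linear term and the flow can fail to leave $N_n'$ immediately through $N_n \times \partial D_1 \times D_2$, destroying the isolating block property. So the parenthetical "for sufficiently small $D_1, D_2$" is not right. A bump-function modification as in Proposition~\ref{prop mfdnhbdswf} alone also does not obviously help, because what is needed is to kill the \emph{coupling} of $c$ between the $V_{\lambda_n}^{\mu_n}$ and $V_\lambda^{\lambda_n} \oplus V_{\mu_n}^\mu$ factors, not merely to truncate the flow outside a region.

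What would actually close the gap is a Conley continuation step (this is also what the citation to Proposition 5.6 of \cite{KLS1} encodes): deform $\varphi_n$ through the family $\varphi_n^{(s)}$ generated by $-(l + (s\, p_\lambda^\mu + (1-s) p_{\lambda_n}^{\mu_n}) c)$, $s \in [0,1]$, inside a fixed isolating neighborhood. At $s = 0$ the flow decouples as a product of the approximated Seiberg--Witten flow on $V_{\lambda_n}^{\mu_n}$ with the \emph{linear} flow on $V_\lambda^{\lambda_n} \oplus V_{\mu_n}^\mu$, for which $N_n \times D_1 \times D_2$ is an honest isolating block with the exit/entry sets you wrote. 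Continuation then identifies the Conley index at $s = 1$ with the one at $s = 0$ via the canonical homotopy equivalence, and your explicit formula for $H$ applies at $s = 0$. The remainder of your argument (the choice of zero sections realizing the canonical equivalences, the computation of both legs of the diagram, and the linear interpolation $H$) is fine once this reduction is in place. You should also confirm that the map $\hat\eta'_{n,n}$ is itself compatible with the continuation — i.e., that it suffices to verify commutativity at the decoupled endpoint — which follows from Lemma~\ref{lem eta independence} applied along the family, but this should be said explicitly rather than left implicit.
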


This lemma implies that $\eta_{n,n}$ (and hence $\eta_{m,n}$) is independent of the choice of $\lambda_n, \mu_n$.

Since $j_{n, m+1} = j_{m, m+1} \circ j_{n, m}$ for $m \geq n$,   the following diagram is commutative:
\begin{equation} \label{eq comm eta j}
        \xymatrix{
   S \ar[r]^(0.35){\eta_{m, n} }  \ar[rd]_{{\eta}_{m+1, n}}   &  I_{m} \wedge \bar{I}_{n} \ar[d]^{j_{m, m+1} \wedge \id}
 \\
                                                                                        &    I_{m+1} \wedge \bar{I}_{n}
        }    
\end{equation}

\begin{lem}
For $m \geq n+1$, the following diagram is commutative:
\begin{equation} \label{eq comm eta bar{j}}
     \xymatrix{
        S \ar[r]^(0.35){\eta_{m,n}} \ar[rd]_{\eta_{m,n+1}} &   I_{m} \wedge \bar{I}_{n}  \\
                                                    &   I_{m} \wedge \bar{I}_{n+1} \ar[u]_{\id \wedge \bar{j}_{n, n+1}}                                  }                                                                              
\end{equation}
\end{lem}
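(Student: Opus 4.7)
The plan is to mimic the argument used for Lemma~\ref{lem commu epsilon bar{j}}, reducing the claim to an equality at a single finite-dimensional level and then checking it by inspection using compatible manifold isolating blocks.

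First, unravelling the definition $\eta_{m,n} = (j_{n,m} \wedge \id) \circ \eta_{n,n}$ and $\eta_{m,n+1} = (j_{n+1,m} \wedge \id) \circ \eta_{n+1,n+1}$ and using $j_{n,m} = j_{n+1,m} \circ j_{n,n+1}$ together with the bifunctoriality of $\wedge$, it suffices to establish
$$
(j_{n,n+1} \wedge \id) \circ \eta_{n,n} \;=\; (\id \wedge \bar{j}_{n,n+1}) \circ \eta_{n+1,n+1}
$$
as morphisms $S \to I_{n+1} \wedge \bar{I}_{n}$, since both sides, postcomposed with $j_{n+1,m} \wedge \id$ in the first slot, yield the two sides of (\ref{eq comm eta bar{j}}).

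Next, by Lemma~\ref{lem eta independence}, we may freely choose manifold isolating blocks. We adopt the same compatible choice used in the proof of Lemma~\ref{lem commu epsilon bar{j}}: fix a manifold isolating block with corners $N_{n+1} \subset V_{\lambda_{n+1}}^{\mu_{n+1}}$ for $\inv(V_{\lambda_{n+1}}^{\mu_{n+1}} \cap J_{n+1}^{+})$, and set
$$
N_n := N_{n+1} \cap J_n^{+}, \quad L_n := L_{n+1} \cap N_n, \quad \overline{L}_n := (\overline{L}_{n+1} \cap N_n) \cup \bigcup_{j} N_n \cap g_{j,+}^{-1}(n+\theta).
$$
The canonical homotopy equivalences of Proposition 5.6 of \cite{KLS1} let us represent both $I_n$ and $\bar{I}_n$ inside the common ambient space $V_{\lambda_{n+1}}^{\mu_{n+1}}$, so that $j_{n,n+1}$ is represented by the inclusion $i_{n,n+1} \colon N_n/L_n \hookrightarrow N_{n+1}/L_{n+1}$ and $\bar{j}_{n,n+1}$ by the quotient projection $\bar{i}_{n,n+1} \colon N_{n+1}/\overline{L}_{n+1} \to N_n/\overline{L}_n$, exactly as in the preceding lemma.

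Finally, a direct inspection using the diagonal formula defining $\hat{\eta}$ finishes the proof: with $R$ large enough that $N_{n+1} \subset B_R$,
$$
(i_{n,n+1} \wedge \id) \circ \hat{\eta}_{n,n}([x]) \;=\; \begin{cases} [x] \wedge [x] & \text{if } x \in N_n, \\ * & \text{otherwise,}\end{cases}
$$
while
$$
(\id \wedge \bar{i}_{n,n+1}) \circ \hat{\eta}_{n+1,n+1}([x]) \;=\; \begin{cases} [x] \wedge [\bar{i}_{n,n+1}(x)] & \text{if } x \in N_{n+1}, \\ * & \text{otherwise,}\end{cases}
$$
which, unwinding the definition of $\bar{i}_{n,n+1}$ on the right factor $N_{n+1}/\overline{L}_{n+1}$, is again $[x] \wedge [x]$ when $x \in N_n$ and $*$ otherwise (using $N_n \subset N_{n+1}$ and the identification $N_{n+1}/\{\overline{L}_{n+1} \cup \bigcup_j g_{j,+}^{-1}([n+\theta,\infty))\} = N_n/\overline{L}_n$). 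The two maps agree on the nose, yielding the desired commutativity.

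The principal obstacle is purely notational rather than substantive: one must carefully reconcile the differing eigenspace approximations $V_{\lambda_n}^{\mu_n}$ and $V_{\lambda_{n+1}}^{\mu_{n+1}}$ appearing in the definitions of $I_n$ and $\bar{I}_{n+1}$, but this is handled uniformly by working inside $V_{\lambda_{n+1}}^{\mu_{n+1}}$ throughout, and is exactly the device already exploited in Lemma~\ref{lem commu epsilon bar{j}}.
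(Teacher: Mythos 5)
Your proposal is correct and follows essentially the same route as the paper's proof: the paper also reduces the problem to checking the identity on specific, compatibly chosen manifold isolating blocks with corners (cutting a single block $N_m$ along the level sets of $g_{j,+}$ to produce $N_n$ and $N_{n+1}$) and then observes that both sides send $[x]$ to $[x]\wedge[x]$ when $x\in N_n$ and to the basepoint otherwise. The only difference is your preliminary reduction to the minimal case $m=n+1$ via bifunctoriality of $\wedge$, which is valid but not strictly necessary — the paper simply fixes $N_m$ at the given level $m$ and runs the same diagonal-map computation directly; for $m=n+1$ your choice of blocks coincides with theirs.
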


\begin{proof}
Let $m \geq n+1$.
We have to show that the following diagram is commutative up to $S^1$-equivariant homotopy:
\begin{equation}   \label{eq eta bar{i}}
    \xymatrix{
   B_{R} / S_{R} \ar[r]^(0.35){ \hat{\eta}_{m,n}} \ar[rd]_{\hat{\eta}_{m,n+1}} 
                   & (N_m / L_m)  \wedge (N_{n} / \overline{L}_{n})    \\
                   & (N_{m} / L_{m}) \wedge (N_{n+1} / \overline{L}_{n+1}) \ar[u]_{\id \wedge \bar{i}_{n, n+1}}.
    }
\end{equation}
By Lemma \ref{lem eta independence}, we can use the following specific manifold isolating blocks $N_{m}, N_{n}, N_{n+1}$
(with corners). Fix a manifold isolating block $N_{m}$ for $\inv (V_{\lambda_m}^{\mu_m} \cap J_m^+)$. Then we have compact
submanifolds $L_{m}, \overline{L}_{m}$ in $\partial N_{m}$ such that 
\[
       \partial N_{m} = L_{m} \cap \overline{L}_{m}, \ \partial L_{m} 
                                 = \partial \overline{L}_{m} 
                                 = L_{m} \cap \overline{L}_{m}.
\]
Moreover $(N_{m}, L_{m})$ is an index pair for $(\inv (V_{\lambda_m}^{\mu_m} \cap J_{m}^+), \varphi_m)$ and $(N_{m}, \overline{L}_{m})$
is an index pair for $(\inv (V_{\lambda_m}^{\mu_m} \cap J_{m}^+), \overline{\varphi}_m)$. 
 Put
\[
   \begin{split}
     & N_{n+1} :=  N_{m} \cap J_{n+1}^+ = N_{m} \cap \bigcap g_{j,+}^{-1} (  (-\infty, n+1+ \theta] ),  \\
     & L_{n+1} := N_{n+1} \cap L_{m}, \\ 
     & \overline{L}_{n+1} :=  (\overline{L}_{m} \cap N_{n+1}) \cup \bigcup_{j=1}^{b_1} (N_{n+1} \cap g_{j, +}^{-1}(n+1+\theta)).
\\
   \end{split}
\]
Then  $N_{n+1}$, $L_{n+1}$ and $\overline{L}_{n+1}$ are manifolds with corners (for generic $\theta$),  and $(N_{n+1}, L_{n+1})$,
$(N_{n+1}, \overline{L}_{n+1})$ are index pairs for $(\inv( V_{\lambda_m}^{\mu_m} \cap J_{n+1}^+), \varphi_m)$, $(\inv (
V_{\lambda_m}^{\mu_m} \cap J_{n+1}^+), \overline{\varphi}_m)$ respectively.  We define $N_{n}, L_{n}, \overline{L}_{n}$ similarly.

The attractor maps $i_{n, m} : N_{n} / L_{n} \rightarrow N_{m} / L_{m}$, $i_{n+1, m} : N_{n+1} / L_{n+1} \rightarrow N_{m}
/ L_{m}$ are the inclusions. The repeller map $\bar{i}_{n,n+1} : N_{n+1} / \overline{L}_{n+1} \rightarrow N_{n} / \overline{L}_{n}$
is the projection:
\[
\begin{split}
    &   N_{n+1} / \overline{L}_{n+1} \rightarrow   \\
    &  \qquad  N_{n+1} \left/  \left\{ \overline{L}_{n+1} \cup \bigcup_{j=1}^{b_1} ( N_{n+1} \cap g_{j,+}^{-1}( [n+ \theta, \infty)
) ) \right\}   \right.
       = N_{n} / \overline{L}_{n}.
 \end{split}
\]
With these index pairs,  for $x \in B_{R}$ we can write
\[
       \hat{\eta}_{m, n}([x]) = 
          \left\{
             \begin{array}{ll}
                 [x] \wedge [x] & \text{if $x \in N_n$,}  \\
                 * & \text{otherwise,}
             \end{array}
          \right.
\]
and
\[
     (\id \wedge \bar{i}_{n, n+1}) \circ \hat{\eta}_{m, n+1} ([x]) =
       \left\{
             \begin{array}{ll}
                 [x] \wedge [x] & \text{if $x \in N_n$,}  \\
                 * & \text{otherwise.}
             \end{array}
          \right.
\]
Thus the diagram (\ref{eq eta bar{i}}) is commutative. 
\end{proof}

The commutativity of the diagrams (\ref{eq comm eta j}), (\ref{eq comm eta bar{j}}) implies that the collection $\{ \eta_{m,
n} \}_{m, n}$ defines an element ${\displaystyle \eta \in \lim_{\infty \leftarrow n} \lim_{m \rightarrow \infty} \morp_{\mathfrak{C}}(
S, I_m \wedge \bar{I}_{n} )}$.

\begin{pro} \label{prop SWF duality}
The morphisms $\epsilon$ and $\eta$ are duality morphisms between $\swf^A(Y)$ and $\swf^R(-Y)$. 
\end{pro}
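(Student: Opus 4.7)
The plan is to verify conditions (i) and (ii) of Definition~\ref{def duality} for the collections $\{\epsilon_{m,n}\}$ and $\{\eta_{m,n}\}$ constructed above. The two conditions are symmetric in nature, so I would write out condition (i) in detail and indicate that (ii) follows by reversing the flow and exchanging the roles of $L_n$ with $\overline{L}_n$ and of $a_n$ with $b_n$.

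To verify (i), fix $m>0$. Choose $n \gg m$ and then $m' \gg n$. Following the strategy of the proofs of Lemma~\ref{lem commu epsilon bar{j}} and of the analogous commutativity for $\eta$, I would pick a single manifold isolating block $N_{m'}$ for $\inv(V_{\lambda_{m'}}^{\mu_{m'}} \cap J_{m'}^+)$ and then define the smaller manifold isolating blocks compatibly by slicing with level sets:
\[
   N_m := N_{m'} \cap J_m^+, \quad N_n := N_{m'} \cap J_n^+,
\]
together with the corresponding $L_m, \overline{L}_m, L_n, \overline{L}_n$. This gives uniform coordinates in which to compute the composition. Unwinding the definitions of $\hat{\epsilon}_{m,n}$ and $\hat{\eta}_{m',n}$ on the pre-desuspended level (i.e.\ after smashing with a copy of $(V_{\lambda_n}^{\mu_n})^+$), the composition
\[
   (\id \wedge \hat{\epsilon}_{m,n}) \circ (\hat{\eta}_{m',n} \wedge \id)\colon
   (V_{\lambda_n}^{\mu_n})^+ \wedge (N_m/L_m) \longrightarrow (N_{m'}/L_{m'}) \wedge (B_\delta/S_\delta)
\]
is given by $[z]\wedge[x] \mapsto [z]\wedge[b_n(z)-a_n(x)]$ whenever $z \in N_n$ and $\|b_n(z)-a_n(x)\| < \delta$, and collapses to the basepoint otherwise.

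The main content of the proof is then to show this composition is homotopic to the suspension of the inclusion $i_{m,m'}\colon N_m/L_m \hookrightarrow N_{m'}/L_{m'}$ (which, after desuspension, is precisely the connecting morphism $j_{m,m'}$). I would proceed in two homotopy stages. First, since $a_n$ and $b_n$ are each $2\delta$-close to the identity and equal to the identity on the interior, a straight-line homotopy replaces $b_n(z)-a_n(x)$ by $z-x$ on the relevant compact domain, without changing the set where the map is nonzero (for $\delta$ small enough, using that $N_n \subset N_{m'}$ and $x \in N_m \subset N_{m'}$). Second, the resulting map $[z]\wedge[x]\mapsto [z]\wedge[z-x]$ is the classical \emph{shearing homeomorphism} of $(V_{\lambda_n}^{\mu_n})^+ \wedge (N_m/L_m)$, composed with projection onto the second factor. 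After smashing with the shearing homeomorphism on the target side and desuspending by $V_{\lambda_n}^{\mu_n}$, this becomes the inclusion $N_m/L_m \to N_{m'}/L_{m'}$, which is exactly the map representing $j_{m,m'}$. A standard compactness/continuity check (in the spirit of Lemmas~\ref{lem independence a b} and \ref{lem eta independence}) ensures that these homotopies are $S^1$-equivariant and stay inside the relevant index pairs.

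The main obstacle is bookkeeping: one has to keep track of several subsystems simultaneously (the ambient block $N_{m'}$, the interior pieces $N_n$ and $N_m$, the subspaces $V_{\lambda_n}^{\mu_n}$, and the small ball $B_\delta$) and verify that the supports of the two homotopies are controlled by the inclusions $A^{[-T,T]}\subset \inti N_n \subset \inti N_{m'}$ coming from Lemma~\ref{lem J 2T int} and Proposition~\ref{prop mfdnhbdswf}. Once the supports are controlled, the shearing argument is a formal consequence of the fact that $\hat{\eta}$ is a diagonal map and $\hat{\epsilon}$ is a difference map, which is the content of classical $S^1$-equivariant Spanier--Whitehead duality on finite $S^1$-CW complexes (cf.\ \cite{LMS}). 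Condition (ii) follows by running the same computation on $\bar{I}_{n'}$: one picks $m\gg n$ and $n'\gg m$ with compatible $N_m, N_n, N_{n'}$, and the same shearing argument identifies the composition with the repeller-induced projection $\bar{j}_{n',n}$.
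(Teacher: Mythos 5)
Your proposal is correct and follows the paper's own proof: fix a single ambient manifold isolating block $N_{m'}$, slice it by the level sets of $g_{j,+}$ to obtain compatible $N_m$, $N_n$ with their exit sets, write out the composition $(\id\wedge\hat\epsilon_{m,n})\circ(\hat\eta_{m',n}\wedge\id)$ explicitly as $[y]\wedge[x]\mapsto[y]\wedge[b_n(y)-a_n(x)]$ on its support, and then deform it to $\gamma\circ(\pi\wedge i_{m,m'})$ by shearing, with the analogous computation (up to the sign map $\sigma$) giving condition (ii). The only place where the paper is more careful than your sketch is in implementing the shear: instead of a single straight-line homotopy from $b_n(y)-a_n(x)$ to $y-x$ (which \emph{does} change the support near $\partial N_n$), the paper first applies a retraction $r\colon\nu_{5\delta}(N_m)\to N_m$ to keep the first factor inside the block, shears $r((1-s)y+sx)$ there, replaces $a_n$ by a tighter $a_n'$ with a smaller radius $\delta'$, homotopes $b_n$ to the identity, and only then shears the second factor; these intermediate steps are what make the continuity checks you gesture at actually go through.
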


\begin{proof}
Fix positive numbers $R, \delta$ with $0 < \delta \ll 1 \ll R$. 
Let $\pi : B_{R} / S_{R} \rightarrow B_{\delta} / S_{\delta}$ be the projection
\[
         B_{R} / S_{R}  \rightarrow B_{R} / (B_{R} \setminus \inti B_{\delta}) = B_{\delta} / S_{\delta},
\]
which is a homotopy equivalence.  We have to prove that the  diagrams (\ref{eq eta epsilon 1}) below is commutative for $m
\ll n \ll m'$ and that the diagram (\ref{eq eta epsilon 2}) below is commutative up to $S^1$-equivariant homotopy for $n
\ll m \ll n'$. (See Lemma 3.5 of \cite{LMS}.) 
\begin{equation}  \label{eq eta epsilon 1}
  \xymatrix{
     (B_{R} / S_{R}) \wedge (N_{m} / L_{m})  
     \ar[rr]^(0.42){ \hat{\eta}_{m',n} \wedge \id }  \ar[rrd]_{\gamma \circ (\pi \wedge i_{m, m'})}     &  & 
         (N_{m'} / L_{m'}) \wedge ( N_{n} / \bar{L}_{n}) \wedge (N_{m} / L_{m}) 
        \ar[d]^{\id \wedge \hat{\epsilon}_{m,n}}    \\
       &  &
    (N_{m'} / L_{m'}) \wedge (B_{\delta} / S_{\delta})
 }
\end{equation}
Here $B_R = B(V_{\lambda_{m'}}^{\mu_{m'}}, R)$, $S_R = \partial B(V_{\lambda_{m'}}^{\mu_{m'}}, R)$, $N_m, N_{n}, N_{m'}$
are isolating blocks for $\inv (V_{\lambda_{m'}}^{\mu_{m'}} \cap J_{m}^+), \inv ( V_{\lambda_{m'}}^{\mu_{m'}}  \cap J_n^+
)$, $\inv(V_{\lambda_{m'}}^{\mu_{m'}} \cap J_{m'}^+ )$ and $\gamma$ is the interchanging map $(B_{\delta} / S_{\delta}) \wedge
(N_{m'} / L_{m'}) \rightarrow (N_{m'}/L_{m'}) \wedge (B_{\delta} / S_{\delta})$.  
\begin{equation} \label{eq eta epsilon 2}
  \xymatrix{
    (N_{n'} / \bar{L}_{n'}) \wedge (B_{R} / S_{R}) 
           \ar[rr]^(0.42){\id \wedge \hat{\eta}_{m,n}}  \ar[d]_{ \gamma \circ ( \bar{i}_{n, n'} \wedge \pi ) }   &   &
   (N_{n'} / \bar{L}_{n'}) \wedge (N_m / L_{m}) \wedge (N_n / \bar{L}_n)   \ar[d]^{\hat{\epsilon}_{m,n'} \wedge \id}    \\
    (B_{\delta} / S_{\delta}) \wedge (N_n / \overline{L}_{n})      
        \ar[rr]_{ \sigma \wedge \id  }  &     &   
    (B_{\delta} / S_{\delta}) \wedge (N_{n} / \bar{L}_n)
  }
\end{equation}
Here $B_R = B(V_{\lambda_{n'}}^{\mu_{n'}}, R)$, $S_R = \partial  B(V_{\lambda_{n'}}^{\mu_{n'}}, R)$, $N_{m}, N_{n}, N_{n'}$
are isolating blocks for $\inv(V_{\lambda_{n'}}^{\mu_{n'}} \cap J_{m}^+ )$, $\inv(  V_{\lambda_{n'}}^{\mu_{n'}} \cap J_{n}^+
)$, $\inv(V_{\lambda_{n'}}^{\mu_{n'}} \cap J_{n'}^+ )$,  $\gamma$ is the interchanging map $(N_{n} / L_{n}) \wedge ( B_{\delta}
/ S_{\delta}) \rightarrow (B_{\delta} / S_{\delta}) \wedge (N_{n} / L_{n})$ and $\sigma : B_{\delta} / S_{\delta} \rightarrow
B_{\delta} / S_{\delta}$ is defined by $\sigma(v) = -v$. 

\vspace{2mm}

First we consider (\ref{eq eta epsilon 1}).  Let $m \ll n \ll m'$.  Take a manifold isolating block $N_{m'}$ for $\inv( V_{\lambda_{m'}}^{\mu_{m'}}
\cap J_{m'}^+ )$. As in the proof of Lemma \ref{lem commu epsilon bar{j}}, from $N_{m'}$ and the functions $g_{j, +}$, we
get index pairs 
\[
    (N_{n}, L_{n}), \ (N_{n}, \overline{L}_n),  \ (N_{m}, L_{m}), \ (N_{m}, \overline{L}_{m})
\]
 for 
 \[
 \begin{split}
   &  (\inv (V_{\lambda_{m'}}^{\mu_{m'}} \cap J_{n}^+ ), \varphi_{m'}),  \ 
     (\inv (V_{\lambda_{m'}}^{\mu_{m'}} \cap J_{n}^+ ), \overline{\varphi}_{m'}),  \\
  &   (\inv ( V_{\lambda_{m'}}^{\mu_{m'}} \cap J_{m}^+ ), \varphi_{m'}),  \
     (\inv ( V_{\lambda_{m'}}^{\mu_{m'}} \cap J_{m}^+ ), \overline{\varphi}_{m'}).
   \end{split}
\]
The attractor map 
\[
     i_{m, n} :N_m / L_m \rightarrow N_n / L_n,   \
     i_{n, m'} : N_n /L_n \rightarrow N_{m'} / L_{m'}
\]
are the injections, and the repeller maps 
\[ 
     \bar{i}_{n, m'} : N_{m'} / \overline{L}_{m'} \rightarrow N_{n} / \overline{L}_{n},  \
     \bar{i}_{m, n} : N_n / \overline{L}_n \rightarrow N_m / \overline{L}_m
\]
are the projections.    

For $x \in N_{m}$ and $y \in B_{R} (= B(V_{\lambda_{m'}}^{\mu_{m'}}, R))$, we can write
\[
\begin{split}
   &  (\id \wedge \hat{\epsilon}_{m, n}) \circ ( \hat{\eta}_{m', n} \wedge \id) ( [y] \wedge [x]) =  \\
  & \qquad    \left\{
          \begin{array}{ll}
                  [y] \wedge [b_n(y) - a_n( x ) ] & 
                         \text{if}  \left\{
                         \begin{array}{l}
                             y \in N_{n}, \\
                            \| b_{n}(y) - a_{n}(x) \| < \delta,
                         \end{array}    
                           \right.  \\
                  * & \text{otherwise.}
          \end{array}
        \right.
    \end{split}
\]
Note that if $\| b_{n}(y) - a_{n}(x) \| < \delta$ for some $x \in N_{m}$ we have $y \in \nu_{5\delta}(N_{m})$.  Fix an $S^1$-equivariant
homotopy equivalence
\[
        r : \nu_{5\delta}(N_{m}) \rightarrow N_{m}
\]
which is close to the identity such that
\[
      r(\nu_{5\delta}(L_{n}) \cap \nu_{5\delta}(N_m) ) \subset L_{m}, \      r( \nu_{5\delta}(L_m)   ) \subset L_m.
\]
Then $(\id \wedge \hat{\epsilon}_{m, n}) \circ (\hat{\eta}_{m', n} \wedge \id)$ is homotopic to a map 
\[
    f : (B_R/S_R) \wedge (N_m / L_m) \rightarrow (N_{m'} / L_{m'}) \wedge (B_{\delta} / S_{\delta})
\]
defined by
\[
\begin{split}
   &   f([y] \wedge [x])  =  \\
    & \qquad  \left\{
          \begin{array}{ll}
                  [r(y)] \wedge [b_n(y) - a_n( x ) ] & 
                         \text{if}  \left\{
                         \begin{array}{l}
                             x \in N_{m},  y \in N_{n} \cap \nu_{5\delta}(N_m), \\
                            \| b_{n}(y) - a_{n}(x) \| < \delta,
                         \end{array}    
                           \right.  \\
                  * & \text{otherwise.}
          \end{array}
        \right.
    \end{split}
\]
Define 
\[
      H : (B_R/S_R) \wedge (N_m / L_m) \times [0, 1] \rightarrow (N_{m'} / L_{m'}) \wedge (B_{\delta} / S_{\delta})
\]
by
\[
    \begin{split}
       & H([y] \wedge [x], s)  = \\
       &  \left\{
          \begin{array}{ll}
                  [r( (1-s)y + sx )] \wedge [b_n(y) - a_n( x ) ] & 
                         \text{if}  \left\{
                         \begin{array}{l}
                             x \in N_{m},  \\
                             y \in N_{n} \cap \nu_{5\delta}(N_m), \\
                            \| b_{n}(y) - a_{n}(x) \| < \delta,
                         \end{array}    
                           \right.  \\
                  * & \text{otherwise.}
          \end{array}
        \right.
   \end{split}
\]
We can easily see that $H$ is well-defined. We will show that $H$ is continuous. It is sufficient to show that if we have
a sequence $(x_j, y_j, s_j)$ in $N_{m} \times N_n \times [0, 1]$ with $y_j \rightarrow y \in \partial N_n = L_n \cup \overline{L}_n$
we have $H([y_j] \wedge [x_j], s_j) \rightarrow *$.  If $y \in \overline{L}_{n}$ we have $\| b_n(y_j) - a_{n}(x_j) \| \geq
\delta$ for large $j$. Hence $H([y_j] \wedge [x_j], s_j) \rightarrow *$.  Consider the case $y \in L_n$.  Assume that ${\displaystyle
\lim_{j \rightarrow \infty} H([y_j] \wedge [x_j], s_j)} \not= *$.  After passing to a subsequence we may suppose that $H([y_j]
\wedge [x_j], s_j) \not= *$ for all $j$.   Then $\| y_j - x_j \| < 5\delta$ for all $j$. For large $j$ we have $(1-s_j) y_j
+ s_j x_j \in \nu_{5\delta}(L_n) \cap \nu_{5\delta}(N_m) $.  Hence $r((1-s_j) y_j + s_j x_j) \in L_{m} \subset L_{m'}$, which
implies $H([y_j] \wedge [x_j], s_j) =*$.  This is a contradiction.  Therefore $H$ is continuous.

We have $H(\cdot, 0) = f$ and 
\[
        H([y] \wedge [x], 1)  
        = \left\{
          \begin{array}{ll}
                  [r(x)] \wedge [b_n(y) - a_n( x ) ] & 
                         \text{if}  \left\{
                         \begin{array}{l}
                             x \in N_{m},  y \in N_{n}, \\
                            \| b_{n}(y) - a_{n}(x) \| < \delta,
                         \end{array}    
                           \right.  \\
                  * & \text{otherwise.}
          \end{array}
        \right.
\]
Fix a positive number $\delta' > 0$ with $0 \ll \delta' \ll \delta$. Take an $S^1$-equivariant continuous map $a_{n}' : N_n
\rightarrow N_n$  such that
\[
         \| a_{n}'(x) - a_{n}(x) \| < 2\delta',  \ a_{n}' (N_n) \subset N_n \setminus \nu_{\delta'} ( \partial N_{n} ). 
\]
Through the homotopy equivalence
\[
        B_{\delta} / S_{\delta} = V_{\lambda_{m'}}^{\mu_{m'}} / ( V_{\lambda_{m'}}^{\mu_{m'}} -  \inti B_{\delta} )
        \rightarrow
        V_{\lambda_{m'}}^{\mu_{m'}} /  (  V_{\lambda_{m'}}^{\mu_{m'}} - \inti B_{\delta'}  ) = B_{\delta'} / S_{\delta'},
\]
$H(\cdot, 1)$ is homotopic to a map 
\[    
     f' : (B_R / S_R) \wedge (N_m / L_m) \rightarrow (N_{m'} / L_{m'}) \wedge (B_{\delta'} / S_{\delta'})
\]
defined by
\[
     f' ([y] \wedge [x]) 
     = \left\{
          \begin{array}{ll}
                  [r(x)] \wedge [b_n(y) - a'_n( x ) ] & 
                         \text{if}  \left\{
                         \begin{array}{l}
                             x \in N_{m},  y \in N_{n}, \\
                            \| b_{n}(y) - a'_{n}(x) \| < \delta',
                         \end{array}    
                           \right.  \\
                  * & \text{otherwise.}
          \end{array}
        \right.
\]
There is  a homotopy $h : N_n \times [0, 1] \rightarrow N_n$ from $b_n$ to the identity such that
\[
      h(\overline{L}_n, s) \subset  \overline{L}_{n}, \ \| h(y, s) - y \| < 2 \delta
\]
for all $y \in N_n$ and $s \in [0, 1]$. Then $h$ naturally induces a homotopy
\[
      H' : (B_R / S_R) \wedge (N_m / L_m) \rightarrow (N_{m'} / L_{m'}) \wedge (B_{\delta'} / S_{\delta'})
\]
defined by
\[
      H'([y] \wedge [x], s) = 
        \left\{
          \begin{array}{ll}
                  [r(x)] \wedge [ h(y, s) - a'_n( x ) ] & 
                         \text{if}  \left\{
                         \begin{array}{l}
                             x \in N_{m},  y \in N_{n}, \\
                            \| b_{n}(y) - a'_{n}(x) \| < \delta',
                         \end{array}    
                           \right.  \\
                  * & \text{otherwise.}
          \end{array}
        \right.
\]
It is straightforward to see that $H'$ is well-defined. To show that $H'$ is continuous, it is sufficient to prove that if we have a
sequence $(x_j, y_j, s_j)$  in $N_{m} \times N_{n} \times [0, 1]$ with $y_j \rightarrow y \in  \partial N_n = L_n \cup \overline{L}_n$
then $H'([y_j] \wedge [x_j], s_j) \rightarrow *$.  Suppose that $y \in \overline{L}_n$. Then for large $j$ we have $\| h(y_j,
s) - a_{n}'(x) \| \geq \delta'$. Thus $H([y_j] \wedge [x_j], s_j) \rightarrow *$. Suppose that $y \in L_n$.  If $\lim_{j
\rightarrow \infty} H'([y_j] \wedge [x_j], s_j) \not= *$, after passing to a subsequence, we may assume that $H([y_j] \wedge
 [x_j], s_j) \not= *$ for all $j$, which implies that $\| y_j - x_j \| < 5\delta$.  So we have $x_j \in \nu_{5\delta}(L_n)
\cap \nu_{5\delta}(N_m)$ for large $j$.  Hence $r(x_j) \in N_m$, which means $H'([y_j] \wedge  [x_j], s_j) = *$ . This is
contradiction.  Therefore $H'$ is continuous.

We can  see that $H'$ is a homotopy  from $f'$ to a map $f'' : (B_R / S_R) \wedge (N_m / L_m) \rightarrow (N_{m'} / L_{m'})
\wedge (B_{\delta'} / S_{\delta'})$ defined by 
\[
       f''([y] \wedge [x]) = 
         \left\{
          \begin{array}{ll}
                  [r(x)] \wedge [y - a'_n( x ) ] & 
                         \text{if}  \left\{
                         \begin{array}{l}
                             x \in N_{m},   y \in B_R, \\
                            \| y - a'_{n}(x) \| < \delta',
                         \end{array}    
                           \right.  \\
                  * & \text{otherwise.}
          \end{array}
        \right.
\]
Note that for $y \in B_R \setminus N_{n}$ we have $f''([y] \wedge [x]) = *$ since $\| y - a_{n}'(x) \| \geq \delta'$.  
Define
\[
       H'' : (B_{R} / S_{R}) \wedge (N_m / L_{m}) \times [0, 1] \rightarrow  (N_{m'} / L_{m'}) \wedge (B_{\delta'} / S_{\delta'})
\]
by
\[
 \begin{split}
   &   H''([y] \wedge [x], s) = \\
   &  \qquad     \left\{
            \begin{array}{ll}
                [r(x)] \wedge [  y - (1-s) a_n'(x)  ] & \text{if}
                    \left\{
                       \begin{array}{l}
                        x \in N_{m},  y \in B_R, \\
                        \| y - (1-s) a_{n}'(x) \| < \delta', 
                       \end{array}
                    \right.    \\
                * & \text{otherwise.}
            \end{array}   
         \right.
     \end{split}
\]
It is straightforward to see that $H''$ is well-defined and continuous.  We have
\[
     H''([y] \wedge [x], 1) =    \left\{
            \begin{array}{ll}
                [r(x)] \wedge [  y  ] & \text{if}
                    \left\{
                       \begin{array}{l}
                        x \in N_{m},  y \in B_R, \\
                        \| y  \| < \delta', 
                       \end{array}
                    \right.    \\
                * & \text{otherwise.}
            \end{array}   
         \right. 
\]
We can easily  show that $H''(\cdot, 1)$ is homotopic to $\gamma \circ (\pi \wedge i_{m, m'})$.

We have proved that $(\id \wedge \hat{\epsilon}_{m, n}) \circ ( \hat{\eta}_{m', n} \wedge \id)$ is $S^1$-equivariantly homotopic
to $\gamma \circ (\pi \wedge i_{m, m'})$, which implies that the diagram (\ref{eq eta epsilon 1}) is commutative up to $S^1$-equivariant
homotopy.

\vspace{3mm}

\vspace{3mm}

Let us consider  (\ref{eq eta epsilon 2}). 
We have to prove that for $n \ll m \ll n'$  the composition
\[
 \begin{split}
    (N_{n'} / \overline{L}_{n'}) \wedge ( B_{R} / S_{R}) & \xrightarrow{ \id \wedge \hat{\eta}_{m,n} }   
   (N_{n'} / \overline{L}_{n'}) \wedge (N_{m} / L_{m}) \wedge (N_{n} / \overline{L}_{n})  \\
   & \xrightarrow{\hat{\epsilon}_{m,n'}
\wedge \id}
       (B_{\delta} / S_{\delta}) \wedge (N_{n} / \overline{L}_{n})
   \end{split}
\]
is $S^1$-equivariantly homotopic to $(\sigma \wedge \id) \circ  \gamma \circ ( \bar{i}_{n', n} \wedge \pi)$.

For $x \in B_R = B(V_{\lambda_{n'}}^{\mu_{n'}}, R)$, $y \in N_{n'}$ we have
\[
   \begin{split}
       & (\hat{\epsilon}_{m,n'} \wedge \id) \circ (\id \wedge \hat{\eta}_{m,n})( [y] \wedge [x]) =  \\
       & \quad \left\{
           \begin{array}{ll}
               [b_{n'}(y) - a_{n'}(x)] \wedge [x] & \text{if}
                    \left\{
                       \begin{array}{l}
                           x \in N_{n}, \\
                           \| b_{n'}(y) - a_{n'}( x )  \| < \delta,
                       \end{array}
                    \right.  \\
             * & \text{otherwise.}
           \end{array}
          \right.
   \end{split}
\]
Take a homotopy equivalence $\bar{r} : \nu_{5\delta}(N_{n}) \rightarrow N_{n}$ which is close to the indentiy such that
\begin{equation} \label{eq bar{r}}
        \bar{r} (  \nu_{5\delta}(\overline{L}_{n'} )\cap \nu_{5\delta} (N_{n}) ) \subset \overline{L}_{n}, \
         \bar{r}( \nu_{5\delta} ( \overline{L}_{n})) \subset \overline{L}_{n}.
\end{equation}
Note that  if $\| b_{n'}(y) - a_{n'}(x) \| < \delta$ for some $x \in N_n$ we have $y \in \nu_{5\delta}(N_n)$.

It is straightforward to see that $(\hat{\epsilon}_{m,n'} \wedge \id) \circ (\id \wedge \hat{\eta}_{m,n})$ is homotopic to a map 
\[
    f : ( N_{n'} / \overline{L}_{n'} ) \wedge (B_R / S_R) \rightarrow (B_{\delta} / S_{\delta}) \wedge (N_n / L_n)
\]
defined by
\[
\begin{split}
     &    f([y] \wedge [x]) = \\
      & \qquad        \left\{
            \begin{array}{ll}
                 [b_{n'}( y ) - a_{n'}(x)] \wedge [ \bar{r}(x)] & \text{if} 
                  \left\{
                    \begin{array}{ll}
                          x \in N_n,  \\
                          y \in N_{n'} \cap \nu_{5\delta}(N_n), \\
                          \| b_{n'}( y ) - a_{n'}(x) \| < \delta, \\
                    \end{array}
                 \right.  \\
             * & \text{otherwise.}
            \end{array}
         \right.
    \end{split}
\]

Define a homotopy $H : ( N_{n'} / \overline{L}_{n'} ) \wedge (B_{R} / S_{R}) \times [0, 1] \rightarrow (B_{\delta} / S_{\delta})
\wedge (N_{n} / \overline{L}_{n})$ by

\[
   \begin{split}
   &   H ([y] \wedge [x], s) = \\
   & \quad    \left\{
            \begin{array}{ll}
                 [b_{n'}( y ) - a_{n'}(x)] \wedge [ \bar{r}( (1-s)x + sy )] & \text{if} 
                  \left\{
                    \begin{array}{ll}
                          x \in N_n,  \\
                          y \in N_{n'} \cap \nu_{5\delta}(N_n), \\
                          \| b_{n'}(y) - a_{n'}(x) \| < \delta \\
                    \end{array}
                 \right.  \\
             * & \text{otherwise.}
            \end{array}
         \right.
     \end{split}
\]
Then $H$ is a well-defined and continuous homotopy from $( \id \wedge \hat{\epsilon}_{m, n'} ) \circ (\id \wedge \hat{\eta}_{m,n})$
to $H(\cdot, 1)$.  We have
\[
\begin{split}
        &    H([y] \wedge [x], 1) = \\
       &       \left\{
            \begin{array}{ll}
                 [b_{n'}(y) - a_{n'}(x)] \wedge [ \bar{r} (y) ] & \text{if} 
                  \left\{
                    \begin{array}{ll}
                          x \in N_n,   \\
                          y \in N_{n'} \cap \nu_{5\delta}(N_n), \\
                          \| b_{n'}(y) - a_{n'}(x) \| < \delta, \\
                    \end{array}
                 \right.  \\
             * & \text{otherwise.}
            \end{array}
         \right.
     \end{split}
\]

Fix a positive number $\delta'$ with  $0 < \delta' \ll \delta$. Take a continuous map $b'_{n'} : N_{n} \rightarrow N_{n}$
such that
\[
      b_{n'}'(N_{n}) \subset N_{n} \setminus \nu_{\delta'}( \partial N_{n}), \
      \| b_{n'}'(y) - b_{n'}(y) \| < 2\delta' \quad (\text{for $x \in N_{n}$}).
\]
Then through the homotopy equivariance $B_{\delta} / S_{\delta} \rightarrow B_{\delta'} / S_{\delta'}$,   $f$ is homotopic
to a map 
\[
    f' : (N_{n'} / \overline{L}_{n'}) \wedge (B_{R} / S_{R}) \rightarrow (B_{\delta'} / S_{\delta'}) \wedge (N_{n} / \overline{L}_{n})
\]
defined by
\[
         f'([y] \wedge [x]) =
              \left\{
            \begin{array}{ll}
                 [b_{n'}'( y) - a_{n'}(x)] \wedge [ \bar{r} (y) ] & \text{if} 
                  \left\{
                    \begin{array}{ll}
                          x \in N_n,  \\
                          y \in N_{n'} \cap \nu_{5\delta}(N_n), \\
                          \| b_{n'}'( y ) - a_{n'}(x) \| < \delta', \\
                    \end{array}
                 \right.  \\
             * & \text{otherwise.}
            \end{array}
         \right.
\]
There is a homotopy $h : N_{n'} \times [0, 1] \rightarrow N_{n'}$ from $a_{n'}$ to the identity such that
\[
        h( L_{n'}) \subset L_{n'}, \ \| h(y, s) - y \| < 2\delta. 
\]
We can see that $h$ induces a homotopy $H'$ from $f'$ to a map $f''$ defined by
\[
      f''( [y] \wedge [x] ) =
          \left\{
            \begin{array}{ll}
                 [ b_{n'}'( y ) - x] \wedge [ \bar{r} (y) ] & \text{if} 
                  \left\{
                    \begin{array}{ll}
                          x \in B_{R},   \\
                          y \in N_{n'} \cap \nu_{5\delta}(N_{n}), \\
                          \| b_{n'}'( y ) - x \| < \delta', \\
                    \end{array}
                 \right.  \\
             * & \text{otherwise.}
            \end{array}
         \right.
\]
Note that for $x \in B_R \setminus \inti N_{n}$ we have $f'([y] \wedge [x]) = *$ since $\| b_{n'}'(y) - x \| \geq \delta'$.
 Define a homotopy $H'' : (N_{n'} / \overline{L}_{n'}) \wedge (B_R / S_{R}) \times[0, 1] \rightarrow (B_{\delta'} / S_{\delta'})
\wedge (N_{n} / \overline{L}_{n})$ by
\[
\begin{split}
     &  H''([y] \wedge [x], s) := \\
    &    \qquad  \left\{
            \begin{array}{ll}
                 [ (1-s)b_{n'}'( y ) - x] \wedge [ \bar{r} (y) ] & \text{if} 
                  \left\{
                    \begin{array}{ll}
                          x \in B_{R},   \\
                          y \in N_{n'} \cap \nu_{5\delta}(N_n), \\
                          \| (1-s) b_{n'}'( y ) -  x \| < \delta', \\
                    \end{array}
                 \right.  \\
             * & \text{otherwise.}
            \end{array}
         \right.
     \end{split}
\]
Then $H''$ is well-defined and continuous, and we have
\[
      H''([y] \wedge [x], 1) =
          \left\{
            \begin{array}{ll}
                 [-x] \wedge [ \bar{r} (y) ] & \text{if} 
                  \left\{
                    \begin{array}{ll}
                          x \in B_{R},   \\
                          y \in N_{n'} \cap \nu_{5\delta}(N_n), \\
                          \| x \| < \delta', \\
                    \end{array}
                 \right.  \\
             * & \text{otherwise.}
            \end{array}
         \right.
\]
It is straightforward to see that $H''(\cdot, 1)$ is homotopic to $(\sigma \wedge \id) \circ \gamma \circ (\bar{i}_{m, n} \wedge \pi)$.
Thus the diagram (\ref{eq eta epsilon 2}) is commutative up $S^1$-equivariant homotopy.

\end{proof}


\section{Relative Bauer-Furuta invariants for 4-manifolds}
\label{sec 4mfd}
  \subsection{Setup}

Let $X$ be a compact, connected, oriented, Riemannian 4--manifold with nonempty boundary $\partial X :=Y$ not necessarily connected. 
Equip $X$ with  a spin$^c$ structure
$\hat{\mathfrak{s}}$ which induces a spin$^c$ structure $\mathfrak{s}$ on $Y$. 
Denote by ${S}_X = S^+ \oplus S^-$ the spinor bundle of $X$ and denote by $\hat{\rho} $ the Clifford multiplication. Choose a metric $\hat{g} $ on $X$ so that a neighborhood of the boundary is isometric to the cylinder $[-3,0]
\times Y$ with the product metric and $\partial X $ identified with $\{ 0 \} \times Y$. To make some distinction, we will often decorate notations associated
to $X$
with hat.   
For instance, let $g$ be the Riemannian metric on $Y$ restricted from $\hat{g}$ on $X$. 
Let $S_{Y}$ be the associated spinor bundle on $Y$  and $\rho \colon TY\rightarrow \text{End}(S_{Y})$ be
the Clifford multiplication.

We write $Y = \coprod Y_j $ as a union of connected component. From now on, we will treat $X$ as a spin$^c$ cobordism, i.e. we label each connected component of $Y $ as either incoming or outgoing satisfying $Y = -Y_{\text{in}} \sqcup Y_{\text{out}} $. We sometimes write this cobordism as $X \colon Y_{\text{in}} \rightarrow Y_{\text{out}}$.
Denote by $ \iota \colon Y \hookrightarrow X $ the inclusion map.
We also choose the following auxiliary data when defining our invariants
\begin{itemize}
\item 
A basepoint $\hat{o} \in X $.
\item A set of loops $\{ \alpha_1, \ldots , \alpha_{b_{1,\alpha}} \} $ in $X$ representing a basis of the cokernel of the induced map $ \iota_*
\colon H_1 (Y ; \mathbb{R}) \rightarrow H_1 (X ; \mathbb{R})$.
\item A set of loops $\{ \beta_1, \ldots , \beta_{b_{\text{in}}} \} $ in $Y_{\text{in}}$ representing a basis of a subspace complementary to kernel of the induced map $ \iota_*
\colon H_1 (Y_{\text{in}} ; \mathbb{R}) \rightarrow H_1 (X ; \mathbb{R})$.
\item A set of loops $\{ \beta_{b_{\text{in}}+1}, \ldots , \beta_{b_{1,\beta}} \} $ in $Y_{\text{out}}$ such that $\{ \beta_1, \ldots , \beta_{b_{1,\beta}} \} $ represents a basis of a subspace
complementary to the kernel of the induced map $ \iota_*
\colon H_1 (Y_{} ; \mathbb{R}) \rightarrow H_1 (X ; \mathbb{R})$.
\item A \emph {based path data} $\left[\vec{\eta}\right]$, whose definition is given below. 
 \end{itemize}

\begin{defi}\label{def based path data}
A based path data is an equivalence class of paths $(\eta_{1},\eta_{2},\ldots,\eta_{b_{0}(Y)})$, where each $\eta_{j}$ is a path from
$\hat{o}$ to a point in $Y_{j}$. We say that $(\eta_{1},\ldots,\eta_{b_{0}(Y)})$ and $(\eta'_{1},\ldots,\eta'_{b_{0}(Y)})$ are
equivalent if the composed path $\eta'_{j} \ast (-\eta_{j})$ represents the zero class in $H_{1}(X,Y;\mathbb{R})$ for  each $j = 1, \ldots, b_0 (Y)$.
\end{defi}

\begin{rmk} 
(i) The set of loops $\{ \alpha_1, \ldots , \alpha_{b_{1,\alpha}} \} $ corresponds to a dual basis of kernel of $ \iota^*
\colon H^1 (X ; \mathbb{R}) \rightarrow H^1 (Y ; \mathbb{R})$.  \\
(ii) The set of loops $\{ \beta_1, \ldots , \beta_{b_{1,\beta}} \} $ corresponds to a dual basis of image of $ \iota^*
\colon H^1 (X ; \mathbb{R}) \rightarrow H^1 (Y ; \mathbb{R})$. \\
(iii) It follows that $\, b_{1,\alpha} = \dim{\ker{\iota^*}}$, $b_{1,\beta} = \dim{\im{\iota^*}}$,   and $b_{1,\alpha} +b_{1,\beta} = b_1
(X) $.
\end{rmk}

As usual, we will set up the Seiberg--Witten equations on a particular slice of the configuration space.  For the manifold with boundary $X$, we will consider the double Coulomb condition introduced by the first author \cite{Khandhawit1} rather than the classical Coulomb--Neumann condition.
Let us briefly recall the definition.

\begin{defi} \label{def doublecoul} For a 1-form $\hat{a}$ on $X$, we have a decomposition $\hat{a}|_{Y}=\mathbf{t}\hat{a}+\mathbf{n}\hat{a}$ on the boundary, where
$\mathbf{t}\hat{a}$ and $\mathbf{n}\hat{a}$ are the tangential part and the normal part respectively. When $Y = \coprod Y_i$
has several
connected components, we denote by $\mathbf{t}_{i}\hat{a}$ and $\mathbf{n}_{i}\hat{a} $ the corresponding parts of $\hat{a}|_{Y_{i}}$.
We say that a 1-form $\hat{a}$ satisfies the double Coulomb condition if:
\begin{enumerate}
\item $\hat{a}$ is coclosed, i.e. $d^{*}\hat{a}=0$;
\item Its restriction to the boundary is coclosed, i.e. $d^{*}(\mathbf{t}\hat{a})=0$;
\item \label{item Coulomb3rd} For each $j$, we have $\int _{Y_{j}} \mathbf{t_{j}}(*\hat{a}) =0$.
\end{enumerate}
We denote by $\Omega^{1}_{CC}(X)$ the space of 1-forms satisfying the double  Coulomb condition.
\end{defi}

As a consequence of \cite[Proposition~2.2]{Khandhawit1}, we can identify $H^1(X ; \mathbb{R}) $ with the space of harmonic 1-forms satisfying double Coulomb condition
\begin{align*}  H^1(X ; \mathbb{R}) \cong \mathcal{H}^1_{CC}(X):= \{ \hat{a} \in \Omega^{1}_{CC}(X) \mid d\hat{a} = 0 \}.
\end{align*}
Since $X$ is connected, we observe that the cohomology long exact sequence of the pair $(X,Y)$ gives rise to a short exact sequence
\begin{align*} 
0 \rightarrow \mathbb{R}^{b_0 (Y) - 1} \rightarrow H^1 (X,Y ; \mathbb{R}) \rightarrow \ker{\iota^*} \rightarrow 0.
\end{align*} 
By the classical Hodge Theorem, each element of the relative cohomology group $ H^1 (X,Y ; \mathbb{R}) $ is represented by a harmonic 1-form with Dirichlet boundary condition. Since condition~(\ref{item Coulomb3rd}) from Definition~\ref{def doublecoul} is of codimension $b_0(Y) - 1 $, we can conclude that the space of harmonic 1-forms
satisfying both Dirichlet boundary condition and condition~(\ref{item Coulomb3rd}) from Definition~\ref{def doublecoul} is isomorphic to $\ker{\iota^*} $. Notice that such 1-forms trivially satisfy other double Coulomb conditions. Hence, we make an identification
\begin{align}    \label{eq H1DC}
\ker{\iota^*} \cong \mathcal{H}^1_{DC}(X) :=  \{ \hat{a} \in \Omega^{1}_{CC}(X) \mid d\hat{a} = 0, \ \mathbf{t}\hat{a}=0 \}.
 \end{align}

 The double Coulomb slice
$Coul^{CC}(X)$ is defined as
\begin{align*}
Coul^{CC}(X):=L^{2}_{k+1/2}(i\Omega^{1}_{CC}(X)\oplus \Gamma(S_{}^{+})),
\end{align*}
where $k$ is an integer greater than 4 fixed throughout the paper. Next, we introduce projections from $Coul^{CC}(X) $ related to the chosen loops  $\{ \alpha_1 , \ldots , \alpha_{b_{1,\alpha}} \} $ and $\{ \beta_1, \ldots , \beta_{b_{1,\beta}} \} $. We define a (nonorthogonal) projection
\begin{align} \label{eq defp_alpha}
    \hat{p}_{\alpha} \colon Coul^{CC}(X)\rightarrow  \mathcal{H}^1_{DC}(X)  
\end{align}
by sending $(\hat{a},\hat{\phi})$ to  the unique
element in $  \mathcal{H}^1_{DC}(X)$ satisfying
$$
\int_{\alpha_{j}}\hat{a}=i\int_{\alpha_{j}}\hat{p}_{\alpha}(\hat{a}, \hat{\phi})\text{ for every }j=1,2,\ldots,b_{1,\alpha}.
$$

 On the other hand, we define a map
\begin{align*}
\hat{p}_{\beta} \colon Coul^{CC}(X) &\rightarrow \mathbb{R}^{b_{1,\beta}}\\
(\hat{a},\hat{\phi}) &\mapsto (-i\int_{\beta_{1}}\mathbf{t}\hat{a}, \, \ldots  \, ,-i\int_{\beta_{b_{1,\beta}}}\mathbf{t}\hat{a}). \nonumber
\end{align*}
Note that $\hat{p}_{\alpha} $ and $\hat{p}_{\beta} $ together  keep track of the $H^1(X; \mathbb{R}) $-component of $(\hat{a},\hat{\phi})$.   
We have a decomposition
\[
        \hat{p}_{\beta} = \hat{p}_{\beta, \text{in}} \oplus \hat{p}_{\beta, \text{out} },
\]
where
\[
  \begin{split}
    & \hat{p}_{\beta, \text{in}}(\hat{a}, \hat{\phi}) = 
         ( - i \int_{\beta_1} \mathbf{t} \hat{a}, \dots, - i \int_{\beta_{b_{\text{in}}}} \mathbf{t} \hat{a}  ),  \\
     & \hat{p}_{\beta, \text{out}}(\hat{a}, \hat{\phi}) = 
        ( - i \int_{\beta_{b_{\text{in}} + 1}} \mathbf{t} \hat{a}, \dots, - i \int_{\beta_{b_{1, \beta}}} \mathbf{t} \hat{a} ).
  \end{split}
\]

We now proceed to describe the group of gauge transformations.
Denote by $\mathcal{G}_{X}$  the $L^{2}_{k+3/2}$-completion of $\operatorname{Map}(X,S^{1})$.
The action of an element  $\hat{u} \in \operatorname{Map}(X,S^{1})$  is given by
\begin{align*}\hat{u} \cdot (\hat{a}, \hat{\phi}) = (\hat{a} - \hat{u}^{-1} d \hat{u}, \hat{u} \hat{\phi}) .
\end{align*}
The proof of the following lemma is a slight adaptation of \cite[Proposition~2.2]{Khandhawit1} and we omit it.
\begin{lem}\label{harmonic gauge tranformation}
Inside each connected component of $\mathcal{G}_{X}$, there is a unique element $\hat{u}:X\rightarrow S^{1}$ satisfying
$$
\hat{u}(\hat{o})=1,\ u^{-1}du\in i\Omega^{1}_{CC}(X).
$$
These elements form a subgroup, denoted by $\mathcal{G}^{h,\hat{o}}_{X}$,
 of harmonic gauge transformation with double Coulomb condition. \end{lem}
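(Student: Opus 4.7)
The plan is to combine the Hodge-theoretic identification established earlier with the standard exponentiation of closed integral 1-forms. The key ingredient is the isomorphism $\mathcal{H}^1_{CC}(X) \cong H^1(X;\mathbb{R})$ from \cite[Proposition~2.2]{Khandhawit1}, which provides a unique closed double-Coulomb representative of every real cohomology class. I will also use the standard bijection $\pi_0(\mathcal{G}_X) \cong H^1(X;\mathbb{Z})$ realized by $\hat{u} \mapsto [\tfrac{1}{2\pi i}\hat{u}^{-1}d\hat{u}]$, which in particular tells us that two gauge transformations lie in the same component if and only if their logarithmic derivatives have the same de Rham class.

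For existence, I would fix a component labeled by $c \in H^1(X;\mathbb{Z})$ and take the unique harmonic double-Coulomb representative $\omega \in i\Omega^{1}_{CC}(X)$ of $2\pi i\cdot c \in H^1(X;i\mathbb{R})$. I would then set
\[
    \hat{u}(x) \;:=\; \exp\!\left(\int_{\gamma_x} \omega\right),
\]
where $\gamma_x$ is any smooth path from $\hat{o}$ to $x$. Path-independence follows because the periods of $\omega$ over loops lie in $2\pi i \mathbb{Z}$, so $\hat{u}$ is a well-defined smooth map to $S^1$ with $\hat{u}(\hat{o})=1$, $\hat{u}^{-1}d\hat{u}=\omega\in i\Omega^{1}_{CC}(X)$, and representing the class $c$. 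Elliptic regularity of $\omega$ ensures that $\hat{u}$ lies in the Sobolev completion $\mathcal{G}_X$.

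For uniqueness, suppose $\hat{u}_1,\hat{u}_2$ both satisfy the conditions and lie in the same component. Then $\hat{u}_1^{-1}d\hat{u}_1$ and $\hat{u}_2^{-1}d\hat{u}_2$ are closed elements of $i\Omega^1_{CC}(X)$ with the same de Rham class, so the uniqueness clause of \cite[Proposition~2.2]{Khandhawit1} forces $\hat{u}_1^{-1}d\hat{u}_1=\hat{u}_2^{-1}d\hat{u}_2$. Consequently $\hat{u}_1\hat{u}_2^{-1}$ has vanishing differential, is locally constant, and equals $1$ everywhere by connectedness of $X$ together with the normalization at $\hat{o}$. The subgroup claim is then immediate from the additivity $(\hat{u}_1\hat{u}_2)^{-1}d(\hat{u}_1\hat{u}_2)=\hat{u}_1^{-1}d\hat{u}_1+\hat{u}_2^{-1}d\hat{u}_2$, the linearity of $i\Omega^1_{CC}(X)$, and multiplicativity of evaluation at $\hat{o}$. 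There is no substantive obstacle beyond the Hodge theory for the double Coulomb slice already provided in \cite{Khandhawit1}; the rest is a direct transcription of the classical construction of harmonic representatives of $S^1$-valued gauge transformations on closed manifolds.
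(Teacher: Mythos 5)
Your proof is correct, and it uses exactly the ingredient the paper points to: the Hodge-theoretic identification $\mathcal{H}^1_{CC}(X)\cong H^1(X;\mathbb{R})$ from \cite[Proposition~2.2]{Khandhawit1}. The paper omits the proof and merely calls it a ``slight adaptation'' of that result; the natural reading is that one starts from an arbitrary $\hat{v}$ in the prescribed component and corrects it by an exact form $df$, solving for $f$ via the decomposition $\Omega^1(X)=\Omega^1_{CC}(X)\oplus d\Omega^0(X)$ underlying that proposition. You instead produce the representative directly by exponentiating the unique harmonic double-Coulomb 1-form $\omega$ with $[\omega]=2\pi i c$, using integrality of the periods to get well-definedness; this is the same analytic input packaged slightly differently, and it is arguably more explicit. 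Your uniqueness argument (two admissible $\hat u_i$ in the same component have logarithmic derivatives that are closed double-Coulomb forms in the same de Rham class, hence equal, so $\hat u_1\hat u_2^{-1}$ is constant and normalized at $\hat o$) and the subgroup verification are both correct. No gaps.
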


Consequently, there is a natural isomorphism
\begin{equation*} 
\mathcal{G}^{h,\hat{o}}_{X}\cong \pi_{0}(\mathcal{G}_{X})\cong H^{1}(X;\mathbb{Z}).
\end{equation*}
We also denote by $\mathcal{G}^{h,\hat{o}}_{X,Y}$ the subgroup of $\mathcal{G}^{h,\hat{o}}_{X}$
 that corresponds to the subgroup $\ker(H^{1}(X;\mathbb{Z})\rightarrow H^{1}(Y;\mathbb{Z}))$ of $H^{1}(X;\mathbb{Z})$. Observe that each element in $\mathcal{G}^{h,\hat{o}}_{X,Y}$
restricts to a constant function on each component of $Y$.

Now we define the relative Picard torus
\begin{equation*}  
\begin{split}
\operatorname{Pic}^{0}(X,Y):&=\mathcal{H}^1_{DC}(X)/\mathcal{G}^{h,\hat{o}}_{X,Y}\\&\cong \ker(H^{1}(X;\mathbb{R})\rightarrow
H^{1}(Y;\mathbb{R}))/\ker(H^{1}(X;\mathbb{Z})\rightarrow H^{1}(Y;\mathbb{Z})).
\end{split}
\end{equation*}
This is a torus of dimension $b_{1,\alpha} $.
 The double Coulomb slice $Coul^{CC}(X)$ is preserved by $\mathcal{G}^{h,\hat{o}}_{X}$ and thus $\mathcal{G}^{h,\hat{o}}_{X,Y}$.

Our main object of interest will be the quotient space $Coul^{CC}(X)/\mathcal{G}^{h,\hat{o}}_{X,Y}$ regarded as a Hilbert bundle
over $\operatorname{Pic}^{0}(X,Y)$ with bundle structure induced by the projection $\hat{p}_{\alpha}$. The bundle will be denoted by 
\[
            \mathcal{W}_X := Coul^{CC}(X)/\mathcal{G}^{h,\hat{o}}_{X,Y}.
\]

\begin{rmk}
A different Hilbert bundle structure of $\mathcal{W}_X $ can be induced by the  orthogonal projection
$$\hat{p}_{\perp}:Coul^{CC}(X)\rightarrow \mathcal{H}^1_{DC}(X).$$
However, we prefer $\hat{p}_{\alpha}$ because $\hat{p}_{\alpha}$ behaves better than $\hat{p}_{\perp}$ and simplifies our argument in the proof of gluing theorem for relative Bauer-Furuta invariants.
\end{rmk}

\begin{defi}
For a pair $(\hat{a},\hat{\phi})\in Coul^{CC}(X)$, we denote by $[\hat{a},\hat{\phi}]$ the corresponding element in the Hilbert
bundle $\mathcal{W}_X $. We write $\|\cdot\|_{F}$ for the fiber-direction norm on $\mathcal{W}_X $.
\end{defi}

Note that the norm $\|\cdot \|_{F}$ is not directly given by the restriction of the $L^{2}_{k+1/2}$-norm on $Coul^{CC}(X)$ because the 
latter is not invariant under $\mathcal{G}^{h,\hat{o}}_{X,Y}$. However, we can construct $\|\cdot \|_{F}$ as follows: We cover $\operatorname{Pic}^{0}(X,Y)$ by finitely many small balls $\{B_{i}\}$. Each $B_{i}$ can be lifted as a subset of $\mathcal{H}^{1}_{DC}$. With such a lift chosen, we can identify the total space of $\mathcal{W}_{X}|_{B_{i}}$ as a subset of $Coul^{CC}(X)$. Then we use the restriction of the $L^{2}_{k+1/2}$-norm on $Coul^{CC}(X)$ to define the fiber direction norm on $\mathcal{W}_{X}|_{B_{i}}$. Using a partition of unity, we patch these norms together to form $\|\cdot \|_{F}$.

 Let us fix a fundamental domain $\mathfrak{D}\subset \mathcal{H}^1_{DC}(X)$  throughout this section. The following equivalence of norms is a consequence of the compactness of $\mathfrak{D}$.
 
\begin{lem}\label{fiberdirection norm}
There exists a positive constant $C$ such that for any $(\hat{a},\hat{\phi})\in Coul^{CC}(X)$ such that $\hat{p}_{\alpha}(\hat{a},\hat{\phi})\in
\mathfrak{D}$, we have
$$
\frac{\|[\hat{a},\hat{\phi}]\|_{F}}{C}\leq \|(\hat{a},\hat{\phi})\|_{L^{2}_{k+1/2}}\leq C\cdot (\|[\hat{a},\hat{\phi}]\|_{F}+1).
$$
\end{lem}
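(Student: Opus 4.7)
The plan is to pass between the original representative $(\hat{a},\hat{\phi})$ and the local representatives used to define the fiber norm on each ball $B_i$, exploiting compactness of $\mathfrak{D}$ and discreteness of the gauge lattice. Set $\bar x := [\hat p_\alpha(\hat a,\hat\phi)]\in \operatorname{Pic}^0(X,Y)$. Over each ball $B_i$ with the chosen lift $s_i\colon B_i\hookrightarrow \mathcal H^1_{DC}(X)$, the local fiber-direction norm is computed as $\|\hat u_i\cdot(\hat a,\hat\phi)\|_{L^2_{k+1/2}}$, where $\hat u_i\in\mathcal G^{h,\hat o}_{X,Y}$ is the unique harmonic gauge transformation with $\hat p_\alpha(\hat u_i\cdot(\hat a,\hat\phi))=s_i(\bar x)$; equivalently,
\[
        \hat u_i^{-1}\,d\hat u_i \;=\; \hat p_\alpha(\hat a,\hat\phi)-s_i(\bar x).
\]

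The first step is to observe that only finitely many $\hat u_i$ can occur. Indeed, by hypothesis $\hat p_\alpha(\hat a,\hat\phi)\in\mathfrak{D}$ and $s_i(\bar x)\in s_i(\overline{B_i})$, both compact subsets of $\mathcal H^1_{DC}(X)$. Over the finite cover $\{B_i\}$, the difference $\hat p_\alpha(\hat a,\hat\phi)-s_i(\bar x)$ therefore ranges in a single bounded subset $K\subset\mathcal H^1_{DC}(X)$. Since the map $\hat u\mapsto \hat u^{-1}d\hat u$ identifies $\mathcal G^{h,\hat o}_{X,Y}\cong \ker(H^1(X;\Z)\to H^1(Y;\Z))$ with a discrete lattice in $\mathcal H^1_{DC}(X)$, only finitely many elements $\hat v_1,\ldots,\hat v_N\in\mathcal G^{h,\hat o}_{X,Y}$ satisfy $\hat v_j^{-1}d\hat v_j\in K$.

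The second step is to convert this finiteness into uniform Sobolev estimates. Since $\{\hat v_j\}$ is finite, there is a constant $C_1$ with $\|\hat v_j^{-1}d\hat v_j\|_{L^2_{k+1/2}}\le C_1$ and $\|\hat v_j^{\pm 1}\|_{L^2_{k+3/2}}\le C_1$ for all $j$. Continuity of Sobolev multiplication $L^2_{k+3/2}\cdot L^2_{k+1/2}\hookrightarrow L^2_{k+1/2}$ (which is available since $k>4$) then yields a constant $C_2$ such that, for every relevant $\hat u_i$,
\begin{align*}
\bigl\|\hat u_i\cdot(\hat a,\hat\phi)\bigr\|_{L^2_{k+1/2}}&\le C_2\bigl(\|(\hat a,\hat\phi)\|_{L^2_{k+1/2}}+1\bigr),\\
\bigl\|(\hat a,\hat\phi)\bigr\|_{L^2_{k+1/2}}&\le C_2\bigl(\|\hat u_i\cdot(\hat a,\hat\phi)\|_{L^2_{k+1/2}}+1\bigr),
\end{align*}
by writing $\hat u_i\cdot(\hat a,\hat\phi)=(\hat a-\hat u_i^{-1}d\hat u_i,\,\hat u_i\hat\phi)$ and applying the triangle inequality to the $1$-form part and the multiplication estimate to the spinor part.

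Finally, the partition-of-unity patching gives the comparison between the global norm $\|[\hat a,\hat\phi]\|_F$ and the local norms $\|[\hat a,\hat\phi]\|_{F,i}=\|\hat u_i\cdot(\hat a,\hat\phi)\|_{L^2_{k+1/2}}$. The cover $\{B_i\}$ is finite with uniformly bounded multiplicity, so $\|[\hat a,\hat\phi]\|_F$ is uniformly comparable to $\max_{i:\,\chi_i(\bar x)>0}\|[\hat a,\hat\phi]\|_{F,i}$ (and to $\min$ as well). Combining this with the previous step, one absorbs the finitely many additive constants into a single $C$ to obtain both inequalities of the lemma, where the asymmetric ``$+1$'' on the right reflects the contribution of the bounded translates $\hat u_i^{-1}d\hat u_i$.

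The only real obstacle is the finite-set property in the first step: one must combine the compactness of $\mathfrak{D}$ (our standing hypothesis), the boundedness of the chosen lifts $s_i(\overline{B_i})$ (an intrinsic feature of the construction of $\|\cdot\|_F$), and the discreteness of the lattice embedding $\mathcal G^{h,\hat o}_{X,Y}\hookrightarrow \mathcal H^1_{DC}(X)$ via $\hat u\mapsto \hat u^{-1}d\hat u$. Once the pool of possible $\hat u_i$ is confined to a finite set, every subsequent estimate is routine Sobolev bookkeeping.
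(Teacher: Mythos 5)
The paper does not actually give a proof of this lemma; it is asserted as "a consequence of the compactness of $\mathfrak{D}$," so there is nothing to compare against line-by-line. Your reconstruction supplies the intended mechanism: compactness of $\mathfrak{D}$ together with boundedness of the lifted charts $s_i(\overline{B_i})$ pins $\hat{u}_i^{-1}d\hat{u}_i$ to a bounded region of $\mathcal{H}^1_{DC}(X)$, discreteness of $\mathcal{G}^{h,\hat{o}}_{X,Y}\hookrightarrow\mathcal{H}^1_{DC}(X)$ then leaves only finitely many possible $\hat{u}_i$, and Sobolev multiplication gives uniform constants for these. That is correct and is clearly the argument the authors had in mind.

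The one step that is too quick is the final "absorption of the additive constants into a single $C$." An additive constant cannot be absorbed into a multiplicative one when the right-hand side can be zero, and with your reading of $\|\cdot\|_F$ (literally $\|\hat{u}_i\cdot(\hat{a},\hat{\phi})\|_{L^2_{k+1/2}}$) your estimates only give $\|[\hat{a},\hat{\phi}]\|_F\le C(\|(\hat{a},\hat{\phi})\|_{L^2_{k+1/2}}+1)$, which is strictly weaker than the asserted left inequality. In fact, taking $(\hat{a},\hat{\phi})=(0,0)$ with a chart whose lift satisfies $s_i([0])\neq 0$, the raw local norm is $\|\hat{u}_i^{-1}d\hat{u}_i\|>0$ while $\|(\hat{a},\hat{\phi})\|=0$, so the left inequality as stated would fail. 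The fix is to read $\|\cdot\|_F$ as a genuine fiber-direction norm, i.e.\ subtract the base component $(is_i(\bar x),0)$ from the local representative before taking the $L^2_{k+1/2}$-norm. A short computation using $\hat{u}_i^{-1}d\hat{u}_i=i(\hat{p}_\alpha(\hat{a})-s_i(\bar x))$ shows the local fiber component is then $(\hat{a}-i\hat{p}_\alpha(\hat{a}),\,\hat{u}_i\hat{\phi})$, and the $\hat{u}_i$-translation drops out of the $1$-form part entirely. Since $\hat{p}_\alpha$ is a bounded projection, $\|\hat{a}-i\hat{p}_\alpha(\hat{a})\|\le C\|\hat{a}\|$ with no additive constant, giving the left inequality; the right inequality picks up the "+1" from $\|\hat{p}_\alpha(\hat{a})\|\le\operatorname{diam}(\mathfrak{D})$. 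You still need your finiteness step to handle the spinor factor $\hat{u}_i\hat{\phi}$ uniformly, so the rest of your argument stands.
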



Lastly, we will consider some restriction maps on the bundle. Recall that the Coulomb slice on 3-manifolds is given by
\begin{align*}Coul(Y) :=  \{(a,\phi) \in L^2_k  \left(i\Omega^{1}(Y)\oplus\Gamma(S_{Y})\right) \mid d^{*}a=0\}. \end{align*}
 From the definition of double Coulomb slice, we obtain a natural restriction map 
 \begin{eqnarray*}  
r \colon Coul^{CC}(X) &\rightarrow &Coul(Y) \\
(\hat{a},\hat{\phi}) &  \mapsto &(\mathbf{t}\hat{a},\hat{\phi}|_{Y}). \nonumber
\end{eqnarray*}
We would want to also define a restriction map from $\mathcal{W}_X $ to $Coul(Y)$.
Notice that $r(\hat{u} \cdot (\hat{a},\hat{\phi}))$ might not be equal to $r (\hat{a},\hat{\phi}) $ even if $\hat{u} \in  \mathcal{G}^{h,\hat{o}}_{X,Y}$ because $\hat{u}|_{Y}\neq 1  $ in general. 
This is where we use the based path data $[\vec{\eta}]$ to define a ``twisted'' restriction map
\begin{equation*} 
\begin{split}
   r'  = r'_{ [\vec{\eta}] }  \colon Coul^{CC}(X)   &  
                     \rightarrow \prod\limits_{j=1}^{b_{0}(Y)}Coul(Y_{j})  = Coul(Y)    \\
    (\hat{a},\hat{\phi}) 
        &   \mapsto \prod\limits_{j=1}^{b_{0}(Y)}   (\mathbf{t}_{j}\hat{a},  e^{ i \int_{\eta_{j}}\hat{p}_{\alpha}(\hat{a},\hat{\phi})}     \cdot   \hat{\phi}|_{Y_{j}}).
\end{split}
\end{equation*}
The following result can be verified by a simple calculation.
\begin{lem} For each $\hat{u} \in \mathcal{G}^{h,\hat{o}}_{X,Y}$, we have $r'(\hat{u} \cdot (\hat{a},\hat{\phi})) = r'(\hat{a},\hat{\phi})$. Moreover, the twisted restriction map $r'$ does not depend on the choice of the representative $\vec{\eta}$ in its equivalent class.
\end{lem}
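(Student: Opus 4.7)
My plan is to verify each of the two claims by a direct computation, exploiting the crucial observation (proved just before in the paper) that any $\hat{u}\in\mathcal{G}^{h,\hat{o}}_{X,Y}$ restricts to a \emph{constant} function on each component $Y_j$. I will handle the two statements separately.

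\medskip

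For gauge invariance, set $(\hat{a}',\hat{\phi}')=\hat{u}\cdot(\hat{a},\hat{\phi})=(\hat{a}-\hat{u}^{-1}d\hat{u},\hat{u}\hat{\phi})$. The tangential part is immediate: since $\hat{u}|_{Y_j}$ is constant, $\mathbf{t}_j(\hat{u}^{-1}d\hat{u})=0$, so $\mathbf{t}_j\hat{a}'=\mathbf{t}_j\hat{a}$. The spinor part requires tracking how $\hat{p}_{\alpha}$ changes. Write $\hat{u}=e^{i\theta}$ with $\theta(\hat{o})=0$ (this is consistent locally; globally $\theta$ is multivalued, but $d\theta=-i\hat{u}^{-1}d\hat{u}$ is a well-defined closed real $1$-form lying in $\mathcal{H}^1_{DC}(X)$ by the discussion following \eqref{eq H1DC}). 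From
\[
i\int_{\alpha_j}\hat{p}_{\alpha}(\hat{a}',\hat{\phi}')=\int_{\alpha_j}\hat{a}-\int_{\alpha_j}\hat{u}^{-1}d\hat{u}=i\int_{\alpha_j}\hat{p}_{\alpha}(\hat{a},\hat{\phi})-2\pi i\,n_j(\hat{u}),
\]
and uniqueness of $\hat{p}_{\alpha}(\cdot)\in\mathcal{H}^{1}_{DC}(X)$ in terms of its $\alpha_j$-periods, I obtain $\hat{p}_{\alpha}(\hat{a}',\hat{\phi}')=\hat{p}_{\alpha}(\hat{a},\hat{\phi})-d\theta$. Lifting $\theta$ along $\eta_j$, with $\theta(\hat{o})=0$, gives $\int_{\eta_j}d\theta=\theta(\eta_j(1))$, and since $\hat{u}|_{Y_j}=e^{i\theta(\eta_j(1))}$ is constant, the twist factor transforms as $e^{i\int_{\eta_j}\hat{p}_{\alpha}(\hat{a}',\hat{\phi}')}=e^{i\int_{\eta_j}\hat{p}_{\alpha}(\hat{a},\hat{\phi})}\cdot(\hat{u}|_{Y_j})^{-1}$. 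Combining with $\hat{\phi}'|_{Y_j}=\hat{u}|_{Y_j}\cdot\hat{\phi}|_{Y_j}$, the two $\hat{u}|_{Y_j}$ factors cancel exactly, yielding $r'(\hat{u}\cdot(\hat{a},\hat{\phi}))=r'(\hat{a},\hat{\phi})$.

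\medskip

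For independence of the representative $\vec{\eta}$, the tangential component of $r'$ makes no reference to $\vec{\eta}$, so only the exponential twist matters. The key observation is that $\omega:=\hat{p}_{\alpha}(\hat{a},\hat{\phi})\in\mathcal{H}^{1}_{DC}(X)$ is closed and satisfies $\mathbf{t}\omega=0$, hence represents a de Rham class $[\omega]\in H^{1}(X,Y;\mathbb{R})$. For a different representative $\vec{\eta}\,'=(\eta_1',\dots,\eta_{b_0(Y)}')$ of the same equivalence class, the concatenation $\eta_j'\ast(-\eta_j)$ is a loop at $\hat{o}$ whose class in $H_1(X,Y;\mathbb{R})$ vanishes by Definition \ref{def based path data}. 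The de Rham pairing $H^1(X,Y;\mathbb{R})\times H_1(X,Y;\mathbb{R})\to\mathbb{R}$ therefore gives
\[
\int_{\eta_j'}\omega-\int_{\eta_j}\omega=\int_{\eta_j'\ast(-\eta_j)}\omega=\bigl\langle[\omega],[\eta_j'\ast(-\eta_j)]\bigr\rangle=0,
\]
so $e^{i\int_{\eta_j'}\hat{p}_{\alpha}(\hat{a},\hat{\phi})}=e^{i\int_{\eta_j}\hat{p}_{\alpha}(\hat{a},\hat{\phi})}$ and $r'_{[\vec{\eta}\,']}=r'_{[\vec{\eta}]}$.

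\medskip

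I do not anticipate a real obstacle here; the only subtle point is making sure one interprets $d\theta$ (rather than $\theta$ itself) as the global object and then explaining why $\int_{\eta_j}d\theta$ still recovers $\hat{u}|_{Y_j}$ via the path-lifting of $\theta$ starting from the chosen basepoint. Both steps ultimately reduce to the two bookkeeping facts that (i) $\hat{u}|_{Y_j}$ is locally constant for $\hat{u}\in\mathcal{G}^{h,\hat{o}}_{X,Y}$ and (ii) elements of $\mathcal{H}^{1}_{DC}(X)$ pair with relative $1$-homology, both of which are already established in the preceding discussion.
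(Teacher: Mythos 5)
Your proof is correct and fills in exactly the "simple calculation" the paper alludes to but omits: the key facts you invoke — that $\hat{u}|_{Y_j}$ is constant so $-i\hat{u}^{-1}d\hat{u}\in\mathcal{H}^1_{DC}(X)$, that $\hat{p}_\alpha$ shifts by this closed form, and that elements of $\mathcal{H}^1_{DC}(X)$ pair well-definedly with relative $1$-homology because of the Dirichlet condition $\mathbf{t}\omega=0$ — are precisely what makes the cancellation work, and the two halves of the lemma are handled cleanly.
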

As a result, we can
define the induced twisted restriction map
\begin{equation*} 
      \tilde{r} = \tilde{r}_{[\vec{\eta}]}  \colon \mathcal{W}_X \rightarrow Coul(Y).
\end{equation*}
Note that $ \tilde{r}$ is fiberwise linear since $\hat{p}_{\alpha}(\hat{a},\hat{\phi}) $ is constant on each fiber. Moreover, there is a decomposition $(\tilde{r}_{\text{in}} ,\tilde{r}_{\text{out}}) \colon \mathcal{W}_X \rightarrow Coul(-Y_{\text{in}}) \times Coul(Y_{\text{out}}) $

\subsection{Seiberg--Witten maps and finite-dimensional approximation} \label{sec SWfinite}
On the boundary 3-manifold $Y$, we fix a base $\text{spin}^{c}$ connection $A_{0}$. We require that the induced curvature $F_{A_{0}^{t}}$
on $\text{det}(S_Y) $ equals $2\pi i\nu_{0}$,
where $\nu_{0}$ is the harmonic $2$-form representing $-c_{1}(\mathfrak{s})$. Furthermore, we pick a good perturbation $f
= (\bar{f} , \delta)$ where $\bar{f}$ is an extended cylinder function and $\delta $ is a real number (see \cite[Definition~2.3]{KLS1}
for details). Auxiliary choices in the construction of the unfolded spectrum $\underline{\text{SWF}}^{}(Y,\mathfrak{s}) $
will be made but not mentioned at this point. 
 
On the 4-manifold $X$, we fix
a base $\text{spin}^{c}$ connection $\hat{A}_{0}$ such that $\nabla_{\hat{A}_{0}} = \frac{d}{dt} + \nabla_{A_0} $ on $[-3,0]
\times Y$. 
As usual, the space of $\text{spin}^{c}$ connections
on $S_{X}$ can be identified with $i\Omega^{1}(X)$ via the correspondence $\hat{A} \mapsto \hat{A}-\hat{A}_{0}$. For a \mbox{1-form}
$\hat{a} \in i\Omega^{1}(X) $, we let $\slashed{D}^{+}_{\hat{a}} \colon \Gamma(S^+) \rightarrow \Gamma(S^-)$ be the Dirac
operator associated to the connection
$\hat{A}_{0}+ \hat{a}$.
We also denote by $\slashed{D}^+ := \slashed{D}^+_{0}$ the Dirac operator corresponding to the base connection $\hat{A}_0$,
so we can write
$\slashed{D}^+_{\hat{a}} = \slashed{D}^+_{} + \hat{\rho}(\hat{a}) $.
On $Y$, we denote by $\slashed{D}_{A_0 +a}$ the Dirac operator associated to the connection
$A_{0}+a$ where $a \in i\Omega^{1}(Y) $ and denote by $\slashed{D} := \slashed{D}_{A_{0}}$ 

Furthermore, we  perturb the Seiberg--Witten map by choosing the following data
\begin{itemize}

\item Pick a closed 2-form $\omega_{0} \in i\Omega^{2}(X)$ such 
that $\omega_{0}|_{[-3,0]\times Y}=\pi i\nu_{0}$. 
\item Pick a bump-function $\iota \colon [-3,0]\rightarrow [0,1]$
satisfying $\iota \equiv 0$ on $[-3,-2]$ and $\iota \equiv 1$ on $[-1,0] $ and $0\leq \iota'(x)\leq 2$. For $t\in [-3,0]$, denote by $a_{t}$ the pull
back of $\hat{a}$ by the inclusion $\{t\}\times Y\rightarrow X$ and let $\phi_{t}=\hat{\phi}|_{\{t\}\times Y}$.
    We define a perturbation on $X $ supported in the collar neighborhood of $Y$  by  
\begin{equation*}  
 \hat{q}(\hat{a},\hat{\phi}):=\iota(t)((dt \wedge \grad^{1}f(a_{t},\phi_{t})+*\grad^{1}f(a_{t},\phi_{t})),\grad^{2}f(a_{t},\phi_{t})).\end{equation*}

\end{itemize}

The (perturbed) Seiberg-Witten map is then given by
\begin{align*}
& SW\colon Coul^{CC}(X)\rightarrow L^{2}_{k-1/2}(i\Omega_{+}^{2}(X)\oplus \Gamma(S^{-}_{X})) \\
& SW(\hat{a},\hat{\phi})=(d^{+}\hat{a}, \slashed {D}^{+}\hat{\phi})+ (\frac{1}{2} F^{+}_{\hat{A}_{0}^{t}}-\hat{\rho}^{-1}(\hat{\phi}\hat{\phi}^{*})_{0}-\omega_{0}^{+},\hat{\rho}
(\hat{a})\hat{\phi})+\hat{q}(\hat{a},\hat{\phi}), \nonumber
\end{align*}
where $(\hat{\phi}\hat{\phi}^{*})_{0}$ denotes the trace-free part of $\hat{\phi}\hat{\phi}^{*}\in \Gamma(\text{End}(S_{X}^{+}))$. 
We consider a decomposition 
\begin{equation}\label{decomposition of sw}
SW=L+Q
\end{equation}
where
$$
L(\hat{a},\hat{\phi})=(d^{+}\hat{a},\slashed{D}^{+}_{\hat{p}_{\alpha}(\hat{a})}\hat{\phi})\text { and }Q=SW-L.
$$
 By computation similar to that in the proof of Proposition 11.4.1 of \cite{Kronheimer-Mrowka}, making use of the tameness condition on $\grad f$ (see \cite[Definition 10.5.1]{Kronheimer-Mrowka}),
we can deduce the following lemma:
\begin{lem}\label{quadratic part}
For any number $j\geq 2$, if a subset $U\subset Coul^{CC}(X)$ is bounded in $L^{2}_{j}$, then the set $Q(U)$ is also bounded
in $L^{2}_{j}$.
\end{lem}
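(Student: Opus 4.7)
The plan is to split $Q$ into its elementary pieces and show that each piece sends $L^2_j$-bounded sets to $L^2_j$-bounded sets. Computing directly from (\ref{decomposition of sw}) and unwinding $\slashed{D}^{+}_{\hat{p}_{\alpha}(\hat{a})} = \slashed{D}^{+} + \hat{\rho}(\hat{p}_{\alpha}(\hat{a}))$, we obtain
\begin{align*}
Q(\hat{a}, \hat{\phi}) = \Bigl( \tfrac{1}{2} F^{+}_{\hat{A}_{0}^{t}} - \hat{\rho}^{-1}(\hat{\phi}\hat{\phi}^{*})_{0} - \omega_{0}^{+},\ \hat{\rho}\bigl(\hat{a} - \hat{p}_{\alpha}(\hat{a})\bigr)\hat{\phi} \Bigr) + \hat{q}(\hat{a}, \hat{\phi}),
\end{align*}
so it suffices to bound each summand.

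The fixed smooth 2-form $\tfrac12 F^{+}_{\hat{A}_{0}^{t}} - \omega_{0}^{+}$ trivially has bounded $L^2_j$ norm. For the pointwise quadratic term $\hat{\rho}^{-1}(\hat{\phi}\hat{\phi}^{*})_{0}$ and the bilinear term $\hat{\rho}(\hat{a})\hat{\phi}$, I would invoke the Sobolev multiplication theorem on the compact $4$-manifold $X$: for $j \geq 2$ it gives a continuous bilinear map $L^2_j \times L^2_j \to L^2_j$ (the case $j > 2$ is the Banach-algebra property, while the borderline case $j = 2$ uses $L^2_2 \hookrightarrow L^p$ for all finite $p$, combined with Leibniz and H\"older, exactly as in the proof of \cite[Proposition~11.4.1]{Kronheimer-Mrowka}). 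For the subtracted piece $\hat{\rho}(\hat{p}_{\alpha}(\hat{a}))\hat{\phi}$, the map $\hat{p}_{\alpha}$ factors through the finite-dimensional space $\mathcal{H}^{1}_{DC}(X)$ of smooth forms and is determined by the period integrals $\int_{\alpha_{i}} \hat{a}$; these are bounded linear functionals on $L^2_j(X)$ for $j \geq 2$ by the trace theorem applied to the loops $\alpha_{i}$. Hence $\|\hat{p}_{\alpha}(\hat{a})\|_{C^\infty}$ is dominated by $\|\hat{a}\|_{L^2_j}$ and its product with $\hat{\phi}$ is again $L^2_j$-bounded.

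The main substance of the argument is the bound on the cylinder-function perturbation $\hat{q}(\hat{a}, \hat{\phi})$ in $L^2_j$. Since $\hat{q}$ is supported in the collar $[-3,0]\times Y$ and is built by applying $\grad f$ fiberwise in $t$ to the slice $(a_t, \phi_t)$, then multiplying by the smooth cutoff $\iota(t)$, its $L^2_j(X)$-norm on the collar is controlled by an integral in $t$ of slice-by-slice $L^2_j(Y)$ norms of $\grad f(a_t, \phi_t)$ (together with a $t$-derivative contribution from $\iota$). The tameness hypothesis on the good perturbation $f = (\bar f, \delta)$ from \cite[Definition~10.5.1]{Kronheimer-Mrowka} states precisely that $\grad f$ maps sets bounded in the $3$-dimensional $L^2_j$ Coulomb slice to bounded sets in the same Sobolev norm; combined with the trace inequality $\|(a_t,\phi_t)\|_{L^2_j(Y)} \leq C\|(\hat a,\hat\phi)\|_{L^2_j(X)}$ uniformly in $t \in [-3,0]$, this gives the desired $L^2_j$ bound on $\hat{q}(U)$. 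This tameness input is the only non-routine ingredient; everything else is standard Sobolev multiplication on the compact 4-manifold $X$ and exactly matches the computation referenced from \cite[Proposition~11.4.1]{Kronheimer-Mrowka}.
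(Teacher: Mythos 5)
Your decomposition of $Q$ into the constant term $\tfrac12 F^+_{\hat A_0^t}-\omega_0^+$, the pointwise quadratic terms $\hat\rho^{-1}(\hat\phi\hat\phi^*)_0$ and $\hat\rho(\hat a-\hat p_\alpha(\hat a))\hat\phi$, and the collar perturbation $\hat q$ is exactly the right split, and the paper itself offers no independent proof — it only cites \cite[Proposition~11.4.1 and Definition~10.5.1]{Kronheimer-Mrowka}, so you are filling in details the authors leave implicit. Your treatment of the constant term, the Sobolev multiplication for $j\ge 2$ on the compact $4$-manifold, and the boundedness of $\hat p_\alpha$ via traces onto the loops $\alpha_i$ (codimension $3$ in a $4$-manifold, so the trace is well-defined and bounded for $j\ge 2>3/2$) are all correct.

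The one place that needs tightening is the $\hat q$ estimate, which is where the real content of \cite[Proposition~11.4.1]{Kronheimer-Mrowka} lies. Two specific imprecisions: (i) the trace inequality you invoke should read $\|(a_t,\phi_t)\|_{L^2_{j-1/2}(Y)}\le C\|(\hat a,\hat\phi)\|_{L^2_j(X)}$ uniformly in $t$, since restricting to a hypersurface slice costs half a derivative — the $L^2_j(Y)$ version is only available in an $L^2$-in-$t$ (Fubini) sense, not uniformly; (ii) the $L^2_j$ norm on the collar $[-3,0]\times Y$ is not controlled merely by integrating slice-wise $L^2_j(Y)$ norms of $\grad f(a_t,\phi_t)$, because the norm also carries cross-derivatives $\partial_t^l$ of the composite, and these involve $\partial_t^l(a_t,\phi_t)$ fed through Fréchet derivatives of $\grad f$ — not just derivatives of the fixed cutoff $\iota(t)$. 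Closing these cross-derivative terms is precisely what the tameness condition of \cite[Definition~10.5.1]{Kronheimer-Mrowka} is engineered to do (it gives controlled estimates on all the Fréchet derivatives $D^k\grad f$ between the appropriate Sobolev spaces), and combining that with the anisotropic description $L^2_j([-3,0]\times Y)\cong\bigcap_{l\le j}H^l([-3,0],H^{j-l}(Y))$ yields the bound. You have correctly identified the tameness hypothesis as the non-routine ingredient; the reduction to it should be stated in this chain-rule / anisotropic-Sobolev form rather than the slice-wise integral form.
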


We will next consider Seiberg--Witten maps on the Hilbert bundle $\mathcal{W}_X $. Notice that the map
\begin{align*} 
(SW,\hat{p}_{\alpha}) \colon Coul^{CC}(X)\rightarrow L^{2}_{k-1/2}(i\Omega^{2}_{+}(X)\oplus \Gamma(S_{X}^{-}))\times \mathcal{H}^1_{DC}(X)
\end{align*}
 is equivariant under the action of $\mathcal{G}^{h,\hat{o}}_{X,Y}$, where the action on the target space is given by
 $$
\hat{u} \cdot ((\omega,\hat{\phi}),\hat{h}):=((\omega,\hat u\hat{\phi}),\hat{h}-{\hat u}^{-1}d \hat u).
$$
Consequently, $(SW,\hat{p}_{\alpha})$ induces a bundle map over $\operatorname{Pic}^{0}(X,Y) $ denoted by
$$
       \overline{SW} \colon \mathcal{W}_X \rightarrow  (L^{2}_{k-1/2}(i\Omega^{2}_{+}(X)\oplus \Gamma(S_{X}^{-}))\times
\mathcal{H}^1_{DC}(X))/\mathcal{G}^{h,\hat{o}}_{X,Y}.
$$

By Kuiper's theorem, the Hilbert bundle $(L^{2}_{k-1/2}(i\Omega^{2}_{+}(X)\oplus \Gamma(S_{X}^{-}))\times \mathcal{H}^1_{DC}(X))/\mathcal{G}^{h,\hat{o}}_{X,Y}$ can be trivialized. We fix a trivialization and consider the induced projection from this bundle to its fiber $L^{2}_{k-1/2}(i\Omega^{2}_{+}(X)\oplus \Gamma(S_{X}^{-}))$. 
Composing the map $\overline{SW}$ with this projection, we obtain a map
$$
\widetilde{SW} \colon \mathcal{W}_X \rightarrow L^{2}_{k-1/2}(i\Omega^{2}_{+}(X)\oplus \Gamma(S_{X}^{-})).
$$

As the map $(L,\hat{p}_{\alpha})$ is also equivariant under the action of $\mathcal{G}^{h,\hat{o}}_{X,Y}$, the decomposition (\ref{decomposition of sw}) induces a decomposition
$$
\widetilde{SW}=\tilde{L}+\tilde{Q},
$$
where $\tilde{L}$ is a fiberwise linear map.

On the
3-dimensional Coulomb slice $Coul(Y) $, a Seiberg--Witten trajectory is a trajectory $\gamma \colon I \rightarrow Coul(Y) $ on some interval $I \subset \mathbb{R} $ satisfying an equation
\begin{equation*}
-\frac{d\gamma}{dt}(t)= (l+ c)(\gamma(t)), 
\end{equation*}
where $l+c $ comes from gradient of the perturbed Chern--Simons--Dirac functional $CSD_{\nu_{0},f}$ (cf. \cite[Section~2]{KLS1}). Recall that $l=(*d,\slashed{D}) $ and $c$ has nice compactness properties.

Let $V^{\mu}_{\lambda} \subset Coul(Y)$ be the span of eigenspaces of $l$
with eigenvalues in the interval $(\lambda,\mu]$ and let $p^{\mu}_{\lambda}$ be the $L^2$-orthogonal
projection onto $V^{\mu}_{\lambda}$. An approximated Seiberg--Witten
trajectory is a trajectory on a finite-dimensional subspace $\gamma \colon I \rightarrow V^{\mu}_{\lambda}$ satisfying
an equation
\begin{equation*}
-\frac{d\gamma}{dt}(t) = (l+p^{\mu}_{\lambda}\circ c)(\gamma(t)).
\end{equation*} 

From now on, let us fix a decreasing sequence of negative real numbers $\{\lambda_{n}\}$ and an increasing sequence
of positive real numbers $\{\mu_{n}\}$ such that $-\lambda_{n}, \mu_n \rightarrow \infty$. 
As a consequence of \cite[Proposition~3.1]{Khandhawit1}, the linear part
\begin{align*}
(\tilde{L},p^{\mu_{n}}_{-\infty}\circ \tilde{r}) \colon \mathcal{W}_X \rightarrow
L^{2}_{k-1/2}(i\Omega^{2}_{+}(X)\oplus \Gamma(S_{X}^{-}))\oplus V^{\mu_{n}}_{-\infty}
\end{align*} 
is fiberwise Fredholm.
Now we choose an increasing sequence $\{U_{n}\}$ of finite-dimensional subspaces of $L^{2}_{k-1/2}(i\Omega^{2}_{+}(X)\oplus
\Gamma(S_{X}^{-}))$ with the following two properties:
\begin{enumerate}[(i)]
\item As $n\rightarrow \infty$, the orthogonal projection $P_{U_{n}}:L^{2}_{k-1/2}(i\Omega^{2}_{+}(X)\oplus \Gamma(S_{X}^{-}))\rightarrow
U_{n}$ converges to the identity map pointwisely.
\item\label{item transversality} For any point $p\in \text{Pic}^{0}(X,Y)$ and any $n$, the restriction of $(\tilde{L}^{},p^{\mu_{n}}_{-\infty}\circ
\tilde{r}^{})$ to the fiber over $p$ is transverse to $U_{n}$.
\end{enumerate}

Note that $\hat{p}_{\alpha}(\hat{a}) = 0$ on $\partial X$ and hence the family of the Dirac operators $\slashed{D}^{+}_{\hat{p}_{\alpha}(\hat{a})}$ has no spectral flow. 
Consequently, we see that 
\begin{align*}
W_{n} := (\tilde{L},p^{\mu_{n}}_{-\infty}\circ \tilde{r})^{-1} ( U_n \times V^{\mu_{n}}_{\lambda_n} )
\end{align*}
is a finite-dimensional vector bundle over the Picard torus $\text{Pic}^{0}(X,Y)$. We
define an approximated Seiberg-Witten
map as 
\begin{align*}
\widetilde{SW}_{n} :=\tilde{L}+P_{U_{n}}\circ \tilde{Q} \colon W_{n}\rightarrow U_{n}.
\end{align*}

\subsection{Boundedness results} 
In this section, we will establish analytical results needed to set up our construction of the relative Bauer--Furuta invariants. Uniform boundedness of the following objects and their approximated analogues will be our main focus here.
\begin{defi}\label{defi: X-trajectory} A finite type $X$-trajectory is a pair $(\tilde{x},\gamma) $ such that
\begin{itemize}
\item  $\tilde{x} \in \mathcal{W}_X$ satisfying $\widetilde{SW} (\tilde{x}) = 0$;
\item $\gamma \colon [0 , \infty) \rightarrow Coul(Y) $ is a finite type Seiberg--Witten trajectory;
\item $\tilde{r} (\tilde{x}) = \gamma(0)$.
\end{itemize}
Recall that a smooth path in $Coul(Y)$ is called \emph{finite type} if it
is contained in a fixed bounded set (in the $L^{2}_{k}$-norm).
\end{defi}

With a basepoint chosen on each connected component $Y_j$, we recall that we can define  the based harmonic
gauge group $\mathcal{G}^{h,o}_{Y}\cong H^{1}(Y; \mathbb{Z}) $. The group $\mathcal{G}^{h,o}_{Y}$ has a residual
action on $Coul(Y)$. Then we consider a strip
of balls in $Coul(Y)$ translated by this action 
\begin{align*}
Str(R)=\{x\in Coul(Y) \mid \exists h\in \mathcal{G}^{h,o}_{Y} \text{ s.t. } \|h\cdot x\|_{L^{2}_{k}}\leq
R\}.
\end{align*}

Loosely speaking, a finite type $X$-trajectory corresponds to a Seiberg--Witten solution on $X^* := X \cup ([0,\infty) \times Y) $. The following result resembles \cite[Corollary~4.3]{Khandhawit1} but we give a more direct proof without relying on broken trajectories and regular perturbations.

\begin{thm}\label{boundedness for X-trajectory}
For any $M>0$, there exists a constant $R_0 (M)>0$ such that for any finite type $X$-trajectory $(\tilde{x},\gamma)$ satisfying
\begin{equation}\label{homology bounded condition}
\hat{p}_{\beta}(\tilde{x})\in [-M,M]^{b_{1,\beta}}
\end{equation}
we have $$\|\tilde{x}\|_{F}< R_0 (M)\text{ and }
\gamma([0,\infty))\subset \operatorname{int}(Str(R_0 (M))).$$
\end{thm}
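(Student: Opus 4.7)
The plan is to reduce both conclusions to a uniform $L^{2}_{k+1/2}$-bound on a representative $(\hat{a},\hat{\phi}) \in Coul^{CC}(X)$ lifting $\tilde{x}$, and then to invoke Lemma \ref{fiberdirection norm}. The argument combines the $3$-dimensional boundedness theorem \cite[Theorem 3.2]{KLS1} with an elliptic bootstrap for the double Coulomb boundary problem. The hypothesis $\hat{p}_{\beta}(\tilde{x})\in[-M,M]^{b_{1,\beta}}$ will be used essentially once: to control the harmonic part of the gauge transformation on $Y$ that is needed to tame $\gamma(0)$.

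First I would use the residual $\mathcal{G}^{h,\hat{o}}_{X,Y}$-action to arrange $\hat{p}_{\alpha}(\hat{a})\in\mathfrak{D}$. This action preserves $\hat{p}_{\beta}$, so under (\ref{homology bounded condition}) the harmonic part of $\hat{a}$, which is completely determined by $(\hat{p}_{\alpha},\hat{p}_{\beta})$ under the identification $\mathcal{H}^{1}_{CC}(X)\cong H^{1}(X;\mathbb{R})$, is now uniformly bounded in every Sobolev norm by a constant depending only on $M$.

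Next I would bound $\gamma(0)$ in $L^{2}_{k}$. By \cite[Theorem 3.2]{KLS1}, $\gamma([0,\infty))\subset Str(R_{1})$ for a universal $R_{1}$, so there exists $h\in\mathcal{G}^{h,o}_{Y}\cong H^{1}(Y;\mathbb{Z})$ with $\|h\cdot\gamma(0)\|_{L^{2}_{k}}\leq R_{1}$. The crucial point is that the harmonic cohomology class of $\mathbf{t}\hat{a}$ lies entirely in $\operatorname{im}(\iota^{*})\subset H^{1}(Y;\mathbb{R})$, so that $[\gamma(0)]$ has vanishing $\operatorname{coker}(\iota^{*})$-component, while its $\operatorname{im}(\iota^{*})$-component is controlled by $M$ via $\hat{p}_{\beta}$. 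Since $[h\cdot\gamma(0)]$ is bounded by $R_{1}$, both components of $[h]\in H^{1}(Y;\mathbb{R})$ must be bounded: the $\operatorname{coker}$-part because it equals $[h\cdot\gamma(0)]_{\operatorname{coker}}$ and the $\operatorname{im}$-part because it must cancel an $M$-bounded quantity. Integrality of $[h]$ then confines it to a finite set, which yields a direct bound on $\gamma(0)=h^{-1}\cdot(h\cdot\gamma(0))$ in $L^{2}_{k}$ depending only on $M$. Unwinding $\tilde{r}=\tilde{r}_{[\vec{\eta}]}$ and noting that the twisting phase $e^{i\int_{\eta_{j}}\hat{p}_{\alpha}(\hat{a},\hat{\phi})}$ is bounded in $L^{2}_{k}$ because $\hat{p}_{\alpha}\in\mathfrak{D}$ is compact, I obtain an $L^{2}_{k}(Y)$-bound on $(\mathbf{t}\hat{a},\hat{\phi}|_{Y})$ depending only on $M$.

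Finally I would run the bootstrap. The elliptic system $d^{+}\oplus d^{*}$ with the double Coulomb boundary conditions (together with the codimension-$b_0(Y){-}1$ mean condition of Definition \ref{def doublecoul}) forms a coercive boundary problem; applied together with the Dirac equation for $\hat{\phi}$ with prescribed boundary datum in $L^{2}_{k}$, and using Lemma \ref{quadratic part} to propagate bounds on the tame-quadratic nonlinearity $Q=SW-L$, a standard half-step iteration starting from a basic $L^{2}_{1}$ energy estimate raises regularity up to $L^{2}_{k+1/2}$. Lemma \ref{fiberdirection norm} then converts this into the desired bound $\|\tilde{x}\|_{F}<R_{0}(M)$, and enlarging $R_{0}(M)$ past $R_{1}$ upgrades $\gamma([0,\infty))\subset Str(R_{1})$ to $\gamma([0,\infty))\subset\operatorname{int}(Str(R_{0}(M)))$. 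I expect the main obstacle to be the bootstrap: verifying carefully that the double Coulomb conditions give the right Gaffney-type inequalities so that interior elliptic regularity and boundary regularity can be iterated without losing derivatives through the quadratic term $Q$.
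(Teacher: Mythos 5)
Your proposal has a genuine gap at its very first substantial step, and the gap is essentially circular reasoning. You claim, citing \cite[Theorem~3.2]{KLS1}, that $\gamma([0,\infty)) \subset Str(R_1)$ for a universal $R_1$. But that theorem controls \emph{bi-infinite} finite-type Seiberg--Witten trajectories $\gamma:\mathbb{R}\to Coul(Y)$: its proof exploits that both $\alpha$- and $\omega$-limit sets land among critical points, which pins down the $CSD$ energy and hence the drop in $p_{\mathcal H}$. A half-trajectory $\gamma:[0,\infty)\to Coul(Y)$ that is finite type (bounded, but by a constant depending on $\gamma$) may have $\gamma(0)$ with arbitrarily large $CSD$ value and lie far outside any fixed $Str(R_1)$. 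In fact, the paper proves the uniform bound on the cylinder part of a finite type $X$-trajectory only as Corollary~\ref{boundedness on cylinder}, i.e.\ \emph{after} and \emph{from} Theorem~\ref{boundedness for X-trajectory} — so the statement you assume is downstream of the statement you are trying to prove.

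The paper's actual route is quite different: it argues by contradiction and compactness. It first uses the finite-type property to show that for $t$ large (depending on $n$), $\gamma_n(t)$ lies in $Str(R')$ and hence $CSD(\gamma_n(\cdot))$ has a uniform lower bound $C_1$. It then \emph{glues} $(\hat a_n,\hat\phi_n)$ with $\gamma_n|_{[0,1]}$ to form solutions on $X'=X\cup([0,1]\times Y)$ and invokes the Kronheimer--Mrowka compactness theorem \cite[Theorem~24.5.2]{Kronheimer-Mrowka} to get $L^2_{k+1/2}$-convergence of $\hat u_n\cdot(\hat a_n,\hat\phi_n)$; the hypothesis $\hat p_\beta(\tilde x)\in[-M,M]^{b_{1,\beta}}$ together with $\hat p_\alpha\in\mathfrak D$ confines the gauges $\hat u_n$ to a finite set. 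Only then does one read off an upper bound $C_2$ on $CSD$ at the collar, and the resulting uniform energy bound $C_2-C_1$ yields $\gamma_n\subset Str(R'')$. Your proposal also relies on an elliptic bootstrap for the nonlinear Seiberg--Witten system with the \emph{full} trace $\hat\phi|_Y$ prescribed; prescribing the entire boundary value of a spinor is not a Shapiro--Lopatinski boundary condition for the Dirac operator (the well-posed condition is a spectral half-projection), so the a priori estimate you want does not follow from standard coercive boundary-problem theory without further argument. The paper sidesteps both difficulties by working directly with the compactness theorem for the Seiberg--Witten equations on a 4-manifold with cylindrical end. You should replace your step bounding $\gamma(0)$ by a genuine 4-dimensional compactness input and abandon the half-step elliptic bootstrap, or else supply an independent proof that finite type half-trajectories restricted to $Y$ with $\gamma(0)\in\operatorname{im}(\tilde r)$ and bounded $\hat p_\beta$ lie in a universal $Str(R_1)$ — which is essentially the content of the theorem.
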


\begin{proof}
Let $\{(\tilde{x}_n,\gamma_{n})\}$ be a sequence of finite type $X$-trajectories satisfying (\ref{homology bounded condition}). 
Without loss of generality, we may pick a representative $\tilde{x}_n = [(\hat{a}_{n},\hat{\phi}_{n}) ]  $ such that
\begin{equation}\label{beta bounded}
\hat{p}_{\alpha}(\hat{a}_{n},\hat{\phi}_{n})\in \mathfrak{D}
\end{equation}
where $\mathfrak{D}$ is the fundamental domain fixed before Lemma \ref{fiberdirection norm}.

Since $\gamma_n$ is finite type, we see that the energy of $\gamma_{n}|_{[t-1,t+1]}$ goes to
$0$ as $t \rightarrow \infty$ for any $n$. In particular, the energy of $\gamma_{n}|_{[t-1,t+1]}$ is bounded above by $1$ for any $n$ and any $t$ large enough compared to $n$. Then, it is not hard to show that there is a constant $R'$ such that 
$\gamma_{n}(t)\in \operatorname{int}({Str(R')})$
for any $n$ and any $t$ large enough compared to $n$.  
Since $CSD_{\nu_{0},f}$
is bounded on $\operatorname{int}({Str(R')})$ and $CSD_{\nu_{0},f}$ is decreasing along $\gamma_{n}$, we can obtain a uniform lower bound $C_{1}$ of $CSD_{\nu_{0},f}(\gamma_{n}(t))$ for any $n\in \mathbb{N}, t\geq 0$.

We now  consider solutions on $X' :=X\cup([0,1]\times Y)$ obtained by gluing together $(\hat{a}_{n},\hat{\phi}_{n})$ and $\gamma_{n}|_{[0,1]}$.
Remark that the condition $\tilde{r} (\tilde{x}) = \gamma(0) $ from the twisted restriction is slightly different from the setup in \cite[Corollary~4.3]{Khandhawit1}.
However, we can still glue in a controlled manner since we control $\hat{p}_{\alpha}(\hat{a}_{n},\hat{\phi}_{n})$ in (\ref{beta bounded}). The uniform lower bound $C_1$ of $CSD_{\nu_{0},f}(\gamma_{n}(t))$ implied that
the energy of these solutions on $X'$  (see \cite[(4.21),(24.25)]{Kronheimer-Mrowka} for definition) has a uniform upper bound.  We now apply the compactness theorem \cite[Theorem~~24.5.2]{Kronheimer-Mrowka} adapted
to the balanced situation; after passing to a subsequence and applying suitable gauge transformations, the solution on $X'$ converges in $C^{\infty}$ on the interior domain $X$. In particular, we can find $\hat{u}_{n}\in \mathcal{G}_{X}^{h,\hat{o}}$ such that $\hat{u}_{n}\cdot (\hat{a}_{n},\hat{\phi}_{n})$
converges in $L^{2}_{k+1/2}$ to some $(\hat{a}_{\infty},\hat{\phi}_{\infty})\in Coul^{CC}(X)$.

By (\ref{homology bounded condition}) and (\ref{beta bounded}), we have controlled values of $\hat{p}_\alpha $ and $\hat{p}_\beta $ of $(\hat{a}_{n},\hat{\phi}_{n})$. This implies that $\{\hat{u}_{n}\}$ takes only finitely many values in $\mathcal{G}^{h,\hat{o}}_{X}$.
  After passing to a subsequence,
we can assume that $\hat{u}_{n}$ does not depend on $n$ and $(\hat{a}_{n},\hat{\phi}_{n})$
converges in $L^{2}_{k+1/2}$.

On the collar neighborhood $[-1,0] \times Y $ of $X$, the solution $(\hat{a}_{n},\hat{\phi}_{n})$ can be transformed to a Seiberg--Witten trajectory in a controlled manner. We subsequently glue this part together with $\gamma_n $ to obtain a Seiberg--Witten trajectory
$$
\gamma'_{n} \colon [-1,\infty)\rightarrow Coul(Y).
$$ 
Since $(\hat{a}_{n},\hat{\phi}_{n})$ converges in $L^{2}_{k+1/2}$, we have a uniform upper bound $C_{2}$ on
$ CSD_{\nu_{0},f}(\gamma'_n (-1))$. As a result, the energy of a trajectory $\gamma'_{n}|_{[t-1,t+1]}$ is bounded above by $C_{2}-C_{1}$ for any $t \ge 0 $ and $n \in \mathbb{N} $. We can then conclude that there is a constant $R''$ such that
$\gamma_{n}(t)\in\operatorname{int}(Str(R''))$ for any $t\ge0$ and $n\in \mathbb{N} $. This finishes the proof. 
\end{proof}

\begin{cor}\label{boundedness on cylinder}
There exists a uniform constant $R_{1}$ such that for any finite type $X$-trajectory $(\tilde{x},\gamma)$, we have $\gamma(t)\in
Str(R_{1})$ for any $t\in [0,\infty)$.
\end{cor}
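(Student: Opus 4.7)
The plan is to reduce the corollary to Theorem \ref{boundedness for X-trajectory} by exploiting the extra gauge freedom that was not used in the definition of $\mathcal{W}_X$. Specifically, the Hilbert bundle $\mathcal{W}_X$ quotients only by $\mathcal{G}^{h,\hat{o}}_{X,Y}$, so there is a residual action of the lattice $\mathcal{G}^{h,\hat{o}}_X / \mathcal{G}^{h,\hat{o}}_{X,Y}$ on $X$-trajectories. This lattice acts on $\hat{p}_\beta$ by translation; since $\{\beta_1,\dots,\beta_{b_{1,\beta}}\}$ is dual to a basis of $\im \iota^*$, the subgroup of classes with vanishing $\alpha$-periods produces translations that form a full-rank lattice in $\mathbb{R}^{b_{1,\beta}}$. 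Hence there is a uniform $M_0 > 0$ such that every orbit of this action meets the cube $[-M_0, M_0]^{b_{1,\beta}}$.

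First I will verify that $Str(R)$ is invariant under the full harmonic gauge group $\mathcal{G}_Y^h$, not only the based subgroup $\mathcal{G}_Y^{h,o}$ used in its definition. This is immediate because constant $S^1$-valued gauge transformations rotate the spinor pointwise by unit scalars and leave the $1$-form part alone, hence preserve the $L^2_k$-norm; combined with $\mathcal{G}_Y^{h,o}$-invariance this gives $\mathcal{G}_Y^h$-invariance of $Str(R)$.

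Next I will check that for $\hat{u} \in \mathcal{G}^{h,\hat{o}}_X$ whose cohomology class has vanishing $\alpha$-periods, the twisted restriction satisfies $\tilde{r}(\hat{u}\cdot \tilde{x}) = (\hat{u}|_Y)\cdot \tilde{r}(\tilde{x})$. Indeed, the hypothesis on the $\alpha$-periods implies $\hat{p}_\alpha$ is preserved by $\hat{u}$, so the twisting factor $e^{i\int_{\eta_j}\hat{p}_\alpha}$ is unchanged; moreover this factor is a constant scalar on each $Y_j$, so it commutes with multiplication by $\hat{u}|_{Y_j}$. Consequently, if $(\tilde{x},\gamma)$ is a finite-type $X$-trajectory and $\hat{u}$ has zero $\alpha$-periods, then $(\hat{u}\cdot \tilde{x},\, (\hat{u}|_Y)\cdot \gamma)$ is again a finite-type $X$-trajectory, using gauge-invariance of the $3$- and $4$-dimensional Seiberg--Witten equations and the fact that a fixed gauge transformation preserves boundedness in $L^2_k$.

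Given an arbitrary finite-type $X$-trajectory $(\tilde{x},\gamma)$, I will then choose $\hat{u}$ with vanishing $\alpha$-periods whose $\beta$-periods translate $\hat{p}_\beta(\tilde{x})$ into $[-M_0, M_0]^{b_{1,\beta}}$. Theorem \ref{boundedness for X-trajectory} applied to the transformed trajectory yields $(\hat{u}|_Y)\cdot \gamma(t)\in Str(R_0(M_0))$ for all $t \geq 0$. Because $\hat{u}|_Y \in \mathcal{G}_Y^h$ and $Str(R_0(M_0))$ is $\mathcal{G}_Y^h$-invariant, this is equivalent to $\gamma(t)\in Str(R_0(M_0))$, so $R_1 := R_0(M_0)$ works. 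The main obstacle is verifying the equivariance of $\tilde{r}$ under $\hat{u}$ with zero $\alpha$-periods; the rest is bookkeeping about the residual lattice action on $H^1$.
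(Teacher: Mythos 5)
Your proof is correct and follows the same strategy as the paper: translate $\hat{p}_\beta(\tilde{x})$ into a bounded cube via the residual lattice action and invoke Theorem~\ref{boundedness for X-trajectory}. The only difference is that you restrict to gauge transformations with vanishing $\alpha$-periods so that $\tilde{r}$ is cleanly equivariant (whereas the paper's $\hat{u}\in\mathcal{G}^{h,\hat{o}}_X$ gives equivariance only up to an extra constant phase, which is harmless since it preserves $Str(R)$); your lattice-rank argument showing this sublattice still surjects onto a full-rank lattice of $\beta$-translations is also correct.
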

\begin{proof}
By looking at the lattice induced by the chosen basis on $\im \iota^* $, there is a constant $C$ such that, for any $\tilde{x} \in \mathcal{W}_X $, one can find a gauge transformation $\hat{u}\in \mathcal{G}^{h,\hat{o}}_{X} $ satisfying $\hat{p}_\beta (\hat{u}\cdot \tilde{x}) \in [-C,C]^{b_{1,\beta}} $.

Let $(\tilde{x},\gamma)$ be an arbitrary finite type $X$-trajectory.
We then apply Theorem~\ref{boundedness for X-trajectory} to $(\hat{u}\cdot \tilde{x},(\hat{u}|_{Y})\cdot \gamma)$ with $M = C$ and $\hat{u}$ chosen as in the previous paragraph. Consequently, we may set $R_1 = R_0(C)$ so that $(\hat{u}|_{Y})\cdot
\gamma(t)\in \operatorname{int}(Str(R_1))$ for any $t\in [0,\infty)$.
This implies $\gamma(t)\in \operatorname{int}(Str(R_1))$ for any $t\in [0,\infty)$.

\end{proof}   

Now we consider an approximated version of $X$-trajectories. 
\begin{defi}\label{the approximated X-trajectory}
For $n\in \mathbb{N}$, $\epsilon\in [0,\infty)$, and $T \in (0,\infty] $, a finite type $(n,\epsilon)$-approximated $X$-trajectory of length $T$ is a pair $(\tilde{x},\gamma)$ such that
\begin{itemize}
\item $\tilde{x}\in W_n$ satisfies $\| \widetilde{SW_{n}}(\tilde{x} )\|_{L^{2}_{k-1/2}} \leq \epsilon$;
\item $\gamma : [0,T) \rightarrow V^{\mu_{n}}_{\lambda_{n}}$ is a finite type trajectory satisfying
 $-\frac{d\gamma}{dt}(t)= (l+p^{\mu_n}_{\lambda_n}\circ c)(\gamma(t))$; 
\item $\gamma(0)=p^{\mu_{n}}_{-\infty}\circ \tilde{r}(\tilde{x})$.
\end{itemize}
 Note that $p^{\lambda_{n}}_{-\infty} \circ \tilde{r}(\tilde{x})$
always belongs to $V^{\mu_{n}}_{\lambda_{n}}$ from the definition of $W_{n}$.
\end{defi}

The proof of the following convergence of approximated trajectories is a slight adaption of \cite[Lemma~4.4]{Khandhawit1} and we omit it.
\begin{lem}\label{convegence of approximated X-trajectories}
Let $\tilde{S},S$ be bounded subsets of $\mathcal{W}_{X}$ and $Coul(Y)$ respectively. Let $\{(\tilde{x}_{j},\gamma_{j})\}$ be a sequence of finite type $(n_j, \epsilon_j)$-approximated $X$-trajectory of length $T_j$ such that $\tilde{x}_{j}\in \tilde{S}, \gamma_{j}\subset
S$ for any $j$ and $(n_j , \epsilon_j , T_j ) \rightarrow (\infty ,0 , \infty) $. 
Then there exists a finite type $X$-trajectory $(\tilde{x}_{\infty},\gamma_{\infty})$ such that, after passing to a subsequence,
we have
\begin{itemize}
\item $\tilde{x}_{j}$ converges  to $\tilde{x}_{\infty}$ in $\mathcal{W}_{X}$;
\item $\gamma_{j}$ converges to $\gamma_{\infty}$ uniformly in $L^{2}_{k}$ on any compact subset of $[0,\infty)$.
\end{itemize}
\end{lem}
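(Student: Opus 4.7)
The plan is to extract a convergent subsequence in two pieces—first for $\tilde{x}_j \in \mathcal{W}_X$ and then for the cylinder trajectories $\gamma_j$—and finally check the matching condition at $t=0$ together with finite type of the limit.

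\textbf{Step 1 (compactness on $X$).} I would first choose lifts $(\hat{a}_j,\hat{\phi}_j) \in Coul^{CC}(X)$ of $\tilde{x}_j$ whose harmonic components $\hat{p}_\alpha(\hat{a}_j,\hat{\phi}_j)$ lie in the fixed fundamental domain $\mathfrak{D}$ of $\mathcal{H}^1_{DC}(X)/\mathcal{G}^{h,\hat{o}}_{X,Y}$. By Lemma \ref{fiberdirection norm} and the boundedness of $\tilde{S}$ in the fiber norm, these lifts are uniformly $L^2_{k+1/2}$-bounded. Writing $\widetilde{SW}_{n_j}=\tilde L+P_{U_{n_j}}\circ\tilde Q$ with $\|\widetilde{SW}_{n_j}(\tilde x_j)\|_{L^2_{k-1/2}}\le\epsilon_j\to 0$, I would use that the linear part $(\tilde L, p^{\mu_n}_{-\infty}\circ\tilde r)$ is fiberwise Fredholm, while by Lemma \ref{quadratic part} the nonlinearity $\tilde Q$ sends $L^2_{k+1/2}$-bounded sets to $L^2_{k+1/2}$-bounded (hence $L^2_{k-1/2}$-precompact) sets. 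Combining pointwise convergence $P_{U_n}\to\mathrm{id}$ with a fiberwise parametrix for $\tilde L$, one extracts a subsequence whose lifts converge in $L^2_{k+1/2}$ to a representative of some $\tilde x_\infty\in\mathcal{W}_X$ with $\widetilde{SW}(\tilde x_\infty)=0$.

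\textbf{Step 2 (compactness on the half-cylinder).} Fix any compact interval $[0,T]$ and pick $j$ large enough that $T_j>T$. The equation $-\dot\gamma_j=(l+p^{\mu_{n_j}}_{\lambda_{n_j}}\circ c)(\gamma_j)$ together with the uniform $L^2_k$-bound on $\gamma_j([0,T])\subset S$ gives a uniform $L^2_{k-1}$-bound on $\dot\gamma_j$. An Arzel\`a--Ascoli argument in $L^2_{k-1}$, upgraded to $L^2_k$ using the compact embedding $L^2_k\hookrightarrow L^2_{k-1}$ and the equation itself, produces a subsequence converging uniformly in $L^2_k$ on $[0,T]$ to a trajectory of the full flow $-\dot\gamma=(l+c)\gamma$. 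A standard diagonal argument over an exhaustion of $[0,\infty)$ then yields a full limit $\gamma_\infty\colon[0,\infty)\to Coul(Y)$ with uniform $L^2_k$-convergence on every compact subinterval.

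\textbf{Step 3 (matching at $t=0$ and finite type).} Passing to the limit in $\gamma_j(0)=p^{\mu_{n_j}}_{-\infty}\circ\tilde r(\tilde x_j)$, using the convergence from Steps 1--2 and the pointwise convergence $p^{\mu_n}_{-\infty}\to\mathrm{id}$, we obtain $\gamma_\infty(0)=\tilde r(\tilde x_\infty)$. Since $\gamma_\infty\subset\overline S$ is bounded in $L^2_k$, the limit trajectory is finite type, so $(\tilde x_\infty,\gamma_\infty)$ is the desired finite-type $X$-trajectory.

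The main obstacle is Step 1: establishing $L^2_{k+1/2}$-precompactness for the lifts in the Hilbert bundle $\mathcal{W}_X$. One must simultaneously control the relative gauge fixing (so that the chosen lifts stay in $\mathfrak{D}$ and Lemma \ref{fiberdirection norm} applies), invoke the fiberwise Fredholm theory for $(\tilde L, p^{\mu_n}_{-\infty}\circ\tilde r)$ (which is available because $\hat p_\alpha(\hat a)$ vanishes on $\partial X$, so the family $\slashed D^+_{\hat p_\alpha(\hat a)}$ has no spectral flow), and cope with the fact that $\widetilde{SW}_{n_j}$ only equals $\widetilde{SW}$ in the limit $n_j\to\infty$. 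Once this is in place, Steps 2 and 3 are routine applications of the standard approximation theory for Seiberg--Witten trajectories on the half-cylinder.
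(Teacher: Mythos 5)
Your three-step outline is the natural one and matches the reference the paper points to for this omitted proof (Khandhawit's Lemma~4.4), but as written Steps~1 and~2 each have a gap, and the gaps are interlocking. In Step~1, $\tilde{L}$ alone admits no parametrix: on a manifold with boundary, $\tilde{L}$ is not Fredholm without a boundary condition. The elliptic estimate you need is for $(\tilde{L}, p^{0}_{-\infty}\circ\tilde{r})$, and it therefore contains a boundary term of the form $\|p^{0}_{-\infty}(\tilde{r}(\tilde{x}_j)-\tilde{r}(\tilde{x}_l))\|_{L^{2}_{k}}$. Since $p^{\mu_{n_j}}_{-\infty}\tilde{r}(\tilde{x}_j)=\gamma_j(0)$, controlling this term requires precisely the $L^{2}_{k}$-convergence of $\gamma_j(0)$ that you only establish in Step~2. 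Your last paragraph names the relevant Fredholm operator but does not recognize that this forces Step~1 to depend on Step~2's output.

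Meanwhile, Step~2 does not actually supply that output at $t=0$. The Arzel\`a--Ascoli argument (bounded $\dot\gamma_j$ in $L^{2}_{k-1}$, Rellich for the values) gives $C^{0}$-convergence in $L^{2}_{k-1}$. The proposed "upgrade to $L^{2}_{k}$ using the compact embedding and the equation itself" is genuine for $t>0$, where the forward existence for time $T_j\to\infty$ suppresses the negative-eigenvalue modes and smooths the trajectory, but at $t=0$ no such gain is available from the cylinder side alone: the positive-eigenvalue part of $\gamma_j(0)$ is only controlled by the four-manifold estimate. So Steps~1 and~2 rely on each other, and the argument as stated is circular. To repair it one must either run a bootstrap, starting from the weak/lower-order convergence each side gives independently and iterating to improve the norms on both sides; or, more in the spirit of Khandhawit's Lemma~4.4 and of the paper's own proof of Theorem~\ref{boundedness for X-trajectory}, glue the approximated configuration on $X$ to the approximated trajectory segment on $[0,T_j]\times Y$ and invoke the Kronheimer--Mrowka compactness theorem for the resulting almost-solutions on the elongated manifold, which handles the interface at $t=0$ in one stroke. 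Either way, the interplay you relegate to a side remark is the actual content of the proof.
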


As a result of this lemma, we can deduce boundedness for approximated $X$-trajectories.

\begin{pro}\label{type A boundedness}
Let $M\geq 0$ be a fixed number.  For any bounded subsets $\tilde{S} \subset \mathcal{W}_{X}$ and $S\subset Coul(Y)$, there exist $\epsilon_{0}, N,\bar{T}\in (0,\infty)$ such that: for any finite type
$(n,\epsilon)$-approximated $X$-trajectory $(\tilde{x},\gamma)$ of length $T\geq \bar{T}$ satisfying
$$
n\geq N,\ \epsilon\leq \epsilon_{0},\ \tilde{x}\in \tilde{S}, \gamma\subset\tilde{S}\text{ and }\hat{p}_{\beta}(\tilde{x})\in
[-M,M]^{b_{1,\beta}} ,
$$
we have $\|\tilde{x}\|_{F}<R_0 ({M})$ where $R_0(M)$ is the constant from Theorem~\ref{boundedness for X-trajectory}.
\end{pro}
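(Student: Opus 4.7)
The plan is to argue by contradiction, reducing the statement about approximated trajectories to the genuine boundedness result Theorem~\ref{boundedness for X-trajectory} via the convergence Lemma~\ref{convegence of approximated X-trajectories}. Suppose the conclusion fails: then for every choice of $(\epsilon_0, N, \bar{T})$ we can find a counterexample, so by a diagonal choice we obtain a sequence of finite type $(n_j, \epsilon_j)$-approximated $X$-trajectories $(\tilde{x}_j, \gamma_j)$ of length $T_j$, with
\[
n_j \to \infty, \quad \epsilon_j \to 0, \quad T_j \to \infty, \quad \tilde{x}_j \in \tilde{S}, \quad \gamma_j \subset S, \quad \hat{p}_{\beta}(\tilde{x}_j) \in [-M,M]^{b_{1,\beta}},
\]
but $\|\tilde{x}_j\|_F \geq R_0(M)$.

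Since $\tilde{S}$ and $S$ are bounded, the hypotheses of Lemma~\ref{convegence of approximated X-trajectories} are satisfied, so after passing to a subsequence we obtain a finite type $X$-trajectory $(\tilde{x}_\infty, \gamma_\infty)$ with $\tilde{x}_j \to \tilde{x}_\infty$ in $\mathcal{W}_X$ and $\gamma_j \to \gamma_\infty$ uniformly in $L^2_k$ on compact subsets of $[0,\infty)$. I would next observe that $\hat{p}_{\beta}$ descends to a continuous map $\mathcal{W}_X \to \mathbb{R}^{b_{1,\beta}}$: indeed, any $\hat{u} \in \mathcal{G}^{h,\hat{o}}_{X,Y}$ restricts to $Y$ as a gauge transformation lying in $\ker(H^1(X;\mathbb{Z}) \to H^1(Y;\mathbb{Z}))$, so the periods $\int_{\beta_j} \mathbf{t}\hat{a}$ are preserved. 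Consequently the closed condition $\hat{p}_{\beta}(\tilde{x}_j) \in [-M,M]^{b_{1,\beta}}$ passes to the limit, giving $\hat{p}_{\beta}(\tilde{x}_\infty) \in [-M,M]^{b_{1,\beta}}$.

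Now apply Theorem~\ref{boundedness for X-trajectory} to the honest finite type $X$-trajectory $(\tilde{x}_\infty, \gamma_\infty)$ with parameter $M$: the conclusion is the strict inequality $\|\tilde{x}_\infty\|_F < R_0(M)$. On the other hand, continuity of the fiber norm on $\mathcal{W}_X$ combined with $\tilde{x}_j \to \tilde{x}_\infty$ yields $\|\tilde{x}_\infty\|_F = \lim_{j \to \infty} \|\tilde{x}_j\|_F \geq R_0(M)$. This is the desired contradiction.

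The proof is essentially a routine compactness-and-limit argument; there is no real obstacle beyond checking that the quantities we want to pass to the limit are continuous on $\mathcal{W}_X$, namely $\hat{p}_\beta$ and $\|\cdot\|_F$, and that the strict inequality in Theorem~\ref{boundedness for X-trajectory} survives the limit (which is automatic because the approximants violate the bound non-strictly). All the analytic work — the ellipticity, the boundedness of energy, and the gauge-theoretic compactness — is already encoded in Theorem~\ref{boundedness for X-trajectory} and Lemma~\ref{convegence of approximated X-trajectories}.
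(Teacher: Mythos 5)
Your proof is correct and follows essentially the same contradiction-by-compactness argument as the paper: extract a counterexample sequence, apply Lemma~\ref{convegence of approximated X-trajectories} to obtain a limit $X$-trajectory, and contradict Theorem~\ref{boundedness for X-trajectory}. The additional remarks on continuity of $\hat{p}_\beta$ and $\|\cdot\|_F$ on $\mathcal{W}_X$ are accurate and make explicit steps the paper leaves implicit.
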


\begin{proof}
Suppose that the result is not true for some $\tilde{S},S$. There would be  a sequence $\{(\tilde{x}_{j},\gamma_{j})\}$ of finite type $(n_j, \epsilon_j)$-approximated $X$-trajectory of length $T_j$ with $\tilde{x}_{j}\in
\tilde{S}, \gamma_{j}\subset
S$ and $(n_j , \epsilon_j , T_j ) \rightarrow (\infty ,0 , \infty) $ such that $\|\tilde{x}_{j}\|_{F}\geq R_{0}(M) $ and $\hat{p}_{\beta}(\tilde{x})\in [-M,M]^{b_{1,\beta}}$.

By Lemma~\ref{convegence of approximated X-trajectories}, after passing to a subsequence, we can find a finite type $X$-trajectory
$(\tilde{x}_{\infty},\gamma_{\infty})$ such that $\tilde{x}_{j}\rightarrow \tilde{x}_{\infty}$ in $\mathcal{W}_X $.
In particular, this implies
\[
\begin{split}
&\|\tilde{x}_{\infty}\|_{F}=\lim_{j\rightarrow \infty}\|\tilde{x}_{j}\|_{F}\geq R_{0}(M)\text{ and }  \\
& \tilde{p}_{\beta}(x_{\infty})=\lim_{j\rightarrow
\infty}\tilde{p}_{\beta}(\tilde{x}_{j}) \in [-M,M]^{b_{1,\beta}},
\end{split}
\]
which is a contradiction with Theorem \ref{boundedness for X-trajectory}.
\end{proof}

\begin{pro}\label{boundedness on cylinder for approximated solutions}
There exists a constant $R_{2}$ with the following significance: for any bounded subsets $\tilde{S}\subset \mathcal{W}_{X}$
and $S\subset Coul(Y)$, there exist $\epsilon_{0},N,\bar{T}\in (0,+\infty)$ such that for any finite type $(n,\epsilon)$-approximated
$X$-trajectory $(\tilde{x},\gamma)$ of length $T\geq \bar{T}$ satisfying
$$
n\geq N,\ \epsilon\leq \epsilon_{0},\ \tilde{x} \in \tilde{S} \text{ and } \gamma \subset S
$$
We have  $\gamma|_{[0,T-\bar{T}]}\subset \text{Str}(R_{2})$.    
\end{pro}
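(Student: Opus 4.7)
The plan is to argue by contradiction and reduce to the two bounded cases we already understand: the half-line convergence of Lemma \ref{convegence of approximated X-trajectories} for $X$-trajectories and the universal boundedness Theorem~3.2 of \cite{KLS1} for finite-type trajectories on the full line. Fix any $R_{2} > R_{0}$, where $R_{0}$ is the universal constant from \cite[Theorem~3.2]{KLS1}, and also $R_{2} > R_{1}$ with $R_1$ as in Corollary \ref{boundedness on cylinder}. Suppose the statement fails for this $R_{2}$: then for some bounded $\tilde{S},S$ we can extract sequences $n_{j}\to\infty$, $\epsilon_{j}\to 0$, $\bar{T}_{j}\to\infty$, finite type $(n_{j},\epsilon_{j})$-approximated $X$-trajectories $(\tilde{x}_{j},\gamma_{j})$ of length $T_{j}$ with $\tilde{x}_{j}\in\tilde{S}$, $\gamma_{j}\subset S$, and times $t_{j}\in[0,T_{j}-\bar{T}_{j}]$ such that $\gamma_{j}(t_{j})\notin\operatorname{Str}(R_{2})$. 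Note that $T_{j}\to\infty$ and $T_{j}-t_{j}\geq\bar{T}_{j}\to\infty$. After passing to a subsequence, either $t_{j}$ is bounded or $t_{j}\to\infty$.

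In the first case, pass to a further subsequence so that $t_{j}\to t_{\infty}\in[0,\infty)$. By Lemma \ref{convegence of approximated X-trajectories} there is a finite type $X$-trajectory $(\tilde{x}_{\infty},\gamma_{\infty})$ with $\gamma_{j}\to\gamma_{\infty}$ uniformly in $L^{2}_{k}$ on compact subsets of $[0,\infty)$, so in particular $\gamma_{j}(t_{j})\to\gamma_{\infty}(t_{\infty})$. Corollary \ref{boundedness on cylinder} gives $\gamma_{\infty}(t_{\infty})\in\operatorname{Str}(R_{1})\subset\operatorname{int}(\operatorname{Str}(R_{2}))$, so $\gamma_{j}(t_{j})\in\operatorname{Str}(R_{2})$ for all large $j$, contradicting the choice of $t_{j}$.

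In the second case, translate: define $\gamma'_{j}(s):=\gamma_{j}(s+t_{j})$ on the interval $[-t_{j},T_{j}-t_{j})$, whose endpoints both tend to infinity. Each $\gamma'_{j}$ is an approximated Seiberg--Witten trajectory in $V^{\mu_{n_{j}}}_{\lambda_{n_{j}}}$ contained in the bounded set $S$, so $\|\gamma'_{j}(s)\|_{L^{2}_{k}}$ is bounded uniformly in $s$ and $j$. By the standard compactness of approximated Seiberg--Witten trajectories on the cylinder (elliptic bootstrapping using the flow equation and a diagonal extraction over a nested exhaustion of $\mathbb{R}$ by compact time intervals), a subsequence of $\gamma'_{j}$ converges in $L^{2}_{k}$ on compacta to a Seiberg--Witten trajectory $\gamma'_{\infty}:\mathbb{R}\to Coul(Y)$, which is finite type since $\gamma'_{\infty}\subset S$. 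Theorem~3.2 of \cite{KLS1} then yields $\gamma'_{\infty}(0)\in\operatorname{Str}(R_{0})\subset\operatorname{int}(\operatorname{Str}(R_{2}))$, and so $\gamma_{j}(t_{j})=\gamma'_{j}(0)\in\operatorname{Str}(R_{2})$ for all large $j$, the desired contradiction.

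The main obstacle is the second case: since a time-translated approximated $X$-trajectory is no longer an $X$-trajectory, Lemma \ref{convegence of approximated X-trajectories} does not apply directly, and one must establish a two-sided compactness statement producing an honest $\mathbb{R}$-trajectory in the limit. The simplifying observation is that the hypothesis $\gamma_{j}\subset S$ already gives a uniform $L^{2}_{k}$-bound on $\gamma'_{j}(0)$, so no gauge correction by $\mathcal{G}_{Y}^{h,o}$ is needed; this reduces the compactness step to the standard parabolic bootstrapping for the perturbed 3-dimensional Seiberg--Witten flow and its finite-dimensional approximations.
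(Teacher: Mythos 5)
Your proof is correct and follows the same contradiction framework as the paper, but you handle the case $t_j\to\infty$ differently. The paper's proof picks $R_2=\max\{R_0,R_1\}$ where $R_0$ is the constant from \cite[Corollary~3.7]{KLS1} (the approximated, two-sided version: any sufficiently approximated trajectory $\gamma':[-T,T]\to V^\mu_\lambda$ contained in $S$ with $T$ large has $\gamma'(0)\in Str(R_0)$). It then cites this directly to conclude that $t_j$ must be bounded, reducing immediately to the case handled by Lemma~\ref{convegence of approximated X-trajectories}. You instead take the explicit limit: after translating by $t_j$, you extract a full-line Seiberg--Witten trajectory $\gamma'_\infty:\mathbb{R}\to Coul(Y)$ and apply the non-approximated boundedness \cite[Theorem~3.2]{KLS1}. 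This is also valid; the compactness step you flag (extracting an honest $\mathbb{R}$-trajectory from translated approximated trajectories in a fixed bounded set, with no gauge correction needed) is the standard elliptic-bootstrap argument for the finite-dimensional approximated flow and is essentially the content behind the very corollary the paper cites. The paper's route is slightly more economical because it offloads that compactness argument to the cited result rather than re-deriving its conclusion; your route is more self-contained but re-proves what \cite[Corollary~3.7]{KLS1} already packages. Either way the first case is handled identically via Lemma~\ref{convegence of approximated X-trajectories} and Corollary~\ref{boundedness on cylinder}.
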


\begin{proof}
Recall that there is a universal constant $R_0$ such that any sufficiently approximated Seiberg--Witten trajectory $\gamma' : [-T, T] \rightarrow V_{\lambda}^{\mu}$ with sufficiently long length $T$ and with $\gamma' \subset S$  must satisfy $\gamma(0) \in \text{Str}(R_0) $ (cf. the constant $R_0$ from \cite[Corollary~3.7]{KLS1}). We pick $R_{2}=\max\{R_{0},R_{1}\}$ where $R_{1}$ is the constant from Corollary \ref{boundedness on cylinder}. 

Suppose the result is not true for some $\tilde{S},S$. Then we can find  sequences $n_j, \epsilon_j, \bar{T}_j, T_j$ with $n_j \rightarrow \infty, \bar{T}_j \leq T_j, \bar{T}_j \rightarrow \infty$ such that there is a sequence $\{(\tilde{x}_{j},\gamma_{j})\}$ of finite type $(n_j, \epsilon_j)$-approximated $X$-trajectory of length $T_j$ with $\tilde{x}_{j} \subset \tilde{S}, \gamma_{j}\subset S$ and  with $\gamma_j([0, T_j - \bar{T}_j]) \not \subset Str(R_2)$.

We have a number  $t_j \in [0, T_j - \bar{T}_j]$ such that $\gamma_{j}(t_{j}) \notin Str(R_{2})$. 
The property of $R_0$ forces $t_{j} $ to converge to a finite number $t_\infty $ after passing to a subsequence.

 By Lemma \ref{convegence of approximated X-trajectories},
there exists an finite type $X$-trajectory $(\tilde{x}_{\infty},\gamma_{\infty})$ such that, after passing to a subsequence,
$\gamma_{j}$ converges to $\gamma_{\infty}$ uniformly in $L^{2}_{k}$ on any compact subset  of $[0,\infty)$. In particular, $\gamma_j (t_j) \rightarrow \gamma_\infty (t_\infty)$ which contradicts with Corollary~\ref{boundedness on cylinder}.

\end{proof}

\subsection{Construction}   \label{sec bfconstuct}
The majority of this section, in fact, is dedicated to construction of type-A unfolded relative invariant. The construction of type-R invariant can be obtained almost immediately after applying duality argument. 

Let us pick $\tilde{R}$ a number greater than the constant $R_{2}$ from Proposition \ref{boundedness on cylinder for approximated solutions}. Recall that the unfolded spectra $\swf^{A}(Y_{\text{out}})$ and $\swf^{R}(-{Y_{\text{in}}})$  are obtained by cutting the unbounded set $Str_{Y}({\tilde{R}})$
into bounded subsets and applying finite dimensional approximations. With a choice of cutting functions, we obtain increasing sequences of bounded sets $\{ J_{m}^{n, -}(-Y_{\text{in}}) \}_{m} $ contained in $Str_{Y_{in}}({\tilde{R}}) $ 
and $\{ J_{m}^{n, +}(Y_{\text{out}}) \}_{m} $ contained in $Str_{Y_{out}}({\tilde{R}}) $ for each positive integer $n$.
See Section~\ref{subsec Unfolded} for brief summary.   

For a normed vector bundle $V$, we will denote by $B(V,r)$ the disk bundle of radius $r$ and denote by $S(V,r)$ the sphere
bundle of radius $r$. We will consider a subbundle of $\mathcal{W}_X $ given by
$$
\mathcal{W}_{X,\beta}:=\{  \tilde{x} \in \mathcal{W}_{X}\mid \hat{p}_{\beta, \text{out} }( \tilde{x} )=0\}.
$$
We also denote $W_{n,\beta} =  W_n \cap \mathcal{W}_{X, \beta}$ and let $\widetilde{SW}_{n,\beta}^{} $ be the restriction 
of $\widetilde{SW}_{n}^{} $ on $W_{n,\beta}$. 

For a fixed positive integer $m_{0}$, since $\{ J_{m_{0}}^{n, -}(-Y_{\text{in}}) \} $ is bounded, we can find a number $ M(m_0)$ such that $|\int_{\beta_{j}}ia | \leq M(m_0)  $ for all $(a,\phi)\in  J_{m_{0}}^{-}(-Y_{\text{in}})  $ and  $j = 1, \ldots,  b_{\text{in}}  $. We then choose a number $R$ greater than $R_0 (M(m_0)) $ the constant from Theorem~\ref{boundedness for X-trajectory}. Since $\tilde{r}_{\text{out}}(B(\mathcal{W}_{X},R)) $ is bounded, we can find a positive integer $m_1$ such that
\begin{equation*}  
\tilde{r}_{\text{out}}(B(\mathcal{W}_{X},R))\cap Str_{Y_{\text{out}}}(\tilde{R})\subset J^{+}_{m_{1}}(Y_{\text{out}}).
\end{equation*}

For $\epsilon>0,\ n\in \mathbb{N}$, we consider the following subsets of  $V_{\lambda_n}^{\mu_n}$ :
\begin{equation} \label{eq K_1 K_2}
\begin{split}
& K_{1}(n,m_{0},R,\epsilon) =    \\
&\quad  \left( J_{m_{0}}^{n,-}(-Y_{\text{in}}) \times Str_{Y_{\text{out}}} (\tilde{R})\right)  \cap \\
&\qquad  \qquad p^{\mu_{n}}_{-\infty}\circ\tilde{r}\left(\widetilde{SW}_{n,\beta}^{-1}(B(U_{n},\epsilon))\cap
B(W_{n,\beta},R)\right),  \\
& K_{2}(n,m_{0},R,\epsilon) =    \\
& \quad  
\left\{ \left( J_{m_{0}}^{n,-}(-Y_{\text{in}}) \times Str_{Y_{\text{out}}}(\tilde{R})\right)  \cap    \right. \\
& \left.   \qquad \qquad
p^{\mu_{n}}_{-\infty}\circ\tilde{r}\left(\widetilde{SW}_{n,\beta}^{-1}(B(U_{n},\epsilon))\cap
S(W_{n,\beta},R)\right) \right\} \cup \\
&  \quad 
\left\{ \partial\left(J_{m_{0}}^{n,-}(-Y_{\text{in}}) \times Str_{Y_{\text{out}}}(\tilde{R})\right) \cap
\right.  \\
& \left.
\qquad \qquad
\left( p^{\mu_{n}}_{-\infty}\circ\tilde{r}\left(\widetilde{SW}_{n,\beta}^{-1}(B(U_{n},\epsilon))\cap
B(W_{n,\beta}, R)\right)  \right) \right\}.  
\end{split}
\end{equation}
Notice that 
$K_{1}(n,m_{0},R,\epsilon)\subset J^{n,-}_{m_{0}}(-Y_{\text{in}})\times J^{n,+}_{m_{1}}(Y_{\text{out}})$ from our choice of $m_{1}$ and $K_{2}(n,m_{0},R,\epsilon)$ plays a role of a boundary of $K_{1}(n,m_{0},R,\epsilon)$.

The following is the key result of this section (cf. \cite[Proposition~4.5]{Khandhawit1}).

\begin{pro} \label{prop 4dimpreindex}
For a choice of $m_{0},m_{1}$ and $R$ chosen above, there exist $N\in \mathbb{N}$ and $\bar{T},\epsilon_{0}>0$ such
that, for any $n\geq N$ and $\epsilon\leq \epsilon_{0}$, the pair $(K_{1}(n,m_{0},R,\epsilon),K_{2}(n,m_{0},R,\epsilon))$ is a $\bar{T}$-tame pre-index pair in an isolating neighborhood $J^{n,-}_{m_{0}}(-Y_{\text{in}})\times J^{n,+}_{m_{1}}(Y_{\text{out}}) $.
\end{pro}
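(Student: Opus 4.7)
The plan is to verify conditions (\ref{item tamepreindex1}) and (\ref{item tamepreindex2}) of Definition~\ref{defi tame pre-index pair} for the isolating neighborhood $A := J^{n,-}_{m_{0}}(-Y_{\text{in}})\times J^{n,+}_{m_{1}}(Y_{\text{out}})$ by contradiction, feeding suitable sequences of approximated $X$-trajectories into Lemma~\ref{convegence of approximated X-trajectories}. The key observation enabling the compactness argument is that any $x \in K_{1}\cup K_{2}$ comes, by construction, equipped with a companion element $\tilde{x}\in B(W_{n,\beta},R)$ with $\|\widetilde{SW}_{n,\beta}(\tilde{x})\|\leq \epsilon$ and $p^{\mu_{n}}_{-\infty}\circ \tilde{r}(\tilde{x})=x$. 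Moreover the inclusion $x\in J_{m_{0}}^{n,-}(-Y_{\text{in}})\times Str_{Y_{\text{out}}}(\tilde{R})$ controls $\hat{p}_{\beta,\text{in}}(\tilde{x})$ uniformly by a constant $M(m_{0})$, while $\hat{p}_{\beta,\text{out}}(\tilde{x})=0$ by definition of $W_{n,\beta}$. Hence any limit $X$-trajectory produced by Lemma~\ref{convegence of approximated X-trajectories} will satisfy the hypothesis $\hat{p}_{\beta}\in [-M(m_{0}),M(m_{0})]^{b_{\text{in}}}\times\{0\}$ of Theorem~\ref{boundedness for X-trajectory}, and therefore lie strictly inside the ball of radius $R_{0}(M(m_{0}))<R$.

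For condition (\ref{item tamepreindex2}), suppose the condition fails. Then one obtains sequences $n_{k}\to\infty$, $\epsilon_{k}\to 0$, $T_{k}\to\infty$ and points $x_{k}\in K_{2}(n_{k},m_{0},R,\epsilon_{k})\cap A^{[0,T_{k}]}$. Pulling back the associated $\tilde{x}_{k}\in W_{n_{k},\beta}$ and concatenating with the forward trajectory of $x_{k}$ for time $T_{k}$, we obtain $(n_{k},\epsilon_{k})$-approximated $X$-trajectories $(\tilde{x}_{k},\gamma_{k})$ of length $T_{k}\to\infty$, whose relevant parts remain bounded. Lemma~\ref{convegence of approximated X-trajectories} produces a limiting finite-type $X$-trajectory $(\tilde{x}_{\infty},\gamma_{\infty})$. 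By Theorem~\ref{boundedness for X-trajectory} we have $\|\tilde{x}_{\infty}\|_{F}<R_{0}(M(m_{0}))<R$, ruling out the sphere-bundle contribution $\tilde{x}_{k}\in S(W_{n_{k},\beta},R)$. Similarly, Corollary~\ref{boundedness on cylinder} forces $\gamma_{\infty}([0,\infty))\subset \operatorname{int} Str(R_{1}) \subset \operatorname{int} Str(\tilde{R})$, and (for generic $\theta$ used in the cutoff $\bar g$) the monotonicity of $g_{j,\pm}\pm\mathcal{L}$ along finite-type trajectories forces $\gamma_{\infty}(0)$ to avoid the boundary level sets $g_{j,-}^{-1}(\theta+m_{0})$ defining $\partial J^{-}_{m_{0}}$. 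This rules out the boundary contribution $\partial(J^{n_{k},-}_{m_{0}}\times Str_{Y_{\text{out}}}(\tilde{R}))$, giving the contradiction.

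For condition (\ref{item tamepreindex1}), we use the same compactness scheme to manufacture a compact subset $A'\subset \operatorname{int}(A)$ working uniformly. If no such $A'$ exists for a given candidate $\bar{T}$, one can find sequences $n_{k}\to\infty$, $\epsilon_{k}\to 0$, $\bar{T}_{k}\to\infty$, points $x_{k}\in K_{1}(n_{k},m_{0},R,\epsilon_{k})\cap A^{[0,T_{k}']}$ with $T_{k}'\geq \bar{T}_{k}$, and times $s_{k}\in[0,T_{k}'-\bar{T}_{k}]$ with $\varphi_{n_{k}}(x_{k},s_{k})$ converging to a point on $\partial A$. As above, the associated sequence of approximated $X$-trajectories has a subsequence converging to a finite-type $X$-trajectory whose forward orbit, by Theorem~\ref{boundedness for X-trajectory} and Corollary~\ref{boundedness on cylinder}, is trapped inside a compact set of the form $K_{0}:=\overline{B}(\mathcal{W}_{X,\beta},R_{0}(M(m_{0})))\times Str_{Y}(R_{1})$. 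Since $K_{0}$ is contained in the open set $B(W_{n,\beta},R)\times (\operatorname{int}(J^{n,-}_{m_{0}})\times \operatorname{int}(J^{n,+}_{m_{1}}))$ for $n$ large, by compactness we may thicken $K_{0}$ to a compact $A'\subset \operatorname{int}(A)$ that contains $A^{[-\bar{T},\bar{T}]}$, contradicting the assumed failure.

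The main obstacle is handling the heterogeneous nature of $K_{2}$: its two defining pieces arise from entirely different sources (the finite-dimensional approximation on $X$ versus the cutoff geometry on $Y$), yet both must be controlled through a single convergence argument. The resolution hinges on observing that the constraint $\hat{p}_{\beta,\text{out}}=0$ on $W_{n,\beta}$ together with the $J^{-}_{m_{0}}$ cutoff on $Y_{\text{in}}$ together pin down $\hat{p}_{\beta}(\tilde{x}_{\infty})$ in precisely the compact window where Theorem~\ref{boundedness for X-trajectory} applies with the sharp constant $R_{0}(M(m_{0}))$, which we chose strictly less than $R$. A secondary technical point is the generic choice of $\theta$ in the construction of $g_{j,\pm}$, which must be carried along to guarantee the limit $X$-trajectory does not accidentally land on a cutoff boundary; this relies on properties already established in \cite{KLS1}.
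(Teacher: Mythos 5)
Your overall strategy — control $\hat{p}_\beta$ via the definitions of $W_{n,\beta}$ and the cutoff $J^{n,-}_{m_0}$, then feed approximated $X$-trajectories into the convergence lemma and leverage the boundedness results — is the correct one and matches the paper's in spirit. The paper, however, keeps the argument at finite $n$ by invoking Propositions~\ref{type A boundedness} and~\ref{boundedness on cylinder for approximated solutions} directly as black boxes: the chosen $(N,\bar T,\epsilon_0)$ already deliver $\|\tilde x\|_F<R_0(M(m_0))<R$ and $\gamma(0)\in Str(R_2)\subset\operatorname{int}(Str(\tilde R))$ for every $n\ge N$ and $\epsilon\le\epsilon_0$, and the level-set piece of $K_2$ is then excluded outright because the approximated flow on $\partial J^{n,-}_{m_0}\setminus\partial Str_{Y_{\text{in}}}(\tilde R)$ leaves $J^{n,-}_{m_0}$ immediately, contradicting $\varphi_n(y,[0,\bar T])\subset A$. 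By re-deriving the compactness inline you essentially reprove those two propositions, which is wasteful but not incorrect.

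The genuine gap is in how you dispose of the level-set boundary contribution to $K_2$. You assert that ``monotonicity of $g_{j,\pm}$ along finite-type trajectories forces $\gamma_\infty(0)$ to avoid the boundary level sets $g_{j,-}^{-1}(\theta+m_0)$,'' but monotonicity by itself places no restriction on where a trajectory may start — it only tells you that once at the level set the $g_{j,-}$-value will increase. To turn this into a contradiction you first need the intermediate step that $\gamma_\infty([0,\infty))\subset J^-_{m_0}(-Y_{\text{in}})$, which follows from $\gamma_k([0,T_k])\subset J^{n_k,-}_{m_0}\subset J^{-}_{m_0}$ together with uniform convergence on compacts; only then does $g_{j,-}(\gamma_\infty(0))=\theta+m_0$ and strict monotonicity force $\gamma_\infty(t)\notin J^-_{m_0}$ for small $t>0$, contradicting the containment. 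This step is missing, and without it the claim as written is false. Relatedly, attributing the exclusion to ``generic $\theta$'' is a misdirection: genericity of $\theta$ is what makes $J^{n,\pm}_m$ an isolating neighborhood in \cite{KLS1}, but the operative fact here is the strict transversality of the approximated flow to the level sets of $g_{j,-}$ (the repeller boundary property), which is what the paper invokes. Your argument for condition~(\ref{item tamepreindex1}) has a similar softness — exhibiting the compact set $A'$ requires not only the $Str(R_2)$ bound but also staying strictly below the level sets of $g_{j,-}$ on $[0,T'-\bar T]$, and the appeal to thickening ``$K_0$'' does not by itself establish that $A'\supset A^{[-\bar T,\bar T]}$ — but these are plausibly repairable by the same observations.
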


\begin{proof} We choose numbers $(N, \bar{T} , \epsilon_0 ) $ satisfying both Proposition~\ref{type A boundedness} and Proposition~\ref{boundedness on cylinder for approximated solutions} with $\tilde{S}=B( \mathcal{W}_X, R) $, 
$S=J^{-}_{m_{0}}(-Y_{\text{in}})\times J^{+}_{m_{1}}(Y_{\text{out}})$, and 
$M=M(m_0)$ . Moreover, we may pick a larger $N$ so that 
\[
 J^{n,-}_{m_{0}}(-Y_{\text{in}})\times J^{n,+}_{m_{1}}(Y_{\text{out}})
 \]
 is an isolating neighborhood for all $n > N $ (cf. \cite[Lemma~5.5 and Proposition~5.8]{KLS1}).
  We will check directly that  
  \[
  (K_{1}(n,m_{0},R,\epsilon), K_{2}(n,m_{0},R,\epsilon))
  \]
is a $\bar{T}$-tame pre-index pair.

Suppose that $y \in K_{1}(n,m_{0},R,\epsilon)$ and $\varphi_n(y, [0,T]) \subset J^{n,-}_{m_{0}}(-Y_{\text{in}})\times J^{n,+}_{m_{1}}(Y_{\text{out}}) $ with $T \ge \bar{T} $. From definition, there is $\tilde{x} \in W_{n,\beta} $ such that $\|\widetilde{SW_{n}}(\tilde{x})\|\leq \epsilon$ and $p^{\mu_{n}}_{-\infty}\circ\tilde{r}(\tilde{x}) = y $. These give rise to a finite type $(n,\epsilon)$-approximated $X$-trajectory $(\tilde{x},\gamma)$ of length $T$. By Proposition~\ref{boundedness on cylinder for approximated solutions}, we have $\varphi_n(y, [0, T-\bar{T}]) \subset Str(R_2) \subset \operatorname{int}(Str(\tilde{R})) $. From our choices of $J^{-}_{m_{0}} , J^{+}_{m_{1}}$, it is not hard to check that $\varphi_n(y,  [0,  T-\bar{T}])$ lies in some compact subset inside the interior of $ J^{n,-}_{m_{0}}(-Y_{\text{in}})\times J^{n,+}_{m_{1}}(Y_{\text{out}})  $.  

For the second pre-index pair condition, let us assume that $y \in K_{2}(n,m_{0},R,\epsilon)$ 
 and $\varphi_{n} (y,  [0,\bar{T}]) \subset  J^{n,-}_{m_{0}}(-Y_{\text{in}})\times J^{n,+}_{m_{1}}(Y_{\text{out}})  $. This also gives rise to a finite type $(n,\epsilon)$-approximated $X$-trajectory $(\tilde{x},\gamma)$ of length $\bar{T}$.
Since $p^{\mu_{n}}_{-\infty}\circ\tilde{r}_{ \text{in} } (\tilde{x}) \in J^{n,-}_{m_{0}}(-Y_{\text{in}}) $ and $\tilde{x} \in \mathcal{W}_{X,\beta}$, we can see that $\hat{p}_{\beta}(\tilde{x})\in [-M(m_0),M(m_0)]^{b_{1,\beta}} $.

By Proposition \ref{type A boundedness}, we have $\|\tilde{x}\|_{F}<R_0 ({M}) < R$, which implies that 
\[
   y \in \partial\left(J_{m_{0}}^{n,-}(-Y_{\text{in}}) \times Str_{Y_{\text{out}}}(\tilde{R})\right).
\]
Again, from Proposition~\ref{boundedness on cylinder for approximated solutions}, we must have 
\[
    y \in   \left\{ \partial J_{m_0}^{n, -}(- Y_{\text{in}}) \setminus \partial Str_{Y_{\text{in}}}(\tilde{R})  \right\}  \times 
            Str_{Y_{\text{out}}}( \tilde{R} ).
\]
This is impossible because the approximated trajectories  on 
\[
\partial J_{m_0}^{n, -}(- Y_{\text{in}}) \setminus \partial Str_{Y_{\text{in}}}(\tilde{R})
\]
immediately  leave $J_{m_0}^{n, -}(-Y_{in})$.
           
\end{proof}

The proposition allows us to consider a map
\begin{equation}   \label{eq map v}
\begin{split}
 & \upsilon(n,m_{0},R,\epsilon) \colon  B(W_{n,\beta},R)/ S(W_{n,\beta},R)\\ 
& \quad \rightarrow (B(U_{n},\epsilon)/S(U_{n},\epsilon))\wedge
(K_{1}(n,m_{0},R,\epsilon)/K_{2}(n,m_{0},R,\epsilon))
\end{split}
\end{equation}
given by

\[
\begin{split}
&\upsilon(n,m_{0},R,\epsilon)(x):=  \\
& \left\{  \begin{array}{l l}
   (\widetilde{SW}_{n,\beta}(x), [p^{\mu_{n}}_{-\infty}\circ \tilde{r}(x)] ) & \text{if } 
   \left\{
   \begin{array}{l}
   \|\widetilde{SW}_{n, \beta}(x)\|_{L^{2}_{k-1/2}}\leq
\epsilon, \\
  p^{\mu_{n}}_{-\infty}\circ \tilde{r}(x)\in K_1(n,m_0, R,\epsilon)
\end{array}
\right.    \\
    \ \ast   &  \text{otherwise.}
  \end{array} \right.
  \end{split}
\]
It follows from our construction that this map is well-defined and continuous.
By Proposition~\ref{prop 4dimpreindex} and Theorem~\ref{from pre-index to index}, we  have a canonical map from 
$K_{1}(n,m_{0},R,\epsilon)/K_{2}(n,m_{0},R,\epsilon)$ to the Conley index of $J^{n,-}_{m_{0}}(-Y_{\text{in}})\times J^{n,+}_{m_{1}}(Y_{\text{out}})
$. This gives a map
\begin{equation}  \label{equation: definition of upsilon}
\begin{split}
&\tilde{\upsilon}(n,m_{0},R,\epsilon) \colon B(W_{n,\beta},R)/S(W_{n,\beta},R) \\ 
& \quad \rightarrow (B(U_{n},\epsilon)/S(U_{n},\epsilon)) \\
& \qquad \qquad \wedge
I(\inv(J^{n,-}_{m_{0}}(-Y_{\text{in}}))) \wedge I(\inv(J^{n,+}_{m_{1}}(Y_{\text{out}}))).
\end{split}\end{equation}
It is a standard argument to check that $\tilde{\upsilon}(n,m_{0},R,\epsilon)$ does not depend on $R $ or $\epsilon $ as long as they satisfy all the requirements to define ${\upsilon}(n,m_{0},R,\epsilon) $.

Before proceeding, let us describe the Thom space 
\[
B(W_{n,\beta},R)/S(W_{n,\beta},R)
\]
in term of index bundle.
Consider a family of Dirac operators
\begin{align*}
\mathbf{D} \colon  L^{2}_{k+1/2}( S_{X}^{+})\times \mathcal{H}^1_{DC}(X) &\rightarrow L^{2}_{k-1/2}( S_{X}^{-}) \times H^-_{Dir} \times \mathcal{H}^1_{DC}(X) \\
(\hat{\phi} , h) & \mapsto ( \slashed{D}^{+}_{h}\hat{\phi} , \Pi^{-}_{Dir}(\hat{\phi}|_{Y} ) , h),
\end{align*}
where $H^-_{Dir}$ is the closure in $L^{2}_{k}(\Gamma(S_{Y}))$ of the subspace spanned by the eigenvectors of $\slashed{D}_{}$ 
with nonpositive eigenvalues and let $\Pi^{-}_{Dir}$ be the orthogonal projection. 
As in Section~\ref{sec SWfinite}, this map is equivariant under an action by $\mathcal{G}^{h,\hat{o}}_{X,Y}$. We then take the quotient to obtain a map between Hilbert bundles over $\operatorname{Pic}^{0}(X,Y) $ and trivialize the right hand side so that we have\begin{align*}
\widetilde{\mathbf{D}} \colon  (L^{2}_{k+1/2}( S_{X}^{+})\times \mathcal{H}^1_{DC}(X))/ \mathcal{G}^{h,\hat{o}}_{X,Y} &\rightarrow L^{2}_{k-1/2}( S_{X}^{-})\times H^-_{Dir}.
\end{align*}
Since $\widetilde{\mathbf{D}} $ is fiberwise Fredholm, the preimage $\widetilde{\mathbf{D}}^{-1}(U) $ is a finite dimensional subbundle for a finite dimensional subspace $U \subset L^{2}_{k-1/2}( S_{X}^{-})\times H^-_{Dir} $ transverse to the image of  the restriction of $\widetilde{\mathbf{D}}$ to any fiber.
Here we use the fact that  the rank of $\widetilde{\mathbf{D}}^{-1}(U)$ is constant because $h|_{Y} = 0$  and there is no spectral flow. 

 We consider the desuspension $\Sigma^{-U}  B(\widetilde{\mathbf{D}}^{-1}(U),R) / S(\widetilde{\mathbf{D}}^{-1}(U),R)$ of the Thom space in the stable category $\mathfrak{C} $. The following lemma follows from a standard homotopy argument.

\begin{lem} \label{lem thombundle} 
The object  $\Sigma^{-U}  B(\widetilde{\mathbf{D}}^{-1}(U),R) / S(\widetilde{\mathbf{D}}^{-1}(U),R)$ does not depend on any choice in the construction given that  $\hat{g}_{}|_{Y}=g$ and $\hat{A}_{0}|_{Y}={A}_{0}$. 
 We will call this object Thom spectrum
of virtual index bundle associated to the Dirac operators, denoted by $T(X,\hat{\mathfrak{s}},A_{0},g,\hat{o};S^{1}) $.
\end{lem}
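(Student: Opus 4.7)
The plan is to verify independence from each choice entering the construction in turn, namely: (a) the radius $R$, (b) the finite-dimensional subspace $U \subset L^{2}_{k-1/2}(S_{X}^{-}) \times H^{-}_{Dir}$ transverse to every fiber restriction of $\widetilde{\mathbf{D}}$, (c) the $\mathcal{G}^{h,\hat{o}}_{X,Y}$-equivariant trivialization used to pass from $\mathbf{D}$ to $\widetilde{\mathbf{D}}$, (d) the metric $\hat{g}$ extending $g$, (e) the base connection $\hat{A}_{0}$ extending $A_{0}$, and (f) the basepoint $\hat{o}\in X$. Independence from the radius $R$ is immediate since radial rescaling gives an $S^{1}$-equivariant fiberwise homeomorphism between the Thom spaces for any two radii. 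Independence from the trivialization (c) follows from Kuiper's theorem: any two trivializations of the Hilbert bundle are related by an automorphism that is fiberwise homotopic to the identity, inducing a canonical stable homotopy equivalence between the resulting Thom spectra.

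For (b), suppose $U \subset U'$ are both transverse subspaces. The fiberwise Fredholm property and the vanishing of spectral flow (since $\hat{p}_{\alpha}(\hat{a})|_{Y}=0$, as noted in the paragraph before Lemma~\ref{lem thombundle}) ensure that $\widetilde{\mathbf{D}}^{-1}(U) \subset \widetilde{\mathbf{D}}^{-1}(U')$ are both finite-dimensional subbundles of constant rank, and that the induced map $\widetilde{\mathbf{D}} \colon \widetilde{\mathbf{D}}^{-1}(U')/\widetilde{\mathbf{D}}^{-1}(U) \to U'/U$ is a bundle isomorphism. Choosing a complementary subbundle $V \subset \widetilde{\mathbf{D}}^{-1}(U')$ to $\widetilde{\mathbf{D}}^{-1}(U)$ yields $V \cong U'/U$ as trivial bundles (the isomorphism being canonical up to contractible choice), so that the Thom space for $U'$ is $S^{1}$-equivariantly homotopy equivalent to $\Sigma^{U'/U}$ of the Thom space for $U$. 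After applying $\Sigma^{-U'} = \Sigma^{-U}\Sigma^{-(U'/U)}$, the two desuspended Thom spectra become canonically isomorphic in $\mathfrak{C}$. For two arbitrary transverse $U_{1},U_{2}$, pass to a common enlargement $U' \supset U_{1}\cup U_{2}$ and apply this twice.

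For (d) and (e), connect any two choices $(\hat{g}_{0},\hat{A}_{0,0})$ and $(\hat{g}_{1},\hat{A}_{0,1})$ by a smooth path fixing the boundary restrictions. This gives a continuous family of Dirac operators over $[0,1]\times \operatorname{Pic}^{0}(X,Y)$, hence a continuous family $\widetilde{\mathbf{D}}_{t}$; by a standard partition-of-unity argument and the spectral flow vanishing, we can choose a single $U$ transverse to every $\widetilde{\mathbf{D}}_{t}$, producing a finite-dimensional vector bundle $\widetilde{\mathbf{D}}^{-1}(U)$ over $[0,1]\times \operatorname{Pic}^{0}(X,Y)$ that restricts to the two original bundles at the endpoints. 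Its Thom space, desuspended by $U$, provides the desired isomorphism in $\mathfrak{C}$. For (f), shifting basepoint corresponds to translating the gauge group $\mathcal{G}^{h,\hat{o}}_{X,Y}$ by a fixed element of $\mathcal{G}_{X}$, which induces a fiberwise-linear isomorphism of the quotient bundles and hence of the Thom spectra.

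I expect the main technical obstacle to be step (d)--(e): one must be sure that a single transverse $U$ can be chosen uniformly across the entire homotopy of Dirac operators over the compact parameter space $[0,1]\times \operatorname{Pic}^{0}(X,Y)$, and that the rank of $\widetilde{\mathbf{D}}_{t}^{-1}(U)$ is locally constant. Both follow from the Fredholm property and the absence of spectral flow at the boundary, so in fact the whole argument reduces to bookkeeping in $\mathfrak{C}$; no new analytical input is required beyond what was already established in Section~\ref{sec SWfinite}.
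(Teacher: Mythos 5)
Your items (a)--(e) are precisely the ``standard homotopy argument'' the paper alludes to (it gives no further detail), and each step is carried out correctly: rescaling for the radius, stabilization over a common enlargement for the transverse subspace $U$, Kuiper's theorem for the trivialization, and a path of metrics/connections together with a uniformly transverse $U$ over $[0,1]\times\operatorname{Pic}^{0}(X,Y)$ for $\hat{g}$ and $\hat{A}_0$. The observation that the absence of spectral flow keeps the rank of $\widetilde{\mathbf{D}}_t^{-1}(U)$ constant is the right technical point, and it has already been established in Section~\ref{sec SWfinite}.

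Item (f), however, misreads the statement. The resulting object is denoted $T(X,\hat{\mathfrak{s}},A_{0},g,\hat{o};S^{1})$, with $\hat{o}$ retained in the notation: the lemma does \emph{not} assert independence from the basepoint, and the remark immediately following it explains why -- any isomorphism between the spectra built from $\hat{o}$ and $\hat{o}'$ requires a choice of connecting path (equivalently, a choice of the gauge transformation used to intertwine $\mathcal{G}^{h,\hat{o}}_{X,Y}$ and $\mathcal{G}^{h,\hat{o}'}_{X,Y}$), and two such paths produce the same isomorphism only if they are homotopic rel $Y$. Your phrase ``translating the gauge group $\mathcal{G}^{h,\hat{o}}_{X,Y}$ by a fixed element of $\mathcal{G}_{X}$'' already hides exactly this choice: the translating element is determined only up to $\mathcal{G}^{h,\hat{o}}_{X,Y}$, and different coset representatives give genuinely different identifications of the quotient Hilbert bundles. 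So the correct conclusion is that the object is well-defined \emph{once $\hat{o}$ is fixed}, not that it is independent of $\hat{o}$. This does not damage (a)--(e), which together do prove the lemma as actually stated.
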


\begin{rmk} For different choices of base points, one can construct an isomorphism
by choosing a path between them. However, isomorphisms given by different paths 
are  different unless they are homotopic relative to $Y$.
\end{rmk}

Recall from Section~\ref{subsec Unfolded} that we have desuspended Conley indices
\begin{equation*} 
\begin{split}
& I^{n,-}_{m_0}(-Y_{\text{in}}) = \Sigma^{-V^0_{\lambda_n}(-Y_{\text{in}})}I(\inv(J^{n,-}_{m_{0}}(-Y_{\text{in}}))), \\
& I^{n,+}_{m_1}(Y_{\text{out}}) = \Sigma^{-\bar{V}^0_{\lambda_n}(Y_{\text{out}})}I(\inv(J^{n,+}_{m_{1}}(Y_{\text{out}}))).
\end{split}
\end{equation*}
We see that if we desuspend the map $\tilde{\upsilon}(n,m_{0},R,\epsilon)$ by $V^{0}_{\lambda_{n}}(-Y_{\text{in}})\oplus
\bar{V}^{0}_{\lambda_{n}}(Y_{\text{out}})\oplus U_{n}$, the right hand side will become $I^{n,-}_{m_{0}}(-Y_{\text{in}})\wedge
I^{n,+}_{m_{1}}(Y_{\text{out}})$. As a consequence of Lemma~\ref{lem thombundle}, we can also identify the left hand side after desuspension as follows:

\begin{lem}
 Let $V^+_X$ be a maximal positive subspace of 
 \[
 \operatorname{im} (H^2(X, \partial X;\R) \rightarrow H^2(X;\R))
 \]
 with respect to the intersection form and let $V_{\text{in}}$ be the cokernel of $\iota^* \colon H^{1}(X;\mathbb{R})\rightarrow H^{1}(Y_{\text{in}};\mathbb{R})$.
Then, we have
\begin{align*}  
&\Sigma^{-(V^{0}_{\lambda_{n}}(-Y_{\text{in}})  \oplus
\bar{V}^{0}_{\lambda_{n}}(Y_{\text{out}})\oplus U_{n})} B(W_{n,\beta},R)/S(W_{n,\beta},R)   \\
&  \qquad  \cong 
\Sigma^{-(V^{+}_X\oplus  V_{\text{in}})}T(X,\hat{\mathfrak{s}},A_{0},g,\hat{o};S^{1}).
\end{align*}

\end{lem}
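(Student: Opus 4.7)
The plan is to decompose the finite-dimensional approximation $W_{n,\beta}$ into its form and spinor components and handle each factor separately. Split $U_{n}=U_{n}^{+}\oplus U_{n}^{D}$ and $V_{\lambda_{n}}^{\mu_{n}}(Y)=V_{\lambda_{n}}^{\mu_{n},\text{form}}(Y)\oplus V_{\lambda_{n}}^{\mu_{n},\text{Dirac}}(Y)$ along the $2$-form/spinor decomposition. Since $\tilde L$ respects this splitting, $\tilde L^{\text{form}}$ is independent of $\hat p_{\alpha}(\hat a)$, and $\tilde L^{\text{Dirac}}_{\hat p_{\alpha}(\hat a)}$ depends only on this fiber coordinate, and since the additional constraint $\hat p_{\beta,\text{out}}(\hat a)=0$ is a form-side condition, the defining equations of $W_{n,\beta}$ separate over each point of $\operatorname{Pic}^{0}(X,Y)$. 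Using a representative with $\hat p_{\alpha}(\hat a)=h\in\mathcal{H}^{1}_{DC}(X)$ (for which $\mathbf{t}h=0$ by (\ref{eq H1DC})), the form constraints become independent of $h$, yielding a direct sum decomposition $W_{n,\beta}\cong W_{n}^{\text{form},\beta}\oplus W_{n}^{\text{Dirac}}$ as bundles over the Picard torus, where $W_{n}^{\text{form},\beta}$ is trivial and $W_{n}^{\text{Dirac}}$ is the Dirac index bundle. Thom-ifying then yields the $S^{1}$-equivariant homotopy equivalence
\[B(W_{n,\beta},R)/S(W_{n,\beta},R)\;\cong\;S^{W_{n}^{\text{form},\beta}}\wedge\bigl(B(W_{n}^{\text{Dirac}},R)/S(W_{n}^{\text{Dirac}},R)\bigr).\]

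For the Dirac factor, the vanishing of $p^{\lambda_{n}}_{-\infty,\text{Dirac}}(\hat\phi|_{Y})$ is equivalent to $\Pi^{-}_{Dir}(\hat\phi|_{Y})\in V^{0}_{\lambda_{n},\text{Dirac}}(Y)\subset H^{-}_{Dir}$. Hence $W_{n}^{\text{Dirac}}=\widetilde{\mathbf{D}}^{-1}\bigl(U_{n}^{D}\oplus V^{0}_{\lambda_{n},\text{Dirac}}(Y)\bigr)$ in the notation of Lemma~\ref{lem thombundle}, and applying that lemma with this choice of $U$ gives
\[\Sigma^{-\bigl(U_{n}^{D}\oplus V^{0}_{\lambda_{n},\text{Dirac}}(-Y_{\text{in}})\oplus V^{0}_{\lambda_{n},\text{Dirac}}(Y_{\text{out}})\bigr)}\bigl(B(W_{n}^{\text{Dirac}},R)/S(W_{n}^{\text{Dirac}},R)\bigr)\;\cong\;T(X,\hat{\mathfrak{s}},A_{0},g,\hat o;S^{1}).\]
Since the Dirac operator on $Y$ has no ``harmonic'' analog of zero-eigenvalue $1$-forms to separate, $\bar V^{0}_{\lambda_{n},\text{Dirac}}=V^{0}_{\lambda_{n},\text{Dirac}}$, matching the Dirac half of the desired desuspension.

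It remains to identify the form-side piece $\Sigma^{-(U_{n}^{+}\oplus V^{0}_{\lambda_{n},\text{form}}(-Y_{\text{in}})\oplus\bar V^{0}_{\lambda_{n},\text{form}}(Y_{\text{out}}))}S^{W_{n}^{\text{form},\beta}}$ with $S^{-(V^{+}_{X}\oplus V_{\text{in}})}$. Since $S^{1}$ acts trivially on all form spaces, this reduces, up to a natural orientation, to a virtual dimension identity. Writing $\Psi^{\text{form}}_{\beta}$ for the APS-type Fredholm operator
\[\{\hat a\in L^{2}_{k+1/2}(i\Omega^{1}_{CC}(X)):\hat p_{\alpha}(\hat a)=0,\ \hat p_{\beta,\text{out}}(\hat a)=0\}\longrightarrow L^{2}_{k-1/2}(i\Omega^{2}_{+}(X))\oplus V^{0}_{-\infty,\text{form}}(Y),\]
$\hat a\mapsto(d^{+}\hat a,\Pi^{-}_{\text{form}}\mathbf{t}\hat a)$, the transversality hypothesis on $\{U_{n}\}$ and a comparison of codomains gives $\dim W_{n}^{\text{form},\beta}=\dim U_{n}^{+}+\dim V^{0}_{\lambda_{n},\text{form}}(Y)+\operatorname{ind}(\Psi^{\text{form}}_{\beta})$. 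Combined with $\dim V^{0}_{\lambda_{n},\text{form}}(Y_{\text{out}})-\dim\bar V^{0}_{\lambda_{n},\text{form}}(Y_{\text{out}})=b_{1}(Y_{\text{out}})$, the desired identity reduces to the single index formula
\[\operatorname{ind}(\Psi^{\text{form}}_{\beta})\;=\;-\dim V^{+}_{X}-\dim V_{\text{in}}-b_{1}(Y_{\text{out}}).\]

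The main obstacle will be verifying this index formula. The approach is to view $\Psi^{\text{form}}_{\beta}$ as the top differential of the augmented elliptic boundary value problem $\Omega^{0}(X)\xrightarrow{-d}i\Omega^{1}_{CC}(X)\xrightarrow{(d^{+},\Pi^{-}_{\text{form}}\mathbf{t})}i\Omega^{2}_{+}(X)\oplus V^{0}_{-\infty,\text{form}}(Y)$, to track how the extra constraints $\hat p_{\alpha}=0$ and $\hat p_{\beta,\text{out}}=0$ affect the Euler characteristic, and to apply the APS index theorem together with the long exact cohomology sequence of the pair $(X,Y)$. In this analysis $V^{+}_{X}$ arises from self-dual harmonic $2$-forms realizing $\im(H^{2}(X,Y;\R)\to H^{2}(X;\R))$ in the cokernel, $V_{\text{in}}$ arises as $\coker(\iota^{*}\colon H^{1}(X;\R)\to H^{1}(Y_{\text{in}};\R))$ from the $Y_{\text{in}}$ boundary contribution, and the $b_{1}(Y_{\text{out}})$ correction reflects the asymmetric type-A desuspension on $Y_{\text{out}}$. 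Independence of the resulting stable isomorphism from $\{U_{n}\}$, $\{\lambda_{n},\mu_{n}\}$ and from the auxiliary basis/basepoint choices then follows from the analogous independence already built into $T(X,\hat{\mathfrak{s}};S^{1})$ via Lemma~\ref{lem thombundle}, combined with standard stability of finite-dimensional approximations.
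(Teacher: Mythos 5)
Your proposal takes essentially the same route as the paper's proof: split $W_{n,\beta}$ into form and spinor summands, identify the spinor factor with $T(X,\hat{\mathfrak{s}},A_0,g,\hat{o};S^1)$ via Lemma~\ref{lem thombundle}, and reduce the form factor to a rank count. The paper outsources the splitting and the base-case index computation to \cite[Proposition~3.1]{Khandhawit1} and records only the dimension adjustment, whereas you reconstruct the splitting explicitly; your target formula $\operatorname{ind}(\Psi^{\text{form}}_{\beta})=-\dim V^{+}_{X}-\dim V_{\text{in}}-b_{1}(Y_{\text{out}})$ is exactly the paper's count $b_1(X)-b^+(X)-b_1(Y)-b_{1,\alpha}-(b_{1,\beta}-b_{\text{in}})=-b^+(X)-(b_1(Y_{\text{in}})-b_{\text{in}})-b_1(Y_{\text{out}})$ before the $b_1(Y_{\text{out}})$-resuspension that produces $\bar V^0_{\lambda_n}(Y_{\text{out}})$. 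One small fix in your sketched verification of the index identity: the arrow $-d\colon\Omega^{0}(X)\to i\Omega^{1}_{CC}(X)$ is not well defined, since $d\Omega^{0}(X)$ is generally neither coclosed nor compatible with the double Coulomb boundary conditions; the AHS-type complex should instead be set up on all of $i\Omega^{1}(X)$ with $d^{*}$ folded into the operator (so the middle term is $\Omega^0\oplus\Omega^2_+$) and the double Coulomb/APS conditions imposed as boundary conditions, precisely as in \cite[Proposition~3.1]{Khandhawit1}.
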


\begin{proof} 
This is a bundle version of index computation in \cite[Proposition~3.1]{Khandhawit1}. From there, we are only left to keep track of $H^1(X;\mathbb{R}) $ and $H^1(Y;\R)$ as we pass to bundle and subspace, i.e. the base of the bundle is the torus of dimension $b_{1,\alpha} $ and we take a slice of codimension $b_{1,\beta}-b_{\text{in}} $. Note that we desuspend by  $\bar{V}^0_{\lambda_n}(Y_{\text{out}}) $, the orthogonal complement  of $H^1 (Y_{\text{out}};\mathbb{R}) $ in ${V}^0_{\lambda_n}(Y_{\text{out}})$. One may compute the rank of the Thom space of the index bundle of the real part of $(\tilde{L}, p^0 \circ \tilde{r})$ suspended by $H^1(Y_{\text{out}};\R)$ as follows
\[
\begin{split}
&b_1(X) - b^+(X) - b_1(Y) - b_{1,\alpha}- (b_{1,\beta}-b_{\text{in}}) + b_1(Y_{\text{out}})   \\
&\quad  = 
-b^+(X) - (b_1(Y_{\text{in}})-b_{\text{in}}).
\end{split}
\]
 The desired isomorphism follows in the same manner.    
 
 \end{proof}

Consequently, we obtain a morphism 
\begin{equation}\label{morphism 1}
\psi^{n}_{m_{0},m_{1}}\colon \Sigma^{-(V^{+}_X\oplus  V_{\text{in}})}T(X,\hat{\mathfrak{s}},A_{0},g,\hat{o};S^{1})\rightarrow I^{n,-}_{m_{0}}(-Y_{\text{in}})\wedge
I^{n,+}_{m_{1}}(Y_{\text{out}})
\end{equation}
in the stable category $\mathfrak{C}$.  Note that such a morphism is defined for any positive integer $m_{0}$ with $m_{1}$ large relative to $m_{0}$ and $n$
large relative to $m_{0},m_{1}$. 

Recall that, to define unfolded spectra  $\swf^{A}(Y_{\text{out}})$ and $\swf^{R}(-{Y_{\text{in}}})$, we have canonical isomorphisms
\begin{align*}
&\tilde{\rho}_{m_{0}}^{n,-}(-Y_{\text{in}}) \colon I^{n,-}_{m_{0}}(-Y_{\text{in}}) \rightarrow I^{n+1,-}_{m_{0}}(-Y_{\text{in}}),  \\
& \tilde{\rho}_{m_{1}}^{n,+}(Y_{\text{out}}) \colon I^{n,+}_{m_{1}}(Y_{\text{out}}) \rightarrow I^{n+1,+}_{m_{1}}(Y_{\text{out}})
\end{align*}
and also morphisms
\begin{align*} 
\tilde{i}_{m_{0}-1}^{n,-} \colon I^{n,-}_{m_{0}}(-Y_{\text{in}}) \rightarrow I^{n,-}_{m_{0}-1}(-Y_{\text{in}}) 
\text{ and } \tilde{i}_{m_{1}}^{n,+} \colon I^{n,+}_{m_{1}}(Y_{\text{out}}) \rightarrow I^{n,+}_{m_{1}+1}(Y_{\text{out}})
\end{align*}
induced by repeller and attractor respectively.
To have a morphism to the unfolded spectra, we have to check that the maps $\{ \psi^{n}_{m_{0},m_{1}} \} $ are compatible with all such morphisms.

\begin{lem}\label{morphism compatible}
When $n$ is large enough relative to $m_{0},m_{1}$, we have the following:
\begin{enumerate}
\item $(\tilde{\rho}_{m_{0}}^{n,-}(-Y_{\text{in}})\wedge \tilde{\rho}_{m_{1}}^{n,+}(Y_{\text{out}}))\circ \psi^{n}_{m_{0},m_{1}}=\psi^{n+1}_{m_{0},m_{1}};
$
\item $(\tilde{i}_{m_{0}-1}^{n,-}\wedge \id_{I^{n,+}_{m_{1}}(Y_{\text{out}})})\circ \psi^{n}_{m_{0},m_{1}}=\psi^{n}_{m_{0}-1,m_{1}};
$
\item $(\id_{I^{n,-}_{m_{0}}(-Y_{\text{in}})}\wedge \tilde{i}_{m_{1}}^{n,+})\circ \psi^{n}_{m_{0},m_{1}}=\psi^{n}_{m_{0},m_{1}+1}.$
\end{enumerate}

\end{lem}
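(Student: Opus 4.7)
The plan is to exploit the fact that each $\psi^{n}_{m_{0},m_{1}}$ is, by construction, (after desuspension) the smash of the Seiberg--Witten map $B(W_{n,\beta},R)/S(W_{n,\beta},R)\to B(U_{n},\epsilon)/S(U_{n},\epsilon)$ with the canonical map $K_{1}/K_{2}\to I(\inv(J^{n,-}_{m_{0}}\times J^{n,+}_{m_{1}}))$ arising from the pre-index pair of Proposition~\ref{prop 4dimpreindex} via (\ref{map from pre-index to index}). Since the flow on $V^{\mu_{n}}_{\lambda_{n}}(-Y_{\text{in}})\times V^{\mu_{n}}_{\lambda_{n}}(Y_{\text{out}})$ is a product flow, every Conley index and every attractor/repeller morphism that appears in the statement factors as a smash product of the two factor-wise constructions, so the three assertions can be checked purely at the level of the Conley index side.

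For assertions (2) and (3) I will recognise each step as a strong Morse decomposition of a slightly larger isolating neighborhood. For (3), set $A=J^{n,-}_{m_{0}}(-Y_{\text{in}})\times J^{n,+}_{m_{1}+1}(Y_{\text{out}})$ and $A_{2}=J^{n,-}_{m_{0}}(-Y_{\text{in}})\times J^{n,+}_{m_{1}}(Y_{\text{out}})$, with $A_{1}$ the closure of $A\setminus A_{2}$ (obtained by cutting by the level set of $g_{j,+}$ at height $m_{1}+\theta$). This is a strong Morse decomposition for generic $\theta$, and a direct inspection of (\ref{eq K_1 K_2}) shows that the pair $(K_{1}(n,m_{0},R,\epsilon),K_{2}(n,m_{0},R,\epsilon))$ is unchanged when $m_{1}$ is replaced by $m_{1}+1$ and is contained in $A_{2}$; Proposition~\ref{pre-index map compatible with attractor} then gives the triangle
\[
\xymatrix{K_{1}/K_{2}\ar[r]^(0.4){\iota_{2}}\ar[rd]_{\iota}&I(\inv A_{2})\ar[d]^{i}\\ & I(\inv A)}
\]
which, upon smashing with $B(W_{n,\beta},R)/S(W_{n,\beta},R)\to B(U_{n},\epsilon)/S(U_{n},\epsilon)$ and desuspending, is exactly the statement of (3). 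Assertion (2) is entirely analogous but on the $-Y_{\text{in}}$ factor: now $A=J^{n,-}_{m_{0}}(-Y_{\text{in}})\times J^{n,+}_{m_{1}}(Y_{\text{out}})$ decomposes strongly into $A_{1}=J^{n,-}_{m_{0}-1}(-Y_{\text{in}})\times J^{n,+}_{m_{1}}(Y_{\text{out}})$ (the repeller part) and $A_{2}$ the complementary slab; one verifies directly that the operator $q$ of Proposition~\ref{pre-index map compatible with repellor} sends $K_{1}(n,m_{0},R,\epsilon)/K_{2}(n,m_{0},R,\epsilon)$ to $K_{1}(n,m_{0}-1,R,\epsilon)/K_{2}(n,m_{0}-1,R,\epsilon)$, and the proposition then yields the required commutative square.

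Assertion (1) is where the real work lies, since increasing $n$ simultaneously enlarges the Thom domain $B(W_{n,\beta},R)/S(W_{n,\beta},R)$, the target sphere $B(U_{n},\epsilon)/S(U_{n},\epsilon)$, and the finite-dimensional approximation space on which the Conley indices are computed. The plan is to mimic the proof of \cite[Proposition~5.6]{KLS1}: pass to a common approximation level by suspending both sides by $V^{\mu_{n+1}}_{\mu_{n}}\oplus V^{\lambda_{n}}_{\lambda_{n+1}}$ (on each of the two 3-manifold factors), a complementary subspace $U_{n+1}\ominus U_{n}$, and the Dirac eigenspace growth on $X$. In each case the suspension isomorphism coming from the definition of $\tilde\rho_{m_{j}}^{n,\pm}$ matches the natural linear inclusion on the 4-dimensional side, so after these identifications both $\psi^{n+1}_{m_{0},m_{1}}$ and $(\tilde\rho^{n,-}_{m_{0}}\wedge\tilde\rho^{n,+}_{m_{1}})\circ\psi^{n}_{m_{0},m_{1}}$ are represented by canonical maps from the pre-index pair $(K_{1}(n+1,m_{0},R,\epsilon),K_{2}(n+1,m_{0},R,\epsilon))$ inside the common ambient $V^{\mu_{n+1}}_{\lambda_{n+1}}$. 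The main obstacle is that the pre-index pair obtained by trivially suspending the data at level $n$ is, a priori, not the same as the one produced at level $n+1$; one must apply a homotopy argument, using convergence of approximated $X$-trajectories (Lemma~\ref{convegence of approximated X-trajectories}) together with Theorem~\ref{pre-index map compatible} to absorb the discrepancy into a homotopy between canonical maps of nested pre-index pairs, provided $n$ is chosen large enough relative to $m_{0}$ and $m_{1}$. Once this identification is in place, the three conclusions follow directly.
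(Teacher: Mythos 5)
Your proof takes essentially the same approach as the paper. The paper's proof of this lemma is a one-liner: item (1) is asserted to follow from ``standard homotopy arguments'' in the style of \cite[Section~9]{Manolescu1} and \cite[Proposition~5.6]{KLS1}, while items (2) and (3) are asserted to follow from Propositions~\ref{pre-index map compatible with attractor} and~\ref{pre-index map compatible with repellor}. Your argument fills in the content of that citation: you correctly identify that item (3), where the pre-index pair $(K_1(n,m_0,R,\epsilon),K_2(n,m_0,R,\epsilon))$ is unchanged and only the ambient $J^{n,+}$-part grows, is the attractor situation of Proposition~\ref{pre-index map compatible with attractor}, while item (2), where the pre-index pair is cut down by intersecting with the smaller $J^{n,-}_{m_0-1}(-Y_{\text{in}})$ and one applies the restriction map $q$, is the repeller situation of Proposition~\ref{pre-index map compatible with repellor}. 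Note that the paper's ``(2) and (3) follow from \dots\ and \dots\ respectively'' has the two propositions in the opposite order from what the constructions actually require (the morphism $\tilde{i}_{m_0-1}^{n,-}$ is a repeller map, $\tilde{i}_{m_1}^{n,+}$ an attractor map), so your pairing is the correct one and you have in fact sorted out a small slip in the published text. The outline for (1) -- passing to a common approximation level, identifying the various suspension indices, and using the convergence of approximated $X$-trajectories together with Theorem~\ref{pre-index map compatible} to match canonical maps from the two pre-index pairs -- is exactly the standard Manolescu-style stabilization argument the paper refers to; one minor gap is that, since the pre-index pairs at levels $n$ and $n+1$ are genuinely different (not nested), the comparison needs a homotopy of pre-index pairs (a one-parameter family interpolating the approximated Seiberg--Witten data) rather than a single application of Theorem~\ref{pre-index map compatible}, but this is precisely what the cited arguments supply.
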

\begin{proof}
The proof of (1) can be given by standard homotopy arguments similar to \cite[Section~9]{Manolescu1} and \cite[Proposition~5.6]{KLS1}. Whereas (2) and (3) follow from Proposition~\ref{pre-index map compatible with attractor} and \ref{pre-index map compatible with repellor} respectively.
\end{proof}

The last step is to apply  Spanier--Whitehead duality between  $I^{n,+}_{m_{0}}(Y_{\text{in}})$  and $I^{n,-}_{m_{0}}(-Y_{\text{in}})$(see Section~\ref{section spanierwhitehead} and \ref{section dualswf} for details). As a result, we can turn  the morphism $\psi^{n}_{m_{0},m_{1}} $  to a morphism 
\begin{equation*} 
\widetilde{\psi}^{n}_{m_{0},m_{1}} \colon \Sigma^{-(V^{+}_X\oplus  V_{\text{in}})}T(X,\hat{\mathfrak{s}},A_{0},g,\hat{o};S^{1})\wedge  I^{n,+}_{m_{0}}(Y_{\text{in}})\rightarrow
I^{n,+}_{m_{1}}(Y_{\text{out}}),
\end{equation*}
which will define the relative Bauer--Furuta invariant.

\begin{defi} \label{def BFtypeA}
For the cobordism $X \colon Y_{\text{in}} \rightarrow Y_{\text{out}}$,  
the collection of morphisms $\{\widetilde{\psi}^{n}_{m_{0},m_{1}} \mid m_{0}\in \mathbb{N},\ m_{1}\gg m_{0}, n\gg m_{0},m_{1}\}$ in $\mathfrak{C} $ gives rise to a morphism
\begin{align}
& \underline{\textnormal{bf}}^{A}(X,\hat{\mathfrak{s}},A_{0},g,\hat{o},[\vec{\eta}];S^{1}) \colon   \nonumber \\
& \Sigma^{-(V^{+}_X\oplus  V_{\text{in}})}T(X,\hat{\mathfrak{s}},A_{0},g,\hat{o};S^{1})\wedge \underline{\textnormal{swf}}_{}^{A}(Y_{\text{in}},\mathfrak{s}_\text{in},A_{\text{in}}, g_\text{in} ; S^1) \\
& \quad  \rightarrow
\underline{\textnormal{swf}}_{}^{A}(Y_\text{out},\mathfrak{s}_\text{out},A_{\text{out}}, g_\text{out}; S^1) \nonumber
\end{align}
in $\mathfrak{S} $. This will be called the type-A unfolded relative Bauer--Furuta invariant of $X$.
\end{defi}

Note that Lemma~\ref{morphism compatible} and compatibility of the dual maps ensure that $\{\widetilde{\psi}^{n}_{m_{0},m_{1}}\}$ are compatible with the direct systems.
When $\mathfrak{s}=\hat{\mathfrak{s}}|_{Y}$ is torsion, we can also define the normalized relative Bauer--Furuta invariant. In this torsion case, let us define the normalized Thom spectrum
\begin{align*}
\tilde{T}(X,\hat{\mathfrak{s}},\hat{o};S^{1}) := (T(X,\hat{\mathfrak{s}},A_{0},g,\hat{o};S^{1}),0,n(Y,{\mathfrak{s}},A_{0},g)),
\end{align*}
where $n(Y,{\mathfrak{s}},A_{0},g) $ is given by $\frac{1}{2} \left(\eta( \slashed{D}) - \dim_\C (\ker  \slashed{D}) + \frac{\eta_{\text{sign}}}{4}  \right) $ (see (21) of \cite{KLS1}).
 
\begin{defi} \label{def normalized BFA}
When $\mathfrak{s}=\hat{\mathfrak{s}}|_{Y}$ is torsion, the normalized type-A unfolded relative Bauer--Furuta invariant of $X$
\[
 \begin{split}
& \underline{\textnormal{BF}}^{A}(X,\hat{\mathfrak{s}},\hat{o},[\vec{\eta}];S^{1}) \colon  \\
& \Sigma^{-(V^{+}_X\oplus  V_{\text{in}})}\tilde{T}(X,\hat{\mathfrak{s}},\hat{o};S^{1})\wedge
\underline{\textnormal{SWF}}_{}^{A}
(Y_{\text{in}},\mathfrak{s}_{\text{in}}; S^1)  
\rightarrow
\underline{\textnormal{SWF}}_{}^{A}(Y_{\text{out}},\mathfrak{s}_{\text{out}};
S^1)
\end{split}
\]
is given by desuspending $\underline{\textnormal{bf}}^{A}(X,\hat{\mathfrak{s}},A_{0},g,\hat{o},[\vec{\eta}];S^{1}) $ 
by $n(Y,{\mathfrak{s}},A_{0},g) $.  
\end{defi}

We then define the type-R invariant by simply considering the dual of type-$A$ 
invariant of the adjoint cobordism $X^{\dagger} \colon -Y_{\text{out}}\rightarrow -Y_{\text{in}}$.
In particular, the dual of the morphism
\begin{align*}
&\widetilde{\psi}^{n}_{m_{1},m_{0}}(X^{\dagger}) \colon  \\
& \Sigma^{-(V^{+}_{X^\dagger}\oplus  V_{\text{in}}(X^{\dagger}))}T(X^{\dagger},\hat{\mathfrak{s}},A_{0},g,\hat{o};S^{1})\wedge
 I^{n,+}_{m_{1}}(-Y_{\text{out}})\rightarrow
I^{n,+}_{m_{0}}(-Y_{\text{in}}),
\end{align*}
gives a morphism
\begin{align*}
{\breve\psi}^{n}_{m_{0},m_{1}} \colon \Sigma^{-(V^{+}_{X}\oplus  V_{\text{out}})}T(X^{},\hat{\mathfrak{s}},A_{0},g,\hat{o};S^{1})\wedge
 I^{n,-}_{m_{0}}(Y_{\text{in}})\rightarrow
I^{n,-}_{m_{1}}(Y_{\text{out}}).
\end{align*}
Note that $ V_{\text{in}}(X^{\dagger})  := V_{\text{out}}$ is the cokernel of $\iota^* \colon H^{1}(X;\mathbb{R})\rightarrow H^{1}(Y_{\text{out}};\mathbb{R})$.  The morphism ${\breve\psi}^{n}_{m_{0},m_{1}} $ is defined for any positive integer $m_{1}$ with $m_{0}$
large relative to $m_{1}$ and $n$
large relative to $m_{0},m_{1}$. We can now give a definition in a similar fashion.

\begin{defi} \label{def BFtypeR}
For the cobordism $X \colon Y_{\text{in}} \rightarrow Y_{\text{out}}$, the type-R unfolded relative Bauer--Furuta invariant of $X$ is a morphism
\begin{align*}
&\underline{\textnormal{bf}}^{R}(X,\hat{\mathfrak{s}},A_{0},g,\hat{o},[\vec{\eta}];S^{1}) \colon \\
& \Sigma^{-(V^{+}_X\oplus  V_{\text{out}})}T(X,\hat{\mathfrak{s}},A_{0},g,\hat{o}; S^{1})\wedge \underline{\textnormal{swf}}_{}^{R}(Y_{\text{in}},\mathfrak{s}_\text{in},A_{\text{in}}, g_\text{in} ; S^1)  \\
& \quad \rightarrow \underline{\textnormal{swf}}_{}^{R}(Y_\text{out},\mathfrak{s}_\text{out},A_{\text{out}}, g_\text{out}
; S^1) 
\end{align*}
in $\mathfrak{S}^* $ given by the collection of morphisms $\{{\breve \psi}^{n}_{m_{0},m_{1}} \mid m_{1}\in \mathbb{N},\ m_{0}\gg m_{1}, n\gg m_{0},m_{1}\}$.
When $\mathfrak{s}=\hat{\mathfrak{s}}|_{Y}$ is torsion, one can also desuspend $\underline{\textnormal{bf}}^{R}(X,\hat{\mathfrak{s}},A_{0},g,\hat{o},[\vec{\eta}];S^{1}) $ 
by $n(Y,{\mathfrak{s}},A_{0},g) $ to obtain the normalized type-R unfolded relative Bauer--Furuta
invariant of $X$
\[
\begin{split}
&\underline{\textnormal{BF}}^{R}(X,\hat{\mathfrak{s}},\hat{o},[\vec{\eta}];S^{1}) \colon  \\
& \Sigma^{-(V^{+}_X\oplus  V_{\text{out}})}\tilde{T}(X,\hat{\mathfrak{s}},\hat{o};S^{1})\wedge
\underline{\textnormal{SWF}}_{}^{R}(Y_{\text{in}},\mathfrak{s}_{\text{in}}; S^1)\rightarrow \underline{\textnormal{SWF}}_{}^{R}(Y_{\text{out}},\mathfrak{s}_{\text{out}};
S^1).
\end{split}
\]
\end{defi}

\begin{rmk} 
One can also construct the maps ${\breve\psi}^{n}_{m_{0},m_{1}} $ directly by replacing $(-Y_{\text{in}} , Y_{\text{out}})$ with $(Y_{\text{out}} , -Y_{\text{in}})$ in the construction through out this section.
\end{rmk}

\subsection{Invariance of the relative invariants} 
\label{sec INVofBF}

In this subsection, we will show that the morphism $\underline{\textnormal{bf}}^{A} = \underline{\textnormal{bf}}^A(X, \hat{\frak s}, A_0,  g,  \hat{o}, [\vec{\eta}]; S^1)$ and $\underline{\textnormal{bf}}^{R} = \underline{\textnormal{bf}}^{R}(X, \hat{\mathfrak s}, A_0,  g, \hat{o}, [\vec{\eta}]; S^1)$ depends only on $A_0, g, \hat{o}, [\vec{\eta}]$.  We have to check that they are independent of the choices of 

\begin{enumerate}[(i)]
  
  \item
  cutting function $\bar{g}$, cutting value $\theta$,  harmonic 1-forms $\{ h_{j} \}_{j=1}^{b_1}$ representing generators of $\im (H^1(Y; \Z) \rightarrow H^1(Y; \R))$, 
  
  \item
  Riemannian metric $\hat{g}$, connection $\hat{A}_0$ on $X$ with $\hat{g}|_{Y} = g, \hat{A}_0|_{Y} = A_0$, 
  
  \item
  perturbation $f : Coul(Y) \rightarrow \R$.

\end{enumerate}

 Moreover when $c_1(\frak{s})$ is torsion, we will show that $\underline{\textnormal{BF}}^{A}(X, \hat{\frak{s}}, \hat{o}, [\vec{\eta}]; S^1)$ and $\underline{\textnormal{BF}}^{R}(X, \hat{\mathfrak s}, \hat{o}, [\vec{\eta}]; S^1)$ are independent of $A_0, g$ too. 

\vspace{2mm}

Choose two cutting functions $\bar{g}$, $\bar{g}'$, cutting values $\theta, \theta'$ and sets of harmonic $1$-forms $\{ h_j \}_{j=1}^{b_1}$, $\{ h_{j}' \}_{j=1}^{b_1}$ representing generators of 
\[
2\pi i \im (H^1(Y; \Z) \rightarrow H^1(Y;\R)).
\]
We get two inductive systems
\[
       \begin{split}
       & \underline{\textnormal{swf}}^{A}(Y, \{ h_j \}_j, \bar{g}, \theta) = (I_1 \rightarrow I_2 \rightarrow \cdots), \\
       & \underline{\textnormal{swf}}^{A}(Y, \{ h_j' \}_j, \bar{g}', \theta') = ( \tilde{I}_1 \rightarrow \tilde{I}_2 \rightarrow \cdots)
       \end{split}
\]
in ${\frak C}$. 
Here $I_m$, $\tilde{I}_m$ are the desuspension of the Conley indices 
\[
I_{S^1}( \varphi^n,  \inv (J_{m}^{n,+})  ),  \
I_{S^1}( \varphi^n,  \inv (\tilde{J}_{m}^{n, +})  )
\]
for $n \gg m$  by $V_{\lambda_n}^{0}$, and  $J_{m}^{n, +}$, $\tilde{J}_{m}^{n, +}$ are the bounded sets  in $Str(\tilde{R})$ defined by using $(\{ h_j \}_j, \bar{g}, \theta)$, $(\{ h_j' \}_j, \bar{g}', \theta')$.  

Choosing integers $m_j \ll \tilde{m}_j \ll m_{j+1}$,  we can assume that $\inv( J^{n,+}_{m_j})$ is an attractor in $\inv( \tilde{J}^{n, +}_{\tilde{m}_j} )$ and we have the attractor map
\[
        I_{S^1}( \inv(J_{m_j}^{n, +} ))  \rightarrow I_{S^1}( \inv(  \tilde{J}^{n,+}_{ \tilde{m}_j }  )  )
\]
which induces a morphism
\[
         I_{m_j} \rightarrow  \tilde{I}_{\tilde{m}_j}. 
\]
Similarly we have a morphism
\[
      \tilde{I}_{ \tilde{m}_j} \rightarrow I_{m_{j+1}}.
\]
These morphisms induce an isomorphism between $\underline{\textnormal{swf}}^{A}(Y, \{ h_j \}_j, \bar{g}, \theta)$ and $\underline{\textnormal{swf}}^{A}(Y, \{ h_j' \}_j, \bar{g}', \theta')$.   The isomorphism between 
\[
\underline{\textnormal{swf}}^{R}(Y, \{ h_j \}_j, \bar{g}, \theta)
\ \text{and} \ 
\underline{\textnormal{swf}}^{R}(Y, \{ h_j' \}_j, \bar{g}, \theta')
\]
is obtained similarly. 
The morphisms in (\ref{morphism 1}) inducing the relative invariants $\underline{\textnormal{bf}}^{A}, \underline{\textnormal{bf}}^{R}$ are compatible with the attractor maps and repeller maps as in stated in Lemma \ref{morphism compatible}. It means that $\underline{\textnormal{bf}}^{A}$, $\underline{\textnormal{bf}}^{R}$ are independent of the choices of $\{ h_ j \}_j$, $\bar{g}$, $\theta$ up to the canonical isomorphisms.

\vspace{2mm}

Choose connections $\hat{A}_0, \hat{A}_0'$ on $X$ with $\hat{A}_0|_{Y} = \hat{A}_0'|_{Y} = A_0$ and Riemannian metrics $\hat{g}, \hat{g}'$ on $X$ with $\hat{g}|_{Y} = \hat{g}'|_{Y} = g$.  Then the homotopies
\[
     \hat{A}_0(s) = (1-s) \hat{A}_0 + s \hat{A}_0', \
     \hat{g} (s) = (1 - s) \hat{g} + s \hat{g}'
\]
naturally induce the homotopy between the maps $v$, $v'$ defined in (\ref{eq map v}) associated with $(\hat{g}_0, \hat{A}_0), (\hat{g}', \hat{A}_0')$.   Hence $\underline{\textnormal{bf}}^{A}, \underline{\textnormal{bf}}^{R}$ are independent of $\hat{A}_0, \hat{g}$.   
\vspace{2mm}

Take sequences $\lambda_n, \lambda_n', \mu_n, \mu_n'$ with $-\lambda_n, -\lambda_n', \mu_n, \mu_n' \rightarrow \infty$.
Then we get objects 
\[ 
  I_{m_0}^{n, -}(-Y_{\text{in}}),  I_{m_1}^{n, +}(Y_{ \text{out} }),   \ 
   \tilde{I}_{m_0}^{n, -}(-Y_{\text{in}}), \tilde{I}_{m_1}^{n,+}(Y_{\text{out}}).
\]
We have canonical isomorphisms
\[
              I_{m_0}^{n, -}(-Y_{\text{in}}) \cong \tilde{I}_{m_0}^{n, -}( - Y_{\text{in}} ), \ 
              I_{m_1}^{n, +}(Y_{\text{out}}) \cong \tilde{I}_{m_1}^{n, +}(Y_{\text{out}})
\]
for  $n$ large relative to $m_0$, $m_1$.   The morphisms $\psi_{m_0, m_1}^{n}$ are compatible with these isomorphisms as stated in Lemma \ref{morphism compatible}.  Therefore  $\underline{\textnormal{bf}}^{A}$, $\underline{\textnormal{bf}}^{R}$ are independent of $\lambda_n, \mu_n$ up to canonical isomorphisms.

\vspace{2mm}

Let us consider the invariance of $\underline{\textnormal{bf}}^{A}, \underline{\textnormal{bf}}^{R}$ with respect to the perturbation $f$. 
Take two perturbations $f_1, f_2 : Coul(Y) \rightarrow \R$. 
Then we obtain two inductive systems
\[
    \begin{split}
      & \underline{\textnormal{swf}}^{A}(Y, f_1) = ( I_1 \rightarrow I_2 \rightarrow \cdots ) \\
      & \underline{\textnormal{swf}}^{A}(Y, f_2) = ( \tilde{I}_1 \rightarrow \tilde{I}_2 \rightarrow \cdots )
    \end{split}
\]
in the category $\frak{C}$, which are isomorphic to each other.  Let us recall how to get the isomorphism briefly. (See  Section 6.3 of \cite{KLS1} for the details.)     The perturbations $f_1, f_2$ define the functionals ${\mathcal L}_1$, ${\mathcal L}_2$, which induce the flows
\[
     \varphi^n ( {\mathcal L}_1), \varphi^n( {\mathcal L}_2 ) : 
     V_{\lambda_n}^{\mu_n} \times \R  \rightarrow V_{\lambda_n}^{\mu_n}.
\]
The objects $I_m, \tilde{I}_m$ are the  desuspensions by $V_{\lambda_n}^{0}$ of the Conley indices 
\[
     I_{S^1}( \varphi^{n} ({\mathcal L}_1),  \inv( J_{m}^{n, +} ) ), \
    I_{S^1} ( \varphi^{n}(  {\mathcal L}_2),  \inv( \tilde{J}_{m}^{n, +} ) ).
\]
Choose integers $k_m, \tilde{k}_m$ with $0 \ll k_m \ll \tilde{k}_m \ll k_{m+1}$. Then  we have
\[
\begin{split}
           J_{k_m}^{+} 
         &  \subset p_{\mathcal H}^{-1}(  [-e_m+1, e_m-1]^{b_1} ) \cap Str(\tilde{R})  \\
      &     \subset p_{\mathcal H}^{-1}( [-e_m, e_m]^{b_1}) \cap Str(\tilde{R}) \\
      &     \subset \tilde{J}_{\tilde{k}_m}^{+}
  \end{split}
\]
for some large positive number $e_m$.  We have a map
\[
        \bar{i}^n_{m}   :  I_{S^1}( \varphi^{n}( {\mathcal L}_1  ), \inv( J^{n, +}_{k_m} )  )   \rightarrow 
                                I_{S^1}(  \varphi^{n} ( {\mathcal L}_2), \inv (  \tilde{J}_{\tilde{k}_m}^{n, +}  )  ),
\]
which induces the isomorphism between $\underline{\textnormal{swf}}^{A}(Y, f_1)$ and $\underline{\textnormal{swf}}^{A}(Y, f_2)$.  The map $\bar{i}_m^{n}$ is the composition $\rho_1 \circ \rho_2$ of 
\[
     \rho_1 : I_{S^1}( \varphi^n(  {\mathcal L}^0_{e_m} ), \inv( \tilde{J}^{n}_{ \tilde{k}_m }  )  )   \rightarrow 
                   I_{S^1}(\varphi^n (  {\mathcal L}_2 ), \inv ( \tilde{J}_{ \tilde{k}_m }^{n,+} )  ) 
\]
and
\[
          \rho_2 : I_{S^1}(  \varphi^n( {\mathcal L}_1 ), \inv( J_{k_m}^{n, +}  )  )  \rightarrow
                       I_{S^1}(  \varphi^n( \mathcal{L}_{e_m}^0  ), \inv( \tilde{J}_{\tilde{k}_m}^{n, +} )  ).
\]
Here ${\mathcal L}_{e_m}^0$ is a functional on $Coul(Y)$ such that 
\[
   \begin{split}
       &   {\mathcal L}^0_{e_m} = {\mathcal L}_1  \   \text{on $p_{\mathcal H}^{-1}( [-e_m+1, e_m - 1]^{b_1} )$,  }  \\
       &   {\mathcal L}^0_{e_m} = { \mathcal L }_2  \ \text{on $p_{\mathcal H}^{-1}( \R^{b_1} \setminus [ -e_m, e_m ]^{b_1}  )$  }.
   \end{split}
\]
The map $\rho_1$ is the homotopy equivalence induced by a  homotopy 
\[
\{ \varphi( \mathcal{L}^{s}_{e_m} ) \}_{0 \leq s \leq 1},
\]
where ${\mathcal L}_{e_m}^{s} = s {\mathcal L}_{1} + (1-s) {\mathcal L}_{e_m}^{0}$.   Note that 
\[
\inv(J_{k_m}^{n, +}, \varphi^n( {\mathcal L}_{e_m}^0 )) ( = \inv( J_{k_m}^{n, +}, \varphi^n( \mathcal{L}_1 )  ) )
\]
is  an attractor in $\inv(\tilde{J}_{\tilde{k}_m}^{n, +}, \varphi^n( {\mathcal L}_{e_m}^0  ))$. The map $\rho_2$ is the attractor map. 

Similarly the isomorphism between  $\underline{\textnormal{swf}}^{R}(Y, f_1)$ and $\underline{\textnormal{swf}}^{R}(Y, f_2)$ is induced by the composition of the repeller map and the homotopy equivalence induced by the homotopy of the flows. 

To prove the invariance of $\underline{\textnormal{bf}}^{A}, \underline{\textnormal{bf}}^{R}$ with respect to perturbation $f$, we need to show that the morphisms (\ref{morphism 1})  are compatible with the attractor maps, the repeller maps and the homotopy equivalence induced by the homotopy of the flows. 
The compatibility with the attractor maps and the repeller maps is already stated in Lemma \ref{morphism compatible}.  We will show the compatibility with the homotopy equivalence induced by the homotopy of the flows. 

Take perturbations $f_0, f_1 :  Coul(-Y_{in}) \coprod Coul(Y_{out}) \rightarrow \R$.    Let us consider the flow
\[
    \widetilde{\varphi}_n : V_{\lambda_n}^{\mu_n} \times [0, 1] \times \R \rightarrow V_{\lambda_n}^{\mu_n} \times [0, 1]
\]
on $V_{\lambda_n}^{\mu_n} \times [0, 1]$, induced by the homotopy 
\begin{equation}  \label{eq L^s}
\begin{split}
 &   \mathcal{L}_{Y_{\text{in}}, e_{m_0}}^{s} \coprod \mathcal{L}_{ Y_{\text{out}}, e_{m_1}  }^{s} :  \\
 &  \quad  Coul(-Y_{\text{in}}) \coprod Coul(Y_{\text{out}}) \rightarrow \R 
    \quad 
    (0 \leq s \leq 1). 
    \end{split}
\end{equation}
We also have the Seiberg-Witten map on $X$ induced by the homotopy:
\[
     Coul^{CC}(X) \times [0, 1] \rightarrow 
     L^2_{k-1}( i\Omega^+(X) \oplus S^-_X) \oplus V_{-\infty}^{\mu_n} \times [0, 1].
\]
Using the flow and the Seiberg-Witten map, for a small positive number $\epsilon > 0$, we define 
\[
    \widetilde{K}_1 = \widetilde{K}_1(n, m_0,  \epsilon), \  \widetilde{K}_2 =  \widetilde{K}_2(n, m_0,  \epsilon) 
     \subset
     B(V_{\lambda_n}^{\mu_n}, \widetilde{R}) \times [0, 1]
\]
as in (\ref{eq K_1 K_2}). As before we can show that $(\widetilde{K}_1, \widetilde{K}_2)$ is a pre-index pair and can find an index pair $(\widetilde{N}, \widetilde{L})$ such that
\[
     \widetilde{K}_1(n, m_0, \epsilon) \subset \widetilde{N}, \
     \widetilde{K}_2(n, m_0, \epsilon) \subset \widetilde{L}.
\]
For $s \in [0,1]$, put
\[
  \begin{split}
   & K_{1, s}(n, m_0, \epsilon) := \widetilde{K}_1(n, m_0, \epsilon) \cap  (V_{\lambda_n}^{\mu_n} \times \{ s \}), \\
   & K_{2, s}(n, m_0, \epsilon) := \widetilde{K}_2(n, m_0, \epsilon) \cap  (V_{\lambda_n}^{\mu_n} \times \{ s \}), \\
   &  N_s := \widetilde{N} \cap (V_{\lambda_n}^{\mu_n} \times \{ s \}), \\
   &  L_s := \widetilde{L} \cap (V_{\lambda_n}^{\mu_n} \times \{ s \}).
  \end{split}
\]
We get the map
\[
  \begin{split}
  &  v_{s} : B(W_{n, \beta}, R) / S(W_{n, \beta}, R)  \\
    &\quad \rightarrow 
     (B(U_n, \epsilon) / S(U_n, \epsilon)) \wedge ( K_{1, s}(n, m_0, \epsilon)/ K_{2, s}(n, m_0, \epsilon) ) \\
    & \quad \hookrightarrow 
     ( B(U_n, \epsilon) / S(U_n, \epsilon) ) \wedge  (N_s / L_s).
  \end{split}
\]
The maps $v_0, v_1$ induce morphisms
\[
  \begin{split}
   & \psi_0 : \Sigma^{ -(V_{X}^+ \oplus V_{\text{in}}) } T \rightarrow I_{m_0}^{n, -}(-Y_{\text{in}})_0 \wedge I_{m_1}^{n, +}(Y_{\text{out}})_0, \\
   & \psi_1 : \Sigma^{ -(V_{X}^+ \oplus V_{\text{in}}) } T \rightarrow I_{m_0}^{n, -}(-Y_{\text{in}})_1 \wedge I_{m_1}^{n, +}(Y_{\text{out}})_1
  \end{split}
\]
for $0 \ll m_0 \ll m_1 \ll n$ as before. 
We have to check that the following diagram is commutative: 
\begin{equation} \label{diagram psi_0 psi_1}
    \xymatrix{
       \Sigma^{ -(V_{X}^+ \oplus V_{\text{in}}) } T \ar[r]^(0.37){\psi_0} \ar[rd]_{\psi_1} &
                    I_{m_0}^{n, -}(-Y_{\text{in}})_0 \wedge I_{m_1}^{n, +}(Y_{\text{out}})_0 \ar[d]^{\cong} \\
       &  I_{m_0}^{n, -}(-Y_{\text{in}})_1 \wedge I_{m_1}^{n, +}(Y_{\text{out}})_1
    }
\end{equation}
Here $I_{m_0}^{n, -}(-Y_{\text{in}})_0 \wedge I_{m_1}^{n, +}(Y_{\text{out}})_0 \cong I_{m_0}^{n, -}(-Y_{\text{in}})_1 \wedge I_{m_1}^{n,+}(Y_{\text{out}})_1$ is the isomorphism induced by the homotopy (\ref{eq L^s}).  Consider the inclusion
\[
     i_s : N_s / L_s \hookrightarrow \widetilde{N} / \widetilde{L}
\]
for $s \in [0, 1]$.  By Theorem 6.7 and Corollary 6.8 of \cite{Salamon},   $i_s$ is a homotopy equivalence and  the following diagram is commutative up to homotopy:
\begin{equation} \label{diagram i_0 i_1}
       \xymatrix{
           N_0 / L_0 \ar[r]^{i_0} \ar[d]_{\cong}   &  \widetilde{N} / \widetilde{L} \\
           N_1 / L_1 \ar[ru]_{i_1}          &
        }
\end{equation}
Here $N_0 / L_0 \cong N_1 / L_1$ is the homotopy equivalence induced by the homotopy (\ref{eq L^s}).   
With the homotopy 
\[
     i_s \circ v_s : B(W_{n, \beta}, R) / S(W_{n, \beta}) \rightarrow 
                           (B(U_n, \epsilon) / S(U_n, \epsilon)) \wedge (\widetilde{N} / \widetilde{L})
\]
 between $i_0 \circ v_0$ and $i_1 \circ v_1$ and the commutativity of the diagram (\ref{diagram i_0 i_1}), we can see that the diagram (\ref{diagram psi_0 psi_1}) is commutative. 
The invariance of $\underline{\textnormal{bf}}^{A}, \underline{\textnormal{bf}}^{R}$ with respect to perturbation $f$ has been proved. 

\vspace{2mm}

Assume that $c_1({\frak s})$ is torsion.  We will prove that the normalized invariants $\underline{\textnormal{BF}}^{A}, \underline{\textnormal{BF}}^{R}$ are independent of Riemannian metric $g$ and base connection $A_0$ on $Y$.   Take Riemannian metrics $g, g'$  and connections $A_0, A_0'$ on $Y$.  Let us consider the homotopy
\[
      A_0 (s) = (1-s) A_0 + s A_0', \ 
      g(s) = (1-s)g + s g' \
      (s \in [0, 1]).
\]
Choose continuous families of Riemannian metrics $\hat{g}(s)$ and connections $\hat{A}_0(s)$ on $X$ with $\hat{g}(s)|_{Y} = g(s), \hat{A}_0(s)|_{Y} = A_0(s)$. 
Splitting the interval $[0, 1]$ into small intervals $[0, 1] = [0, t_1] \cup \cdots \cup [t_{N-1}, t_{N}]$,  the discussion is reduced to the case when $\lambda_n, \mu_n$ (for some fixed, large number $n$) are not eigenvalues of the Dirac operators $D_{s}$ on $Y$ associated to $g(s), A(s)$.  In this case,  the dimension of $W_{n, \beta}(s)$ is constant, where
\[
       W_{n, \beta}(s) :=  
       (\tilde{L}_s, p^{\mu_n}_{\infty})^{-1}(U_n \times V_{\lambda_n}^{\mu_n}(s)) \cap \mathcal{W}_{X, \beta}(s).         
\]
Then we can mimic  the discussion  about the invariance with respect to perturbation $f$ to get a homotopy $v_s$ between $v_0$ and $v_1$ which are the maps in (\ref{eq map v}) associated $(\hat{g}, \hat{A}_0), ( \hat{g}', \hat{A}_0')$.  Therefore the morphisms $\psi_{m_0, m_1}^{n}$ associated with $(\hat{g}_0, \hat{A}_0)$ and $(\hat{g}_1, \hat{A}_1)$ are the same. 
Note that the objects  $(V_{\lambda_n}^{0}(s) \oplus \C^{n(Y, g_s, A_s)})^+$ of $\frak{C}$  for $s = 0 ,1 $ are isomorphic to each other.  Taking the desuspension by $V_{\lambda_n}^{0}(s) \oplus \C^{n(Y, g_s, A_s)}$, we conclude  that $\underline{\textnormal{BF}}^{A}, \underline{\textnormal{BF}}^{R}$ are independent of $g, A_0$ up to canonical isomorphisms.


\section{The gluing theorem}
\subsection{Statement and setup of the gluing theorem} \label{sec gluingsetup}

In this section, let  $X_{0} \colon Y_{0}\rightarrow Y_{2}$ and $X_{1} \colon Y_{1}\rightarrow -Y_{2}$ be connected, oriented cobordisms with the
following properties:
\begin{itemize}
\item $Y_{2}$ is connected;
\item $Y_{0},Y_{1}$ may not be connected but $b_{1}(Y_{0})=b_{1}(Y_{1})=0$.
\end{itemize}
By gluing the two cobordisms along $Y_{2}$, we obtain a cobordism $X \colon Y_{0}\cup Y_{1}\rightarrow \emptyset$. As in Section~\ref{sec 4mfd}, we choose the following data when defining the relative Bauer--Furuta invariants:\begin{itemize}
\item A spin$^{c}$ structure $\hat{\mathfrak{s}}$ on $X$.
\item A Riemannian metric $\hat{g}$ on $X$, we require it equals the product metric near $Y_{i}$.
\item A base connection $\hat{A}^{0}$ on $X$;
\item A base point $\hat{o}\in Y_{2}$ and a based path data $[\vec{\eta}_{i}]$ on $X_{i}$ for $i=0,1$. The path from $\hat{o}$
to $Y_{2}$ is chosen to be the constant path. By patching $[\vec{\eta}_{1}]$ and $[\vec{\eta}_{2}]$ together in the obvious
way, we get a based path data $[\vec{\eta}]$ on $X$;
\item Denote the restriction of $\hat{\mathfrak{s}}$ (resp. $\hat{g}$ and $\hat{A}^{0}$ ) to $X_{i}$ by $\hat{\mathfrak{s}}_{i}$
(resp. $\hat{g}_{i}$ and $\hat{A}^{0}_{i}$) and the restriction to $Y_{j}$ by $\mathfrak{s}_{j}$ (resp. $g_{j}$ and $A^{0}_{j}$).
\end{itemize}
With the above data chosen, we obtain the invariants $$\underline{\textnormal{bf}}^{A}(X_{0},\hat{\mathfrak{s}}_{0},\hat{A}^{0}_{0},\hat{g}_{0},\hat{o},[\vec{\eta}_{0}];S^{1}),$$
$$\underline{\textnormal{bf}}^{R}(X_{1},\hat{\mathfrak{s}}_{1},\hat{A}^{1}_{0},\hat{g}_{1},\hat{o},[\vec{\eta}_{1}];S^{1}),$$
and $$\textnormal{BF}(X,  \hat{\mathfrak{s}}, \hat{o},[\vec{\eta}]).$$ For shorthand, we write them as $\underline{\textnormal{bf}}^{A}(X_{0})$,
$\underline{\textnormal{bf}}^{R}(X_{1})$ and $\textnormal{BF}(X)$ respectively throughout this section.


\begin{thm}[The Gluing Theorem]  \label{thm gluing BF inv}
With the above setup, if we assume further that 
\begin{equation}\label{homology condition}
\im(H^{1}(X_{0};\mathbb{R})\rightarrow H^{1}(Y_{2};\mathbb{R}))\subset \im(H^{1}(X_{1};\mathbb{R})\rightarrow H^{1}(Y_{2};\mathbb{R})),
\end{equation}
then, under the natural identification between domains and targets, we have
\begin{align*} \label{eq gluingformula}
\textnormal{BF}(X)|_{\pic(X,Y_{2})}=\pmb{\tilde{\epsilon}} (\underline{\textnormal{bf}}^{A}(X_{0}),\underline{\textnormal{bf}}^{R}(X_{1})),
\end{align*}
where $\pmb{\tilde{\epsilon}}(\cdot,\cdot)$ is the Spanier-Whitehead
duality operation defined in Section~\ref{section spanierwhitehead}.
\end{thm}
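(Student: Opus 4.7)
The plan is to follow the strategy of Manolescu's gluing theorem \cite{Manolescu2}, where the case $b_1(Y_2)=0$ is handled, and adapt it to the unfolded setting via the $T$-tameness machinery developed in Section~\ref{section T-tame} and Section~\ref{sec Ttamemfd}. Concretely, I would stretch the neck of $Y_2$ to form a family of cobordisms $X(T) = X_0 \cup_{Y_2} ([-T,T] \times Y_2) \cup_{Y_2} X_1$ all diffeomorphic to $X$, and for each large $T$ compare the finite-dimensional approximation of the Seiberg-Witten map on $X(T)$ with the composite of the finite-dimensional approximations on $X_0$ and $X_1$ joined through the Conley index of $Y_2$. The two sides of the gluing formula live in $\morp_{\mathfrak{C}}(\cdots, S)$, and the goal is to exhibit an explicit stable $S^1$-homotopy between them.

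The first half of the proof consists in unwinding the right hand side $\pmb{\tilde\epsilon}(\underline{\textnormal{bf}}^A(X_0),\underline{\textnormal{bf}}^R(X_1))$ into an explicit map on Thom spaces. Using the definition of $\pmb{\tilde\epsilon}$ together with the construction of the duality morphisms $\epsilon_{m,n}$ for $\swf^A(Y_2),\swf^R(-Y_2)$ from Section~\ref{section dualswf}, and the pre-index-pair maps $\iota_0,\iota_1$ provided by Proposition~\ref{prop 4dimpreindex}, one obtains a specific composition of smash products of flow maps with the Spanier--Whitehead pairing $\hat\epsilon([y]\wedge[x]) = [b_n(y)-a_n(x)]$. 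The critical ingredient is that both pre-index pairs of $X_0$ and $X_1$ and the index pairs arising from manifold isolating blocks can be chosen to be $T$-tame by Theorem~\ref{from pre-index to index refined} and Proposition~\ref{prop mfdnhbdswf}; Lemma~\ref{flow map from tame index pair} then reduces the flow maps to the simple formula $[x]\mapsto[\varphi(x,3T)]$ on a neighborhood of the invariant set. After two such simplifications one obtains a normal form for the right hand side as in the anticipated Proposition~\ref{deformed pairing}, which represents the pairing as a single map
\[
   \pmb{\tilde\epsilon}(\iota_0,\iota_1)\colon K_0/S_0 \wedge K_1/S_1 \rightarrow \tilde N_0/\tilde N_0^{+}\wedge \tilde N_1/\tilde N_1^{+}\wedge B^+(V^2_n,\bar\epsilon).
\]

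The second half is to deform the Seiberg-Witten data. Starting from the finite dimensional approximations $\widetilde{SW}_{n,\beta}(X_i)$ defined on bundles $W_{n,\beta}(X_i)$ over $\pic(X_i,Y)$, I would perform a seven-step deformation, in the spirit of Manolescu, to transform their smash product into the approximation $\widetilde{SW}_n(X(T))$ on $W_n(X)$ over $\pic(X,Y_2)$. The individual steps consist of (i) introducing a long cylinder via the product Seiberg--Witten equation on $[-T,T]\times Y_2$, (ii) deforming the Coulomb-type boundary conditions of $X_0$ and $X_1$ into the double Coulomb condition on $X(T)$, (iii) interpolating between the two distinct families of Dirac operators (one parametrized by $\pic(X_0,Y)\times\pic(X_1,Y)$ and one by $\pic(X,Y_2)$), and (iv) collapsing the harmonic gauge group action from $\mathcal{G}^{h,\hat o}_{X_0,Y}\times\mathcal{G}^{h,\hat o}_{X_1,Y}$ to $\mathcal{G}^{h,\hat o}_{X,Y_2}$. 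A recurring device will be the projected-domain lemma (Lemma~\ref{moving map to  domain2} in the forthcoming Section~\ref{sec deform2nd}) which lets one replace a constraint $f(x)\in U$ on the image by a pullback constraint on the domain, so that these deformations can be encoded inside a single moduli of stably c-homotopic pairs (Section~\ref{subsec stablyc}).

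The hardest step will be controlling the residual harmonic gauge group action on $Y_2$ in the glued picture, which is precisely where hypothesis~(\ref{item gluecon3}) enters. The issue is that in the separate invariants $\underline{\textnormal{bf}}^{A}(X_0)$ and $\underline{\textnormal{bf}}^{R}(X_1)$ one works on the Hilbert bundles over $\pic(X_i,Y)$, and the Spanier-Whitehead pairing requires the harmonic projections $\hat p_{\beta}$ on $Y_2$ from the two sides to line up. The inclusion $\im(H^{1}(X_0;\R)\to H^{1}(Y_2;\R))\subset \im(H^{1}(X_1;\R)\to H^{1}(Y_2;\R))$ together with connectedness of $Y_2$ and $b_1(Y_0)=b_1(Y_1)=0$ guarantees that on the glued manifold $X$ the harmonic gauge group $\mathcal G^{h,\hat o}_{X,Y_2}$ decomposes compatibly with the factors, so that the constraint $\hat p_{\beta,\text{out}}=0$ used to define $W_{n,\beta}(X_0)$ is preserved under the deformation and does not force additional identifications. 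With this in hand, the composition of the two deformations identifies the two maps in $\morp_{\mathfrak{C}}$ up to $S^1$-equivariant stable homotopy, yielding the gluing equality on $\pic(X,Y_2)$.
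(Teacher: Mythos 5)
Your proposal follows essentially the same two-part strategy as the paper: Section~\ref{sec deform1st} simplifies the Spanier--Whitehead pairing $\pmb{\tilde\epsilon}(\iota_0,\iota_1)$ into a single explicit map on Thom spaces using the $T$-tameness machinery from Sections~\ref{section T-tame}--\ref{sec Ttamemfd} (culminating in Proposition~\ref{deformed pairing}), and Section~\ref{sec deform2nd} carries out a seven-step deformation of the Seiberg--Witten data within the framework of stably c-homotopic SWC triples, using Lemma~\ref{moving map to domain2} repeatedly as the ``move conditions between domain and target'' device. Your identification of the key role of condition~(\ref{homology condition}) in controlling the residual harmonic gauge action on $Y_2$, and of the $\hat p_{\beta,\mathrm{out}}=0$ constraint, matches the paper's treatment. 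The one cosmetic difference is that you frame the first-half deformation as literally stretching a Riemannian neck $[-T,T]\times Y_2$ as in Manolescu's $b_1(Y_2)=0$ proof; the paper instead encodes the neck length implicitly as the flow-time parameter in the flow maps $s_T$, deforming from $L_0$ to $0$ in the middle coordinate (Step~1 of Section~\ref{sec deform1st}) --- the two viewpoints are equivalent but the paper's avoids re-proving any compactness for the stretched metrics. Similarly, in your second half the paper does not literally insert a cylinder in Step~(i): instead Step~7 interpolates a family of subbundles $W^{n,\theta}_{X_0,X_1}$ by a parameter $t\in[0,1]$ matching the high-frequency and low-frequency parts of $r_2(x_0)$ and $r_2(x_1)$, and the ``gluing'' happens at the level of Sobolev restriction data (Steps~2--4) rather than of underlying manifolds. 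Also a minor sign slip: the target in your displayed map should involve $\tilde N_j/\tilde N_j^{-}$ (the exit set), not $\tilde N_j/\tilde N_j^{+}$.
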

\begin{cor}
When the map $H^{1}(X_{0};\mathbb{R})\rightarrow H^{1}( Y_2 ; \mathbb{R})$ is trivial, we recover the full Bauer--Furuta invariant
\begin{align*} 
\textnormal{BF}(X)=\pmb{\tilde{\epsilon}}(\underline{\textnormal{bf}}^{A}(X_{0}), \underline{\textnormal{bf}}^{R}(X_{1})).
\end{align*}
\end{cor}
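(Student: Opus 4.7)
The strategy is to deduce the corollary directly from Theorem~\ref{thm gluing BF inv} (the Gluing Theorem) by verifying two points: first, that the hypothesis of the theorem is satisfied under the stronger assumption of the corollary, and second, that under this assumption the relative Picard torus $\pic(X,Y_{2})$ coincides with the full Picard torus $\pic(X)$, so that the restriction appearing in the gluing formula becomes redundant.

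For the first point, the homological hypothesis (\ref{homology condition}) of Theorem~\ref{thm gluing BF inv} asks that
\[
\im(H^{1}(X_{0};\R)\to H^{1}(Y_{2};\R))\subset\im(H^{1}(X_{1};\R)\to H^{1}(Y_{2};\R)).
\]
Under the assumption of the corollary the left-hand side is the zero subspace, so the inclusion holds trivially. Hence Theorem~\ref{thm gluing BF inv} applies and yields
\[
\textnormal{BF}(X)|_{\pic(X,Y_{2})}=\pmb{\tilde{\epsilon}}(\underline{\textnormal{bf}}^{A}(X_{0}),\underline{\textnormal{bf}}^{R}(X_{1})).
\]

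For the second point, I would use the factorization of the restriction map $\iota_{Y_{2}}^{*}\colon H^{1}(X;\R)\to H^{1}(Y_{2};\R)$ through $H^{1}(X_{0};\R)$, coming from the inclusions $Y_{2}\hookrightarrow X_{0}\hookrightarrow X$. Since the second arrow in this factorization vanishes by hypothesis, so does $\iota_{Y_{2}}^{*}$. The same factorization holds with $\Z$-coefficients, and because $H^{1}(Y_{2};\Z)=\mathrm{Hom}(H_{1}(Y_{2};\Z),\Z)$ is torsion-free, a homomorphism from a finitely generated group into it is zero as soon as its $\R$-linear extension is zero. Therefore $H^{1}(X;\Z)\to H^{1}(Y_{2};\Z)$ also vanishes, and
\[
\pic(X,Y_{2})=\ker(H^{1}(X;\R)\to H^{1}(Y_{2};\R))\big/\ker(H^{1}(X;\Z)\to H^{1}(Y_{2};\Z))=H^{1}(X;\R)/H^{1}(X;\Z),
\]
which is precisely $\pic(X)$ since $b_{1}(Y_{0})=b_{1}(Y_{1})=0$ forces $H^{1}(\partial X;\R)=0$.

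Combining the two points, the restriction to $\pic(X,Y_{2})$ in the conclusion of Theorem~\ref{thm gluing BF inv} acts on the full Picard torus, and the gluing formula collapses to the statement of the corollary. There is no real obstacle here: the analytic and homotopical content is entirely contained in Theorem~\ref{thm gluing BF inv}, and the corollary is a purely homological observation that the relative Picard torus degenerates to the full one when the map $H^{1}(X_{0};\R)\to H^{1}(Y_{2};\R)$ is trivial.
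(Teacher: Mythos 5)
Your proposal is correct and follows exactly the route the paper intends: the corollary is stated as an immediate consequence of Theorem~\ref{thm gluing BF inv}, and you supply the two routine verifications — that condition (\ref{homology condition}) holds trivially when the left-hand image vanishes, and that the factorization of $H^{1}(X;\R)\to H^{1}(Y_{2};\R)$ through $H^{1}(X_{0};\R)$ (together with torsion-freeness of $H^{1}(Y_{2};\Z)$) forces $\pic(X,Y_{2})=\pic(X)$. No gaps.
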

\begin{cor}
When $\mathfrak{s}_{2}$ is torsion and (\ref{homology condition}) is satisfied, we also have analogous result for the normalized version
$$
\textnormal{BF}(X)|_{ \pic(X,Y_2)}=\pmb{\tilde{\epsilon}} (\underline{\textnormal{BF}}^{A}(X_{0}),\underline{\textnormal{BF}}^{R}(X_{1})).
$$
\end{cor}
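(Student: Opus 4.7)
The plan is to deduce the corollary directly from Theorem~\ref{thm gluing BF inv} by tracking the rational desuspensions that distinguish the normalized invariants from the unnormalized ones. Since $\mathfrak{s}_{2}$ is torsion by hypothesis (and $\mathfrak{s}_{0}, \mathfrak{s}_{1}$ are automatically torsion because $b_{1}(Y_{0}) = b_{1}(Y_{1}) = 0$), all the normalized objects $\underline{\textnormal{BF}}^{A}(X_{0}),\underline{\textnormal{BF}}^{R}(X_{1}),\underline{\textnormal{SWF}}^{A}(Y_{2}),\underline{\textnormal{SWF}}^{R}(-Y_{2})$ are well defined, and the unnormalized gluing formula from Theorem~\ref{thm gluing BF inv} already gives
\[
\textnormal{BF}(X,\hat{\mathfrak{s}})|_{\pic(X,Y_{2})}=\pmb{\tilde{\epsilon}}(\underline{\textnormal{bf}}^{A}(X_{0}),\underline{\textnormal{bf}}^{R}(X_{1})).
\]
It therefore suffices to identify $\pmb{\tilde{\epsilon}}(\underline{\textnormal{BF}}^{A}(X_{0}),\underline{\textnormal{BF}}^{R}(X_{1}))$ with $\pmb{\tilde{\epsilon}}(\underline{\textnormal{bf}}^{A}(X_{0}),\underline{\textnormal{bf}}^{R}(X_{1}))$ in $\mathfrak{C}$.

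Unpacking the definition of the normalized invariant via $\tilde{T}(X_{0},\hat{\mathfrak{s}}_{0},\hat{o};S^{1})$ and the normalized spectra, $\underline{\textnormal{BF}}^{A}(X_{0})$ is obtained from $\underline{\textnormal{bf}}^{A}(X_{0})$ by shifting the $\mathbb{Q}$-coordinate of the triple $(A,m,n)\in\operatorname{ob}\mathfrak{C}$ by the same amount on source and target. Indeed, the Thom factor contributes a shift of $n(-Y_{0})+n(Y_{2})$ and the $\underline{\textnormal{SWF}}^{A}(Y_{0})$ factor contributes $+n(Y_{0})$, so the net source shift is $n(Y_{2})$, matching the target shift $+n(Y_{2})$ from $\underline{\textnormal{SWF}}^{A}(Y_{2})$. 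The same bookkeeping applied to $X_{1}\colon Y_{1}\to -Y_{2}$ gives a net shift of $n(-Y_{2})=-n(Y_{2})$ on both source and target of $\underline{\textnormal{BF}}^{R}(X_{1})$, using $n(-Y)=-n(Y)$.

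Next, when one forms $\pmb{\tilde{\epsilon}}(\underline{\textnormal{BF}}^{A}(X_{0}),\underline{\textnormal{BF}}^{R}(X_{1}))$, the pairing factor is the duality morphism between $\underline{\textnormal{SWF}}^{A}(Y_{2})$ and $\underline{\textnormal{SWF}}^{R}(-Y_{2})$. Because the third coordinates $+n(Y_{2})$ and $-n(Y_{2})$ of these two factors cancel in the smash product in $\mathfrak{C}$, this duality morphism coincides literally (not merely up to isomorphism) with the one between $\underline{\textnormal{swf}}^{A}(Y_{2})$ and $\underline{\textnormal{swf}}^{R}(-Y_{2})$ constructed in Section~\ref{section dualswf}. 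Similarly, the opposite shifts $\pm n(Y_{2})$ cancel on the smashed sources, identifying the domain of $\pmb{\tilde{\epsilon}}(\underline{\textnormal{BF}}^{A}(X_{0}),\underline{\textnormal{BF}}^{R}(X_{1}))$ with that of $\pmb{\tilde{\epsilon}}(\underline{\textnormal{bf}}^{A}(X_{0}),\underline{\textnormal{bf}}^{R}(X_{1}))$. Combining these, one concludes
\[
\pmb{\tilde{\epsilon}}(\underline{\textnormal{BF}}^{A}(X_{0}),\underline{\textnormal{BF}}^{R}(X_{1}))=\pmb{\tilde{\epsilon}}(\underline{\textnormal{bf}}^{A}(X_{0}),\underline{\textnormal{bf}}^{R}(X_{1}))=\textnormal{BF}(X,\hat{\mathfrak{s}})|_{\pic(X,Y_{2})}.
\]

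The argument is essentially bookkeeping, and the only subtle point is the compatibility of the duality morphism $\epsilon$ with simultaneous opposite rational suspensions of its two inputs. However, this is immediate from the construction of $\epsilon$ entirely within $\mathfrak{C}$: changing the $\mathbb{Q}$-coordinate of an object of $\mathfrak{C}$ is a formal operation that commutes with morphisms, and so the whole smashed composite that defines $\pmb{\tilde{\epsilon}}$ transforms in the expected way. Thus no new analytical input is required beyond Theorem~\ref{thm gluing BF inv}.
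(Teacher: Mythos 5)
Your overall strategy is the intended one: the paper states this corollary without separate proof precisely because, once Theorem~\ref{thm gluing BF inv} is known, the normalized version is a matter of unwinding the definitions of $\underline{\textnormal{BF}}^{A}$, $\underline{\textnormal{BF}}^{R}$ and $\textnormal{BF}$ as rational (de)suspensions of the unnormalized morphisms and checking that the total shifts on the two sides of the formula agree, using that morphism sets in $\mathfrak{C}$ depend only on the difference of the $\Q$-coordinates. The one step I would not accept as written is the identity $n(-Y_{2})=-n(Y_{2})$, which you use to get the ``literal'' cancellation of the $\Q$-coordinates on the two $Y_{2}$-factors. From the formula $n(Y,\mathfrak{s},A_{0},g)=\tfrac12\left(\eta(\slashed{D})-\dim_{\C}\ker\slashed{D}+\tfrac{\eta_{\text{sign}}}{4}\right)$, orientation reversal flips the sign of the two eta terms but not of $\dim_{\C}\ker\slashed{D}$, so in fact $n(-Y_{2})=-n(Y_{2})-\dim_{\C}\ker\slashed{D}_{Y_{2}}$, and the two normalization shifts cancel exactly only when $\ker\slashed{D}_{Y_{2}}=0$ (true for generic metrics on $Y_2$ with $\mathfrak{s}_2$ torsion, but not always). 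The correct statement to invoke is not this numerical identity but the fact that the duality morphism of Section~\ref{section dualswf} descends to a duality between the \emph{normalized} spectra $\underline{\textnormal{SWF}}^{A}(Y_{2})$ and $\underline{\textnormal{SWF}}^{R}(-Y_{2})$: the $\dim_{\C}\ker\slashed{D}_{Y_{2}}$ discrepancy is absorbed by the asymmetric desuspensions on the two sides ($V^{0}_{\lambda_{n}}$, which contains $\ker\slashed{D}$, for the type-A factor versus $V^{\mu_{n}}_{0}$ for the type-R factor) together with the matching term in the index-gluing identification of $T(X)$ with the smash of $T(X_{0})$ and $T(X_{1})$ that underlies the ``natural identification between domains and targets''. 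With that correction the bookkeeping closes up and the corollary follows exactly as you say; no new analytic input beyond Theorem~\ref{thm gluing BF inv} is needed.
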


We begin  by setting up some notations. Let $\iota_{i} \colon Y_{2}\rightarrow X_{i}$ be the inclusion map. We pick a set of loops 
\[
\{\alpha^{0}_{1},\cdots ,\alpha^{0}_{b^{0}_{1,\alpha}}\}, \{\alpha^{1}_{1},\cdots ,\alpha^{1}_{b^{1}_{1,\alpha}}\},
\{\beta_{1},\cdots ,\beta_{b_{1,\beta}}\}
\]
with the following properties:
\begin{itemize}
\item For $i=0,1$, the set $\{\alpha^{i}_{1},\cdots ,\alpha^{i}_{b^{i}_{1,\alpha}}\}$ is contained in the interior of $X_{i}$
and represents a basis of cokernel of the induced map 
\[
(\iota_{i})_* \colon H_{1}(Y_{2};\mathbb{R})\rightarrow H_{1}(X_{i};\mathbb{R}).
\]
\item $\{\beta_{1},\cdots ,\beta_{b_{1,\beta}}\}\subset Y_{2}$ represents a basis for a subspace complementary to the kernel
of $(\iota_{0})_* \colon H_{1}(Y_{2};\mathbb{R})\rightarrow H_{1}(X_{0};\mathbb{R}).$
\end{itemize}
Under the assumption (\ref{homology condition}), the above properties further imply the following two properties:
\begin{itemize}
\item The set 
\begin{equation}\label{eq: decomposition of basis}
    \{\alpha^{0}_{1},\cdots ,\alpha^{0}_{b^{0}_{1,\alpha}}\}\cup \{\alpha^{1}_{1},\cdots ,\alpha^{1}_{b^{1}_{1,\alpha}}\}\cup
\{\beta_{1},\cdots ,\beta_{b_{1,\beta}}\}\end{equation} represent a basis of $H_1 (X;\mathbb{R})$;
\item $\{\alpha^{0}_{1},\cdots ,\alpha^{0}_{b^{0}_{1,\alpha}}\}\cup \{\alpha^{1}_{1},\cdots ,\alpha^{1}_{b^{1}_{1,\alpha}}\}$
represent a basis of $H_{1}(X,Y_{2};\mathbb{R})$.
\end{itemize}

As before, we use     $\mathcal{G}^{h, \hat{o}}_{X_i}$     to denote the group of harmonic gauge transformations
$u$ on $X_{i}$ such that $u(\hat{o})=1$ and $u^{-1}du\in i\Omega^{1}_{CC}(X_{i})$, and let  $\mathcal{G}^{h,\hat{o}}_{X_{i},\partial X_{i}}$   be the subgroup of  $\mathcal{G}^{h, \hat{o}}_{X_i}$ corresponding to $\ker ( H^1(X_i; \Z) \rightarrow H^1(\partial X_i; \Z) )$  . 
Note that $\mathcal{G}^{h,\hat{o}}_{X_{i},\partial X_{i}} \cong H^{1}(X_{i},Y_{2};\mathbb{Z})$.

As in Section~\ref{sec 4mfd}, 
for $i=0,1$, we consider the bundles $$\mathcal{W}_{X_{i}}=Coul^{CC}(X_{i})/\mathcal{G}^{h,\hat{o}}_{X_{i},\partial X_{i}},$$
over $\pic(X_{i},\partial X_{i})$ and the subbundle 
 $$\mathcal{W}_{X_{0},\beta}:=\{x\in \mathcal{W}_{X_{0}}\mid \hat{p}_{\beta}(x)=0\},$$
 where the projection $\hat{p}_{\beta} \colon Coul^{CC}(X_0) \rightarrow \mathbb{R}^{b_{1,\beta}}$ is given by 
\[
    \hat{p}_{\beta}(\hat{a},\hat{\phi})
      = (-i\int_{\beta_{1}}\mathbf{t}\hat{a}, \, \ldots  \, ,-i\int_{\beta_{b_{1,\beta}}}\mathbf{t}\hat{a}).
\]
\begin{rmk}Recall that $\underline{\textnormal{bf}}^{R}(X_{1})$ is constructed from the bundle $\mathcal{W}_{X_{1}}$, while $\underline{\textnormal{bf}}^{A}(X_{0})$ is constructed from a smaller bundle $\mathcal{W}_{X_{0},\beta}$. As a consequence, the decomposition (\ref{eq: decomposition of basis}) is essential in proving the pairing theorem involving the A-invariant for $X_{0}$ and R-invariant for $X_{1}$. Since existence of this asymmetric decomposition is implied by Condition (\ref{homology condition}), one can not switch $X_{0}$ and $X_{1}$ in this condition (without changing types of the relative invariants). 
\end{rmk}

 We have a basic boundedness result for glued trajectories:

\begin{pro}\label{bounded for gluing} There exists a universal constant $R_{3}$ with the following significance: For 
 any tuple $(\tilde{x}_{0},\tilde{x}_{1},\gamma_{0},\gamma_{1},\gamma_{2} , T)$ satisfying the following conditions
\begin{itemize}
\item $(\tilde{x}_{0},\tilde{x}_{1})\in \mathcal{W}_{X_{0},\beta}\times \mathcal{W}_{X_{1}}$ satisfies $\widetilde{SW}(\tilde{x}_{j})=0$;
\item $\gamma_{i} \colon (-\infty,0]\rightarrow Coul(Y_{i})$ $(i=0,1)$ and $\gamma_{2}\colon[-T,T]\rightarrow Coul(Y_{2})$ are finite
type Seiberg-Witten trajectories;
\item $\tilde{r}_{0}(\tilde{x}_{0})=\gamma_{0}(0),\ \tilde{r}_{2}(\tilde{x}_{0})=\gamma_{2}(-T),\ \tilde{r}_{2}(\tilde{x}_{1})=\gamma_{2}(T)$
and $\tilde{r}_{1}(\tilde{x}_{1})=\gamma_{1}(0)$, where $\tilde{r}_{j}$ denotes the twisted restriction map to $Coul(Y_{j})$;
\end{itemize}
one has $\|\tilde{x}_{i}\|_{F}\leq R_{3}$ for $i=0,1$ and $\gamma_{j}\subset Str_{Y_{j}}(R_{3})$ for $j=0,1,2$.
\end{pro}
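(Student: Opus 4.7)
The plan is to glue the given data into a single finite-type $X$-trajectory for the composite cobordism $X = X_0 \cup_{Y_2} X_1$, viewed as a cobordism $Y_0 \sqcup Y_1 \to \emptyset$ with a cylindrical neck of length $2T$ at $Y_2$, and then invoke Theorem \ref{boundedness for X-trajectory}. The essential simplification is that $b_1(\partial X) = b_1(Y_0) + b_1(Y_1) = 0$, so $\iota^*\colon H^1(X;\R)\to H^1(\partial X;\R)$ vanishes, $b_{1,\beta}(X) = 0$, and the $\hat{p}_\beta$ hypothesis in Theorem \ref{boundedness for X-trajectory} is vacuous (equivalently, satisfied with $M = 0$). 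The constant $R_3$ will then depend only on the universal constant $R_0(0)$ returned by that theorem.

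Concretely, I first pick lifts $(\hat{a}_i,\hat{\phi}_i)\in Coul^{CC}(X_i)$ of $\tilde{x}_i$. Because the paths in $[\vec{\eta}_i]$ from $\hat{o}\in Y_2$ to $Y_2$ are chosen constant, the twisted restriction $\tilde{r}_2$ coincides with the honest restriction at $Y_2$; combined with the matching $\tilde{r}_2(\tilde{x}_i)=\gamma_2(\pm T)$---and after applying harmonic gauge transformations in $\mathcal{G}^{h,\hat{o}}_{X_i,\partial X_i}$, which act trivially on $Y_2$ by the basepoint normalization---I can glue $(\hat{a}_0,\hat{\phi}_0)$, the cylindrical configuration on $[-T,T]\times Y_2$ determined by $\gamma_2|_{[-T,T]}$, and $(\hat{a}_1,\hat{\phi}_1)$ into a smooth Seiberg--Witten solution on $X_T := X_0\cup([-T,T]\times Y_2)\cup X_1\cong X$. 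After placing this in double Coulomb gauge on $X_T$, I obtain $\tilde{x}\in\mathcal{W}_X$ with $\widetilde{SW}(\tilde{x})=0$. Attaching $\gamma_0,\gamma_1$ (with the orientation reversal that converts a trajectory on $Y_i$ defined on $(-\infty,0]$ into one on $-Y_i$ defined on $[0,\infty)$) as cylindrical ends gives a finite-type $X$-trajectory $(\tilde{x},\gamma)$ in the sense of Definition \ref{defi: X-trajectory}. Theorem \ref{boundedness for X-trajectory} then yields $\|\tilde{x}\|_F < R_0(0)$ and $\gamma\subset B(R_0(0))$ (using $b_1(\partial X)=0$). The bound on $\gamma$ directly gives the bound on $\gamma_0,\gamma_1$; slicing $\tilde{x}$ along each $\{t\}\times Y_2$ in the neck and applying harmonic gauge translates on $Y_2$ gives $\gamma_2\subset Str_{Y_2}(R_3)$; restricting $\tilde{x}$ to $X_0$ and $X_1$ and invoking Lemma \ref{fiberdirection norm} yields the bounds on $\tilde{x}_0,\tilde{x}_1$.

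The main obstacle is ensuring that the constant $R_0(0)$ is uniform in the neck length $T$, since $X_T$ is a different Riemannian manifold for each $T$. The proof of Theorem \ref{boundedness for X-trajectory} ultimately rests on the Kronheimer--Mrowka compactness theorem applied to a solution on $X\cup([0,1]\times \partial X)$ with energy controlled by the Chern--Simons--Dirac values at the ends; both ingredients are stable under neck-stretching once one has uniform control on the CSD functional at the ends, which holds here because $b_1(Y_0)=b_1(Y_1)=0$ forces the $\gamma_0,\gamma_1$ into absolutely bounded sets. A secondary subtlety is that a $\mathcal{G}^{h,\hat{o}}_{X,\partial X}$-orbit need not restrict to $\mathcal{G}^{h,\hat{o}}_{X_i,\partial X_i}$-orbits on the pieces; any discrepancy is by constant gauge transformations on $Y_2$ which can be absorbed using the $\alpha$-period data in the basis decomposition (\ref{eq: decomposition of basis}), with the hypothesis (\ref{homology condition}) and the defining constraint $\hat{p}_\beta(\tilde{x}_0)=0$ ensuring this bookkeeping is consistent.
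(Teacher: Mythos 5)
The structure of your argument is genuinely different from the paper's, and there is a gap at exactly the point you flag as "the main obstacle." You propose to glue the data into a single $X$-trajectory on the stretched manifold $X_T := X_0\cup([-T,T]\times Y_2)\cup X_1$ and apply Theorem~\ref{boundedness for X-trajectory} to $X_T$. But that theorem's constant $R_0(M)$ is produced by a contradiction/compactness argument for a \emph{fixed} $(X,\hat g,\hat A_0)$; it gives no uniformity as $T\to\infty$, because each $X_T$ is a different Riemannian manifold. Your claim that "both ingredients are stable under neck-stretching once one has uniform control on the CSD functional at the ends" is precisely the substance of the proposition, not a citation one can make to the existing theorem. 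To make it rigorous you would have to rerun a compactness argument along a neck-stretching sequence, which produces broken trajectories on the $Y_2$-cylinder rather than a single limit, and then control the location of the break points — and this is where $b_1(Y_2)>0$ bites, since finite-type trajectories on $Y_2$ are only confined to the noncompact strip $Str_{Y_2}(R)$, not to a ball, so "uniform control on CSD at the ends" does not automatically localize the broken trajectory.

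The paper's proof deliberately avoids the gluing. It argues by contradiction with $T\to\infty$ and applies Theorem~\ref{boundedness for X-trajectory} \emph{twice, to the fixed pieces $X_0$ and $X_1$ separately}. The $\hat p_\beta(\tilde x_0)=0$ constraint (built into $\mathcal{W}_{X_0,\beta}$) lets the theorem control $\|\tilde x_0\|_F$ and $\gamma_0$; this bounds $\tilde r_2(\tilde x_0)=\gamma_2(-T)$, placing it in a fixed $J^+_m(Y_2)$. The limit broken trajectory on the $Y_2$-neck lies in $Str_{Y_2}(R)$ by 3-manifold compactness, and because $J^+_m(Y_2)$ is an attractor for the Seiberg--Witten flow, the entire broken trajectory (hence $\tilde r_2(\tilde x_1)=\gamma_2(T)$) stays in $J^+_m(Y_2)$. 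That gives the a priori $\hat p_\beta$-bound needed to apply Theorem~\ref{boundedness for X-trajectory} to $X_1$ and control $\|\tilde x_1\|_F$ and $\gamma_1$. This two-step, attractor-based argument is exactly what replaces the missing "uniformity under stretching" in your sketch. A secondary issue in your writeup is the paragraph about absorbing gauge-orbit discrepancies via the $\alpha$-periods and hypothesis~(\ref{homology condition}): that bookkeeping is genuinely delicate for the glued double Coulomb gauge, and the paper avoids it entirely by never passing to a glued configuration; as written, your treatment of it is too vague to be checked.
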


\begin{proof} Suppose there exists a sequence not satisfying such uniform bounds.
We also assume that $T\rightarrow +\infty$ as the case when $T$ is uniformly bounded is trivial. From the condition $\hat{p}_{\beta}(x_0)=0 $, the norm of $\gamma_{0}$
and the norm $\|\tilde{x}_{0}\|_{F}$ is controlled by Theorem~\ref{boundedness for X-trajectory}. Notice
that the solutions converge to a broken trajectory on the $Y_{2}$-neck, which is contained in $Str_{Y_{2}}(R)$
for some universal constant $R$ by \cite[Theorem 3.2]{KLS1}. As in the construction, of $\swf(Y_2)$, we consider a sequence of bounded subset $\{ J^+_m (Y_2) \}$ of $Str_{Y_{2}}(R)$ (cf. \cite[Definition~5.3]{KLS1}).
Since $\|\tilde{x}_{0}\|_{F}$ is uniformly bounded, $\tilde{r}_{2}(\tilde{x}_{0})$ is contained in $J_{m}^{+}(Y_2)$
for some fixed $m$. From the fact that $J_{m}^{+}(Y_2)$ is an attractor with respect to the Seiberg--Witten flow,
we see that the whole broken trajectory is contained in $J^{+}_{m}(Y_2)$. In particular, $\tilde{r}_{2}(\tilde{x}_{1})$ also belongs to  $J_{m}^{+}(Y_2)$. We then apply  Theorem~\ref{boundedness for X-trajectory} again on $X_1$ to control $\|\tilde{x}_{1}\|_{F}$
and the norm of $\gamma_{1}$.
\end{proof}

Following Section~\ref{sec bfconstuct}, we will start to consider finite-dimensional approximation of the Seiberg--Witten map on both $X_0$ and $X_1$. Let us fix  an increasing sequence
of positive real numbers $\{\mu_{n}\}$ such that $\mu_n \rightarrow \infty$. 
For $i=0,1,2$, let $V^i_n \subset Coul(Y_i)$ be the span of eigenspaces with respect to $(*d,\slashed{D})$ with eigenvalues in the interval $[-\mu_n , \mu_n] $.
For $i=0,1$, we choose appropriate finite-dimensional subspaces $U^{i}_{n}\subset L^{2}_{k-1/2}(i\Omega^{+}_{2}(X_{i})\oplus \Gamma(S^{-}_{X_{i}}))$. The preimages of $U^i_n \times V^i_n \times V^2_n$ under $(\tilde{L} , p^{\mu_n}_{-\infty} \circ \tilde{r}) $ give rise to finite-dimensional subbundles $W^{0}_{n,\beta}\subset \mathcal{W}_{X_{0},\beta}$ and
$W^{1}_{n}\subset \mathcal{W}_{X_{1}}$.

We now state the boundedness result for approximated solutions. 
\begin{pro}\label{bounded for gluing, approximated}
For any $R>0$ and $L \ge 0 $ and any bounded subsets $S_{i}$ of $Coul(Y_{i})$ $(i=0,1,2)$, there exist constants $\epsilon,N,\bar{T}>0$
with the following significance: For any tuple $(\tilde{x}_{0},\tilde{x}_{1},\gamma_{0},\gamma_{1},\gamma_{2},n,T,T')$ satisfying
\begin{itemize}
\item $n >N $, $ T' > \bar{T}, $ and $T \le L$,
\item $(\tilde{x}_{0},\tilde{x}_{1})\in B(W^{0}_{n,\beta},R)\times B(W^{1}_{n},R)$ such that $\|\widetilde{SW}_{n}(\tilde{x}_{j})\|_{L^{2}_{k-1/2}}<\epsilon$
$(j=0,1)$,\item $\gamma_{i} \colon (-T',0]\rightarrow V^{i}_{n}\cap S_{i}$ $(i=0,1)$ and $\gamma_{2} \colon [-T,T]\rightarrow V^{2}_{n}\cap S_{2}$
are finite type approximated Seiberg-Witten trajectories,
\item $p^{\mu_n}_{-\infty} \circ\tilde{r}_{0}(\tilde{x}_{0})=\gamma_{0}(0),\ p^{\mu_n}_{-\infty} \circ\tilde{r}_{2}(\tilde{x}_{0})=\gamma_{2}(-T),\
 p^{\mu_n}_{-\infty} \circ\tilde{r}_{2}(\tilde{x}_{1})=\gamma_{2}(T)$
and $p^{\mu_n}_{-\infty} \circ\tilde{r}_{1}(\tilde{x}_{1})=\gamma_{1}(0)$,\end{itemize}
one has the following estimate
\begin{itemize}
\item $\|\tilde{x}_{i}\|_{F}\leq R_{3}+1$ for $i=0,1$;
\item $\gamma_{2}\subset Str_{Y_{2}}(R_{3}+1)$;
\item $\gamma_{i}|_{[-(T'- \bar{T}),0]}\subset  B_{Y_{i}}(R_{3}+1)$ for $i=0,1$.
\end{itemize}
Here $R_{3}$ is the constant from Proposition~\ref{bounded for gluing}.
\end{pro}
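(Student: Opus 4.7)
The plan is to argue by contradiction, mimicking the structure of the proof of Proposition~\ref{type A boundedness} but with Proposition~\ref{bounded for gluing} replacing Theorem~\ref{boundedness for X-trajectory} as the limiting boundedness statement. Suppose the conclusion fails for some $R$, $L$ and bounded sets $S_{0}, S_{1}, S_{2}$. Then one can choose sequences $\epsilon_{k} \to 0$, $N_{k} \to \infty$, $\bar{T}_{k} \to \infty$ and tuples $(\tilde{x}_{0}^{(k)}, \tilde{x}_{1}^{(k)}, \gamma_{0}^{(k)}, \gamma_{1}^{(k)}, \gamma_{2}^{(k)}, n_{k}, T_{k}, T'_{k})$ satisfying all the hypotheses with $n_{k} \geq N_{k}$, $\epsilon_k \le \epsilon_k$, $T'_{k} \geq \bar{T}_{k}$, $T_{k} \leq L$, yet violating at least one of the three stated bounds. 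Since $T_{k} \in [0, L]$, I would pass to a subsequence so that $T_{k} \to T_{\infty} \in [0, L]$.

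Next I would extract a limit by adapting Lemma~\ref{convegence of approximated X-trajectories} to each side of the neck. The bounded sets $B(W^{0}_{n_{k}, \beta}, R)$, $S_{0}$ and $S_{2}$ provide the uniform control needed for the $X_{0}$-side, while $(n_{k}, \epsilon_{k}, T'_{k}) \to (\infty, 0, \infty)$ and $T_k$ converges. After passing to a subsequence one obtains $\tilde{x}_{0}^{(k)} \to \tilde{x}_{0}^{\infty}$ in $\mathcal{W}_{X_{0}, \beta}$ with $\widetilde{SW}(\tilde{x}_{0}^{\infty}) = 0$, uniform $L^{2}_{k}$-convergence of $\gamma_{0}^{(k)}$ on compact subsets of $(-\infty, 0]$ to a finite type Seiberg--Witten trajectory $\gamma_{0}^{\infty}$, and convergence of $\gamma_{2}^{(k)}|_{[-T_{k}, 0]}$ to some $\gamma_{2}^{-} : [-T_{\infty}, 0] \to Coul(Y_{2})$. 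The same argument on the $X_{1}$-side produces $\tilde{x}_{1}^{\infty}$, $\gamma_{1}^{\infty}$ and $\gamma_{2}^{+} : [0, T_{\infty}] \to Coul(Y_{2})$. Gluing $\gamma_{2}^{\pm}$ yields $\gamma_{2}^{\infty} : [-T_{\infty}, T_{\infty}] \to Coul(Y_{2})$, and the resulting limit tuple satisfies all the hypotheses of Proposition~\ref{bounded for gluing} with neck length $T_{\infty}$. Hence $\|\tilde{x}_{i}^{\infty}\|_{F} \leq R_{3}$ and $\gamma_{j}^{\infty} \subset Str_{Y_{j}}(R_{3})$ for all $i, j$.

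The contradiction is then immediate in two of the three failure modes. If $\|\tilde{x}_{i}^{(k)}\|_{F} > R_{3} + 1$ for some $i$ along the subsequence, the limit forces $\|\tilde{x}_{i}^{\infty}\|_{F} \geq R_{3} + 1$, contradicting the bound just obtained. If $\gamma_{2}^{(k)}(t_{k}) \notin Str_{Y_{2}}(R_{3} + 1)$ for some $t_{k} \in [-T_{k}, T_{k}]$, then passing to a subsequence $t_{k} \to t_{\infty} \in [-T_{\infty}, T_{\infty}]$, the uniform convergence of $\gamma_{2}^{(k)}$ on this compact interval forces $\gamma_{2}^{\infty}(t_{\infty}) \notin Str_{Y_{2}}(R_{3} + 1)$, again a contradiction.

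The step I expect to be the main obstacle is the tail bound for $\gamma_{i}^{(k)}$ with $i = 0, 1$, because the violation point $t_{k} \in [-(T'_{k} - \bar{T}_{k}), 0]$ may drift to $-\infty$ and no longer lie in any fixed compact set on which we have convergence to $\gamma_i^\infty$. I would split into two subcases. When $\{t_{k}\}$ has a bounded subsequence with $t_{k} \to t_{\infty}$, the compact-subset convergence combined with the hypothesis $b_{1}(Y_{i}) = 0$ (so that $Str_{Y_{i}}(R_{3})$ coincides with the closed ball $B_{Y_{i}}(R_{3})$) yields $\gamma_{i}^{\infty}(t_{\infty}) \notin B_{Y_{i}}(R_{3} + 1)$, contradicting $\gamma_{i}^{\infty} \subset B_{Y_{i}}(R_{3})$. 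When $|t_{k}| \to \infty$, the shifted trajectory $s \mapsto \gamma_{i}^{(k)}(s + t_{k})$ is an approximated Seiberg--Witten trajectory on $[-a_{k}, b_{k}]$ with $a_{k} := T'_{k} + t_{k} \geq \bar{T}_{k} \to \infty$ and $b_{k} := -t_{k} \to \infty$; invoking the universal constant $R_{0}$ from the proof of Proposition~\ref{boundedness on cylinder for approximated solutions}, which bounds approximated trajectories that are long on both sides and stay in a prescribed bounded set, one obtains $\gamma_{i}^{(k)}(t_{k}) \in Str_{Y_{i}}(R_{0}) \subset B_{Y_{i}}(R_{3} + 1)$ for $k$ large, provided $R_{3}$ is chosen with $R_{0} \leq R_{3}$. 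This gives the final contradiction and completes the argument.
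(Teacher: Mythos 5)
Your proposal is correct and matches the paper's intended argument: the paper's proof is a one-line remark that it is analogous to Propositions~\ref{type A boundedness} and~\ref{boundedness on cylinder for approximated solutions} with Proposition~\ref{bounded for gluing} replacing Theorem~\ref{boundedness for X-trajectory}, and your write-up is exactly this contradiction-and-compactness argument (including the correct split for the tail bound on $\gamma_i$, $i=0,1$, into the bounded-$t_k$ case handled by compact-subset convergence and the unbounded-$t_k$ case handled by the universal constant $R_0$). The only cosmetic slip is the typo ``$\epsilon_k \le \epsilon_k$''; the substance is sound.
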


\begin{proof} The proof is analogous to that of Proposition~\ref{type A boundedness} and Proposition~\ref{boundedness on cylinder for approximated solutions} where one applies Proposition~\ref{bounded for gluing} instead.
\end{proof}


Recall that, for $i=0,1$, the manifold $Y_i$ is a rational homology sphere and one can find a sufficiently large ball $B_{Y_i} (\tilde{R_i})$ in the Coulomb slice containing all finite type Seiberg-Witten trajectories (cf.\cite{Manolescu1}). On $Y_2$, an unbounded subset $Str_{Y_2}({\tilde{R_2}})$ contains all finite type Seiberg-Witten trajectories when $\tilde{R_2}$ is sufficiently large. With a choice of cutting functions, we obtain an increasing
sequence of bounded sets $\{ J_{m_{}}^{+}(Y_{2}) \} $ contained in $Str_{Y_{2}}({\tilde{R_2}}) $.
Note that we can identify  $J^{n,-}_{m}(-Y_{2})=J^{n,+}_{m}(Y_{2})$.

Throughout the rest of the section, we will fix the following parameters in order of dependency carefully.
\begin{enumerate}[(i)]
\item Pick $\hat{R}_{0}> R_3$ such that any finite type $X_{0}$-trajectories  $(x,\gamma)$ with $x\in \mathcal{W}_{X_{0},\beta}$ satisfies $\|x\|_{F}\leq
\hat{R}_{0}$ (cf.  Theorem~\ref{boundedness for X-trajectory}).

\item Pick $\tilde{R}_{0},\tilde{R}_{2}> R_3+2$ such that $\tilde{r}(B(\mathcal{W}_{X_{0},\beta},\hat{R}_{0}))\subset B_{Y_{0}}(\tilde{R_{0}})\times Str_{Y_{2}}(\tilde{R}_{2})$ and  also $B_{Y_{0}}(\tilde{R}_{0}-1)\times Str_{Y_{2}}(\tilde{R}_{2}-1)$ contains all finite type Seiberg-Witten trajectories.

\item Choose a positive integer $m$ such that 
\[
\tilde{r}_{2}(B(\mathcal{W}_{X_{0},\beta},\hat{R}_{0}))\subset J^{+}_{m-1}(Y_{2}).
\]

\item Pick $\hat{R}_{1}>R_3+1$ such that any finite type $X_{1}$-trajectory $(x,\gamma)$ with $\tilde{r}_{2}(x)\in  J^{+}_{m}(Y_{2})$, one has $\|x_{}\|_{F}<\hat{R}_{1}$.

\item Choose a positive number $\tilde{R}_{1}$ such that $\tilde{r}_{2}(B(\mathcal{W}_{X_{1}},\hat{R}_{1}))\subset B_{Y_{1}}(\tilde{R}_{1})$
and $B_{Y_{1}}(\tilde{R}_{1}-1)$ contains all finite type Seiberg-Witten trajectory on $Y_{1}$.

\setcounter{choicegluing}{\value{enumi}}
\end{enumerate}

\subsection{Deformation of the duality pairing}
\label{sec deform1st}

In this section, we will focus on describing  the right hand side of the gluing theorem 
\[
\pmb{\tilde{\epsilon}} (\underline{\textnormal{bf}}^{A}(X_{0}),\underline{\textnormal{bf}}^{R}(X_{1}))
\]
and its deformation.   
As in Section~\ref{sec bfconstuct}, we write down the following subsets in order to define 
$\underline{\textnormal{bf}}^{A}(X_{0}) $ and $\underline{\textnormal{bf}}^{R}(X_{1})$:
\begin{align*}
K_{0} =&p^{\mu_{n}}_{-\infty}\circ\tilde{r}(\widetilde{SW}_{n}^{-1}(B(U^{0}_{n},\epsilon))\cap B(W^{0}_{n,\beta},\hat{R}_{0})) , \\
S_{0} =&p^{\mu_{n}}_{-\infty}\circ\tilde{r}(\widetilde{SW}_{n}^{-1}(B(U^{0}_{n},\epsilon))\cap S(W^{0}_{n,\beta},\hat{R}_{0})), \\
K_{1}=&p^{\mu_{n}}_{-\infty}\circ\tilde{r}(\widetilde{SW}_{n}^{-1}(B(U^{1}_{n},\epsilon))\cap B(W^{1}_{n},\hat{R}_{1}))\cap
(V^{1}_{n}\times
J^{n,-}_{m}(-Y_{2})),\\
S_{1}=& \{ p^{\mu_{n}}_{-\infty}\circ\tilde{r}(\widetilde{SW}_{n}^{-1}(B(U^{1}_{n},\epsilon))\cap S(W^{1}_{n},\hat{R}_{1}))
            \cap (V^{1}_{n}\times J^{n,-}_{m}(-Y_{2})) \}\\
        &  \cup \{ K_{1} \cap (V^{1}_{n}\times \partial J^{n,-}_{m}(Y_{2})) \}.
\end{align*}
Note that some of the subsets are simpler because $b_1( Y_0) = b_1(Y_1) = 0$.
The parameters $(\hat{R}_{0},\hat{R}_{1},\tilde{R}_{0},\tilde{R}_{1},\tilde{R}_{2},m)$ are selected earlier.
Subsequently, we will also fix a large number $L_{0}$ with the following property and then proceed to $n$ and $\epsilon$.
\begin{enumerate}[(i)]
\setcounter{enumi}{\value{choicegluing}}
\item Choose a positive number $L_0$ such that, for any large $n$ and small $\epsilon$, one has
\begin{enumerate}
\item  $(K_{0},S_{0})$ and $(K_{1},S_{1})$ are $L_{0}$-tame pre-index pairs. This follows from Proposition~\ref{prop 4dimpreindex} red by applying it to $X_0$ and $X_1$;
\item The pair $(K^{0}_{},S^{0}_{})$, as defined below
\begin{align*}
K^{0}&=\{(y_{0},y_{1})\mid (y_{0},y)\times (y_{1},y)\in K_{0} \times K_{1} \text{ for some }y\}
\\
S^{0}&=\{(y_{0},y_{1})\mid (y_{0},y)\times (y_{1},y)\in S_{0}\times K_{1}\cup K_{0}\times S_{1}  \text{ for some }y\},
\end{align*}
is an $L_{0}$-tame pre-index pair for $B(V^{0}_{n},\tilde{R}_{0})\times B(V^{1}_{n},\tilde{R}_{1})$. This follows from Proposition
\ref{bounded for gluing, approximated} with $L=0$.
\item \label{item (vi)c} Pick a slightly smaller closed subset $J'_m\subset \operatorname{int}(J_{m}^{+}(Y_{2}))$ such that for any approximated trajectory
$ \gamma \colon [-L_{0},L_{0}]\rightarrow  B(V^{0}_{n},\tilde{R}_{0})\times B(V^{1}_{n},\tilde{R}_{1})\times J^{n,+}_{m}(Y_{2})$,
one has $\gamma(0)\in B(V^{0}_{n},\tilde{R}_{0}-1)\times B(V^{1}_{n},\tilde{R}_{1}-1)\times J'_m$ (cf. \cite[Lemma 5.5]{KLS1}).

\item $L_0 > 4T_m (j) $ where $T_m(j)$ is the constant which appeared in Lemma~\ref{lem J 2T int} applying to the manifold $Y_j$. 
\end{enumerate}

\item Finally, we pick a large positive integer $n$ and a small positive real number $\epsilon$
so that
\begin{enumerate}
\item The above assertions for $L_{0}$  hold;
\item Proposition~\ref{bounded for gluing, approximated} holds for $L=3L_{0}$, $R=\max(\hat{R}_{0},\hat{R}_{1})$, $S_{0}=B_{Y_{0}}(\tilde{R}_{0})$ , $S_{1}=B_{Y_{1}}(\tilde{R}_{1})$  and $S_{2}=J^{+}_{m}(Y_{2})$.
\end{enumerate}
\end{enumerate}

With all the above parameters fixed, we have canonical maps to Conley indices
\begin{align*}
\iota_{0} \colon K_{0}/S_{0}&\rightarrow I(B(V^{0}_{n},\tilde{R}_{0}))\wedge I(J^{n,+}_{m}(Y_{2})), \\
\iota_{1} \colon K_{1}/S_{1}&\rightarrow I(B(V^{1}_{n},\tilde{R}_{1}))\wedge I(J^{n,-}_{m}(-Y_{2})).
\end{align*}
For simplicity, we will write $A_{j}=B(V^{j}_{n},\tilde{R}_{j})$ and $A'_{j}=B(V^{j}_{n},\tilde{R}_{j}-1)$ for $j=0,1$.
We also let $A_2$ denote
$J^{n,+}_{m}(Y_{2})$ and let $A'_{2}$ be  a closed subset satisfying
$$
(Str_{Y_{2}}(\tilde{R}_{2}-1)\cap J^{n,+}_{m-1}(Y_{2}))\cup (J'_m \cap V^2_n)\subset \operatorname{int}(A'_{2})\subset A'_{2}\subset \operatorname{int}(A_{2}).
$$
By our choice of $L_0$ and Proposition~\ref{prop mfdnhbdswf}, there exists a manifold isolating block $\tilde{N}_{j}$
satisfying
 \begin{equation}\label{isolating block contains invariant set}
 A^{[ -L_{0},L_{0} ]}_{j}\subset \operatorname{int}(\tilde{N}_{j})\subset \tilde{N}_{j}\subset A'_{j}.
 \end{equation}
Let $\varphi^{j}$ be the approximated Seiberg--Witten flow on $A_j$.
Denote by $\tilde{N}^{-}_{j}$ (resp. $\tilde{N}^{+}_{j}$) be the submanifold of $\partial \tilde{N}_{j}$ where $\varphi^{j}$
points outward (resp. inward).

By the choice of $L_0$ and Lemma~\ref{flow map from tame index pair} and Theorem~\ref{from pre-index to index refined}, we can express the smash product of canonical maps 
\begin{align*}
\iota_{0}\wedge \iota_{1} \colon K_{0}/S_{0}\wedge K_{1}/S_{1}\rightarrow \tilde{N}_{0}/\tilde{N}^{-}_{0}\wedge \tilde{N}_{2}/\tilde{N}^{-}_{2}\wedge \tilde{N}_{1}/\tilde{N}^{-}_{1} \wedge
\tilde{N}_{2}/\tilde{N}^{+}_{2}
\end{align*}
as a map sending $(y_{0},y_{2},y_{1}, y'_2)$ to $$(\varphi^{0}_{}(y_{0},3L_{0}),\varphi^{2}_{}(y_{2},3L_{0}),\varphi^{1}_{}(y_{1},3L_0),\varphi^{2}_{}(y_{2},-3L_{0}))$$
when the following conditions are all satisfied
\begin{equation}\label{condition on Y01}
\varphi^{j}(y_{j},[0,3L_{0}])\subset A_{j} \text{ and } \varphi^{j}(y_{j},[ L_{0},  3L_{0}])\subset \tilde{N}_{j}\setminus
\tilde{N}^{-}_{j} \text{ for }j=0,1;
\end{equation}
\begin{equation}\label{condition1 on Y2}
\varphi^{2}(y_{2},[0,3L_{0}])\subset A_{2} \text{ and } \varphi^{2}(y'_{2},[-3L_{0},0])\subset A_{2};
\end{equation}
\begin{equation}\label{condition2 on Y2}
\varphi^{2}(y_{2},[ L_{0},  3L_{0}])\subset \tilde{N}_{2}\setminus \tilde{N}^{-}_{2}\text{ and }\varphi^{2}(y'_{2},[-   L_{0}, -3L_{0}])\subset
\tilde{N}_{2}\setminus \tilde{N}^{+}_{2}.
\end{equation}
Otherwise, it  will be sent to the base point. We will first try to simplify some of the above conditions. For brevity, we sometimes omit mentioning a part of map sending to the basepoint.

\begin{lem}\label{almost invariant set}
There exists a positive constant $\bar{\epsilon}_{0}$ such that one can find
a closed subset $B_0 \subset \operatorname{int}(\tilde{N}_{2})$ with the following property: For any $(y_2 , y'_2) $ satisfying (\ref{condition1 on Y2}) and 
$$
\|\varphi^{j}(y_{2},3L_{0})-\varphi(y'_{2},-3L_{0})\|\leq 5\bar{\epsilon}_0,
$$
one has
\begin{equation*}
\varphi^{2}(y_{2},[  L_{0},  3L_{0}])\subset B_0\text{ and }\varphi^{2}(y'_{2},[- L_{0}, -3L_{0}])\subset B_0.
\end{equation*}
In particular, $(y_2 ,y'_2 )$ will satisfy (\ref{condition2 on Y2}).
\end{lem}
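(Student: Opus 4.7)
The plan is to take $B_0$ to be a small closed neighborhood of the compact set $A_2^{[-L_0,L_0]}$ inside $\operatorname{int}(\tilde N_2)$, and then to argue by contradiction using compactness of the flow. More precisely, since $A_2^{[-L_0,L_0]}$ is compact and is contained in $\operatorname{int}(\tilde N_2)$ by the $L_0$-tameness assumption (\ref{isolating block contains invariant set}), there exists $\eta_0>0$ such that the closed $\eta_0$-neighborhood
\[
B_0 := \{\,p\in A_2 \mid \operatorname{dist}(p,A_2^{[-L_0,L_0]})\le \eta_0\,\}
\]
still lies in $\operatorname{int}(\tilde N_2)$. I claim this $B_0$ works, together with some sufficiently small $\bar{\epsilon}_0$.

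Suppose not. Then for each $k$ (taking $\bar{\epsilon}_0=1/k$) one finds $(y_{2,k},y'_{2,k})$ satisfying (\ref{condition1 on Y2}) and $\|\varphi^2(y_{2,k},3L_0)-\varphi^2(y'_{2,k},-3L_0)\|\le 5/k$, but some $t^*_k\in[L_0,3L_0]$ with $\varphi^2(y_{2,k},t^*_k)\notin B_0$ (or the analogous violation for $y'_{2,k}$). Since $A_2$ and $[L_0,3L_0]$ are compact and the flow $\varphi^2$ is continuous, pass to a subsequence so that $y_{2,k}\to y_2$, $y'_{2,k}\to y'_2$ and $t^*_k\to t^*$. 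Writing $q_k:=\varphi^2(y_{2,k},3L_0)$, $q'_k:=\varphi^2(y'_{2,k},-3L_0)$ and $q,q'$ for their limits, the closeness hypothesis forces $q=q'$.

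Define $z:[0,6L_0]\to V^2_n$ by
\[
z(t)=\begin{cases}\varphi^2(y_2,t)&t\in[0,3L_0],\\ \varphi^2(y'_2,t-6L_0)&t\in[3L_0,6L_0].\end{cases}
\]
The identification $q=q'$ makes $z$ continuous at $3L_0$, and each piece is a flow line, so $z$ is a genuine approximated Seiberg--Witten trajectory of length $6L_0$ contained in $A_2$. For any $t^*\in[L_0,3L_0]$ and $s\in[-L_0,L_0]$ one has $t^*+s\in[0,4L_0]\subset[0,6L_0]$, hence $\varphi^2(z(t^*),s)=z(t^*+s)\in A_2$. Thus $z(t^*)=\varphi^2(y_2,t^*)\in A_2^{[-L_0,L_0]}$. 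By continuity, $\varphi^2(y_{2,k},t^*_k)\to\varphi^2(y_2,t^*)\in A_2^{[-L_0,L_0]}$, so for $k$ large the distance is less than $\eta_0$, contradicting $\varphi^2(y_{2,k},t^*_k)\notin B_0$. The argument for $y'_{2,k}$ is identical by symmetry, using $t^*\in[-3L_0,-L_0]$. Finally, $B_0\subset\operatorname{int}(\tilde N_2)$ immediately yields $\varphi^2(y_2,[L_0,3L_0])\subset \tilde N_2\setminus\tilde N^{-}_2$ and $\varphi^2(y'_2,[-L_0,-3L_0])\subset\tilde N_2\setminus\tilde N^{+}_2$, so (\ref{condition2 on Y2}) follows.

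The only real subtlety is ensuring that the limiting glued object is a \emph{bona fide} trajectory on $[0,6L_0]$, which is precisely what the closeness hypothesis $\|q-q'\|\le 5\bar{\epsilon}_0$ delivers in the limit; the rest is a pure application of the $L_0$-tameness of $\tilde N_2$. No use of the nonlinear structure of the Seiberg--Witten flow beyond continuity of $\varphi^2$ on bounded sets is required.
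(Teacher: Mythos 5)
Your proof is correct and follows essentially the same route as the paper's (very terse) argument: fix $B_0$ as a small closed neighborhood of $A_2^{[-L_0,L_0]}$ inside $\operatorname{int}(\tilde N_2)$, handle the $\bar\epsilon_0=0$ case by gluing the two half-trajectories at the common endpoint, and then upgrade to $\bar\epsilon_0>0$ by a compactness/contradiction continuity argument; you have simply spelled out what the authors sweep under "apply a continuity argument."
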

\begin{proof}
From (\ref{isolating block contains invariant set}), we see that one can choose $B_0 = A^{[  -L_{0},L_{0} ]}_{2}$ if we consider the case $\bar{\epsilon}_0=0$. For positive $\bar{\epsilon}_0 $, we pick $B_0 $ to be a slightly larger closed subset containing $ A^{[ -L_{0},  L_{0}]}_{2} $ and then apply  a continuity argument.
\end{proof}

To deform our maps, we also consider a variation of the above lemma.

\begin{lem} \label{lem deformL01st}
There exists a positive constant $\bar{\epsilon}_{1}$ such that for any $L\in
[0,L_{0}]$ and any $(y_{0},y_{2},y'_{2},y_{1})\in K_{0}\times K_{1}$ satisfying  (\ref{condition on Y01}) and 
\begin{align}
\varphi^{2}(y_{2},[0,3L])\subset A_{2} \text{ and } \varphi^{2}(y'_{2},[-3L,0])\subset A_{2}, \label{condition4 on Y2} \\
\|\varphi^{2}(y_{2},3L)-\varphi^{2}(y'_{2},-3L)\|\leq \bar{\epsilon}_1, \label{condition5 on Y2}
\end{align}
we have
$$
\varphi^{2}(y_{2},[0,3L])\subset A'_{2} \text{ and } \varphi^{2}(y'_{2},[-3L,0])\subset A'_{2}.
$$
\end{lem}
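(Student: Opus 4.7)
My plan is to argue by contradiction, exploiting the finite-dimensional compactness at the fixed approximation level $n$ together with Proposition~\ref{bounded for gluing, approximated}. Suppose no such $\bar{\epsilon}_1$ exists. Then for each integer $j\geq 1$ I can produce a length $L_j\in[0,L_0]$, a quadruple $(y_0^j,y_2^j,(y_2')^j,y_1^j)\in K_0\times K_1$ satisfying (\ref{condition on Y01}), (\ref{condition4 on Y2}), and the strengthened matching $\|\varphi^2(y_2^j,3L_j)-\varphi^2((y_2')^j,-3L_j)\|\leq 1/j$, together with a witness time $t_j\in[0,3L_j]$ for which $\varphi^2(y_2^j,t_j)\notin A_2'$ (the symmetric case for $(y_2')^j$ is handled identically). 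Using the definitions of $K_0,K_1$ I will lift each witness to approximated near-solutions $\tilde{x}_0^j\in B(W^0_{n,\beta},\hat{R}_0)$ and $\tilde{x}_1^j\in B(W^1_n,\hat{R}_1)$ with $\|\widetilde{SW}_n(\tilde{x}_i^j)\|_{L^2_{k-1/2}}<\epsilon$. Since $n$ is fixed and all ambient spaces $W^0_{n,\beta},W^1_n,V^i_n$ are finite-dimensional with all data sitting in bounded subsets, a standard diagonal subsequence yields convergent limits $\tilde{x}_i^\infty,\ y_\ast^\infty,\ L_\infty\in[0,L_0]$, and $t_\infty\in[0,3L_\infty]$.

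The key payoff of passing to the limit is that the matching error vanishes, giving the exact identity $\varphi^2(y_2^\infty,3L_\infty)=\varphi^2((y_2')^\infty,-3L_\infty)$. The two $Y_2$-pieces $\varphi^2(y_2^\infty,\cdot)|_{[0,3L_\infty]}$ and $\varphi^2((y_2')^\infty,\cdot)|_{[-3L_\infty,0]}$ then patch into a single approximated Seiberg--Witten trajectory $\gamma_2^\infty\colon[-3L_\infty,3L_\infty]\to V^2_n$. Combined with $\tilde{x}_0^\infty,\tilde{x}_1^\infty$ and with the forward-time trajectories of length $3L_0$ on $Y_0,Y_1$ inherited from (\ref{condition on Y01}), this is precisely an approximated glued tuple to which Proposition~\ref{bounded for gluing, approximated} applies. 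Step (vii) of the parameter choice was engineered so that the hypotheses are met with $L=3L_0$, $R=\max(\hat{R}_0,\hat{R}_1)$, and the designated $S_i$, and the conclusion I would extract is $\gamma_2^\infty\subset Str_{Y_2}(R_3+1)\subset Str_{Y_2}(\tilde{R}_2-1)$.

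The main obstacle is to upgrade this $Str$-bound to the stronger containment $\gamma_2^\infty\subset\operatorname{int}(A_2')$; once this is in hand, continuity together with the inherited $\varphi^2(y_2^\infty,t_\infty)\notin\operatorname{int}(A_2')$ closes the contradiction. By the definition of $A_2'$ it suffices to place every point of $\gamma_2^\infty$ into $(Str_{Y_2}(\tilde{R}_2-1)\cap J^{n,+}_{m-1}(Y_2))\cup(J'_m\cap V^2_n)$. The $Str$-inclusion is already in hand, so the real work is to control the cutting functions $g_{j,+}$ along $\gamma_2^\infty$, and this is where the careful parameter bookkeeping of (iii)--(vi) enters. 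Specifically, the left endpoint $y_2^\infty=\tilde{r}_2(\tilde{x}_0^\infty)$ lies in $J^+_{m-1}$ by choice (iii); any interior point with an $L_0$-margin on both sides through $A_0\times A_1\times A_2$ lies in $J'_m\cap V^2_n$ by property (vi)(c); and a symmetric statement at the right endpoint, either by tightening (iv) so that one also has $\tilde{r}_2(B(W^1_n,\hat{R}_1))\subset J^+_{m-1}$ after possibly enlarging $m$, or by invoking (vi)(c) applied at both ends of the longer glued trajectory, produces the final cover of $[-3L_\infty,3L_\infty]$. Assembling these three regions gives $\varphi^2(y_2^\infty,t_\infty)\in\operatorname{int}(A_2')$, the desired contradiction.
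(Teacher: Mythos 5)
Your overall framework---argue by contradiction, pass to a limit using finite-dimensional compactness at the fixed approximation level $n$, glue the two $Y_2$-pieces into a single approximated trajectory, and apply Proposition~\ref{bounded for gluing, approximated} for the $Str_{Y_2}(\tilde{R}_2-1)$-bound---is a correct and more explicit version of the ``continuity argument'' that the paper invokes tersely. The genuine gap appears precisely where you flag it: upgrading the $Str$-bound to containment in $A'_2$.

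None of your proposed fixes close that gap. Using (vi)(c) requires an $L_0$-margin on both sides through $A_0\times A_1\times A_2$, which simply is not available near the endpoints of $[-3L_\infty,3L_\infty]$; the $Y_2$-trajectory cannot be extended beyond $[-3L_\infty,3L_\infty]$ while staying in $A_2=J^{n,+}_m(Y_2)$, since the lemma's hypotheses only control it on that interval. Tightening (iv) by enlarging $m$ alters the carefully ordered parameter choices (i)--(vii) and would need to be re-verified globally, which is not justified.

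The missing idea is that $J^{n,+}_{m-1}(Y_2)$ is an attractor (in particular positively invariant) in $J^{n,+}_m(Y_2)$ for the approximated Seiberg--Witten flow. Once you observe that $y_2 \in J^{n,+}_{m-1}(Y_2)\subset V^2_n$ (by choice (iii), since $y_2$ lies in the $Y_2$-part of $p^{\mu_n}_{-\infty}\circ\tilde{r}$ applied to some $\tilde{x}_0\in B(W^0_{n,\beta},\hat{R}_0)$), positive invariance gives $\varphi^2(y_2,[0,6L])\subset J^{n,+}_{m-1}(Y_2)$ directly, with no margin argument at all. Combined with the $Str$-bound this yields $\varphi^2(y_2,[0,6L])\subset Str_{Y_2}(\tilde{R}_2-1)\cap J^{n,+}_{m-1}(Y_2)\subset\operatorname{int}(A'_2)$, and the contradiction closes. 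Your proof as written would not succeed without this observation.
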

\begin{proof}
We first consider the case $\bar{\epsilon}_1=0$. Then, by Proposition \ref{bounded for gluing, approximated} and our choice of $(n,\epsilon) $, we have
$\varphi^{2}(y_{2},[0,6L]) \subset Str_{Y_{2}}(\tilde{R}_{2}-1)$. From our choice, we also have $y_{2} \in
J^{n,+}_{m-1}(Y_2) \subset V^{2}_{n}$. Since $J^{n,+}_{m-1}(Y_2)$ is an attractor in $J^{n,+}_{m}(Y_2)$, we have $\varphi^{2}(y_{2},[0,6L]) \subset
J^{n,+}_{m-1}(Y_2)$. Thus
\[
   \varphi^{2}(y_{2},[0,6L]) \subset (Str_{Y_{2}}(\tilde{R}_{2}-1)\cap J^{n,+}_{m-1}(Y_{2})) 
                                          \subset \operatorname{int}(A'_{2}).
\]
The general case follows from a continuity argument.
\end{proof}

We will also consider the following subsets enlarging $(K^0 , S^0)$
\begin{align*}
K^{\bar{\epsilon}}:=\{(y_{0},y_{1})\mid &(y_{0},y_{2})\times (y_{1},y_{2}')\in K_{0}\times K_{1}\\
&\text{ for some $y_2, y_2'$ with } \|y_{2}-y'_{2}\| \leq    \bar{\epsilon}\},
\\
S^{\bar{\epsilon}}:=\{(y_{0},y_{1})\mid &(y_{0},y_{2})\times (y_{1},y_{2}')\in (S_{0}\times K_{1})\cup (K_{0}\times S_{1})\\
&\text{ for some $y_2, y_2'$ with } \|y_{2}-y'_{2}\|\leq \bar{\epsilon}\}.
\end{align*}
Since $(K^0 , S^0)$ is an $L_0$-tame pre-index pair, the following can be obtained by a continuity argument.

\begin{lem} \label{lem epsilonL0tame} There exists a positive constant $\bar{\epsilon}_{2}$ such that the pair $(K^{\bar{\epsilon}},S^{\bar{\epsilon}})$ is an $L_{0}$-tame pre-index pair for any $0 \le \bar{\epsilon} \le \bar{\epsilon}_2 $.
\end{lem}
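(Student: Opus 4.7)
The proof is a standard continuity-compactness argument using that $(K^0, S^0)$ is already an $L_0$-tame pre-index pair in the fixed isolating neighborhood $A := B(V^0_n, \tilde{R}_0) \times B(V^1_n, \tilde{R}_1)$ by choice (vi)(b). Since $n$ and $\epsilon$ are fixed at this stage, everything happens in a finite-dimensional compact ambient space, the flow $\varphi^0 \times \varphi^1$ depends continuously on initial data, and the families $\{K^{\bar{\epsilon}}\}$, $\{S^{\bar{\epsilon}}\}$ are upper-semicontinuous in the parameter $\bar{\epsilon}$.

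For condition (ii) of Definition \ref{defi tame pre-index pair}, I would argue by contradiction. If no $\bar{\epsilon}_2>0$ worked, there would exist $\bar{\epsilon}_j \downarrow 0$ and $(y_0^j, y_1^j) \in S^{\bar{\epsilon}_j} \cap A^{[0, L_0]}$. Unpacking the definition of $S^{\bar{\epsilon}_j}$, we obtain witnesses $y_2^j, (y_2^j)' \in V^2_n$ with $\|y_2^j - (y_2^j)'\| \le \bar{\epsilon}_j$ such that $(y_0^j, y_2^j, y_1^j, (y_2^j)')$ lies in the closed set $(S_0 \times K_1) \cup (K_0 \times S_1)$. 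Passing to a subsequence (all relevant sets are compact; $A^{[0,L_0]}$ is closed by continuity of the flow) gives a limit with $y_2 = y_2'$, yielding $(y_0, y_1) \in S^0 \cap A^{[0,L_0]}$, contradicting condition (ii) for $(K^0, S^0)$.

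For condition (i) I would let $A' \subset \operatorname{int}(A)$ be a compact set containing $A^{[-L_0, L_0]}$ that witnesses $L_0$-tameness of $(K^0, S^0)$, and pick an intermediate compact $A''$ with $A' \subset \operatorname{int}(A'') \subset A'' \subset \operatorname{int}(A)$. The claim is that $A''$ witnesses $L_0$-tameness for $(K^{\bar{\epsilon}}, S^{\bar{\epsilon}})$ once $\bar{\epsilon}$ is small enough. Otherwise, we would obtain sequences $\bar{\epsilon}_j \downarrow 0$, $(y_0^j, y_1^j) \in K^{\bar{\epsilon}_j}$, $T_j' \ge L_0$, and $t_j \in [0, T_j' - L_0]$ with $\varphi^0\times \varphi^1((y_0^j,y_1^j),[0,T_j']) \subset A$ but $\varphi^0 \times \varphi^1((y_0^j, y_1^j), t_j) \notin A''$. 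Extracting convergent subsequences of $(y_0^j, y_1^j)$ and of the $V^2_n$-witnesses $y_2^j, (y_2^j)'$ produces a limit point $(y_0, y_1) \in K^0$. If $t_j$ has a finite subsequential limit $t$, the limit trajectory satisfies $(y_0, y_1) \in K^0 \cap A^{[0, T']}$ for some $T' \ge L_0 + t$ yet its time-$t$ image lies in $A \setminus \operatorname{int}(A'') \subset A \setminus \operatorname{int}(A')$, contradicting that $A'$ witnesses $L_0$-tameness for $(K^0, S^0)$.

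The one delicate case, and the main technical obstacle, is $t_j \to \infty$. This is dispatched by the same device used in the proof of Lemma \ref{always tame for large T}: the points $\varphi(t_j)$ lie in $A^{[-t_j, T_j' - t_j]}$, so any subsequential limit lies in $\mathrm{inv}(A)$, which is contained in the interior of the witness $A'$; but by assumption the limit lies in $A \setminus \operatorname{int}(A'')$, and $\operatorname{int}(A') \cap (A \setminus \operatorname{int}(A'')) = \emptyset$, a contradiction. Taking $\bar{\epsilon}_2 := \min(\bar{\epsilon}_2^{(\mathrm{i})}, \bar{\epsilon}_2^{(\mathrm{ii})})$ from the two arguments completes the proof.
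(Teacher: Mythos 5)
The paper itself gives no proof here --- it simply asserts that the lemma ``can be obtained by a continuity argument'' --- so your write-up is a genuine contribution, and it is a correct implementation of that idea. Your treatment of condition (ii) and of the finite-$t_j$ subcase of condition (i) is clean and complete: compactness of $K_0, S_0, K_1, S_1$, closedness of $A^{[0,L_0]}$, upper semicontinuity of $K^{\bar\epsilon}$ and $S^{\bar\epsilon}$, and the interposed compact $A''$ with $A' \subset \operatorname{int}(A'') \subset A'' \subset \operatorname{int}(A)$ are exactly the right ingredients.

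There is one small imprecision in the $t_j \to \infty$ subcase. You conclude that a subsequential limit $z$ of $\varphi(\cdot,t_j)$ lies in $\inv(A)$, appealing to the device in Lemma~\ref{always tame for large T}. But the analogy is not exact: in that lemma the tameness parameter itself tends to infinity, forcing $T_j'-t_j \to \infty$; here $L_0$ is fixed, and the only guaranteed bound is $T_j' - t_j \ge L_0$, so what you actually get is $z \in A^{(-\infty,L_0]}$, not $z \in \inv(A)$. Likewise, the claim that $\inv(A) \subset \operatorname{int}(A')$ is not given by the tameness definition, which only provides $A^{[-L_0,L_0]} \subset A'$ with $A'$ a compact subset of $\operatorname{int}(A)$. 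Fortunately none of this breaks the argument: $A^{(-\infty,L_0]} \subset A^{[-L_0,L_0]} \subset A' \subset \operatorname{int}(A'')$, so $z \in \operatorname{int}(A'')$ directly, which contradicts $z \in A \setminus \operatorname{int}(A'')$. I would simply replace the appeal to $\inv(A)$ and $\operatorname{int}(A')$ by this chain of inclusions; the rest of your proof stands.
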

 
For a vector space or a vector bundle, denote by $B^{+}(V,R)$ the sphere $B(V,R)/S(V,R)$.
 Recall that the Spanier-Whitehead duality map (see Section~\ref{section dualswf})
$$
\pmb{\tilde{\epsilon}} \colon \tilde{N}_{2}/\tilde{N}^{-}_{2}\wedge \tilde{N}_{2}/\tilde{N}^{+}_{2}\rightarrow B^{+}(V^{2}_{n},\bar{\epsilon})
$$
can be given by 
\begin{align*}\pmb{\tilde{\epsilon}}(y_{2},y'_{2}) = 
\begin{cases} \eta_{-}(y_{2})-\eta_{+}(y'_{2}) &\text{if } \|\eta_{-}(y_{2})-\eta_{+}(y'_{2})\|\leq \bar{\epsilon},  \\  * &\text{otherwise}.\end{cases}
\end{align*}
Here we pick $\bar{\epsilon} < \min\{\bar{\epsilon}_0 , \bar{\epsilon}_1 , \bar{\epsilon}_2\} $ and  $\eta_{\pm} \colon\tilde{N}_{2}\rightarrow \tilde{N}_{2}$ are  homotopy equivalences which are identity on $B_0 \subset \operatorname{int}(\tilde{N}_{2})$ 
and satisfy $\|\eta_{\pm}(x)-x\|\leq 2\bar{\epsilon}$ for any $x$.  Here $B_0$ is the closed set in Lemma \ref{almost invariant set}.

Consequently we can write down the composition of $\iota_{0}\wedge \iota_{1} $ and $\pmb{\epsilon}  $ as a map
$$\pmb{\tilde{\epsilon}}(\iota_{0},\iota_{1}) \colon K_{0}/S_{0}\wedge
K_{1}/S_{1}\rightarrow \tilde{N}_{0}/\tilde{N}^{-}_{0}\wedge \tilde{N}_{1}/\tilde{N}^{-}_{1}\wedge B^{+}(V^{2}_{n},\bar{\epsilon})$$
 given by
 \begin{equation}\label{gluing with long neck}
 \begin{split}
& (y_{0},y_{2},y_1 , y'_{2})\mapsto  \\
& \qquad (\varphi^{0}(y_{0},3L_{0}),\varphi^{1}(y_{1},3L_{0}),\varphi^{2}(y_{2},3L_{0})-\varphi^{2}(y'_{2},-3L_{0}))
 \end{split}
  \end{equation}
  if  (\ref{condition on Y01}) and (\ref{condition1 on Y2}) and
\begin{equation}\label{condition3 on Y2}
\|\varphi^{2}(y_{2},3L_{0})-\varphi^{2}(y'_{2},-3L_{0})\|\leq \bar{\epsilon}
\end{equation}
are satisfied. This follows from Lemma~\ref{almost invariant set} and our choice of $\bar{\epsilon} $ and $\eta_{\pm}$.

We now begin to deform the map $\pmb{\tilde{\epsilon}}(\iota_{0},\iota_{1}) $. 

\subsubsection*{Step 1} We will deform the map so that $L_0 $ in the last term of (\ref{gluing with long neck}) goes from $L_0$ to $0$. To achieve this, we consider a family of maps 
 \begin{align*}
 K_{0}/S_{0}\wedge
K_{1}/S_{1}&\rightarrow \tilde{N}_{0}/\tilde{N}^{-}_{0}\wedge \tilde{N}_{1}/\tilde{N}^{-}_{1}\wedge B^{+}(V^{2}_{n},\bar{\epsilon})\\
 (y_{0},y_{2},y_1 ,y'_{2})&\mapsto (\varphi^{0}(y_{0},3L_{0}),\varphi^{1}(y_{1},3L_{0}),\varphi^{0}(y_{2},3L)-\varphi^{2}(y'_{2},-3L))
  \end{align*}
if   (\ref{condition on Y01}) together with 
the conditions \begin{equation*}
\begin{split} \varphi^{2}(y_{2},[0,3L])\subset A_{2} \text{, } \varphi^{2}(y'_{2},[-3L,0])\subset A_{2} \\
\|\varphi^{2}(y_{2},3L)-\varphi^{2}(y'_{2},-3L)\|\leq \bar{\epsilon}\end{split}
\end{equation*}
are all satisfied. Lemma~\ref{lem deformL01st} guarantees that this is a continuous family. Thus, $\pmb{\tilde{\epsilon}}(\iota_{0},\iota_{1})$ is homotopic to the map ${\pmb{\tilde{\epsilon}}}_0(\iota_{0},\iota_{1})$  at $L=0$, which is given by
\begin{equation}\label{map with short neck}
 (y_{0},y_{2},y'_{2},y_{1})\mapsto (\varphi^{0}(y_{0},3L_{0}),\varphi^{1}(y_{1},3L_{0}),y_{2}-y'_{2})
  \end{equation}
if (\ref{condition on Y01}) and $\|y_{2}-y'_{2}\|\leq \bar{\epsilon}$
are satisfied.

\subsubsection*{Step 2}
By Lemma~\ref{lem epsilonL0tame}, $(K^{\bar{\epsilon}},S^{\bar{\epsilon}})$
is an $L_{0}$-tame pre-index pair and we have a canonical map
$$\iota^{\bar{\epsilon}} \colon K^{\bar{\epsilon}}/S^{\bar{\epsilon}}\rightarrow I(B(V^{0}_{n},\tilde{R}_{0}))\wedge I(B(V^{1}_{n},\tilde{R}_{1})).$$
It is not hard to check that the map given by
\begin{align}
K_{0}/S_{0}\wedge K_{1}/S_1 &\rightarrow I(B(V^{0}_{n},\tilde{R}_{0}))\wedge I(B(V^{1}_{n},\tilde{R}_{1}))\wedge (V^{2}_{n})^{+}.
\label{eq mappreindex2nd} \\
(y_{0},y_{2}, y_1, y'_{2}) &\mapsto  \begin{cases}(\iota^{\bar{\epsilon}}(y_{0},y_{1}),y_{2}-y'_{2}) &\text{if } \|y_{2}-y'_{2}\|\leq \bar{\epsilon},
 \\ * & \text{otherwise} \end{cases} \nonumber
\end{align}
is well-defined and continuous. 
From Lemma~\ref{flow map from tame index pair}, we can represent $\iota^{\bar{\epsilon}} $ by a map
\begin{align*}
K^{\bar{\epsilon}}/S^{\bar{\epsilon}} &\rightarrow \tilde{N}_{0}/\tilde{N}^{-}_{0}\wedge \tilde{N}_{1}/\tilde{N}^{-}_{1} \\
(y_0 , y_1) &\mapsto (\varphi^{0}(y_{0},3L_{0}),\varphi^{1}(y_{1},3L_{0})),
\end{align*}
if (\ref{condition on Y01}) is satisfied. Consequently, we are able to replace the first two components of the map (\ref{map with short neck}) by  $\iota^{\bar{\epsilon}} $  and  the map ${\pmb{\tilde{\epsilon}}}_0(\iota_{1},\iota_{2}) $  by (\ref{eq mappreindex2nd}).

Finally, recall that the relative Bauer--Furuta invariant $\underline{\textnormal{bf}}^{A}(X_{0}) $ is obtained from composition of a map
\begin{align*}
B^{+}(W^{0}_{n,\beta},\hat{R}_{0})  \rightarrow  B^{+}(U^{0}_{n},\epsilon)\wedge K_0 / S_0\end{align*}
and the canonical map $\iota_0 $. The invariant $\underline{\textnormal{bf}}^{R}(X_{1}) $ is obtained similarly.
Then, $\pmb{\tilde{\epsilon}}(\underline{\textnormal{bf}}^{A}(X_{0}), \underline{\textnormal{bf}}^{R}(X_{1}))$
is given by applying Spanier--Whitehead dual map to their smash product.
From previous steps, we can conclude the following result

\begin{pro}\label{deformed pairing}
The morphism
$\pmb{\tilde{\epsilon}}(\underline{\textnormal{bf}}^{A}(X_{0}), \underline{\textnormal{bf}}^{R}(X_{1}))$ can be represented by suitable desuspension of the map
\begin{align*}
& B^{+}(W^{0}_{n,\beta},\hat{R}_{0}) \wedge B^{+}(W^{1}_{n},\hat{R}_{1}) \\
&\qquad \rightarrow  B^{+}(U^{0}_{n},\epsilon)\wedge B^{+}(U^{1}_{n},\epsilon)\wedge
B^{+}(V^{2}_{n},\bar{\epsilon})\wedge I^{n}(-Y_{0})\wedge I^{n}(-Y_{1})
\end{align*}
defined by
\[(\tilde{x}_{0},\tilde{x}_{1})\mapsto (\widetilde{SW}_{n}(\tilde{x}_{0}),\widetilde{SW}_{n}(\tilde{x}_{1}),r_{2}(\tilde{x}_{0})-r_{2}(\tilde{x}_{1}),{\iota}^{\bar{\epsilon}}(r_{0}(\tilde{x}_{0}),r_{1}(\tilde{x}_{1}))
\]
if $\|\widetilde{SW}_{n}(\tilde{x}_{i})\|\leq \epsilon$ and $\|r_{2}(\tilde{x}_{0})-r_{2}(\tilde{x}_{1})\|\leq \bar{\epsilon}$
and sending $(\tilde{x}_{0},\tilde{x}_{1})$ to the base point otherwise.
Here $I^{n}(-Y_{i})$ denotes $I(B(V^{i}_{n},\tilde{R}_{i}))$ for $i=0,1$.
\end{pro}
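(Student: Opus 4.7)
The plan is to unpack the two morphisms $\underline{\textnormal{bf}}^{A}(X_0)$ and $\underline{\textnormal{bf}}^{R}(X_1)$, smash them, and precompose with the Spanier--Whitehead pairing $\pmb{\epsilon}$, then show that the resulting composition can be successively deformed to the explicit formula in the statement. By Definitions~\ref{def BFtypeA}, \ref{def BFtypeR} and the construction in Section~\ref{sec bfconstuct}, each invariant is represented by an approximated Seiberg--Witten map into a Thom space smashed with $K_i/S_i$, followed by the canonical map $\iota_i$ to the (desuspended) Conley index, and finally suitable Spanier--Whitehead duality. Under the pairing $\pmb{\tilde{\epsilon}}$ of Section~\ref{section spanierwhitehead}, the $Y_2$-parts of the two Conley index factors are contracted using the map $\pmb{\epsilon}$ of Section~\ref{section dualswf}. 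After suspension/desuspension cancellations in $\mathfrak{C}$, we may therefore work at the space level with the map $\pmb{\tilde{\epsilon}}(\iota_0,\iota_1)$ smashed with the obvious pair of approximated Seiberg--Witten maps.

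I would first represent $\pmb{\tilde{\epsilon}}(\iota_0,\iota_1)$ exactly by the long-neck formula~(\ref{gluing with long neck}) of the excerpt. This is legitimate because the constant $L_0$ was chosen so that $(K_0,S_0)$ and $(K_1,S_1)$ are $L_0$-tame pre-index pairs and because the manifold isolating blocks $\tilde N_j$ are $L_0$-tame by Proposition~\ref{prop mfdnhbdswf}; Lemma~\ref{flow map from tame index pair} combined with Theorem~\ref{from pre-index to index refined} then gives the explicit flow-map representative. The choice of $\bar\epsilon<\min(\bar\epsilon_0,\bar\epsilon_1,\bar\epsilon_2)$ and Lemma~\ref{almost invariant set} ensure that the cutoff $\|\eta_-(y_2)-\eta_+(y_2')\|\leq\bar\epsilon$ can be replaced by $\|\varphi^2(y_2,3L_0)-\varphi^2(y_2',-3L_0)\|\leq\bar\epsilon$ without affecting the nontrivial part of the map.

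Next I would perform Step~1 of the excerpt: homotope the parameter $L$ in the $Y_2$-factor from $L_0$ down to $0$, using the family of maps indexed by $L\in[0,L_0]$ defined by $(y_2,y_2')\mapsto \varphi^2(y_2,3L)-\varphi^2(y_2',-3L)$. The key point is Lemma~\ref{lem deformL01st}, which guarantees that whenever the output lies in $B^+(V^2_n,\bar\epsilon)$, the intermediate trajectories remain inside $A_2'\subset\operatorname{int}(A_2)$, so the domain condition~(\ref{condition1 on Y2}) is preserved along the entire homotopy. The endpoint at $L=0$ is the map~(\ref{map with short neck}), which only requires $\|y_2-y_2'\|\leq\bar\epsilon$ together with the $(X_0,X_1)$-conditions~(\ref{condition on Y01}). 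Then I carry out Step~2: observe that, under the condition $\|y_2-y_2'\|\leq\bar\epsilon$, the $(X_0,X_1)$-factor of~(\ref{map with short neck}) is precisely the canonical map $\iota^{\bar\epsilon}$ attached to the joint pre-index pair $(K^{\bar\epsilon},S^{\bar\epsilon})$, which is itself $L_0$-tame by Lemma~\ref{lem epsilonL0tame}. Finally, precomposing with the approximated Seiberg--Witten maps $B^+(W^0_{n,\beta},\hat R_0)\to B^+(U^0_n,\epsilon)\wedge K_0/S_0$ and analogously for $X_1$, and repackaging restriction to $Y_2$ through $r_2$, yields the formula in the statement.

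The main obstacle will be step one: verifying that the $L$-parameter homotopy is genuinely continuous as a map into $\tilde N_0/\tilde N_0^-\wedge\tilde N_1/\tilde N_1^-\wedge B^+(V^2_n,\bar\epsilon)$. Continuity at the cutoff (where the output norm is exactly $\bar\epsilon$) requires that points escaping $A_2$ during the homotopy must also have the $Y_2$-difference leave $B(V^2_n,\bar\epsilon)$; this is exactly the content of Lemma~\ref{lem deformL01st}, and the quantitative bound $\bar\epsilon\leq\bar\epsilon_1$ chosen earlier in Section~\ref{sec deform1st} is what makes this go through. The other steps are bookkeeping in the symmetric monoidal structure of $\mathfrak{C}$ and in the compatibilities of Section~\ref{section dualswf}.
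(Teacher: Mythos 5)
Your proposal correctly reconstructs the paper's argument: you start from the long-neck representative (\ref{gluing with long neck}) justified by $L_0$-tameness, deform $L$ from $L_0$ to $0$ using Lemma~\ref{lem deformL01st} for continuity at the $\bar\epsilon$ cutoff, repackage the $(Y_0, Y_1)$-factor as $\iota^{\bar\epsilon}$ via Lemma~\ref{lem epsilonL0tame}, and precompose with the approximated Seiberg--Witten maps. This is essentially the same two-step deformation the paper carries out, using the same lemmas for the same purposes.
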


\subsection{Stably c-homotopic pairs } \label{subsec stablyc}
In this subsection, we recall notions of stably c-homotopy and SWC triples which were originally introduced by Manolescu \cite{Manolescu2}.
These provide a convenient framework when deforming stable homotopy maps coming from construction of Bauer--Furuta invariants.
 Although most
of the definitions are covered in \cite{Manolescu2}, we rephrase  them in a slightly more general setting which is easier
to apply in our situation. We also give some details for completeness and concreteness.

Let $p_{i} \colon E_{i}\rightarrow B\ (i=1,2)$ be Hilbert bundles over some compact space $B$. We denote by $\|\cdot\|_{i}$ the
fiber-direction norm of ${E}_{i}$. Let $\bar{E}_{1}$ be the fiberwise completion of ${E}_{1}$ using a weaker
norm, which we denote by $|\cdot |_{1}$. 
We also assume that for any bounded sequence $\{x_{n}\}$ in $E_{1}$, there exist $x_{\infty}\in
E_{1}$ such that after passing to a subsequence, we have\begin{itemize}
\item  $\{x_{n}\}$ converge to $x_{\infty}$ weakly in $E_{1}$. 
\item  $\{x_{n}\}$ converge to $x_{\infty}$ strongly in $\bar{E}_{1}$.
\end{itemize}

\begin{defi}\label{admissible pairs}
A pair $l,c \colon E_{1}\rightarrow E_{2}$ of bounded continuous  bundle maps is called an admissible pair if it
satisfies the following conditions:
\begin{itemize}
\item $l$ is a fiberwise linear map;
\item $c$ extends to a continuous map $\bar{c} \colon \bar{E}_{1}\rightarrow E_{2}$.
\end{itemize}

\end{defi}

At this point, we will focus on the context of the gluing theorem as in Section~\ref{sec gluingsetup}. 
Let $V=Coul(Y_{0})\times Coul(Y_{1})$ with $b_{1}(Y_{0}) = b_1( Y_{1})=0$.
As before, denote by $V^{\mu}_{\lambda}$ the subspace spanned by the eigenvectors
of $(*d,\slashed{D})$ with eigenvalue in $(\lambda , \mu]$ and denote the projection $V\rightarrow V_{\lambda}^{\mu}$ by $p_{\lambda}^{\mu}$.
Motivated by the Seiberg-Witten map on 4-manifolds with boundary, we give the following definition.

\begin{defi}\label{SWC triple}
Let $(l,c)$ be an admissible pair from $E_{1}$ to $E_{2}$ and let $r \colon E_{1}\rightarrow V$ be a continuous map which is linear
on each fiber. We call $(l,c,r)$ an \emph{SWC-triple} (which stands for Seiberg--Witten--Conley) if the following conditions are satisfied:
\begin{enumerate}

\item The map $l\oplus (p^{0}_{-\infty}\circ r) \colon E_{1}\rightarrow E_{2}\oplus V^{0}_{-\infty}$ is fiberwise Fredholm.
\item There exists $M'>0$ such that for any pair of $x\in E_{1}$ satisfying $(l+c)(x)=0$ and a half-trajectory of finite type $\gamma \colon (-\infty,0]\rightarrow
V$ with $r(x)=\gamma(0)$, we have $\|x\|_{1}<M'$ and $\|\gamma(t)\|<M'$ for any $t\geq 0$.
\end{enumerate}
\end{defi}

Two SWC-triples $(l_{i},c_{i},r_{i})\ (i=0,1)$ (with the same domain and targets) are called \emph{c-homotopic} if
there is a homotopy between them through a continuous family of SWC triples with a uniform constant $M'$. 

Two SWC-triples  $(l_{i},c_{i},r_{i})\ (i=0,1)$ (with possibly different domain and targets) are called \emph{stably c-homotopic}
if there exist Hilbert bundles $E_{3},E_{4}$ such that $((l_{1}\oplus \operatorname{id}_{E_{3}},c_{1}\oplus
0_{E_{3}}),r_{1}\oplus 0_{E_{3}})$ is c-homotopic to  $((l_{2}\oplus \operatorname{id}_{E_{4}},c_{2}\oplus 0_{E_{4}}),r_{2}\oplus
0_{E_{4}})$ .

For any SWC triple $(l,c,r)$, we can define a relative Bauer--Furuta type invariant as a pointed stable homotopy class
$$
   BF(l,c,r)\in \{\Sigma^{n \mathbb{C} }\mathbf{T}(\text{ind}(l,p^{0}_{-\infty}\circ r)), \operatorname{SWF}(-Y_{0})\wedge \operatorname{SWF}(-Y_{1})\}_{},
$$
where 
\[
n=n(Y_{0},\mathfrak{s}_{Y_{0}},g_{Y_{0}})+n(Y_{1},\mathfrak{s}_{Y_{1}},g_{Y_{1}})
\]
by  so called ``SWC-construction'' analogous to the  construction in Section~\ref{sec 4mfd} described below.

Let us pick a trivialization $E_2 \cong F_2 \times B$ with a projection $q \colon E_{2}\rightarrow F_{2}$, an increasing sequence of real numbers $\lambda_{n}\rightarrow \infty$ and a sequence of increasing finite-dimensional subspaces $\{F^{n}_{2}\}$ of $F_{2}$ such that the projections $p_{n} \colon F_{2}\rightarrow F^{n}_{2}$ converge pointwisely
 to the identity map and $q^{-1}(F^{n}_{2})\times V^{\lambda_{n}}_{-\lambda_{n}}\subset E_{2}\times V^{\lambda_{n}}_{-\infty}$
is transverse to the image of $(l,p^{\lambda_{n}}_{-\infty}\circ r)$ on each fiber. Let $E_{1}^{n}$ be the preimage $(l,p^{\lambda_{n}}_{-\infty}\circ
r)^{-1}(q^{-1}(F^{n}_{2})\times V^{\lambda_{n}}_{-\lambda_{n}})$ which is a finite rank subbundle. 

Consider an approximated map
$$f_{n}=p_{n}\circ q\circ (l+c) \colon E^{n}_{1}\rightarrow F^{n}_{2}.$$
From the definition of the SWC triple, 
one can deduce the following result in the same manner as the construction of relative invariants for Seiberg--Witten maps:  
for any $R',R\gg 0$ satisfying $r(B(E_{1},R))\subset
B(V,R')$, there exist $N,\epsilon_{0}$ such that for any $n\geq N$ and $\epsilon<\epsilon_{0}$, the pair of subsets $$(p^{\lambda_{n}}_{-\infty}\circ
r(f_{n}^{-1}(B(F^{n}_{2},\epsilon))\cap B(E_{1},R)),p^{\lambda_{n}}_{-\infty}\circ r(f_{n}^{-1}(B(F^{n}_{2},\epsilon))\cap
S(E_{1},R))))$$
is a pre-index pair in the isolating neighborhood $B(V^{\lambda_{n}}_{-\lambda_{n}},R')$.

From this, we can find an index pair $(N,L)$ containing the above pre-index pair, which allows us to define an induced map
$$B(E^{n}_{1},R)/S(E^{n}_{1},R)\rightarrow B(F^{n}_{2},\epsilon)/S(F^{n}_{2},\epsilon)\wedge N/L.$$ After desuspension, we
obtain a stable map 
$$
       h \colon \Sigma^{n\C}  \mathbf{T}(\operatorname{ind}(l,p^{0}_{-\infty}\circ r))   \rightarrow  
       \operatorname{SWF}(-Y_{0})  \wedge \operatorname{SWF}(-Y_{1}).
$$ By standard homotopy arguments, the stable homotopy class $[h]$ does not depend on auxiliary choices.
As a result, we define the stable homotopy class $[h]$ to be the relative invariant $BF(l,c,r)$ for this SWC triple. 

It is straightforward to prove that two stably c-homotopic SWC triples give the same stable homotopy class.
This is the main point of introducing SWC construction.
We end with a very useful lemma which is  a generalization of   Observation 1  in \cite[Section 4.1]{Manolescu2}
and allows us to move between maps and conditions on the domain.

\begin{lem}\label{moving map to  domain2}
Let $(l,c)$ be an admissible pair from $E_{1}$ to $E_{2}$ and let $r \colon E_{1}\rightarrow V$ be a continuous map which is linear
on each fiber. Suppose that we have a surjective bundle map $g \colon E_{1}\rightarrow E_{3}$. Then the triple $(l\oplus g,c\oplus 0_{E_{e}},r)$
is an SWC triple if and only if the triple $(l|_{\ker g},c|_{\ker g},r|_{\ker g})$ is an SWC triple. In the case that such two triples are SWC triples, they are stably c-homotopic to each other.\end{lem}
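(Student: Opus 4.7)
My plan is first to establish the equivalence between the SWC triple properties, and then to exhibit an explicit stable c-homotopy. For brevity denote by (I) the triple $(l \oplus g,\, c \oplus 0_{E_3},\, r)$ on $E_1$ and by (II) the triple $(l|_{\ker g},\, c|_{\ker g},\, r|_{\ker g})$ on $\ker g$. Using that $g$ is a surjective bundle map over a compact base, I would fix a continuous fiberwise splitting $s \colon E_3 \to E_1$ with $g \circ s = \id_{E_3}$, yielding an isomorphism $E_1 \cong \ker g \oplus E_3$. Under this identification, the linearization $(l \oplus g) \oplus (p^0_{-\infty} \circ r)$ has block form
\[
\begin{pmatrix} l|_{\ker g} & l \circ s \\ 0 & \id_{E_3} \\ (p^0_{-\infty}\circ r)|_{\ker g} & p^0_{-\infty}\circ r \circ s \end{pmatrix} \colon \ker g \oplus E_3 \to E_2 \oplus E_3 \oplus V^0_{-\infty},
\]
and taking the Schur complement along the invertible $\id_{E_3}$ block reduces its Fredholmness to that of $l|_{\ker g} \oplus (p^0_{-\infty}\circ r)|_{\ker g}$, which is condition~(1) of Definition~\ref{SWC triple} for (II). Condition~(2) for (I) and (II) is manifestly identical, since $g(x) = 0$ is the same as $x \in \ker g$, so the two triples have the same set of pairs $(x, \gamma)$ and share the same bound $M'$.

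For the stable c-homotopy, in the case both triples are SWC, I plan to introduce the one-parameter family on $\ker g \oplus E_3$ defined for $t \in [0,1]$ by
\[
    l_t(x, y) = \bigl(l(x + (1-t)s(y)),\, y\bigr), \quad
    c_t(x, y) = \bigl(c(x + (1-t)s(y)),\, 0\bigr), \quad
    r_t(x, y) = r(x + (1-t)s(y)).
\]
At $t = 0$ this equals (I) under the identification $E_1 \cong \ker g \oplus E_3$; at $t = 1$ it equals $(l|_{\ker g} \oplus \id_{E_3},\, c|_{\ker g} \oplus 0,\, r|_{\ker g} \oplus 0)$, which is precisely (II) stabilized by adding $\id_{E_3}$. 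For each $t < 1$ the bundle automorphism $\phi_t(x, y) := x + (1-t)s(y)$ from $\ker g \oplus E_3$ onto $E_1$ conjugates $l_t \oplus (p^0_{-\infty} \circ r_t)$ to $\bigl(l,\, (1-t)^{-1}g,\, p^0_{-\infty}\circ r\bigr)$ on $E_1$, whose Fredholmness is equivalent to that of (I); at $t = 1$ the map is the block-diagonal sum $l|_{\ker g} \oplus \id_{E_3} \oplus (p^0_{-\infty}\circ r)|_{\ker g}$, Fredholm because (II) is. The key point is that any solution of $(l_t + c_t \oplus 0_{E_3})(x, y) = 0$ forces $y = 0$ from the $E_3$-component, reducing to $(l+c)(x) = 0$ with $x \in \ker g$; so the solutions and their paired half-trajectories are independent of $t$, and the uniform constant $M'$ for (II) serves throughout the family.

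The main potential obstacle is verifying that each pair $(l_t, c_t)$ remains admissible in the sense of Definition~\ref{admissible pairs}, i.e. that $c_t$ extends continuously to the fiberwise weak completion. This should follow because $s$ is a bounded fiberwise linear bundle map, so $(x, y) \mapsto x + (1-t)s(y)$ respects the weak topology on each fiber, and $\bar c \colon \bar E_1 \to E_2$ is continuous by assumption; composition then yields the required continuous extension of $c_t$, uniformly in $t$. With admissibility, Fredholmness, and the uniform bound $M'$ established, the family $(l_t, c_t, r_t)$ realizes the desired c-homotopy from (I) (with $E_4 = 0$ added) to (II) (with $E_3$ added as $\id_{E_3}$), completing the proof of the stable c-homotopy claim.
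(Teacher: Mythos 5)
Your proof is correct.  The paper itself supplies no argument for this lemma, merely labelling it ``a generalization of Observation 1 in \cite[Section 4.1]{Manolescu2},'' so there is no in-text proof to compare against; your write-up fills that gap cleanly.  The two ingredients you use — the splitting $E_1 \cong \ker g \oplus E_3$ via a bounded section $s$, with a block-triangular reduction establishing the equivalence of the Fredholm conditions, and the linear interpolation $\phi_t(x,y)=x+(1-t)s(y)$ giving the explicit c-homotopy — are exactly the natural route, and match the spirit of Manolescu's observation.  You correctly notice the two delicate points: (a) at $t=1$ the map $\phi_t$ degenerates, so Fredholmness must be checked directly there (it becomes $T' \oplus \id_{E_3}$) rather than by conjugation, and (b) the solution set of $(l_t+c_t)=0$ is $t$-independent because the $E_3$-component forces $y=0$, which is what gives the uniform bound $M'$.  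The admissibility check is the only place where you are slightly implicit — you need that the fiberwise weak completion $\overline{\ker g \oplus E_3}$ is compatible with the splitting so that $\phi_t$ extends, which holds precisely because $s$ is a bounded fiberwise-linear bundle map as you say — and your one-sentence justification is adequate for this.
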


\subsection{Deformation of the Seiberg-Witten map}
\label{sec deform2nd}

Throughout this section, we will denote by  
$$G =H^{1}(X,Y_{2};\mathbb{Z})\cong H^{1}(X_{0},Y_{2};\mathbb{R})\times H^{1}(X_{1},Y_{2};\mathbb{Z})$$
and fix such an identification . Furthermore, we introduce the notation 
\[
\begin{split}
& \Omega^{1}(X_{1},Y_{1},\alpha^{1}):=\\
& \qquad 
\left\{
\hat{a}\in \Omega^{1}(X_{1}) \left| d^{*}\mathbf{t}_{Y_{1}}(\hat{a})=0,\ \int_{Y^{j}_{1}}(*\hat{a})=0,\
\int_{\alpha^{1}_{k}} \hat{a}=0,\ \forall j,k
\right.
\right\}
\end{split}
\]
and define $\Omega^{1}(X_{0},Y_{0},\alpha^{0}\cup
\beta)$ and $\Omega^{1}(X,Y_{0}\cup Y_{1},\alpha^{0}\cup\alpha^{1}\cup  \beta)$ similarly. Let us also denote all the relevant Hilbert spaces

\begin{itemize}
\item $
V_{X_{0}}:=L^{2}_{k+1/2}(i\Omega^{1}(X_{0},Y_{0},\alpha^{0}\cup \beta)\oplus \Gamma(S^{+}_{X_{0}}));
$
\item $
V_{X_{1}}:=L^{2}_{k+1/2}(i\Omega^{1}(X_{1},Y_{1},\alpha^{1})\oplus \Gamma(S^{+}_{X_{1}}));
$
\item $V_{X}:=L^{2}_{k+1/2}(i\Omega^{1}(X,Y_{0}\cup Y_{1},\alpha^{0}\cup\alpha^{1}\cup  \beta)\oplus \Gamma(S^{+}_{X}));
$
\item $V:=Coul(Y_{0})\times Coul(Y_{1});$ 
\item $U_{X_{i}}:=L^{2}_{k-1/2}({i\Omega^{0}(X_{i})\oplus i\Omega^{2}_{+}(X_{i})\oplus \Gamma(S^{-}_{X_{i}})})\text{ for }i=0,1;$
\item $U_{X}:=L^{2}_{k-1/2}({i\Omega_{0}^{0}(X)\oplus i\Omega^{2}_{+}(X)\oplus \Gamma(S^{-}_{X})})$;
\item $H^{1}(X_{\bullet},Y_{2};\mathbb{R})$, where $X_{\bullet}$ stands for $X_{0},X_{1}$ or $X$. 
\end{itemize}
Here $\Omega^{0}_{0}(X)$ denotes the space of functions on $X$ which integrate to zero. 
Note that $G$ acts on all
these spaces  as following:
\begin{itemize}
\item On differential forms, the action is trivial.
\item On spinors, we use the identification 
\begin{equation}\label{harmonic gauge transformation}
G\cong \mathcal{G}^{h,\hat{o}}_{X,Y_{2}},
\end{equation}
where $\mathcal{G}^{h,o}_{X,Y_{2}}$ denotes the group of harmonic gauge transformations  $u$ on $X$ such that  $u^{-1}du\in
i\Omega_{CC}^{1}(X)$ and $u|_{Y_{2}}=
e^f$ with $f(\hat{o}) = 0 $. The action is by gauge transformation. Note that we will use the restriction
of $\mathcal{G}^{h,\hat{o}}_{X,Y_{2}}$ on $X_0 $ and $X_1$ instead of the harmonic gauge transformation satisfying boundary condition on $X_{0}$ or $X_1$.
\item On the homology $H^{1}(X_{\bullet},Y_{2};\mathbb{R})$, the action is given by \emph{negative} translation.
\end{itemize}

We consider Hilbert bundles 
\begin{align*}
\tilde{V}_{X}&=(V_{X}\times H^{1}(X,Y_{2};\mathbb{R}))/G, \\
\tilde{U}_{X}&=(U_{X}\times H^{1}(X,Y_{2};\mathbb{R}))/G
\end{align*}
over $\pic(X,Y_{2})$ and a pair of maps 
$$
l_{X},c_{X} \colon V_{X}\times H^{1}(X,Y_{2};\mathbb{R})\rightarrow L^{2}_{k-1/2}(i\Omega^{2}_{+}(X)\oplus \Gamma(S^{-}_{X}))\times H^{1}(X,Y_{2};\mathbb{R})
$$
given by 
$$
l_{X}(\hat{a},\phi,h):=(d^{+}\hat{a},\slashed{D}^{+}_{\hat{A}_{0}+i\tau(h)}\phi,h),\ c_{X}:=(F^{+}_{\hat{A}^{t}_{0}}-\rho^{-1}(\phi\phi^{*})_{0},\rho(\hat{a})\phi,h),
$$
where $\tau (h) $
is the unique harmonic 1-form $u$ on $X$ representing $h$ such that $\mathbf{t}_{Y_{2}}(\tau(h))$ is exact
and $\tau(h)\in
i\Omega_{CC}^{1}(X)$. It is straightforward to see that $l_{X}$ and $c_{X}$ are equivariant under the $G$-action. Thus, we can take the quotient and
obtain bundle maps 
$$
(d^* \oplus \tilde{l}_{X}), (0 \oplus \tilde{c}_{X}) \colon \tilde{V}_{X}\rightarrow \tilde{U}_{X}.
$$

Observe that the double Coulomb condition on $V_X$ is simplified to just $d^* (\hat{a}) =0 $.   
It then follows that  $(\tilde{l}_X|_{\ker d^*},\tilde{c}_X|_{\ker d^*}, (\tilde{r}_{0},\tilde{r}_{1})|_{\ker d^*})$ is an SWC-triple and $\operatorname{BF}(X)|_{\pic(X,Y_{2})}$
is precisely obtained from the SWC-construction of this triple, where $ \tilde{r}_{i}:\tilde{V}_{X}\rightarrow Coul(Y_{i})$ denotes the twisted restriction map as in Section~\ref{sec 4mfd}.

The goal of this section is to deform $\operatorname{BF}(X)|_{\pic(X,Y_{2})}$ to the map $\pmb{\tilde{\epsilon}}(\underline{\textnormal{bf}}^{A}(X_{0}), \underline{\textnormal{bf}}^{R}(X_{1}))$  represented as in Proposition~\ref{deformed pairing}. 
There will be several steps.

\subsubsection*{Step 1} 
We move the gauge fixing condition $d^* = 0 $ to  stably c-homotopic maps. Since
$$d^{*} \colon i\Omega^{1}(X,Y_{0}\cup Y_{1},\alpha^{0}\cup \alpha^{1}\cup\beta)\rightarrow i\Omega_{0}^{0}(X)$$
is surjective, we directly apply Lemma~\ref{moving map to  domain2} and obtain the following:
\begin{lem}
The relative Bauer--Furuta invariant $\operatorname{BF}(X)|_{\pic(X,Y_{2})}$ is obtained by the SWC construction on the triple
$(d^* \oplus \tilde{l}_{X},0 \oplus \tilde{c}_{X},(\tilde{r}_{0},\tilde{r}_{1}))$.
\end{lem}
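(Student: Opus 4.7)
The plan is to apply Lemma~\ref{moving map to domain2} directly with the surjection $g = d^*$. The triple producing $\operatorname{BF}(X)|_{\pic(X,Y_2)}$ is $(\tilde{l}_X|_{\ker d^*}, \tilde{c}_X|_{\ker d^*}, (\tilde{r}_0, \tilde{r}_1)|_{\ker d^*})$, and the claim is that this is stably c-homotopic to the larger triple $(d^* \oplus \tilde{l}_X, 0 \oplus \tilde{c}_X, (\tilde{r}_0, \tilde{r}_1))$.

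First I would verify the hypotheses of Lemma~\ref{moving map to domain2}. The map
\[
d^* \colon i\Omega^{1}(X,Y_0\cup Y_1,\alpha^{0}\cup \alpha^{1}\cup \beta) \rightarrow i\Omega^{0}_{0}(X)
\]
is surjective on the appropriate Sobolev completions: given $f\in L^2_{k-1/2}(i\Omega^0_0(X))$, one solves the Neumann problem $\Delta u = f$ with $\partial u/\partial n = 0$ on $Y$ (solvable because $\int_X f = 0$), and then $\hat{a} = du$ satisfies $d^*\hat{a} = f$ together with all the required linear boundary conditions (the tangential part $\mathbf{t}\hat{a}$ is exact on $Y$, hence coclosed and integrates to zero on each component; period integrals over $\alpha^i_k, \beta_j$ vanish because $\hat{a}$ is exact). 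This map passes to the quotient by $G$ since $G$ acts trivially on forms, so the induced $d^*\colon \tilde V_X \rightarrow \tilde U_X$ (the first component) is a surjective bundle map of Hilbert bundles over $\pic(X,Y_2)$.

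Next I would check that the pair $(\tilde{l}_X, \tilde{c}_X)$ is admissible in the sense of Definition~\ref{admissible pairs}: $\tilde{l}_X$ is fiberwise linear (a family of Dirac operators and $d^+$), while $\tilde{c}_X$ is a bundle quadratic term of the form $(F^+_{\hat{A}_0^t} - \rho^{-1}(\phi\phi^*)_0, \rho(\hat{a})\phi)$ which extends continuously in a weaker Sobolev norm by the standard Sobolev multiplication estimates used in Lemma~\ref{quadratic part}. The SWC condition for $(\tilde{l}_X|_{\ker d^*}, \tilde{c}_X|_{\ker d^*}, (\tilde{r}_0,\tilde{r}_1)|_{\ker d^*})$ is exactly the boundedness statement of Proposition~\ref{bounded for gluing} (since $X \colon Y_0\sqcup Y_1\rightarrow \emptyset$ has no outgoing boundary, the strip conditions reduce to ball conditions on $V=Coul(Y_0)\times Coul(Y_1)$, using $b_1(Y_0)=b_1(Y_1)=0$), so the hypotheses of Lemma~\ref{moving map to domain2} are satisfied.

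Finally, Lemma~\ref{moving map to domain2} immediately gives that the enlarged triple $(d^*\oplus \tilde{l}_X, 0\oplus \tilde{c}_X, (\tilde{r}_0,\tilde{r}_1))$ is itself an SWC triple and is stably c-homotopic to the restriction $(\tilde{l}_X|_{\ker d^*}, \tilde{c}_X|_{\ker d^*}, (\tilde{r}_0,\tilde{r}_1)|_{\ker d^*})$; by the invariance of the SWC construction under stable c-homotopy noted just after Definition~\ref{SWC triple}, both triples produce the same stable map, namely $\operatorname{BF}(X)|_{\pic(X,Y_2)}$. There is essentially no obstacle here: the lemma is really just the observation that enlarging the domain by a trivially cancelled factor and encoding the gauge-fixing as a component of the linearization does not change the Bauer--Furuta output. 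The real work in the gluing theorem will come in the subsequent steps where one must deform the harmonic gauge groups and the restriction to $Y_2$ from the $X_0\sqcup X_1$ setting to the glued $X$ setting; this preparatory lemma merely sets up a flexible form of $\operatorname{BF}(X)$ that is amenable to such deformations.
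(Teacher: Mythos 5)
Your overall strategy is the same as the paper's: apply Lemma~\ref{moving map to domain2} with the surjective bundle map $g = d^*$ and conclude by stable $c$-homotopy invariance of the SWC construction. The paper simply asserts the surjectivity of $d^*\colon i\Omega^{1}(X,Y_0\cup Y_1,\alpha^{0}\cup \alpha^{1}\cup \beta)\to i\Omega^0_0(X)$ and cites the lemma, whereas you attempt to prove it.

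However, your surjectivity argument contains a genuine error. You set $\hat{a}=du$ with $u$ solving the Neumann problem $\Delta u = f$, $\partial_{\vec n}u=0$, and then claim ``the tangential part $\mathbf{t}\hat{a}$ is exact on $Y$, hence coclosed.'' This is false: $\mathbf{t}(du)=d_Y(u|_Y)$ is exact but not coclosed; indeed $d^*_Y\,d_Y(u|_Y)=\Delta_Y(u|_Y)$ is generically nonzero. So $du$ does not lie in $\Omega^1(X,Y_0\cup Y_1,\alpha^0\cup\alpha^1\cup\beta)$, which requires $d^*\mathbf{t}_Y(\hat{a})=0$. A correct argument should instead solve the \emph{Dirichlet} problem $\Delta u = f$, $u|_Y=0$ (so that $\mathbf{t}(du)=0$ for free, making the coclosedness trivial) and then correct by a harmonic function $v$ which is locally constant on $Y$ to arrange $\int_{Y_j}\mathbf{t}(*d(u+v))=0$ on each boundary component; such $v$ exist because the total flux $\sum_j\int_{Y_j}\partial_{\vec n}u=\int_X f=0$ and the space of such harmonic corrections has dimension $b_0(Y)-1$. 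This is exactly the argument underlying \cite[Proposition~2.2]{Khandhawit1}.

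A smaller inaccuracy: the boundedness ingredient making $(\tilde{l}_X|_{\ker d^*},\tilde{c}_X|_{\ker d^*},(\tilde r_0,\tilde r_1)|_{\ker d^*})$ an SWC triple is Theorem~\ref{boundedness for X-trajectory} applied directly to $X$ (with $b_{1,\beta}=0$, so no $\hat p_\beta$ constraint is needed); Proposition~\ref{bounded for gluing} is the neck-stretched version for the decomposed manifold and enters only in the later deformation steps, not here.
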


\subsubsection*{Step 2}
We begin to glue configurations on $X_0$ and $X_1$ to obtain configurations on $X$. 
Let us consider a Sobolev space of configurations on the boundary
$$V^{k-m}_{Y_{2}}:=L_{k-m}^{2}(i\Omega^{1}(Y_{2})\oplus i\Omega^{0}(Y_{2})\oplus \Gamma(S_{Y_{2}})).$$
for $0\leq m\leq k$.

For any 1-form $\hat{b}$ on $X$, we can combine the Levi--Civita connection on  $\Lambda^{*}T^{*}(X_{i})$ and the spin$^{c}$
connection $\hat{A}_{0}|_{X_{i}}+\hat{b}$ to obtain a connection on $\Lambda^{*}T^{*}(X_{i})\oplus S_{X_{i}}$. We use $\nabla^{\hat{b}}$
to denote the corresponding covariant derivative.
Consider a map 
\begin{align*}
D^{(m)} \colon V_{X_{0}}\times V_{X_{1}}&\times H^{1}(X,Y_{2};\mathbb{R}) \rightarrow  V^{k-m}_{Y_{2}}\times H^{1}(X,Y_{2};\mathbb{R}) \\
(x_{0},x_{1},h) &\mapsto  ((\nabla^{\tau(h)|_{X_{0}}}_{\vec{n}})^{m}x_{0})|_{Y_{2}}-((\nabla^{\tau(h)|_{X_{1}}}_{\vec{n}})^{m}x_{1})|_{Y_{2}},h),
\end{align*}
where $\vec{n}$ is the outward normal direction of $Y_{2}\subset X_{0}$. Here, we apply standard bundle isomorphisms
$
T^{*}(X_{i})|_{Y_{2}}\cong T^{*}Y_{2}\oplus \underline{\mathbb{R}}\text{ and }S^{+}_{X_{i}}|_{Y_{2}}\cong S_{Y_{2}}.
$

It is clear that the map $D^{(m)}$ is equivariant under the action of $G$.  As a result, we can take the quotient and obtain a map
$$ 
\tilde{D}^{(m)} \colon \tilde{V}_{X_{0},X_{1}}\rightarrow \tilde{V}^{k-m}_{Y_{2}},
$$
where we set
\begin{align*}
\tilde{V}_{X_{0},X_{1}} &:=(V_{X_{0}}\times V_{X_{1}}\times H^{1}(X,Y_{2};\mathbb{R}))/G \\
\tilde{V}^{k-m}_{Y_{2}} &:=(V^{k-m}_{Y_{2}}\times H^{1}(X,Y_{2};\mathbb{R}))/G.
\end{align*}
We state the gluing result for these spaces, which is a variation of \cite[Lemma~3]{Manolescu2}. The proof is only local near $Y_{2}$ and can be adapted without change.

\begin{lem}\label{gluing sobolev space}
The bundle map 
$$(\tilde{D}^{(k)},\cdots,\tilde{D}^{(0)}): \tilde{V}_{X_{0},X_{1}}\rightarrow \mathop{\oplus}^{k}_{m=0}\tilde{V}^{k-m}_{Y_{2}}$$
is fiberwise surjective and the kernel can be identified with the bundle $\tilde{V}_{X}$.
\end{lem}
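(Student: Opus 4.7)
My plan is to reduce the statement to a fiberwise one and then invoke the classical gluing theorem for Sobolev spaces of sections along a separating hypersurface, while carefully tracking the dependence on the harmonic parameter $h$ and the $G$-action.

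First, since the domains and targets are Hilbert bundles over $\pic(X,Y_{2})$ and $(\tilde{D}^{(k)},\ldots,\tilde{D}^{(0)})$ is a bundle map, fiberwise surjectivity and fiberwise kernel identification reduce to computations over a single point $[h]\in\pic(X,Y_{2})$, after choosing a lift $h\in H^{1}(X,Y_{2};\mathbb{R})$. Locally the bundles trivialize, and the map becomes the family of maps $(D^{(k)}_{h},\ldots,D^{(0)}_{h})$ parametrized by $h$, where each $D^{(m)}_{h}$ computes the $m$-th normal derivative along $Y_{2}$ using the smooth harmonic connection $\hat{A}_{0}+i\tau(h)$ restricted to either $X_{0}$ or $X_{1}$.

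Second, I would invoke the classical trace/extension theorem together with a standard patching argument. For any smooth connection on $X$ (in our case $\hat{A}_{0}+i\tau(h)$, whose restriction to a collar of $Y_{2}$ is smooth and depends smoothly on $h$), the iterated normal-derivative trace map from $L^{2}_{k+1/2}$ sections on $X_{0}$ into $\bigoplus_{m=0}^{k}L^{2}_{k-m}(Y_{2})$ is surjective with a bounded right inverse, and its kernel consists of sections whose first $k+1$ normal derivatives vanish on $Y_{2}$. Given any element of $\bigoplus_{m=0}^{k}V^{k-m}_{Y_{2}}$, we extend it into $X_{0}$ via the right inverse and take $x_{1}=0$; this shows fiberwise surjectivity. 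Conversely, a pair $(x_{0},x_{1})\in V_{X_{0}}\times V_{X_{1}}$ with matching normal derivatives of all orders $\leq k$ at $Y_{2}$ patches, by a standard cut-off argument, to an $L^{2}_{k+1/2}$-section on the glued manifold $X$. The boundary conditions at $Y_{0}$ (resp.\ $Y_{1}$) defining $\Omega^{1}(X_{0},Y_{0},\alpha^{0}\cup\beta)$ (resp.\ $\Omega^{1}(X_{1},Y_{1},\alpha^{1})$) combine to produce exactly the boundary/period conditions defining $\Omega^{1}(X,Y_{0}\cup Y_{1},\alpha^{0}\cup\alpha^{1}\cup\beta)$, which identifies the fiberwise kernel with $V_{X}$.

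Third, I would verify that the whole construction is $G$-equivariant so as to descend to the quotient bundles. By the identification $G\cong\mathcal{G}^{h,\hat{o}}_{X,Y_{2}}$, an element $u\in G$ acts on $V_{X_{0}}\times V_{X_{1}}\times H^{1}(X,Y_{2};\mathbb{R})$ by simultaneously gauge transforming the spinor parts on $X_{0}$ and $X_{1}$ by $u|_{X_{0}},u|_{X_{1}}$ and translating the harmonic factor by $-u^{-1}du$. Because $u$ is smooth on $X$ and its two restrictions automatically agree as jets on $Y_{2}$, each normal-derivative map $D^{(m)}$ is $G$-equivariant, and so is the gluing right-inverse produced above. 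The quotient map $(\tilde{D}^{(k)},\ldots,\tilde{D}^{(0)})$ is thus fiberwise surjective and its fiberwise kernel is precisely $\tilde{V}_{X}$.

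The one step I expect to require care, rather than serious difficulty, is the continuous $h$-dependence: the collar-neighborhood right inverse and the trace map both involve the connection $\hat{A}_{0}+i\tau(h)$, and one must verify that the uniform estimates survive as $h$ varies over a fundamental domain for $G$ in $H^{1}(X,Y_{2};\mathbb{R})$. Since $\tau(h)$ depends linearly and smoothly on $h$ and $\mathbf{t}_{Y_{2}}\tau(h)$ is exact (so it does not obstruct matching on $Y_{2}$), the classical argument---the family version of \cite[Lemma~3]{Manolescu2}---goes through with uniform constants on compact sets, which is all that is needed after passing to the quotient by $G$.
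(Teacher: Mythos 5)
Your proposal is correct and follows the same route the paper implicitly takes: the paper simply cites \cite[Lemma~3]{Manolescu2} and notes the argument is local near $Y_{2}$, while you reconstruct that argument fiberwise via the iterated normal-derivative trace theorem, check $G$-equivariance, and note the smooth $h$-dependence of $\hat{A}_{0}+i\tau(h)$ on the collar. The only point worth flagging is implicit in your write-up but should be said once: the extension and cut-off used for surjectivity must be supported in a collar of $Y_{2}$, disjoint from $Y_{0},Y_{1}$ and the loops $\alpha^{i},\beta$, so that the extended 1-form automatically satisfies the constraints defining $\Omega^{1}(X_{0},Y_{0},\alpha^{0}\cup\beta)$ and lands in $V_{X_{0}}$.
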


Analogous to the maps $d^* \oplus l_{X}$ and $0 \oplus c_{X}$, we define the map
\[l_{X_{0},X_{1}} \colon V_{X_{0}}\times V_{X_{1}}\times H^{1}(X,Y_{2};\mathbb{R}) \rightarrow U_{X_{0}}\times U_{X_{1}}\times
H^{1}(X,Y_{2};\mathbb{R}) \label{linear part of SW map}\]
\[\begin{split}
&((\hat{a}_{0},\phi_{0}),(\hat{a}_{1},\phi_{1}),h) \\
 &\mapsto ((d^{*}\hat{a}_{0},d^{+}\hat{a}_{0},\slashed{D}^{+}_{(\hat{A}_{0}+i\tau(h))|_{X_{0}}}\phi_{0}),(d^{*}\hat{a}_{1},d^{+}\hat{a}_{1},\slashed{D}^{+}_{(\hat{A}_{0}+i\tau(h))|_{X_{1}}}\phi_{1}),h)
\end{split}
\]
and the map 
\[c_{X_{0},X_{1}} \colon V_{X_{0}}\times V_{X_{1}}\times H^{1}(X,Y_{2};\mathbb{R}) \rightarrow U_{X_{0}}\times U_{X_{1}}\times
H^{1}(X,Y_{2};\mathbb{R})\]
\[
\begin{split}
&((\hat{a}_{0},\phi_{0}),(\hat{a}_{1},\phi_{1}),h) \\
&\mapsto ((0,F^{+}_{\hat{A}^{t}_{0}}|_{X_{0}}-\rho^{-1}(\phi_{0}\phi_{0}^{*})_{0},\rho(\hat{a}_{0})\phi_{0}), \\
& \qquad  \qquad 
(0,F^{+}_{\hat{A}^{t}_{0}}|_{X_{1}}-\rho^{-1}(\phi_{1}\phi_{1}^{*})_{1},\rho(\hat{a}_{1})\phi_{1}),h).
\end{split}
\]
Then, by taking quotient, we get bundle maps  
$$\tilde{l}_{X_{0},X_{1}},\tilde{c}_{X_{0},X_{1}} \colon \tilde{V}_{X_{0},X_{1}}\rightarrow \tilde{U}_{X_{0},X_{1}},$$
where $\tilde{U}_{X_{0},X_{1}}:=(U_{X_{0}}\times U_{X_{1}}\times H^{1}(X,Y_{2};\mathbb{R}))/G$. By gluing of Sobolev spaces, the bundle  $\tilde{U}_{X}$
can be identified as a subbundle of  $\tilde{U}_{X_{0},X_{1}}$. Let $\operatorname{pj}$ be the orthogonal projection to this subbundle.
The following result is then a consequence of  Lemma~\ref{gluing sobolev space} and Lemma~\ref{moving map to  domain2}.

\begin{lem} \label{lem deformation1} 
The triple 
\begin{equation}\label{deformation 1}
((\operatorname{pj}\circ \tilde{l}_{X_{0},X_{1}},\tilde{D}^{(k)},\cdots, \tilde{D}^{(0)}),(\operatorname{pj}\circ \tilde{c}_{X_{0},X_{1}},0,\cdots,0),(\tilde{r}_{0},\tilde{r}_{1}))
\end{equation}
 is an SWC-triple and is stably c-homotopic to  $(d^* \oplus \tilde{l}_{X}, 0 \oplus \tilde{c}_{X},(\tilde{r}_{0},\tilde{r}_{1}))$.
\end{lem}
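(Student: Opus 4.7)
The plan is to apply Lemma~\ref{moving map to  domain2} with $E_1 = \tilde{V}_{X_0,X_1}$, the surjective bundle map $g = (\tilde{D}^{(k)},\ldots,\tilde{D}^{(0)}) \colon \tilde{V}_{X_0,X_1} \to \bigoplus_{m=0}^{k}\tilde{V}_{Y_2}^{k-m}$ furnished by Lemma~\ref{gluing sobolev space}, the pair $(l,c) = (\operatorname{pj}\circ \tilde{l}_{X_0,X_1},\, \operatorname{pj}\circ \tilde{c}_{X_0,X_1})$, and the restriction map $r = (\tilde{r}_0,\tilde{r}_1)$. The two assertions of the lemma then follow simultaneously once I identify the restricted triple $(l|_{\ker g}, c|_{\ker g}, r|_{\ker g})$ with the SWC triple $(d^* \oplus \tilde{l}_X,\, 0 \oplus \tilde{c}_X,\, (\tilde{r}_0,\tilde{r}_1))$ obtained in Step~1.

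First, Lemma~\ref{gluing sobolev space} provides the identification $\ker g \cong \tilde{V}_X$ equivariantly with respect to the $G$-action, since the simultaneous vanishing of all normal jets $D^{(m)}$ along $Y_2$ up to order $k$ characterises pairs $(x_0,x_1)$ that patch into a single $L^2_{k+1/2}$ configuration on $X$. Next I would verify that on this kernel the Seiberg-Witten expressions assembled from the two pieces coincide with the one for $X$. Since $d^*\hat{a}_0$ and $d^*\hat{a}_1$ patch into $d^*\hat{a}$, and likewise for $d^+\hat{a}$, and since the connection $\hat{A}_0 + i\tau(h)$ restricts naturally to each $X_i$, the output of $\tilde{l}_{X_0,X_1}$ on $\tilde{V}_X$ lies in the subbundle $\tilde{U}_X$, so that $\operatorname{pj}$ acts there as the identity and reproduces $d^*\oplus \tilde{l}_X$; the same computation for the quadratic part yields $\operatorname{pj}\circ \tilde{c}_{X_0,X_1}|_{\tilde{V}_X} = 0 \oplus \tilde{c}_X$. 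The twisted restrictions $\tilde{r}_0$ and $\tilde{r}_1$ are defined from the same based path data $[\vec{\eta}]$ anchored at the common basepoint $\hat{o}\in Y_2$ on both sides, so they match tautologically.

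With this identification in hand, Lemma~\ref{moving map to  domain2} yields both that the triple (\ref{deformation 1}) is an SWC triple and that it is stably c-homotopic to $(d^*\oplus \tilde{l}_X, 0\oplus \tilde{c}_X, (\tilde{r}_0,\tilde{r}_1))$. The only substantive points requiring verification are that $(l_{X_0,X_1}, c_{X_0,X_1})$ is an admissible pair in the sense of Definition~\ref{admissible pairs}, which follows from the standard multiplication estimates on $L^2_{k+1/2}$ with $k\geq 4$ exactly as in Lemma~\ref{quadratic part}, and the boundedness condition~(2) in the definition of an SWC triple. The latter is precisely the content of Proposition~\ref{bounded for gluing}, whose uniform constant $R_3$ transfers between the two sides of the c-homotopy because the finite-type trajectory spaces are identified under $\ker g \cong \tilde{V}_X$. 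I do not anticipate a substantive obstacle; the entire argument is essentially bookkeeping around Lemma~\ref{moving map to  domain2} once the Sobolev gluing of Lemma~\ref{gluing sobolev space} is in place.
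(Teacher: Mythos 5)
Your proof is correct and follows exactly the paper's intended argument: the paper's own proof of this lemma is the one-line statement "The following result is then a consequence of Lemma~\ref{gluing sobolev space} and Lemma~\ref{moving map to  domain2}," and you have filled in the same two ingredients (apply Lemma~\ref{moving map to  domain2} with $g = (\tilde{D}^{(k)},\ldots,\tilde{D}^{(0)})$, use Lemma~\ref{gluing sobolev space} to identify $\ker g \cong \tilde{V}_X$, then check that the restriction to $\ker g$ recovers the Step~1 triple). Your spelled-out verification that $\operatorname{pj}\circ\tilde{l}_{X_0,X_1}$ restricts to $d^*\oplus\tilde{l}_X$ on $\ker g$, including the implicit use of the Green--Stokes identity to see the $\Omega^0_0(X)$-valued target for $d^*$, is a reasonable elaboration of what the paper leaves tacit.
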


\subsubsection*{Step 3}
Next, we will glue the Sobolev spaces of the target.  Let us consider a map 
\begin{align*}
& E^{(m)} \colon  U_{X_{0}}\times U_{X_{1}}\times H^{1}(X,Y_{2};\mathbb{R}) \rightarrow  V^{k-1-m}_{Y_{2}}\times H^{1}(X,Y_{2};\mathbb{R}) \\
& E^{(m)}(y_{0},y_{1},h) = (((\nabla^{\tau(h)|_{X_{0}}}_{\vec{n}})^{m}y_{0})|_{Y_{2}}-((\nabla^{\tau(h)|_{X_{1}}}_{\vec{n}})^{m}y_{1})|_{Y_{2}},h),
\end{align*}
where we also apply standard bundle isomorphisms 
\[
\Lambda^{2}_{+}(X_{i})|_{Y_{2}}\cong T^{*}Y_{2},\ S^{-}_{X_{i}}|_{Y_{2}}\cong S_{Y_{2}}.
\]
By taking quotient with respect to the action of $G$, we obtain bundle maps 
$$
\tilde{E}^{(m)} \colon \tilde{U}_{X_{0},X_{1}}\rightarrow  \tilde{V}^{k-1-m}_{Y_{2}}.
$$

\begin{pro}\label{step 2}
The triple
\begin{equation}\label{deformation 2}
\begin{split}
((\operatorname{pj}\circ \tilde{l}_{X_{0},X_{1}},\ \tilde{E}^{(k-1)}\circ \tilde{l}_{X_{0},X_{1}},\ \cdots\ ,\tilde{E}^{(0)}\circ
\tilde{l}_{X_{0},X_{1}},\ \tilde{D}^{(0)}),\\
(\operatorname{pj}\circ \tilde{c}_{X_{0},X_{1}},\ \tilde{E}^{(k-1)}\circ \tilde{c}_{X_{0},X_{1}},\ \cdots\ ,\tilde{E}^{(0)}\circ
\tilde{c}_{X_{0},X_{1}},\ 0),(\tilde{r}_{0},\tilde{r}_{1}))
\end{split}
\end{equation}
is an SWC-triple and is c-homotopic to the triple (\ref{deformation 1}).
\end{pro}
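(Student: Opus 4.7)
The plan is to reduce both triples in (\ref{deformation 1}) and (\ref{deformation 2}) to essentially the same ``intrinsic'' SWC-triple on $\tilde{V}_{X_0,X_1}$ -- namely, one whose zero set consists of glued Seiberg-Witten solutions on $X$ -- and then to produce a straight-line homotopy between them.

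The first observation is that the same gluing argument used in Lemma~\ref{gluing sobolev space} applies to the $U$-bundles: the bundle map $(\tilde{E}^{(k-1)},\ldots,\tilde{E}^{(0)}) \colon \tilde{U}_{X_0,X_1}\to \oplus_{m=0}^{k-1}\tilde{V}^{k-1-m}_{Y_2}$ is fiberwise surjective with kernel $\tilde{U}_X$. Consequently, the combined map $(\operatorname{pj},\tilde{E}^{(k-1)},\ldots,\tilde{E}^{(0)}) \colon \tilde{U}_{X_0,X_1}\to \tilde{U}_X \oplus \bigoplus_{m=0}^{k-1}\tilde{V}^{k-1-m}_{Y_2}$ is a fiberwise isomorphism. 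Applying this to $\tilde{l}_{X_0,X_1}+\tilde{c}_{X_0,X_1}$, the full linear-plus-compact map of the triple (\ref{deformation 2}) is, up to this isomorphism, simply $(SW,\tilde{D}^{(0)})$, where $SW := \tilde{l}_{X_0,X_1}+\tilde{c}_{X_0,X_1}$.

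Next, I invoke Lemma~\ref{moving map to  domain2} twice. Since $\tilde{D}^{(0)}$ is surjective (by Lemma~\ref{gluing sobolev space}), triple~(\ref{deformation 2}) is stably c-homotopic to $(SW|_{\ker \tilde{D}^{(0)}},(\tilde{r}_0,\tilde{r}_1)|_{\ker \tilde{D}^{(0)}})$. Similarly, for triple~(\ref{deformation 1}), surjectivity of $(\tilde{D}^{(k)},\ldots,\tilde{D}^{(0)})$ with kernel $\tilde{V}_X$ reduces it to $(SW|_{\tilde{V}_X},(\tilde{r}_0,\tilde{r}_1)|_{\tilde{V}_X})$, using that $\operatorname{pj}\circ SW = SW$ on $\tilde{V}_X$ (since $SW(\tilde{V}_X)\subset \tilde{U}_X$ by the $U$-gluing statement applied in reverse). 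Reversing Lemma~\ref{moving map to  domain2}, both reduced triples lift to triples on $\tilde{V}_{X_0,X_1}$ whose full maps are $(SW,\tilde{D}^{(0)})$ and $(SW,\tilde{D}^{(0)},\tilde{D}^{(1)},\ldots,\tilde{D}^{(k)})$ respectively, and so it suffices to connect these two.

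For this, I use the linear interpolation with full map $(SW,\tilde{D}^{(0)},s\tilde{D}^{(1)},\ldots,s\tilde{D}^{(k)})$ for $s\in[0,1]$ (on the ambient bundle $\tilde{V}_{X_0,X_1}$, with the same restriction map $(\tilde{r}_0,\tilde{r}_1)$). Fredholmness of the linear part holds throughout by the index additivity argument already used. For $s>0$, a zero of the full map has $\tilde{D}^{(m)}(x)=0$ for all $m$ and $SW(x)=0$, hence is a genuine Seiberg--Witten solution on $X$, so together with any finite-type half-trajectory $\gamma$ satisfying the restriction condition it produces a finite type $X$-trajectory bounded by Proposition~\ref{bounded for gluing}. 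The main obstacle is the case $s=0$, where a priori one only has $SW(x_0)=SW(x_1)=0$ together with zeroth-order matching $x_0|_{Y_2}=x_1|_{Y_2}$. The key point, however, is that the linear part of $SW$ already contains the Coulomb gauge fixing $d^*=0$ (as built into (\ref{linear part of SW map})); thus $SW(x)=0$ together with zeroth-order boundary matching defines a weak solution of an elliptic equation on $X$, and standard interior elliptic regularity across $Y_2$ promotes this to a smooth Seiberg--Witten solution on $X$, placing us again in the hypothesis of Proposition~\ref{bounded for gluing}. This uniform bound in $s$ is the content of condition~(2) of Definition~\ref{SWC triple} and completes the c-homotopy.
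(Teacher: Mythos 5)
Your reduction of both triples via Lemma~\ref{moving map to  domain2} and the elliptic-regularity observation that $\ker\tilde{D}^{(0)}\cap SW^{-1}(0)$ consists of genuine glued solutions are sound ideas, but the final interpolation $(SW,\tilde{D}^{(0)},s\tilde{D}^{(1)},\ldots,s\tilde{D}^{(k)})$ is not a c-homotopy. At $s=0$ the components $s\tilde{D}^{(m)}$ vanish identically, so the linear part misses the entire factor $\bigoplus_{m=1}^{k}\tilde{V}^{k-m}_{Y_2}$ of the target and the cokernel is infinite-dimensional: condition (1) of Definition~\ref{SWC triple} fails, independently of the boundedness issue you address. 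Your assertion that ``Fredholmness of the linear part holds throughout by the index additivity argument'' is precisely what breaks at $s=0$. Moreover, the $s=0$ endpoint $(SW,\tilde{D}^{(0)},0,\ldots,0)$ is not a valid lift of the reduction of (\ref{deformation 2}) --- padding a target with zero maps is not the same stabilization as adjoining $\mathrm{id}_{E_3}$, and the two triples $(SW,\tilde{D}^{(0)})$ and $(SW,\tilde{D}^{(0)},\ldots,\tilde{D}^{(k)})$ you claim to be connecting have different targets and Fredholm indices, so no straight-line interpolation on $\tilde{V}_{X_0,X_1}$ can connect them.

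The paper avoids both problems by interpolating each $\tilde{D}^{(m)}$, $m\geq1$, not to the zero map but to the non-degenerate constraint $\tilde{E}^{(m-1)}\circ\tilde{l}_{X_0,X_1}$ via $\tilde{D}^{(m)}_{t}=(1-t)\cdot\tilde{D}^{(m)}+t\cdot\tilde{E}^{(m-1)}\circ\tilde{l}_{X_0,X_1}$, while simultaneously interpolating the compact parts by $t\cdot\tilde{E}^{(m)}\circ\tilde{c}_{X_0,X_1}$; this keeps the target fixed throughout. The normal-derivative counting (that $\tilde{E}^{(m)}\circ\tilde{l}_{X_0,X_1}-\tilde{D}^{(m+1)}$ is of strictly lower order) then shows inductively that $(\tilde{D}^{(k)}_{t},\ldots,\tilde{D}^{(1)}_{t},\tilde{D}^{(0)})$ has $t$-independent kernel and is surjective for every $t$, which combined with Fredholmness at $t=0$ gives Fredholmness along the whole path (Lemma~\ref{deformation 2 Fredholm}). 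A secondary slip in your opening paragraph: $(\operatorname{pj},\tilde{E}^{(k-1)},\ldots,\tilde{E}^{(0)})$ is \emph{not} a fiberwise isomorphism, since the kernel of $(\tilde{E}^{(k-1)},\ldots,\tilde{E}^{(0)})$ is the unrestricted glued Sobolev bundle, one dimension larger than $\tilde{U}_X$ (whose $\Omega^0$-component consists of functions integrating to zero). The missing coordinate is exactly the functional $\xi(x_0,x_1,h)=\int_{X_0}f_0+\int_{X_1}f_1$ added in Step 4 of the paper.
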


\begin{proof}

We simply consider a linear $c$-homotopy between them as follows:
For $1\leq m\leq k$ and $0 \le t \le 1 $, define a map 
$$\tilde{D}^{(m)}_{t}=(1-t)\cdot \tilde{D}^{(m)}+t\cdot \tilde{E}^{(m-1)}\circ \tilde{l}_{X_{0},X_{1}} $$
and the following maps from $\tilde{V}_{X_{0},X_{1}}$ to $\tilde{U}_{X}\oplus \left( \mathop{\oplus}^{k}_{m=0}\tilde{V}^{k-m}_{Y_{2}}\right) $
\begin{align*}
l_{t} &:=(\operatorname{pj}\circ \tilde{l}_{X_{0},X_{1}},\ \tilde{D}_{t}^{(k)},\cdots, \tilde{D}_{t}^{(1)},\tilde{D}^{(0)}), \\
c_{t} &:=(\operatorname{pj}\circ \tilde{c}_{X_{0},X_{1}},\  t\cdot\tilde{E}^{(k-1)}\circ \tilde{c}_{X_{0},X_{1}},\ \cdots\
,t\cdot\tilde{E}^{(0)}\circ \tilde{c}_{X_{0},X_{1}},\ 0).
\end{align*}
This will give a $c$-homotopy as a result of the following lemma. 
\end{proof}

\begin{lem}\label{deformation 2 Fredholm}
For any $0 \le t \le 1$, the map 
$$
(l_{t},p^{0}_{-\infty}\circ(\tilde{r}_{0},\tilde{r}_{1})) \colon \tilde{V}_{X_{0},X_{1}}\rightarrow \tilde{U}_{X}\oplus (\mathop{\oplus}^{k}_{m=0}\tilde{V}^{k-m}_{Y_{2}})\oplus
V^{0}_{-\infty}(-Y_{0}\cup -Y_{1})
$$
is fiberwise Fredholm. Moreover, the zero set $(l_{t}+c_{t})^{-1}(0)\subset \tilde{V}_{X_{0},X_{1}}$ is independent of $t$ and can be described
as
\[
\begin{split}
& \{[(\hat{a},\phi,h)]\in \tilde{V}_{X}    \\
& \qquad 
\mid d^{*}\hat{a}=0\text{ and }(\hat{A}_{0}+i\tau(h)+\hat{a},\phi)
\text{ is a Seiberg-Witten solution}\}.
\end{split}
\]

\end{lem}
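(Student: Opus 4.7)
The plan is to unpack the equation $(l_t + c_t)(x) = 0$ into its three constituent families of component equations and run a single induction on the order of normal derivatives at $Y_2$ to extract both the zero-set description and the Fredholm property. Writing $\tilde{\ell} := \tilde{l}_{X_0,X_1}+\tilde{c}_{X_0,X_1}$, these equations read (A) $\operatorname{pj}\circ\tilde{\ell}(x)=0$, (B) $\tilde{D}^{(0)}(x)=0$, and (C) $(1-t)\tilde{D}^{(m)}(x)+t\,\tilde{E}^{(m-1)}(\tilde{\ell}(x))=0$ for $1\le m\le k$.

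The central ingredient is the following commutator identity, which reflects that $\tilde{l}_{X_0,X_1}$ is a first-order differential operator whose normal symbol at $Y_2$ is invertible on each side:
\[
\tilde{E}^{(m-1)}\circ \tilde{l}_{X_0,X_1}(x)=\tilde{D}^{(m)}(x)+\sum_{j<m}M_j\,\tilde{D}^{(j)}(x),
\]
where the $M_j$ are tangential differential operators of order $m-j$ on $Y_2$. Combined with the observation that $\tilde{c}_{X_0,X_1}$ is pointwise algebraic in $x$ (so matching of $x$ at $Y_2$ up to order $m-1$ immediately gives $\tilde{E}^{(m-1)}\circ \tilde{c}_{X_0,X_1}(x)=0$), this collapses (C) at step $m$ to $\tilde{D}^{(m)}(x)=0$ whenever $\tilde{D}^{(j)}(x)=0$ for all $j<m$. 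An induction anchored by (B) therefore forces $\tilde{D}^{(m)}(x)=0$ for every $0\le m\le k$, placing $x$ in $\tilde{V}_X$ via Lemma~\ref{gluing sobolev space}. Once $x$ glues, $\tilde{\ell}(x)=\tilde{l}_X(x)+\tilde{c}_X(x)$ lies in $\tilde{U}_X$, so $\operatorname{pj}$ acts as the identity on it and equation (A) becomes the full Seiberg--Witten equation on $X$; this yields the zero-set description, manifestly independent of $t$.

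The same induction applied to the linearization $(l_t,p^0_{-\infty}\circ(\tilde{r}_0,\tilde{r}_1))$ shows that its kernel is constant in $t$, and hence is finite dimensional by Lemma~\ref{lem deformation1}. For the full Fredholm property, I would note that the differences $\tilde{D}^{(m)}_t-\tilde{D}^{(m)}=t\sum_{j<m}M_j\tilde{D}^{(j)}$ are of strictly lower order than the principal boundary symbol, so the Lopatinski--Shapiro ellipticity of the induced boundary value problem on $X_0\sqcup X_1$ persists along the homotopy in $t$ and fiberwise Fredholmness is preserved from the $t=0$ case already settled in Lemma~\ref{lem deformation1}. The step I expect to have to verify most carefully is the commutator identity displayed above: it requires rewriting the $m$-th normal derivative $\partial_n^m$ of $x$ on each side of $Y_2$ in terms of $\tilde{l}_{X_0,X_1}(x)$ modulo tangential corrections and keeping track of the sign change coming from the two different normal orientations at $Y_2$. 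Once this identity is in hand, every subsequent step is formal bookkeeping.
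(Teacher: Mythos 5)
Your proof is correct and takes essentially the same approach as the paper. Your displayed commutator identity is precisely the paper's key observation that $\tilde{E}^{(m-1)}\circ \tilde{l}_{X_0,X_1}-\tilde{D}^{(m)}$ involves at most $(m-1)$-st normal derivatives, and the downward induction anchored at $\tilde{D}^{(0)}$ is the same inductive argument (traceable to Manolescu's Section~4.11). One small caution on the Fredholm part: the differences $\tilde{D}^{(m)}_t-\tilde{D}^{(m)}$ are of strictly lower \emph{normal} order but not lower \emph{total} order (the $M_j$ carry compensating tangential degree), so a bare appeal to ``lower-order perturbations preserve Lopatinski--Shapiro'' is a bit loose; the clean reason ellipticity persists is the unipotent-triangular structure of the transformation from $(\tilde{D}^{(0)},\dots,\tilde{D}^{(k)})$ to $(\tilde{D}^{(0)},\tilde{D}^{(1)}_t,\dots,\tilde{D}^{(k)}_t)$, which is exactly what the paper exploits by showing the system remains fiberwise surjective with constant kernel $\tilde{V}_X$ and then transferring Fredholmness from the $t=0$ case.
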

\begin{proof}
The key observation is that $E^{(m)}\circ l_{X_{1},X_{2}}- \tilde{D}^{(m+1)}$ contains at most $m$-th derivative in the normal
direction. Then, one can prove inductively that
$$(\tilde{D}_t^{(k)},\cdots,\tilde{D}_t^{(1)},\tilde{D}^{(0)})(x_{0},x_{1})=0 \implies (\tilde{D}^{(k)},\cdots,\tilde{D}^{(0)})(x_{0},x_{1})=0,
$$
so that the kernel of $l_t$ does not depend on $t$. Similarly, one can show that $(\tilde{D}_{t}^{(k)},\cdots,\tilde{D}_{t}^{(1)},\tilde{D}^{(0)})$ is fiberwise surjective for all $t$. Since $t=0$ is the map from Lemma~\ref{lem deformation1}, the map $(l_{t},p^{0}_{-\infty}\circ(\tilde{r}_{0},\tilde{r}_{1}))$ is fiberwise Fredholm for all $t$.

The second part was essentially proved in \cite[Section 4.11]{Manolescu2} using similar inductive argument.

\end{proof}


\subsubsection*{Step 4}
We now make the following identification:

\begin{lem}
The bundle map (over $\pic(X,Y_{2})$)
$$
(\operatorname{pj},\tilde{E}^{(k-1)}\cdots \tilde{E}^{(0)},\xi) \colon \tilde{U}_{X_{0},X_{1}}    \rightarrow 
       \tilde{U}_{X}\oplus
        (\mathop{\oplus}^{k-1}_{m=0}\tilde{V}^{k-1-m}_{Y_{2}})\oplus 
        \underline{\mathbb{R}}
$$
is an isomorphism. The map $\xi$ is given by $\xi (x_1 , x_2 , h) = \int_{X_{0}}f_{0} +\int_{X_{1}}f_{1}$, where $f_i$ is the 0-form component of $x_i$.
\end{lem}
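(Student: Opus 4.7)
The plan is to reduce the statement to a Sobolev gluing result at regularity $L^2_{k-1/2}$, analogous to Lemma~\ref{gluing sobolev space} but for the target spaces. Since the statement concerns a bundle map over $\operatorname{Pic}^0(X,Y_2)$ and all maps involved are $G$-equivariant, it suffices to work fiberwise and descend at the end. By the trace theorem, for $y \in L^2_{k-1/2}(X)$ the normal derivative $\partial_{\vec n}^m y|_{Y_2}$ lies in $L^2_{k-1-m}(Y_2)=V^{k-1-m}_{Y_2}$ for $m = 0,1,\ldots,k-1$, which matches the target index. A verbatim adaptation of the proof of Lemma~\ref{gluing sobolev space} then yields that the jump map
$$
(E^{(k-1)},\ldots,E^{(0)}) \colon U_{X_0} \oplus U_{X_1} \longrightarrow \bigoplus_{m=0}^{k-1} V^{k-1-m}_{Y_2}
$$
is fiberwise surjective, with kernel equal to the full glued space $U_X^{\mathrm{full}} := L^2_{k-1/2}(i\Omega^0(X)\oplus i\Omega^2_+(X) \oplus \Gamma(S_X^-))$, that is, $U_X$ without the zero-integral constraint on its $0$-form component.

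Next, I would observe that $U_X$ is cut out from $U_X^{\mathrm{full}}$ by the single linear functional $y \mapsto \int_X f$, which on $U_X^{\mathrm{full}}$ coincides with $\xi(y_0,y_1) = \int_{X_0} f_0 + \int_{X_1} f_1$. Augmenting the jump map by $\xi$ thus produces a short exact sequence
$$
0 \to U_X \to U_{X_0}\oplus U_{X_1} \xrightarrow{(E^{(k-1)},\ldots,E^{(0)},\xi)} \Bigl(\bigoplus_{m=0}^{k-1} V^{k-1-m}_{Y_2}\Bigr) \oplus \mathbb{R} \to 0.
$$
Since $G$ acts trivially on differential forms, $\xi$ is $G$-invariant, and all the maps are $G$-equivariant by construction, so this sequence descends to the corresponding short exact sequence of Hilbert bundles over $\operatorname{Pic}^0(X,Y_2)$.

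Finally, combine this with the orthogonal projection $\operatorname{pj}$. Injectivity of the full map is immediate: if all components vanish on $(y_0,y_1)$, then vanishing of $(\tilde E^{(k-1)},\ldots,\tilde E^{(0)},\xi)$ forces $(y_0,y_1) \in \tilde U_X$, whence $\operatorname{pj}(y_0,y_1) = (y_0,y_1)$, which also vanishes. For surjectivity, given $(u,v_0,\ldots,v_{k-1},r)$, first pick $(y_0,y_1) \in \tilde U_{X_0,X_1}$ mapping to $(v_0,\ldots,v_{k-1},r)$ via the short exact sequence; then $u - \operatorname{pj}(y_0,y_1)$ lies in $\tilde U_X$ and, viewed as an element of $\tilde U_{X_0,X_1}$, has vanishing jumps and integral, so adding it to $(y_0,y_1)$ gives the desired preimage. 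The only real technical obstacle is the Sobolev gluing at the half-integer regularity $L^2_{k-1/2}$; once the trace exponents are tracked carefully, the argument is parallel to that of Lemma~\ref{gluing sobolev space} and presents no new difficulty.
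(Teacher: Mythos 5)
Your argument is correct and takes essentially the same approach as the paper, which simply cites the Sobolev gluing result (Manolescu, Lemma 3; cf.\ Lemma~\ref{gluing sobolev space}) and observes that the extra $\underline{\mathbb{R}}$ factor corresponds to the decomposition $\Omega^0(X)=\Omega^0_0(X)\oplus\mathbb{R}$, with the projection onto $\mathbb{R}$ given by $\xi$. You have merely spelled out the resulting short exact sequence and the injectivity/surjectivity check that the paper leaves implicit.
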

\begin{proof}
This also follows from gluing result of Sobolev spaces \cite[Lemma 3]{Manolescu2}. The only difference here is that the 0-form component $\tilde{U}_{X}$ consists of functions which integrate to 0. From  the standard decomposition $\Omega_{}^{0}(X) = \Omega_{0}^{0}(X) \oplus \mathbb{R} $, we can see that the projection onto $\mathbb{R} $ is given by the map $\xi$.
\end{proof}

On the other hand, 
we decompose $\tilde{D}^{(0)} $ from the following decomposition of the Hilbert spaces:
\begin{equation}
V^{k}_{Y_{2}}  =  Coul(Y_{2})  \oplus H  \oplus  \mathbb{R}   \text{ with } 
H   =   L^{2}_{k}(i(d\Omega^{0}(Y_{2}) \oplus \Omega_{0}^{0}(Y_{2}))).
\end{equation}
We denote the corresponding components of $D^{(0)}$ (resp. $\tilde{D}^{(0)}$) by $D_{Y_{2}}$, $D_{H}$ and $D_{\mathbb{R}}$ (resp.
$\tilde{D}_{Y_{2}},\tilde{D}_{H}$ and $\tilde{D}_{\mathbb{R}}$).

We make an observation that  the SWC-triple (\ref{deformation 2}) in Proposition~\ref{step 2} arises from a composition
\begin{align*}
& \tilde{V}_{X_{0},X_{1}} \xrightarrow{} \tilde{U}_{X_{0},X_{1}}  \oplus Coul(Y_{2})\oplus H  \\ 
&\qquad \qquad \xrightarrow{}
\tilde{U}_{X}\oplus
(\mathop{\oplus}^{k-1}_{m=0}\tilde{V}^{k-1-m}_{Y_{2}})\oplus \underline{\mathbb{R}} \oplus Coul(Y_{2})\oplus H,
\end{align*}
where the first arrow is $(\tilde{l}_{X_{0},X_{1}} + \tilde{c}_{X_{0},X_{1}} , \tilde{D}_{Y_{2}},\tilde{D}_{H} )$ and the second arrow is the isomorphism
$(\operatorname{pj},\tilde{E}^{(k-1)}\cdots \tilde{E}^{(0)},\xi , id , id)$. The only thing we need to check is that $\tilde{D}_{\mathbb{R}}=\xi\circ \tilde{l}_{X_{0},X_{1}} $ on the 1-form component, which follows from the Green-Stokes formula
$$
\int_{Y_{2}}\mathbf{t}(*\hat{a}_{0})-\int_{Y_{2}}\mathbf{t}(*\hat{a}_{1})=\int_{X_{0}}d^*\hat{a}_{0}+\int_{X_{1}}d^*\hat{a}_{1}.
$$
Thus, we can conclude

\begin{lem}\label{lem SWCtripleDH} The SWC-triple (\ref{deformation 2}) can be identified with the triple
\begin{align}
((\tilde{l}_{X_{0},X_{1}},\tilde{D}_{Y_{2}},\tilde{D}_{H}),(\tilde{c}_{X_{0},X_{1}},0,0),(\tilde{r}_{0},\tilde{r}_{1})).
\end{align}
\end{lem}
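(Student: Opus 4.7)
The plan is to verify the identification directly by factoring the triple (\ref{deformation 2}) through the isomorphism of the preceding lemma, so that the maps $\tilde{E}^{(m)} \circ \tilde{l}_{X_{0},X_{1}}$, $\tilde{E}^{(m)} \circ \tilde{c}_{X_{0},X_{1}}$ and the projection $\operatorname{pj}$ in the target are absorbed into the isomorphism and disappear from the SWC data up to stable c-homotopy.

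First, I would decompose $\tilde{D}^{(0)}$ according to the orthogonal splitting $V^{k}_{Y_{2}} = Coul(Y_{2}) \oplus H \oplus \mathbb{R}$ into three components $\tilde{D}_{Y_{2}}, \tilde{D}_{H}, \tilde{D}_{\mathbb{R}}$. I would then argue that the map
\[
(\tilde{l}_{X_{0},X_{1}} + \tilde{c}_{X_{0},X_{1}},\ \tilde{D}_{Y_{2}},\ \tilde{D}_{H}) \colon \tilde{V}_{X_{0},X_{1}} \rightarrow \tilde{U}_{X_{0},X_{1}} \oplus Coul(Y_{2}) \oplus H
\]
composed with the isomorphism $(\operatorname{pj}, \tilde{E}^{(k-1)}, \ldots, \tilde{E}^{(0)}, \xi, \id, \id)$ from the previous lemma recovers (\ref{deformation 2}) coordinate-by-coordinate. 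The only nontrivial coordinate to check is the $\underline{\mathbb{R}}$-summand, which corresponds to asking whether $\tilde{D}_{\mathbb{R}} = \xi \circ \tilde{l}_{X_{0},X_{1}}$ when restricted to the 1-form component.

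The key step, and the only genuine calculation, is the verification of this identity. By definition of $\tilde{D}^{(0)}$ on a representative $(\hat{a}_{0}, \hat{a}_{1}, h)$, the $\mathbb{R}$-component of $\mathbf{t}\hat{a}_{0}|_{Y_{2}} - \mathbf{t}\hat{a}_{1}|_{Y_{2}}$ (after the identification of normal and tangential parts) equals
\[
\int_{Y_{2}} \mathbf{t}(*\hat{a}_{0}) - \int_{Y_{2}} \mathbf{t}(*\hat{a}_{1}),
\]
and the Green--Stokes formula
\[
\int_{Y_{2}} \mathbf{t}(*\hat{a}_{0}) - \int_{Y_{2}} \mathbf{t}(*\hat{a}_{1}) = \int_{X_{0}} d^{*}\hat{a}_{0} + \int_{X_{1}} d^{*}\hat{a}_{1}
\]
(with the sign coming from the opposite outward normals of $Y_{2}$ as viewed from $X_{0}$ and $X_{1}$) shows that this equals $\xi$ applied to the 0-form component of $\tilde{l}_{X_{0},X_{1}}(\hat{a}_{0},\hat{a}_{1},h)$. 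Equivariance under $G$ is automatic since both sides are $G$-invariant constructions (the $G$-action is trivial on differential forms and preserves integration), so the identity descends to the bundle.

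With the Green--Stokes identification in hand, the composition factorization becomes tautological, and the two SWC triples are literally equal after the explicit isomorphism. In particular no further c-homotopy is needed at this step; the claim follows formally. The only point that requires a bit of care, and is the main potential obstacle, is keeping track of the orientation and sign conventions in the Green--Stokes step (since $Y_{2}$ appears with opposite orientations as boundary component of $X_{0}$ and $X_{1}$), so I would double-check the sign by checking against a model case (say a flat cylinder $[-1,1] \times Y_{2}$ cut in half) before concluding.
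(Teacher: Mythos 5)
Your proof is correct and takes essentially the same approach as the paper: decompose $V^k_{Y_2}$ as $Coul(Y_2)\oplus H\oplus\mathbb{R}$, factor (\ref{deformation 2}) through the isomorphism $(\operatorname{pj},\tilde{E}^{(k-1)},\ldots,\tilde{E}^{(0)},\xi,\id,\id)$ of the preceding lemma, and verify the one nontrivial identity $\tilde{D}_{\mathbb{R}}=\xi\circ\tilde{l}_{X_0,X_1}$ on 1-forms by Green--Stokes. (One small slip in wording: the $\mathbb{R}$-summand actually sits inside the normal $\Omega^0(Y_2)$-component of $\hat{a}_j|_{Y_2}$, not the tangential part $\mathbf{t}\hat{a}_j$, but you correct for this immediately with the right integral expression, so the calculation stands.)
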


\subsubsection*{Step 5}   

In this step, we focus on deforming the $\tilde{D}_{H}$-component which  corresponds to boundary conditions for gauge fixing. We sometimes omit spinors  from expressions in this step. 

For $\hat{a}_{j}\in i\Omega^{1}(X_{j})$, we have a Hodge decomposition $\mathbf{t}_{Y_{2}}(\hat{a}_{j})=a_{j}+b_{j}$ on $Y_2$
with $a_{j} \in  \ker d^{*}$  and $b_{j} \in \im d$. We also denote by $e_j := c_j -\tfrac{\int c_j d\text{vol}}{\text{vol}(Y_{2})} \in i\Omega^{0}_{0}(Y_{2})$, where ${\hat{a}_j}{|_{Y_2}} = \mathbf{t}_{Y_{2}}(\hat{a}_{j}) + c_j dt$. 
With this formulation, we see that $D_H (\hat{a}_0 , \hat{a}_1 ) = (b_0 - b_1 , e_0 - e_1) $.

Let us consider an isomorphism 
$$
\bar{d}:L^{2}_{k}(i\Omega_{0}^{0}(Y_{2}))\rightarrow L^{2}_{k}(id\Omega^{0}(Y_{2}))
$$
defined by $\bar{d}f:=\lambda^{-1}df$ for any $f\in i\Omega_{0}^{0}(Y_{2})$ with $d^{*}df=\lambda^{2}f$ with $\lambda>0$
using the spectral decomposition of $d^{*}d$. 
We let 
$$\bar{d}^{*}: L^{2}_{k}(id\Omega^{0}(Y_{2}))\rightarrow L^{2}_{k}(i\Omega_{0}^{0}(Y_{2}))$$
be its formal adjoint. Note that $\bar{d}^{*}$ can also be obtained directly by  $\bar{d}^{*}\alpha:=\lambda f$
for $\alpha = df$ satisfying $dd^{*}\alpha=\lambda^{2}\alpha$ with $\lambda>0$ and $\int_{Y_2} f = 0 $.
We then define a family of maps 
$$
D_{H,t} \colon V_{X_{0}}\times V_{X_{1}}\rightarrow H
$$
given by
$$
D_{H,t}(\hat{a}_{0},\hat{a}_{1}):=(b_{0}-b_{1},t\cdot \bar{d}^{*}(b_{0}+b_{1})+(1-t)\cdot (e_{0}-e_{1})).
$$

The main point here is to establish that the gauge fixing conditions 
$D_{H,t} =0$ are isomorphic and vary continuously. In particular, we will find a harmonic gauge transformation in the identity component to relate them. For a pair of coclosed 1-forms $(\hat{a}_{0},\hat{a}_{1}) \in \Omega^{1}(X_{0},Y_{0},\alpha^{0}\cup \beta) \times \Omega^{1}(X_{1},Y_{1},\alpha^{1})$ with $b_0 = b_1 $, finding such a transformation amounts to solving for a pair of functions $(f_0 , f_1 ) \in \Omega^0(X_0) \times \Omega^0 (X_1) $ such that
\begin{align*} 
& 2t\cdot \bar{d}^{*}d(f_{0}|_{Y_{2}})+ (1-t)(\partial_{\vec{n}}f_{0}|_{Y_{2}}-\partial_{\vec{n}}f_{1}|_{Y_{2}})  \\
&\qquad = 2t\cdot \bar{d}^{*}(b_{0})+(1-t) (e_{0}-e_{1})
\end{align*}                                                                                                             and also satisfies other gauge fixing conditions.
We have the following existence and uniqueness result.

\begin{lem}\label{laplace with mixed bundary condition}
Let $W\subset L^{2}_{k+3/2}(X_{0};\mathbb{R})\times L^{2}_{k+3/2}(X_{1};\mathbb{R})$ be the subspace 
containing all functions $(f_{0},f_{1})$ satisfying the following conditions:
\begin{enumerate}
\item $\Delta f_{i}=0$; \label{list laplace1}
\item $f_{i}(\hat{o})=0$; \label{list laplace2}
\item $f_{0}|_{Y_{2}}=f_{1}|_{Y_{2}}$; \label{list laplace3}
\item $f_{i}$ is a constant on each component of $Y_{i}$, $i=0,1$; \label{list laplace4}
\item $\partial_{\vec{n}}f_{i}$ integrates to zero on each component of $Y_{i}$, $i=0,1$. \label{list laplace5}
\end{enumerate}
Then the map $\rho_{t} \colon W\rightarrow L^{2}_{k}(\Omega_{0}^{0}(Y_{2}))$ defined by $$\rho_{t}(f_{0},f_{1})=2t\cdot \bar{d}^{*}d(f_{0}|_{Y_{2}})+(1-t)(\partial_{\vec{n}}f_{0}|_{Y_{2}}-\partial_{\vec{n}}f_{1}|_{Y_{2}})$$
is an isomorphism.
\end{lem}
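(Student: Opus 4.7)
The plan is to combine (i) an energy identity yielding injectivity for every $t\in[0,1]$ with (ii) a reduction of $\rho_t$ to a first-order elliptic operator on $Y_2$ that is automatically Fredholm of index zero. Once both are in hand, injectivity forces bijectivity.

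First I would parameterize $W$ by the common boundary value $g := f_0|_{Y_2} = f_1|_{Y_2}$ on $Y_2$. For each $g$ in an appropriate Sobolev subspace of $L^2_{k+1}(i\Omega^0(Y_2))$ with $g(\hat o) = 0$, the mixed boundary value problem on each $X_i$---namely $\Delta f_i = 0$ with Dirichlet datum $g$ on $Y_2$, $f_i$ constant on each component of $Y_i$, and zero integrated normal flux on each component of $Y_i$---is uniquely solvable by standard elliptic theory. This identifies $W$ with the space of such $g$, and recasts $\rho_t$ as
\[
\rho_t(g) \;=\; 2t\,\bar d^{*} d\, g \;+\; (1-t)\bigl(N_0 - N_1\bigr)g,
\]
where $N_i \colon g \mapsto \partial_{\vec n} f_i|_{Y_2}$ is the Dirichlet-to-Neumann operator for the mixed problem on $X_i$. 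Both $N_i$ and $\bar d^{*} d$ are first-order self-adjoint elliptic pseudodifferential operators on $Y_2$ with nonnegative principal symbols of size $|\xi|$, so $\rho_t$ is Fredholm of index zero for every $t \in [0,1]$ and it suffices to prove injectivity.

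For injectivity, I would run a uniform energy identity. Integrating by parts on each $X_i$ and using conditions (\ref{list laplace4})--(\ref{list laplace5}) to kill the $Y_i$ boundary contribution (the product $f_i \cdot \partial_{\vec n}f_i$ is a constant times a zero-flux term), I obtain
\[
\int_{X_0}|df_0|^2 + \int_{X_1}|df_1|^2 \;=\; \int_{Y_2} g\bigl(\partial_{\vec n}f_0 - \partial_{\vec n}f_1\bigr).
\]
Substituting the relation $\rho_t(f_0,f_1)=0$, the right-hand side becomes $-\tfrac{2t}{1-t}\int_{Y_2}g\,\bar d^{*} d g$ when $t\in(0,1)$, which is nonpositive by the spectral decomposition $\int_{Y_2}g\,\bar d^{*} d g = \sum_\lambda \lambda\|g_\lambda\|^2 \ge 0$. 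Hence both sides vanish; this forces $df_i\equiv 0$ and $g\equiv 0$, and conditions (\ref{list laplace2}), (\ref{list laplace3}) then give $f_0 = f_1 = 0$. The endpoints $t=0$ (direct application of the identity) and $t=1$ (where invertibility of $\bar d^{*} d$ on mean-zero functions forces $g=0$ before invoking the identity) follow by the same template.

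The main obstacle will be the verification that the mixed Dirichlet/Neumann boundary value problem on each $X_i$ is well-posed and gives the promised identification of $W$ with the space of boundary data $g$. The subtle point is the mixed nature of the conditions on each component of $Y_i$: a constant Dirichlet value together with an integrated Neumann condition, producing a finite-dimensional augmentation of the ordinary Dirichlet problem. I would handle this by a variational argument, minimizing $\int_{X_i}|df_i|^2$ over the affine subspace cut out by the given boundary data, and then reading off regularity and the Dirichlet-to-Neumann map from the associated Lagrange multipliers.
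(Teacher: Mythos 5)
Your route differs from the paper's in one ingredient: the paper first proves that $\rho_1$ is an isomorphism by directly solving the relevant Dirichlet problem (invoking the double Coulomb argument from \cite{Khandhawit1}), and then concludes that every $\rho_t$ is Fredholm of index zero by homotopy invariance of the index from $t=1$; you instead reparameterize $W$ by the common trace $g=f_0|_{Y_2}=f_1|_{Y_2}$, rewrite $\rho_t$ as a boundary pseudodifferential operator built from $\bar{d}^{*}d$ and the two mixed Dirichlet-to-Neumann maps $N_0,N_1$, and argue Fredholm index zero directly from ellipticity and self-adjointness. Both proofs then finish with exactly the same Green's-formula energy identity, whose form also matches what the paper writes. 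Your route has the modest advantage of not requiring a separate right inverse for $\rho_1$.

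There is, however, a sign slip in the symbol statement which, taken at face value, would break ellipticity at $t=0$. You assert that \emph{both} $N_0$ and $N_1$ have nonnegative principal symbol $|\xi|$; if that were the case, the symbol of $N_0-N_1$, and hence of $\rho_0$, would vanish identically and the ellipticity/Fredholm-index argument would fail there. In fact, since $\vec n$ is the outward normal of $X_0$ along $Y_2$, it is the \emph{inward} normal of $X_1$, so $N_1 g=\partial_{\vec n}f_1|_{Y_2}$ is minus the standard outward Dirichlet-to-Neumann operator of $X_1$ and has principal symbol $-|\xi|$. Consequently $N_0-N_1$ has symbol $2|\xi|$ and $\rho_t=2t\,\bar{d}^{*}d+(1-t)(N_0-N_1)$ has the elliptic, $t$-independent symbol $2|\xi|$ for all $t\in[0,1]$. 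This is also what your (correct) energy identity encodes, since it shows $\int_{Y_2} g\,(N_0-N_1)g=\int_{X_0}|df_0|^2+\int_{X_1}|df_1|^2\ge 0$ rather than a difference of two nonnegative forms. With the sign of $N_1$ corrected, your argument goes through and is a valid alternative to the paper's deformation-from-$t=1$ step.
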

\begin{proof}
We first show that $\rho_{t}$ is an isomorphism when $t=1$. For $\xi\in L^{2}_{k}(i\Omega_{0}^{0}(Y_{2}))$,
we want to find $f_{i}$ such that $f_{i}|_{Y_{2}}=\frac{\xi}{2}-\frac{\xi(\hat{o})}{2}$ and satisfies the other conditions.
The existence and uniqueness of such functions follow from the same argument as in the double Coulomb condition (cf. \cite[Proposition~2.2]{Khandhawit1}).


Since each $\rho_{t}$ corresponds to Laplace equation with mixed Dirichlet and Neumann boundary condition, it is Fredholm with index zero (from $t=1$). Thus, for $t<1$, we are left to show that $\rho_{t}$ is injective.  Suppose
$\rho_{t}(f_{0},f_{1})=0$. Then by Green's formula, we have 
\[\begin{split}
(1-t)(\int_{X_{0}}\langle df_{0},df_{0}\rangle+\int_{X_{1}}\langle df_{1},df_{1}\rangle)&=(1-t)\int_{Y_{2}}f_{0}(\partial_{\vec{n}}f_{0}-\partial_{\vec{n}}f_{1})\\&=-2t\int_{Y_{2}}f_{0}\cdot
(\bar{d}^{*}d(f_{0}|_{Y_{2}}))    
\end{split}
\]
The first expression is nonnegative but the expression 
$$\int_{Y_{2}}f_{0}
(\bar{d}^{*}d(f_{0}|_{Y_{2}})) = \int_{Y_2} (f_0)^2 - \frac{1}{vol Y_2}(\int_{Y_2} f_0)^2 $$ is also nonnegative
by Cauchy--Schwartz inequality.
Hence both $f_{0}$ and $f_{1}$ must be constant and are in fact identically zero because $f_{i}(\hat{o})=0$. 
\end{proof}

As $D_{H,t} $ is equivariant, we can form bundle maps $\tilde{D}_{H,t} $ and obtain a c-homotopy.

\begin{pro}\label{changing boundary condition gives c-homotopy}
For any $t \in [0,1]$, the triple $$((\tilde{l}_{X_{0},X_{1}},\tilde{D}_{Y_{2}},\tilde{D}_{H,t}),(\tilde{c}_{X_{0},X_{1}},0,0),(\tilde{r}_{0},\tilde{r}_{1}))$$
is an SWC-triple. Consequently, this provides a c-homotopy between the triples at $t=0$ and $t=1$.
\end{pro}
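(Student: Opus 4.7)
The plan is to verify the two conditions in Definition \ref{SWC triple} uniformly in $t$. Both are already settled at $t=0$: the Fredholm condition follows from Lemma \ref{lem SWCtripleDH} together with Lemma \ref{deformation 2 Fredholm}, and the boundedness is Proposition \ref{bounded for gluing}. The whole point is to propagate these across $t\in[0,1]$, and the key tool will be Lemma \ref{laplace with mixed bundary condition}, which supplies, for each $t$, a continuous inverse $\rho_t^{-1}: L^2_k(i\Omega^0_0(Y_2)) \to W$. I will use $\rho_t^{-1}$ to construct a continuous, fiberwise family of harmonic gauge transformations $u_t = (e^{f_0^{(t)}}, e^{f_1^{(t)}})$ with $(f_0^{(t)}, f_1^{(t)}) \in W$, depending linearly on $(\hat{a}_0,\hat{a}_1)$, such that applying $u_t$ converts any zero of $(l_{X_0,X_1}+c_{X_0,X_1}, D_{Y_2}, D_{H,t})$ into a zero of the corresponding $t=0$ system: given the Hodge data $(b_0,e_0,b_1,e_1)$ of the input, the combination $2t\bar{d}^* b_0 + (1-t)(e_0-e_1)$ is precisely the element of $i\Omega^0_0(Y_2)$ one feeds into $\rho_t^{-1}$, and the resulting gauge transformation preserves coclosedness, $d^+$, vanishing normal flux, and the period conditions along $\alpha^0,\alpha^1,\beta$ defining $V_{X_0}, V_{X_1}$.

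For the Fredholm condition, on each fiber the linear construction above gives a continuous-in-$t$ isomorphism between $\ker(l_{X_0,X_1}, D_{Y_2}, D_{H,t})$ and $\ker(l_{X_0,X_1}, D_{Y_2}, D_{H,0})$; a dual computation handles the cokernel, and since the $t=0$ operator is Fredholm by Lemma \ref{deformation 2 Fredholm}, the whole family is Fredholm with constant index. For boundedness, any zero $[(\hat{a}_0,\hat{a}_1,h)]$ of the $t$-level system together with finite-type half-trajectories $\gamma_0,\gamma_1$ compatible under the twisted restrictions is transformed via $u_t$ to a zero of the $t=0$ system. By the identifications of Steps 1--4, such a $t=0$ zero is a genuine Seiberg--Witten solution on $X$ in the double Coulomb gauge, and its boundary data feeds into Proposition \ref{bounded for gluing} to produce a uniform bound $R_3$ on both the configuration and $\gamma_0,\gamma_1$. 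Because $\rho_t^{-1}$ is uniformly bounded for $t\in[0,1]$ (by continuity of $t \mapsto \rho_t$ and openness of the isomorphism locus), the gauge transformation $u_t$ has operator norm bounded in terms of the configuration, so the bound $R_3$ transfers back to a uniform bound $M'$ on $[(\hat{a}_0,\hat{a}_1,h)]$ independent of $t$.

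The main subtlety I expect is ensuring that the construction descends cleanly to the quotient bundle $\tilde{V}_{X_0,X_1} = (V_{X_0}\times V_{X_1} \times H^1(X,Y_2;\R))/G$: the functions $(f_0^{(t)}, f_1^{(t)})$ produced by $\rho_t^{-1}$ are continuous in the configuration and vanish at the zero configuration, so the exponentials $u_t$ lie in the identity component of the gauge group and, in particular, outside $G\cong \mathcal{G}^{h,\hat{o}}_{X,Y_2}$; equivariance of the construction under $G$ follows because conditions (1)--(5) in Lemma \ref{laplace with mixed bundary condition} are preserved by the $G$-action. Carefully bookkeeping the spinor component of the gauge transformation (which is just multiplication by $e^{f_i^{(t)}}$) and checking continuity of the whole family in $t$ through the Sobolev exponents is the routine but delicate verification that occupies the bulk of the actual proof.
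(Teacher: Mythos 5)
Your strategy is the same as the paper's: use Lemma~\ref{laplace with mixed bundary condition} to produce a continuous family of harmonic adjustments $(f_0^{(t)},f_1^{(t)})=\rho_t^{-1}(\cdots)$ that intertwines the $t$-level and $0$-level systems, thereby transporting both the fiberwise Fredholm property and the uniform boundedness constant across $t\in[0,1]$. Two details deserve a closer look.

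For the kernel half of the Fredholm comparison, the bijection must be the \emph{form-only} map $(\hat{a}_i,\phi_i)\mapsto(\hat{a}_i-df_i^{(t)},\phi_i)$, not the full gauge transformation $u_t=(e^{f_0^{(t)}},e^{f_1^{(t)}})$ that your notation suggests. The full transformation does not preserve $\ker l_{X_0,X_1}$: one has
$\slashed{D}^+_{\hat{A}_0+i\tau(h)}(e^f\phi)=e^f\bigl(\slashed{D}^+_{\hat{A}_0+i\tau(h)}\phi+\rho(df)\phi\bigr)$,
and the Clifford term $\rho(df)\phi$ is cancelled only by the quadratic part $\rho(\hat{a})\phi$ sitting in $c_{X_0,X_1}$, not in $l_{X_0,X_1}$ alone. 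The form-only map, by contrast, is genuinely linear in the configuration (because $(f_0^{(t)},f_1^{(t)})$ depends linearly on the Hodge boundary data of $(\hat{a}_0,\hat{a}_1)$), preserves $d^*=0$, $d^+=0$, the Coulomb part of the $Y_2$-restriction, and the period and flux conditions defining $V_{X_i}$, and carries $\ker(l,D_{Y_2},D_{H,t})$ onto $\ker(l,D_{Y_2},D_{H,0})$. Use the full $u_t$ (including spinor multiplication) for the nonlinear boundedness step, and the form-only map for the linear kernel.

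For the cokernel, the paper's argument is cleaner than an adjoint computation: it observes that $(l,D_{Y_2},D_{H,t})$ and $(l,D_{Y_2},D_{H,0})$ differ only in the $\Omega^0_0(Y_2)$-component, and that adding exact forms $(df_0,df_1)$ with $(f_0,f_1)\in W$ lands in the kernel of every other component while hitting the $\Omega^0_0(Y_2)$-component exactly as $\rho_t(f_0,f_1)$. Surjectivity of $\rho_t$ from Lemma~\ref{laplace with mixed bundary condition} then forces the cokernel at each $t$ to agree with the cokernel at $t=0$, which is finite-dimensional by Lemma~\ref{lem SWCtripleDH}. A genuine dual computation would require identifying the adjoint boundary conditions, which is more work than the surjectivity observation; I would recommend replacing that sketch with the explicit surjectivity argument.
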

\begin{proof} The statement for $t=0$ follows from Lemma~\ref{lem SWCtripleDH}. For each element in the kernel of  
$(\tilde{l}_{X_{0},X_{1}},\tilde{D}_{Y_{2}},\tilde{D}_{H,t})$ there is a unique gauge transformation to an element in the kernel of $(\tilde{l}_{X_{0},X_{1}},\tilde{D}_{Y_{2}},\tilde{D}_{H,0})$ as a result of Lemma~\ref{laplace with mixed bundary condition}. This provides a linear bijection, so the kernel of $(\tilde{l}_{X_{0},X_{1}},\tilde{D}_{Y_{2}},\tilde{D}_{H,t})$ is also finite-dimensional.

The map $(\tilde{l}_{X_{0},X_{1}},\tilde{D}_{Y_{2}},\tilde{D}_{H,t})$ differs from the map $(\tilde{l}_{X_{0},X_{1}},\tilde{D}_{Y_{2}},\tilde{D}_{H,0})$ only at the $\Omega_{0}^{0}(Y_{2})$-component. By Lemma~\ref{laplace with mixed bundary
condition}, the map $\rho_t$ is surjective, so the map $(\tilde{l}_{X_{0},X_{1}},\tilde{D}_{Y_{2}},\tilde{D}_{H,t})$ is surjective on the $\Omega_{0}^{0}(Y_{2})$-component. This implies that the cokernels at each $t$ are in fact the same. 
Therefore, $(\tilde{l}_{X_{0},X_{1}},\tilde{D}_{Y_{2}},\tilde{D}_{H,t})$ are Fredholm.

Applying Lemma~\ref{laplace with mixed bundary condition} again,
one can see that there is a unique gauge transformation from a solution of 
$((\tilde{l}_{X_{0},X_{1}},\tilde{D}_{Y_{2}},\tilde{D}_{H})$, $(\tilde{c}_{X_{0},X_{1}},0,0)) $
to a solution of
$((\tilde{l}_{X_{0},X_{1}},\tilde{D}_{Y_{2}},\tilde{D}_{H,t})$, $(\tilde{c}_{X_{0},X_{1}},0,0))$
which depends continuously.
This provides a homeomorphism between them. Then the boundedness result follows from the case $t=0$ and compactness of
$[0,1]$.

\end{proof}

%
%

\subsubsection*{Step 6} Here, we will basically change the action of $G$ by identifying it with a group of harmonic gauge transformations with different boundary conditions. Recall from our setup  that $\tau (h) $ for $h \in H^{1}(X,Y_{2};\mathbb{R})$
is the unique harmonic 1-form on $X$ representing $h$ such that $\mathbf{t}_{Y_{2}}(\tau(h))$ is exact
and $\tau(h)\in
i\Omega_{CC}^{1}(X)$.  Note that for $t \in [0,1]$,  
\[
    D_{H, t}(\tau(h)|_{X_{0}},\tau(h)|_{X_{1}})=(0,2t\bar{d}^{*}(\mathbf{t}_{Y_{2}}(\tau(h)))).
\]
We put
$$ 
(\xi_{0,t}(h),\xi_{1,t}(h)) :=\rho^{-1}_{t}(2t\bar{d}^{*}(\mathbf{t}_{Y_{2}}(\tau(h)))).
$$
We then apply gauge transformation to $\tau(h) $ and define 
\begin{align*} 
&\tau_{t}=(\tau_{X_{0},t},\tau_{X_{1},t}) \colon H^{1}(X,Y_{2};\mathbb{R}) \rightarrow
\Omega^{1}_{h}(X_{0})\times \Omega^{1}_{h}(X_{1}) \\
& h \mapsto ( \tau(h)|_{X_{0}}-d\xi_{0,t}(h) , \tau(h)|_{X_{1}}-d\xi_{1,t}(h) ).
 \end{align*}
From our construction, we have $D_{H,t} (\tau_{t}(h))=0$ and $d\xi_{i,0} =0 $.  

%
%

We will consider  harmonic gauge transformations corresponding to boundary condition $D_{H,t}=0 $.
For $h\in G$, we define 
$$u_{t}(h) :=(u_{X_{0},t}(h),u_{X_{1},t}(h))$$
such that $u_{X_{i},t}(h)$ is the unique gauge transformation on $X_i$ satisfying  
$$u_{X_{i},t}(h)(\hat{o})=1,\ u_{X_{i},t}^{-1}du_{X_{i},t}=\tau_{X_{i},t}(h).$$
Notice that for $u_{X_i , 0} $ is the restriction of $u \in \mathcal{G}^{h,o}_{X,Y_{2}}$ and  $u_{X_i , t}(h) = e^{-\xi_{i,t}(h)} u_{X_{i},0}(h) $.

Consider a new action $\varphi_{t}$ of $G$ on the spaces $V_{X_{i}},U_{X_{i}}$, $H^{1}(X_{i},Y_{2};\mathbb{R})$, $Coul(Y_{i})$
and $H$ such that the action on spinors is given by the gauge transformations
$(u_{X_{0},t}(h),u_{X_{1},t}(h))$ instead of restriction of $u \in \mathcal{G}^{h,o}_{X,Y_{2}}$.
We also consider a map 
$$l^{t}_{X_{0},X_{1}},c^{t}_{X_{0},X_{1}} \colon V_{X_{0}}\times V_{X_{1}}\times H^{1}(X,Y_{2};\mathbb{R})\rightarrow U_{X_{0}}\times
U_{X_{1}}\times H^{1}(X,Y_{2};\mathbb{R})$$
by replacing the term $\tau(h)|_{X_{i}}$ in the definition (cf. (\ref{linear part of SW map}))  with $\tau_{X_{i},t}(h)$.

 It is not hard to check that the maps
$l^{t}_{X_{0},X_{1}},\ c^{t}_{X_{0},X_{1}},\ D_{Y_{2}}\times \operatorname{id}_{H^{1}(X,Y_{2};\mathbb{R})}$ and $D_{H,{t}}\times
\operatorname{id}_{H^{1}(X,Y_{2};\mathbb{R})}$ are all equivariant under the action $\varphi_{t}$. By taking quotients, we
obtain bundles
$$\tilde{V}^{t}_{X_{0},X_{1}}:=(V_{X_{0}}\times V_{X_{1}}\times H^{1}(X,Y_{2};\mathbb{R}))/(G,\varphi_{t});$$ $$\tilde{U}^{t}_{X_{0},X_{1}}:=(U_{X_{0}}\times
U_{X_{1}}\times H^{1}(X,Y_{2};\mathbb{R}))/(G,\varphi_{t})$$
and bundle maps $\tilde{l}^{t}_{X_{0},X_{1}},\ \tilde{c}^{t}_{X_{0},X_{1}} , \tilde{D}_{Y_{2},t},\tilde{D}_{H,t}$.
We can consider an obvious bundle isomorphism from $\tilde{V}^{}_{X_{0},X_{1}}$ (resp.  $\tilde{U}^{}_{X_{0},X_{1}}$) to $\tilde{V}^{t}_{X_{0},X_{1}}$  (resp.  $\tilde{U}^{}_{X_{0},X_{1}}$) by sending $ (a_i,\phi_i,h)$ to $(a_i,e^{\xi_{i,t}(h)}\phi_i,h) $. All of the above maps fit in a commutative diagram.
\begin{equation*}
\xymatrix{
\tilde{V}^{}_{X_{0},X_{1}} \ar[d]_{} \ar[r]^{} & \tilde{U}^{}_{X_{0},X_{1}} \ar[d]^{}\\
\tilde{V}^{t}_{X_{0},X_{1}} \ar[r]^{} & \tilde{U}^{t}_{X_{0},X_{1}}}
\end{equation*}
We can conclude:

\begin{lem} The triple
 $((\tilde{l}^{1}_{X_{0},X_{1}},\tilde{D}^{1}_{Y_{2}},\tilde{D}^{1}_{H}),(\tilde{c}^{1}_{X_{0},X_{1}},0,0),(\tilde{r}_{0},\tilde{r}_{1}))$
 is an SWC triple and is c-homotopic to the triple $$((\tilde{l}_{X_{0},X_{1}},\tilde{D}_{Y_{2}},\tilde{D}_{H,1}),(\tilde{c}_{X_{0},X_{1}},0,0),(\tilde{r}_{0},\tilde{r}_{1})).$$
\end{lem}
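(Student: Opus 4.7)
The plan is to exploit the explicit bundle isomorphisms $\Phi_V^1 \colon \tilde V_{X_0,X_1} \to \tilde V^1_{X_0,X_1}$ and $\Phi_U^1 \colon \tilde U_{X_0,X_1} \to \tilde U^1_{X_0,X_1}$ induced by the formula $(a_i,\phi_i,h)\mapsto(a_i, e^{\xi_{i,1}(h)}\phi_i, h)$ on representatives, and to verify that the commutative diagram displayed at the end of the excerpt genuinely commutes. First I would verify well-definedness of $\Phi_V^1$: for $g \in G$ with translation class $h_g$, the required equivariance
\[
\Phi_V^1(g \cdot_{\varphi_0}(a_i,\phi_i,h)) = g \cdot_{\varphi_1} \Phi_V^1(a_i,\phi_i,h)
\]
reduces, after canceling the common factor $u_{X_i,0}(g)$ from the identity $u_{X_i,1}(g) = e^{-\xi_{i,1}(h_g)} u_{X_i,0}(g)$, to the linearity statement $\xi_{i,1}(h-h_g) = \xi_{i,1}(h) - \xi_{i,1}(h_g)$. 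This linearity is inherited from linearity of $h \mapsto \tau(h)$ and of $\rho_1^{-1}$. An identical argument produces the bundle isomorphism on $\tilde U$, $\mathrm{Coul}(Y_i)$ and $H$; compatibility on the common boundary $Y_2$ is guaranteed by condition~(3) of Lemma~\ref{laplace with mixed bundary condition}, which forces $\xi_{0,1}(h)|_{Y_2} = \xi_{1,1}(h)|_{Y_2}$.

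The key calculation in verifying the commutative diagram is gauge equivariance of the Dirac operator. With $u = e^{\xi_{i,1}(h)}$ and $A = \hat A_0 + i\tau(h)|_{X_i}$ one has $u^{-1}du = d\xi_{i,1}(h)$; the standard identity $\slashed D_{A-u^{-1}du}(u\phi) = u \slashed D_A(\phi)$ combined with $\tau_{X_i,1}(h) = \tau(h)|_{X_i} - d\xi_{i,1}(h)$ yields
\[
\slashed D^+_{\hat A_0 + i\tau_{X_i,1}(h)}\!\bigl(e^{\xi_{i,1}(h)}\phi_i\bigr) = e^{\xi_{i,1}(h)}\slashed D^+_{\hat A_0 + i\tau(h)|_{X_i}}(\phi_i).
\]
The remaining components of $l^1_{X_0,X_1}$ and $c^1_{X_0,X_1}$---namely $d^*\hat a_i$, $d^+\hat a_i$, the curvature term, the Clifford product $\rho(\hat a_i)\phi_i$, and the quadratic $\rho^{-1}(\phi_i\phi_i^*)_0$---are intertwined by the unitary rescaling $e^{\xi_{i,1}(h)}$ in the obvious way. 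The boundary difference operators $\tilde D_{Y_2}$ and $\tilde D_{H,1}$ commute with $\Phi_U^1$ precisely because the rescalings $e^{\xi_{0,1}(h)}|_{Y_2}$ and $e^{\xi_{1,1}(h)}|_{Y_2}$ coincide, so both sides of the difference pick up the same scalar. Finally, $(\tilde r_0,\tilde r_1)$ is intertwined because the twisted restriction has already absorbed the relevant harmonic gauge factor at each boundary component.

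Given the commutative diagram, both conclusions of the lemma are immediate. Fredholmness of the linear part is preserved by conjugation by a bundle isomorphism, and the uniform bound $M'$ of Definition~\ref{SWC triple} transfers because the fiber-direction norms on source and target differ only by a uniformly bounded unitary factor; hence the SWC property transfers from the right-hand triple, which is an SWC triple by Proposition~\ref{changing boundary condition gives c-homotopy} at $t=1$, to the left-hand triple. After identifying $\tilde V^1_{X_0,X_1}$ with $\tilde V_{X_0,X_1}$ (and the target similarly) via $(\Phi_V^1,\Phi_U^1)$, the two triples become literally equal by the commutative diagram, so they are c-homotopic via the constant homotopy. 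The main obstacle is purely bookkeeping: one must systematically check equivariance across $V_{X_i}, U_{X_i}, \mathrm{Coul}(Y_i), H$ and verify compatibility of all the maps with a single unified bundle isomorphism; there is no new analytic input, and the only subtlety is the boundary matching $\xi_{0,1}(h)|_{Y_2} = \xi_{1,1}(h)|_{Y_2}$ supplied by Lemma~\ref{laplace with mixed bundary condition}.
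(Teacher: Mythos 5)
Your proof is correct and takes essentially the same approach as the paper, which simply asserts that the bundle isomorphism $(a_i,\phi_i,h)\mapsto(a_i,e^{\xi_{i,1}(h)}\phi_i,h)$ makes the relevant maps fit into a commutative diagram and leaves the verification to the reader. You have filled in the details the paper omits—the linearity of $\xi_{i,1}$ needed for well-definedness on the quotient, the gauge-equivariance identity for the Dirac operator, and the boundary matching $\xi_{0,1}(h)|_{Y_2}=\xi_{1,1}(h)|_{Y_2}$ from condition~(3) of Lemma~\ref{laplace with mixed bundary condition} that makes the difference operators $\tilde D_{Y_2}$ and $\tilde D_{H,1}$ commute with the isomorphism—all of which are consistent with the paper's intent.
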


Let us take a closer look at the SWC triple $$((\tilde{l}^{1}_{X_{0},X_{1}},\tilde{D}^{1}_{Y_{2}},\tilde{D}^{1}_{H}),(\tilde{c}^{1}_{X_{0},X_{1}},0,0),(\tilde{r}_{0},\tilde{r}_{1})).$$
Observe that the boundary condition $b_0 - b_1 = 0$ and $\bar{d}^* (b_0 + b_1 ) = 0$ implies $b_0 = b_1 =0 $.
This allows us to recover the double Coulomb condition on $X_i$.

\begin{lem}\label{kernel equals double coulomb slice}
The operator
$$ (d^*_{X_0} , d^*_{X_1} , D_{H,1}) \colon V_{X_{1}}\times V_{X_{2}}\rightarrow L^{2}_{k-1/2}(i\Omega^{0}(X_{0})\oplus i\Omega^{0}(X_{1}))\oplus H$$
is surjective and its kernel can be written as
$$
L^{2}_{k+1/2}(i\Omega_{CC}^{1}(X_{0},\alpha^{0}\cup \beta)\oplus \Gamma(S^{+}_{X_{0}}))\times L^{2}_{k+1/2}(i\Omega_{CC}^{1}(X_{1},\alpha^{1})\oplus
\Gamma(S^{+}_{X_{1}})).
$$
\end{lem}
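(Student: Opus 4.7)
The proof naturally splits into the kernel computation and the surjectivity assertion.

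For the kernel, let $(\hat{a}_0,\hat{a}_1)$ lie in the kernel and decompose the tangential restrictions via the Hodge decomposition on $Y_2$ as $\mathbf{t}_{Y_2}\hat{a}_j = a_j + b_j$ with $a_j \in \ker d^*$ and $b_j \in \operatorname{im} d$. The condition $D_{H,1}(\hat{a}_0,\hat{a}_1)=0$ reads $b_0 = b_1$ and $\bar{d}^*(b_0 + b_1) = 0$; since $\bar{d}^*$ is, by its spectral definition, injective on $\operatorname{im} d$, this forces $b_0 = b_1 = 0$. Consequently $\mathbf{t}_{Y_2}\hat{a}_j$ is coclosed, so the boundary Coulomb condition $d^*\mathbf{t}_{Y_2}\hat{a}_j = 0$ on $Y_2$ holds. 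Combined with $d^*\hat{a}_j = 0$ on the interior and the conditions already built into $V_{X_j}$ (Coulomb and mean-zero on $Y_j$, vanishing $\alpha^j$- and $\beta$-periods), the only remaining piece of the double Coulomb condition is $\int_{Y_2}\mathbf{t}(*\hat{a}_j)=0$. But Stokes's theorem applied to $d^*\hat{a}_j=0$ gives $\int_{\partial X_j}\mathbf{t}(*\hat{a}_j)=0$, and the $Y_j^k$-contributions vanish by hypothesis, leaving the $Y_2$-integral zero. This identifies the kernel with the product of double Coulomb slices as claimed.

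For surjectivity, given target data $(g_0,g_1,c_1,c_2)$, write $c_1 = d\psi$ for the unique $\psi \in i\Omega^0_0(Y_2)$ (possible since $c_1 \in d\Omega^0(Y_2)$). The plan is to first solve a mixed Laplace boundary value problem and then apply a harmonic correction. Namely, seek $(f_0,f_1) \in L^2_{k+3/2}(X_0) \times L^2_{k+3/2}(X_1)$ solving $\Delta f_i = g_i$ together with the boundary conditions $f_0|_{Y_2} - f_1|_{Y_2} = \psi$, $\bar{d}^* d(f_0|_{Y_2} + f_1|_{Y_2}) = c_2$, $f_i(\hat{o}) = 0$, $f_i$ constant on each component of $Y_i$, and $\int_{Y_i^k}\partial_{\vec{n}} f_i = 0$. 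This is a variant of Lemma~\ref{laplace with mixed bundary condition} with inhomogeneous right-hand side; its solvability follows by the same Fredholm-plus-injectivity argument, the single compatibility condition between $g_i$ and the prescribed boundary data being automatic from the structure of the target. Setting $\hat{a}_i^{(0)} := df_i$ one has $d^*\hat{a}_i^{(0)} = g_i$ and $D_{H,1}(\hat{a}_0^{(0)},\hat{a}_1^{(0)}) = (c_1, c_2)$. To land inside $V_{X_i}$ one must still correct the possibly nonzero periods of $\hat{a}_i^{(0)}$ along the $\alpha^i$-loops (and the $\beta$-loops on $X_0$); since $\hat{a}_i^{(0)}$ is coclosed, this is done by subtracting a unique harmonic form $h_i \in \mathcal{H}^1(X_i)$ supported in the span dual to $\{\alpha^i\} \cup \{\beta\}$ that matches these periods. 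The correction leaves $d^*\hat{a}_i$ unchanged and, by construction of the chosen loops (which are in the interior, respectively on $Y_2$), does not affect the boundary quantities $b_i$ either, so the corrected $\hat{a}_i = df_i - h_i$ sits in $V_{X_i}$ and hits the prescribed target.

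The principal technical hurdle is the adaptation of Lemma~\ref{laplace with mixed bundary condition}: upgrading it to inhomogeneous $\Delta f_i = g_i$ and incorporating the $\bar{d}^*$-type boundary condition. Fredholmness is preserved by the same compact-perturbation argument, injectivity carries over verbatim (the integration-by-parts identity used in the proof of that lemma forces $df_i = 0$), and the solvability obstruction reduces via Green's formula to a relation that the target data satisfy tautologically. The harmonic-correction step then works uniformly in the bundle picture because, by construction of the loops $\alpha^0, \alpha^1, \beta$ in Section~\ref{sec gluingsetup}, the period map is an isomorphism from the relevant harmonic subspace onto the space of period defects.
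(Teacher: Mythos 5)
Your kernel computation is correct and matches the paper's (terse) argument: from $b_0 - b_1 = 0$ and $\bar d^*(b_0+b_1) = 0$ one gets $b_0 = b_1 = 0$ since $\bar d^*$ is injective on $\operatorname{im} d$; the $Y_j$-conditions are built into $V_{X_j}$; and the missing flux condition $\int_{Y_2}\mathbf t(*\hat a_j)=0$ follows from Stokes as you say. Your surjectivity strategy — use exact forms $(df_0,df_1)$ so the problem becomes a Poisson-type boundary value problem, solvable by an inhomogeneous variant of Lemma~\ref{laplace with mixed bundary condition} — is also the paper's, which disposes of it in two sentences.

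However, the ``harmonic correction'' step is confused and should be deleted. You write that one must ``correct the possibly nonzero periods of $\hat a_i^{(0)}$ along the $\alpha^i$-loops (and the $\beta$-loops on $X_0$)'' and that ``since $\hat a_i^{(0)}$ is coclosed'' one subtracts a harmonic form matching these periods. Both premises are wrong: $\hat a_i^{(0)} = df_i$ is exact, so its period over every closed loop (in particular every $\alpha^i_k$ and $\beta_k$) is already zero, and there is nothing to correct; and $df_i$ is \emph{not} coclosed when $g_i \neq 0$ — rather $d^*(df_i) = \pm g_i$, which is exactly what is wanted. Once the $f_i$ solve the mixed BVP with constant values on each component of $Y_i$ and mean-zero normal derivative there, $df_i$ already lies in $V_{X_i}$. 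The argument you give happens to be a no-op (the harmonic form you would subtract is zero), so the conclusion is unaffected, but the reasoning reflects a misconception about exact forms that you should fix.

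One small clarification: there is no compatibility constraint on $(g_0, g_1)$ to worry about, because the normal component of $\hat a_i$ on $Y_2$ is unconstrained in $V_{X_i}$ (the total flux across $Y_2$ is free), so $\int_{X_i} g_i$ may be arbitrary. Your phrase about ``the single compatibility condition'' being automatic is therefore not quite the right framing — better to observe directly that no such constraint arises.
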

\begin{proof} We consider a pair of exact forms $(df_0 , df_1)$. 
Then, surjectivity reduces to finding a solution of Poisson equation with Dirichlet boundary condition on $X_0$ and $X_1$.

\end{proof}

Note that we can identify \begin{equation}L^{2}_{k+1/2}(i\Omega_{CC}^{1}(X_{0},\alpha^{0}\cup \beta)\oplus \Gamma(S^{+}_{X_{0}}))\times
H^{1}(X_{0},Y_{2};\mathbb{R})\cong Coul^{CC}(X_{0},\beta)\end{equation}
by sending $((\hat{a}_{0},\phi),h)$ to $(\hat{a}_{0}+\hat{a}_{h},\phi)$, where $\hat{a}_{h}$ is the element in $\mathcal{H}^{1}_{DC}(X_{0})$
corresponding to $h$ (cf. (\ref{eq H1DC}) from Section~\ref{sec 4mfd}). Under this identification, the natural projection
to $H^{1}(X_{0},Y_{2};\mathbb{R})$ becomes the map $\hat{p}_{\alpha , X_0} $ (cf. (\ref{eq defp_alpha})).
Similarly, we have an isomorphism 

 \begin{equation}L^{2}_{k+1/2}(i\Omega_{CC}^{1}(X_{1},\alpha^{1})\oplus \Gamma(S^{+}_{X_{0}}))\times H^{1}(X_{1},Y_{2};\mathbb{R})\cong
Coul^{CC}(X_{1}).\end{equation}
As a result, the action $\varphi^{1}$ provides an action on $Coul^{CC}(X_{0},\beta)\times Coul^{CC}(X_{1})$
via an identification 
$$G=H^{1}(X_{0},Y_{2})\times H^{1}(X_{1},Y_{2})\cong \mathcal{G}^{h,\hat{o}}_{X_{0},\partial X_{0}}\times\mathcal{G}^{h,\hat{o}}_{X_{1},\partial
X_{1}}.$$ 
This holds because $Y_0 $ and $Y_1$ are homology spheres.

%

As in Section~\ref{sec 4mfd}, we have Seiberg--Witten maps 
\begin{align*}
\widebar{SW}_{X_{0}} =\bar{L}_{X_{0}} + \bar{Q}_{X_{0}} \colon &Coul^{CC}(X_{0},\beta)/\mathcal{G}^{h,\hat{o}}_{X_{0},\partial X_{0}}\rightarrow\\& (L^{2}_{k-1/2}(i\Omega^{+}_{2}(X_{0})\oplus \Gamma(S^{-}_{X_{0}}))\times  \mathcal{H}^1_{DC}(X_0))/\mathcal{G}^{h,\hat{o}}_{X_{0},\partial
X_{0}}, \\
\widebar{SW}_{X_{1}} =\bar{L}_{X_{1}} + \bar{Q}_{X_{1}} \colon &Coul^{CC}(X_{1})/\mathcal{G}^{h,\hat{o}}_{X_{1},\partial
X_{1}}\rightarrow\\& (L^{2}_{k-1/2}(i\Omega^{+}_{2}(X_{1})\oplus \Gamma(S^{-}_{X_{1}}))\times  \mathcal{H}^1_{DC}(X_1))/\mathcal{G}^{h,\hat{o}}_{X_{1},\partial
X_{1}}.
\end{align*}
Since an element of $\mathcal{G}^{h,\hat{o}}_{X_{i},\partial X_{i}}$ takes value $1$ on $Y_{2}$, there are
well-defined restriction maps $r_2$ from $Coul^{CC}(X_{0},\beta)/\mathcal{G}^{h,\hat{o}}_{X_{0},\partial X_{0}}\text{ and }  Coul^{CC}(X_{1})/\mathcal{G}^{h,\hat{o}}_{X_{1},\partial
X_{1}}$ to $Coul(Y_{2}) $.
We then consider a map 
\begin{align*}
\bar{D}_{Y_{2}} \colon Coul^{CC}(X_{0},\beta)/\mathcal{G}^{h,\hat{o}}_{X_{0},\partial X_{0}}\times Coul^{CC}(X_{1})/\mathcal{G}^{h,\hat{o}}_{X_{1},\partial
X_{1}} &\rightarrow Coul(Y_{2}) \\
(x_{0},x_{1}) &\mapsto r_{2}(x_{0})-r_{2}(x_{1}).
\end{align*}

\begin{cor}
The triple $((\bar{L}_{X_{0}},\bar{L}_{X_{1}},\bar{D}_{Y_{2}}),(\bar{Q}_{X_{0}},\bar{Q}_{X_{1}},0),(\tilde{r}_{0},\tilde{r}_{1}))$
is an SWC triple stably c-homotopic to the triple $$((\tilde{l}^{1}_{X_{0},X_{1}},\tilde{D}^{1}_{Y_{2}},\tilde{D}^{1}_{H}),(\tilde{c}^{1}_{X_{0},X_{1}},0,0),(\tilde{r}_{0},\tilde{r}_{1})).$$
\end{cor}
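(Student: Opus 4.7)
The plan is to apply Lemma~\ref{moving map to  domain2} with the bundle map
$$g := (d^*_{X_0}, d^*_{X_1}, \tilde{D}^1_H) \colon \tilde{V}^1_{X_0,X_1} \longrightarrow \tilde{U}',$$
obtained by pulling off the $d^*$-components of $\tilde{l}^1_{X_{0},X_{1}}$ together with the entire $H$-component $\tilde{D}^1_H$ from the source triple. The map $g$ is $\varphi_1$-equivariant by construction, and it is fiberwise surjective by Lemma~\ref{kernel equals double coulomb slice} combined with the surjectivity of $D_{H,1}$ on the orthogonal complement of $\ker D_{H,1}$; equivalently one can argue the surjectivity at the pre-quotient level and descend. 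Lemma~\ref{moving map to  domain2} then says that removing $g$ from the linear part, and restricting to $\ker g$, yields a stably c-homotopic SWC triple.

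Next we identify $\ker g$ after the quotient. At the pre-quotient level, Lemma~\ref{kernel equals double coulomb slice} gives
$$\ker g \;=\; L^{2}_{k+1/2}(i\Omega_{CC}^{1}(X_{0},\alpha^{0}\cup \beta)\oplus \Gamma(S^{+}_{X_{0}})) \times L^{2}_{k+1/2}(i\Omega_{CC}^{1}(X_{1},\alpha^{1})\oplus \Gamma(S^{+}_{X_{1}})) \times H^1(X,Y_2;\mathbb{R}).$$
Using the isomorphisms stated just above Lemma~\ref{kernel equals double coulomb slice}, which absorb the $H^1(X_i,Y_2;\mathbb{R})$-factor into a harmonic $\mathcal{H}^1_{DC}(X_i)$-piece, each slice factor is identified with $Coul^{CC}(X_0,\beta)$ and $Coul^{CC}(X_1)$ respectively. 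The action $\varphi_1$ restricted to this kernel acts on the spinors by $u_{X_i,1}(h)$; since $\tau_{X_i,1}(h) \in i\Omega^1_{CC}(X_i)$ by construction of the corrections $\xi_{i,1}$, the gauge transformations $u_{X_i,1}(h)$ lie in $\mathcal{G}^{h,\hat{o}}_{X_i,\partial X_i}$, and on the 1-form part the action is by the corresponding harmonic translation. Thus the quotient $\ker g / G$ is exactly $\mathcal{W}_{X_0,\beta} \times \mathcal{W}_{X_1}$.

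Under this identification, the remaining components restrict as required: the restriction of $\tilde{l}^1_{X_0,X_1}$ to the kernel becomes $(\bar{L}_{X_0},\bar{L}_{X_1})$ because $\hat{A}_0 + i\tau_{X_i,1}(h)$ is precisely the base connection twisted by the $\mathcal{H}^1_{DC}$-piece used to define $\bar{L}_{X_i}$; similarly $\tilde{c}^1_{X_0,X_1}$ restricts to $(\bar{Q}_{X_0},\bar{Q}_{X_1})$; the $Coul(Y_2)$-component $\tilde{D}^1_{Y_2}$ restricts to $r_2(x_0) - r_2(x_1) = \bar{D}_{Y_2}$; and the map $(\tilde{r}_0,\tilde{r}_1)$ is unchanged since it involves only components at $Y_0, Y_1$.

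The main point requiring care is the compatibility of the modified gauge action $\varphi_1$ with the standard harmonic gauge actions $\mathcal{G}^{h,\hat{o}}_{X_i,\partial X_i}$ used to form the double Coulomb slice bundles; this is ensured by the very design of $\tau_{X_i,1}$ and $u_{X_i,1}$ in Step~6, which were chosen so that the double Coulomb conditions $D_{H,1}=0$ are preserved by the action. Everything else is essentially a bookkeeping application of Lemma~\ref{moving map to  domain2}, and the SWC property of the target triple follows from the SWC property of the source triple together with the boundedness results of Proposition~\ref{bounded for gluing}.
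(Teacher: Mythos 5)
Your proposal is correct and follows essentially the same route as the paper: the paper's proof is exactly the one-line application of Lemma~\ref{moving map to  domain2} with $g = (d^*_{X_0}, d^*_{X_1}, D_{H,1})$ as identified by Lemma~\ref{kernel equals double coulomb slice}, and you have simply unpacked the bookkeeping (identification of $\ker g$ with the double Coulomb slices, compatibility of the $\varphi_1$-action with $\mathcal{G}^{h,\hat{o}}_{X_i,\partial X_i}$, restriction of the remaining maps) that the paper leaves implicit. The only minor slip is positional: the isomorphisms absorbing $H^1(X_i,Y_2;\mathbb{R})$ into $Coul^{CC}(X_i)$ appear just \emph{after} Lemma~\ref{kernel equals double coulomb slice}, not before it.
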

\begin{proof} This follows by applying Lemma~\ref{moving map to  domain2} to the 
triple $$((\tilde{l}^{1}_{X_{0},X_{1}},\tilde{D}^{1}_{Y_{2}},\tilde{D}^{1}_{H}),(\tilde{c}^{1}_{X_{0},X_{1}},0,0),(\tilde{r}_{0},\tilde{r}_{1}))$$  with $g = (d^*_{X_0} , d^*_{X_1} , D_{H,1})$ as in Lemma~\ref{kernel equals double coulomb slice}.
\end{proof}
\subsubsection*{Step 7} This is the final step.
Recall from Section~\ref{sec gluingsetup} that we chose finite dimensional subspaces $U^i_n $ of 
$L^{2}_{k-1/2}(i\Omega^{+}_{2}(X_{i})\oplus \Gamma(S^{-}_{X_{i}}))
$
and eigen-spaces $V^i_n $ of $Coul(Y_i) $.
In the SWC construction of the triple $$((\bar{L}_{X_{0}},\bar{L}_{X_{1}},\bar{D}_{Y_{2}}),(\bar{Q}_{X_{0}},\bar{Q}_{X_{1}},0),(\tilde{r}_{0},\tilde{r}_{1})),$$ the subbundles involved are preimages of the map $(\bar{L}_{X_{0}},\bar{L}_{X_{1}},\bar{D}_{Y_{2}},p^{\mu_n}_{-\infty} \circ \tilde r_0 , p^{\mu_n}_{-\infty} \circ \tilde r_1 )$ rather than preimages of the product map $(\bar{L}_{X_{0}}, p^{\mu_n}_{-\infty}\circ \tilde r_0 , p^{\mu_n}_{-\infty} \circ \tilde r_2)  \times (\bar{L}_{X_{1}}, p^{\mu_n}_{-\infty} \circ \tilde r_1 , p^\infty_{-\mu_n} \circ \tilde r_2)$ in the construction of relative Bauer--Furuta invariants.
Note that there is a choice of trivialization but we do not emphasize this here.

Using the spectral decomposition, we see that $r_2 (x_0) - r_2(x_1) \in V^{\mu_n}_{-\mu_n} $ if and only if
\begin{align*}
p^{\infty}_{\mu_{n}}\circ r_{2}(x_{0}) &=  p^{\infty}_{\mu_{n}}\circ r_{2}(x_{1}),\\ 
p^{-\mu_{n}}_{-\infty}\circ r_{2}(x_{1}) &= p^{-\mu_{n}}_{-\infty}\circ r_{2}(x_{0}).
\end{align*}
We introduce a family of subbundles : for $t \in [0,1]$, 
\begin{align*}
& W_{X_{0},X_{1}}^{n,t}:=  \\
& \quad \{ (x_{0},x_{1})\in (Coul^{CC}(X_{0},\beta)/\mathcal{G}^{h,\hat{o}}_{X_{0},\partial X_{0}})\times
(Coul^{CC}(X_{1})/\mathcal{G}^{h,\hat{o}}_{X_{1},\partial X_{1}})\mid \\  
& \qquad p^{\mu_n}_{-\infty}\tilde{r}_{i}(x_{i})\in V^i_n,\
\bar L_{X_{i}}(x_{i})\in U^i_{n},  \\
& \qquad  p^{\infty}_{\mu_{n}}\circ r_{2}(x_{0})= t p^{\infty}_{\mu_{n}}\circ
r_{2}(x_{1}), \\ 
& \qquad p^{-\mu_{n}}_{-\infty}\circ r_{2}(x_{1})= t   p^{-\mu_{n}}_{-\infty}\circ r_{2}(x_{0}) \}.
\end{align*}



We have a boundedness result for this family.
\begin{lem}
 For any $R>0$, there exist $N,\epsilon_{0}$ with the following significance: For any $n>N, t \in [0,1]$,
$(x_{0},x_{1})\in B^{+}(W^{n,\theta}_{X_{0},X_{1}},R)$ and $\gamma_{i} \colon (-\infty,0]\rightarrow B(V^{\lambda_{n}}_{-\lambda_{n}}(Y_{i}),R)$
where  $i=0,1$   satisfying
\begin{itemize}
\item $\|p^{\mu_{n}}_{-\mu_{n}}(r_{2}(x_{0})-r_{2}(x_{1}))\|_{L^{2}_{k}}\leq \epsilon$,\item $\|p_{U^i_{n}}\circ \widebar{SW}_{X_i} (x_{i})\|_{L^{2}_{k-1/2}}\leq \epsilon$,
\item $\gamma_{i}$ is an approximated trajectory with $\gamma_{i}(0)=p^{\mu_{n}}_{-\mu_{n}}\circ \tilde{r}_{i}(x_{i}),$
\end{itemize}
one has $\|x_{i}\|_{F}\leq R_{3}+1$ and $\|\gamma_{i}(t)\|_{L^{2}_{k}}\leq R_{3}+1$, where $R_{3}$ is the constant in Proposition~\ref{bounded
for gluing}. 
\end{lem}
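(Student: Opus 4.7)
The plan is to prove this by contradiction, following the pattern of Propositions~\ref{bounded for gluing} and \ref{bounded for gluing, approximated}. If the statement fails, one obtains sequences $t_j \in [0,1]$, $n_j \to \infty$, $\epsilon_j \to 0$, $(x_{0,j}, x_{1,j}) \in B^{+}(W^{n_j, t_j}_{X_0, X_1}, R)$ and half-trajectories $\gamma_{i,j} \colon (-\infty, 0] \to B(V^{\mu_{n_j}}_{-\mu_{n_j}}(Y_i), R)$ satisfying all three hypotheses, but with either $\|x_{i,j}\|_F > R_3 + 1$ for some $i$, or $\|\gamma_{i,j}(s_j)\|_{L^2_k} > R_3 + 1$ for some $s_j \leq 0$. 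Passing to a subsequence we assume $t_j \to t_\infty \in [0,1]$.

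\textbf{Extraction of limits.} The first step is to extract limits on both sides of the neck. Picking gauge representatives of $x_{i,j}$ bounded in $L^2_{k+1/2}$ via Lemma~\ref{fiberdirection norm} and using the compactness of $\widebar{Q}_{X_i}$ from Lemma~\ref{quadratic part} together with the approximated equation $\|p_{U^i_{n_j}}\circ \widebar{SW}_{X_i}(x_{i,j})\|_{L^2_{k-1/2}} \leq \epsilon_j$, standard elliptic estimates on $X_i$ produce an $L^2_{k+1/2}$-convergent subsequence with limit $x_i^\infty$ satisfying $\widebar{SW}_{X_i}(x_i^\infty) = 0$. An analogous application of the adaptation of Lemma~\ref{convegence of approximated X-trajectories} to half-trajectories on the rational homology spheres $Y_i$ (where $b_1(Y_i)=0$) produces finite-type Seiberg--Witten limit trajectories $\gamma_i^\infty \colon (-\infty, 0] \to Coul(Y_i)$ with $\gamma_i^\infty(0) = \tilde{r}_i(x_i^\infty)$, and with convergence $\gamma_{i,j} \to \gamma_i^\infty$ uniform in $L^2_k$ on any compact subset of $(-\infty, 0]$.

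\textbf{Matching at $Y_2$ in the limit.} The key step is to establish $\tilde{r}_2(x_0^\infty) = \tilde{r}_2(x_1^\infty)$. Decomposing along the spectral splitting $V^{\infty}_{\mu_{n_j}} \oplus V^{\mu_{n_j}}_{-\mu_{n_j}} \oplus V^{-\mu_{n_j}}_{-\infty}$ of $Coul(Y_2)$, the defining relations of $W^{n_j, t_j}_{X_0, X_1}$ together with the middle-mode hypothesis yield
\[
   r_2(x_{0,j}) - r_2(x_{1,j}) = (t_j - 1)\, p^{\infty}_{\mu_{n_j}} r_2(x_{1,j}) + (1 - t_j)\, p^{-\mu_{n_j}}_{-\infty} r_2(x_{0,j}) + v_j,
\]
with $\|v_j\|_{L^2_k} \leq \epsilon_j \to 0$. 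Strong $L^2_{k+1/2}$-convergence of $x_{i,j}$ gives strong $L^2_k$-convergence of the boundary traces $r_2(x_{i,j})$, after which a standard splitting $\|p^{\infty}_{\mu_{n_j}} r_2(x_{i,j})\|_{L^2_k} \leq \|r_2(x_{i,j}) - r_2(x_i^\infty)\|_{L^2_k} + \|p^{\infty}_{\mu_{n_j}} r_2(x_i^\infty)\|_{L^2_k}$ (and similarly for the negative tail) forces both tail projections to vanish as $n_j \to \infty$. Independently of $t_\infty \in [0,1]$, this gives $\tilde{r}_2(x_0^\infty) = \tilde{r}_2(x_1^\infty)$.

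\textbf{Contradiction.} With the matching in hand, the tuple $(x_0^\infty, x_1^\infty, \gamma_0^\infty, \gamma_1^\infty, \gamma_2 \equiv \tilde{r}_2(x_0^\infty))$ with trivial neck length $T = 0$ fulfills the hypotheses of Proposition~\ref{bounded for gluing}, hence $\|x_i^\infty\|_F \leq R_3$ and $\gamma_i^\infty \subset Str_{Y_i}(R_3)$. Continuity of $\|\cdot\|_F$ along the $L^2_{k+1/2}$-convergence (after gauge fixing) together with the uniform $L^2_k$-convergence of $\gamma_{i,j}$ on compact intervals then contradicts the assumed failure of the bound $R_3 + 1$ for large $j$. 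The main obstacle I anticipate is the matching argument in step three: one must upgrade the a priori weak compactness of $\{x_{i,j}\}$ to strong $L^2_{k+1/2}$-convergence of suitable gauge representatives, which is essential for the $L^2_k$-convergence of the trace maps and hence for the spectral-tail estimate that makes the $(1-t_j)$ factors irrelevant in the limit.
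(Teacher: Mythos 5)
Your overall strategy --- contradiction, extraction of limits on both sides of the cut, matching the boundary values at $Y_2$, and invoking Proposition~\ref{bounded for gluing} with $T=0$ --- is the right skeleton, and your spectral decomposition
\[
   r_2(x_{0,j}) - r_2(x_{1,j}) = (t_j - 1)\, p^{\infty}_{\mu_{n_j}} r_2(x_{1,j}) + (1 - t_j)\, p^{-\mu_{n_j}}_{-\infty} r_2(x_{0,j}) + v_j
\]
correctly isolates the new difficulty. But the step you flag as ``the main obstacle'' is precisely where a genuinely new ingredient is needed, and the route you propose is circular.

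You claim that strong $L^2_{k+1/2}$-convergence of $x_{i,j}$ (obtained by ``standard elliptic estimates'') yields $L^2_k$-convergence of the traces $r_2(x_{i,j})$, after which the spectral tails are killed by $\mu_{n_j}\to\infty$. However, the elliptic estimate that upgrades weak to strong convergence on a manifold with boundary has the form $\|u\|_{L^2_{k+1/2}(X_i)} \lesssim \|\bar L u\|_{L^2_{k-1/2}} + \|\Pi\,r(u)\|_{L^2_k(\partial X_i)} + \|u\|_{L^2}$, and the boundary data $\Pi\,r(u)$ that enters on $Y_2$ includes exactly the tail projections $p^{\infty}_{\mu_{n_j}}r_2(x_{0,j})$, $p^{-\mu_{n_j}}_{-\infty}r_2(x_{1,j})$ whose vanishing you are trying to derive \emph{from} the strong convergence. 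In Proposition~\ref{bounded for gluing, approximated} this does not arise because those tails are forced to vanish by the definition of $W^{0}_{n,\beta}\times W^1_n$; the whole point of the interpolating family $W^{n,\theta}_{X_0,X_1}$ is that for $t\in(0,1]$ the tails are only partially matched and hence not a priori controlled, and Rellich compactness alone leaves them untouched.

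The missing ingredient, which the paper's proof cites explicitly, is Lemma~1 of \cite{Manolescu2}: a quantitative elliptic boundary-decay estimate that bounds $\|p^{\infty}_{\mu_n}\circ r_2(x_0)\|_{L^2_k}$ directly in terms of $\|\bar L_{X_0}(x_0)\|_{L^2_{k-1/2}}$ and $\|x_0\|$, with a prefactor decaying as $\mu_n\to\infty$ coming from the exponential decay of high-frequency boundary modes into the collar $[-3,0]\times Y_2\subset X_0$ (and the analogous bound for $\|p^{-\mu_n}_{-\infty}\circ r_2(x_1)\|_{L^2_k}$ via the collar in $X_1$). Since $\|\bar L_{X_i}(x_{i,j})\|$ and $\|x_{i,j}\|_{L^2_{k+1/2}}$ are uniformly bounded, this makes both tails go to zero \emph{independently of $t_j$}, which simultaneously supplies the boundary control needed to close the elliptic estimate and yields $\tilde r_2(x_0^\infty)=\tilde r_2(x_1^\infty)$ directly. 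Without this (or an equivalent) boundary-decay estimate, the matching step and hence the contradiction do not go through.
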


\begin{proof} The proof is essentially identical to Proposition~\ref{bounded for gluing, approximated} by using \cite[Lemma 1]{Manolescu2}
to control $\|p^{\infty}_{\mu_{n}}\circ r_{2}(x_{0})\|_{L^{2}_{k}}$ (resp. $\|p_{-\infty}^{-\mu_{n}}\circ r_{2}(x_{1})\|_{L^{2}_{k}}$)
in terms of $\|\bar{L}_{X_{0}}(x_{0})\|_{L^{2}_{k-1/2}}$ (resp. $\|\bar{L}_{X_{1}}(x_{1})\|_{L^{2}_{k-1/2}}$).
\end{proof}

As a result,
we obtain a family of maps as $t \in [0 , 1]$.
When $t=1$, this is the same as the SWC construction
for the original triple. When  $t=0$, we have 
$$
W^{n,0}_{X_{0},X_{1}}=W^0_{n, \beta}\times W^1_{n}
$$ 
and we then recover the homotopy class in Proposition~\ref{deformed pairing}. The proof of the gluing theorem is finished.


\end{document}